\documentclass[11pt,a4paper]{article}
\usepackage[utf8]{inputenc} % proper encryption
\usepackage[USenglish]{babel} % language 
\usepackage{graphicx} % images
\usepackage{comment} % doing long comments
\usepackage{amsmath}
\usepackage{amsfonts}
\usepackage{amssymb}\usepackage{amsmath} % math
\usepackage{amssymb}
\usepackage[singlespacing]{setspace}
\usepackage{mathtools} % special math symbols
\usepackage{stmaryrd}

\usepackage[maxbibnames=99, maxcitenames=99,giveninits=true, sortcites=true,url=false,citestyle=numeric]{biblatex}
\usepackage{amsthm}
\usepackage{abstract} % abstract
\usepackage{float}
\usepackage[mathscr]{euscript}
\usepackage[singlelinecheck=false,justification=RaggedRight,format=plain]{caption}
\usepackage{subcaption}
\usepackage{geometry}
\usepackage{mathabx}
\usepackage{tikz-cd}
\usepackage{xcolor}
\usepackage{array}

\usepackage[hidelinks]{hyperref}

\newcommand{\N}{\mathbb{N}}

\newcommand{\R}{\mathbb{R}}
\newcommand{\C}{\mathbb{C}}
\newcommand{\X}{\mathfrak{X}}
\DeclareMathOperator{\id}{id}

\newcommand{\dd}{\mathrm{d}}
\newcommand{\ii}{\mathrm{i}}

\DeclareMathOperator{\im}{im}

\newcommand{\dn}{\mathrm{d}_{\nabla}}

\DeclareMathOperator{\Aut}{Aut}
\DeclareMathOperator{\Auteq}{Auteq}
\DeclareMathOperator{\Autex}{Auteq^{ex}}
\DeclareMathOperator{\auteq}{\mathfrak{auteq}}
\DeclareMathOperator{\autex}{{\mathfrak{h}}_{ex}}
\DeclareMathOperator{\Der}{Der}
\DeclareMathOperator{\End}{End}
\DeclareMathOperator{\ad}{ad}

\newcommand{\dbar}{\bar{\partial}}

\DeclareMathOperator{\Diff}{Diff}

\DeclareMathOperator{\pr}{pr}

\DeclareMathOperator{\Hproj}{pr_{\mathcal{H}}}
\DeclareMathOperator{\G}{G}
\DeclareMathOperator{\Q}{Q}
\DeclareMathOperator{\QdR}{Q_{dR}}
\DeclareMathOperator{\HprojL}{pr_{\mathcal{H}_L}}

\renewcommand{\O}{\operatorname{O}}

\makeatletter
\g@addto@macro\bfseries{\boldmath}
\makeatother

\newtheorem{theorem}{Theorem}[section]
\newtheorem*{theorem*}{Theorem}
\newtheorem{corollary}[theorem]{Corollary}
\newtheorem{lemma}[theorem]{Lemma}
\newtheorem{proposition}[theorem]{Proposition}

\theoremstyle{definition}
\newtheorem{definition}[theorem]{Definition}

\newtheorem{remark}[theorem]{Remark}

\addto\captionsUSenglish{}

\title{Deformations of generalized complex structures on $\mathcal{G}$-flat transitive Courant algebroids}
\author{ Vicente Cortés and Paula Naomi Pilatus}
\date{}

\begin{document}
\newgeometry{inner=2.50cm, outer=2.50cm, top=2.50cm, bottom=3.50cm}

\maketitle
\begin{abstract}
We extend Gualtieri's deformation theorem for generalized complex structures on exact Courant algebroids to the more general class of $\mathcal{G}$-flat transitive Courant algebroids.  Our approach relies on the framework of tame Fréchet spaces and Hamilton's Nash--Moser (implicit function) theorem. As an important step of the proof of our deformation theorem, we endow the group of autoequivalences of a $\mathcal{G}$-flat transitive Courant algebroid with a tame Fréchet Lie group structure and show that there exists a tame subgroup of exact autoequivalences, whose infinitesimal action is by inner derivations.

         \medskip\noindent
{\it MSC classification:} 53D18 (Generalized geometry a la Hitchin), 58D27 (Moduli problems
for differential geometric structures)

\medskip\noindent
{\it Key words:} generalized complex structures, deformation theory, transitive Courant algebroids, Nash-Moser theory 
\end{abstract}
\tableofcontents 
\section{Introduction}

\emph{Generalized complex structures} are geometric structures that encompass both complex and symplectic structures as special cases. They were first introduced by Hitchin in~\cite{Hitchin2003} and developed further by Gualtieri in~\cite{Gualtieri:2003dx,gualtieri2011}. Generalized complex structures have applications to physics: they appear naturally in the context of supersymmetric non-linear $\sigma$-models~\cite{Gates1984,Gualtieri2014} and play a role in the context of T-duality in string theory~\cite{cavalcanti2010generalized,Severa2015,Cavalcanti2026,Corts2023}. 

While classical complex structures can be considered as endomorphisms on the tangent bundle of a manifold $M$, generalized complex structures are endomorphisms on the generalized tangent bundle $\mathbb{T}M=TM\oplus T^*M$ or, more generally, on \emph{Courant algebroids}. The latter are vector bundles equipped with the data of a scalar product, a bracket on the sections and a vector bundle morphism to the tangent bundle, called \emph{anchor map}, satisfying compatibility conditions.

In this work, we are concerned with deformations of generalized complex structures. These are relevant to moduli problems in supergravity theories (see e.g.~\cite{GRANA2004979,KAPUSTIN2004,Butti2008}). The deformation theory of generalized complex structures on \emph{exact} Courant algebroids, i.e.\ those isomorphic to a (twisted) generalized tangent bundle, has been developed by Gualtieri in the seminal work~\cite{Gualtieri:2003dx,gualtieri2011}. Whereas deformations of (classical) complex structures are studied up to the action of the diffeomorphism group, deformations of \textit{generalized} complex structures must be studied up to so-called \emph{Courant autoequivalences}: vector bundle equivalences that preserve the Courant algebroid structure. Within this setting, Gualtieri established the existence of a locally complete family of generalized complex deformations. His approach was inspired by Kuranishi's simplified proof in~\cite{kuranishi64} of the existence of a locally complete family of (classical) complex deformations.  

A long-term objective (which goes beyond the scope of this paper) is to eventually extend Gualtieri's theorem to the larger class of \emph{transitive} Courant algebroids, i.e.\ Courant algebroids with a surjective anchor map. These are isomorphic to Courant algebroids with underlying vector bundle $TM\oplus\mathcal{G}\oplus T^*M$, where $\mathcal{G}$ is a bundle of quadratic Lie algebras. Transitive Courant algebroids have applications to supergravity theories, where in the presence of additional gauge fields, it is often not enough to only consider sections of the generalized tangent bundle~\cite{GarciaFernandez2014}. 

As was already pointed out in~\cite{paperoncomplexdefs}, in order to achieve such an extension, the proofs of Kuranishi~\cite{kuranishi64} and Gualtieri~\cite{Gualtieri:2003dx,gualtieri2011} need to be revisited. These proofs rely on Sobolev spaces and refer to both the implicit function theorem and the inverse function theorem on Banach spaces. However, a justification  that this implicit function theorem is indeed applicable is not provided. In addition, while Kuranishi works  with local charts of the diffeomorphism group, Gualtieri works with flows, which raises additional analytic questions.

In~\cite{paperoncomplexdefs}, we recovered Kuranishi's theorem~\cite{kuranishi64} asserting the existence of a locally complete family of complex deformations using Hamilton--Nash--Moser theory. In the present work we will use the same framework to extend Gualtieri's   deformation theorem~\cite{Gualtieri:2003dx,gualtieri2011} to the class of $\mathcal{G}$-flat transitive Courant algebroids. This is a class of transitive Courant algebroids which properly contains the exact Courant algebroids. More precisely, we prove the following theorem.
\begin{theorem*}[cf.\ Theorem~\ref{thm: deformation theorem}]
Let $\mathcal{J}$ be a generalized complex structure on a $\mathcal{G}$-flat transitive Courant algebroid $E\rightarrow M$ over a compact manifold. Let $L$ be the Lie algebroid corresponding to $\mathcal{J}$. There exists an open neighborhood $\mathcal{W}\subset H^2(M,L)$ of $0$, a family $\tilde{\mathcal{M}}=\{\varphi_t:t\in \mathcal{W}\}$ of generalized almost complex
deformations of $L$, and an analytic obstruction map 
\begin{equation*}
\phi\colon\mathcal{W}\rightarrow H^3(M,L)  \end{equation*}
such that the generalized almost complex deformations $\mathcal{M}:=\{\varphi_t\in\tilde{\mathcal{M}}:\phi(t)=0\}$ are precisely the integrable ones in $\tilde{\mathcal{M}}$. Any sufficiently small generalized complex deformation of $\mathcal J$ is equivalent to at least one member
of the family $\mathcal{M}$. In the case that the obstruction map vanishes, $\mathcal{M}$ is a smooth locally complete family of generalized complex deformations.    
\end{theorem*}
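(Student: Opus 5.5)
We follow the Kuranishi--Gualtieri scheme, replacing the Banach-space arguments by Hamilton's Nash--Moser theorem in the tame Fréchet category, as in our earlier treatment of complex deformations.

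\textbf{Deformation equation and Hodge theory.} Deformations of $\mathcal{J}$ near $L$ are described by sections $\varphi\in\Gamma(\wedge^2 L^*)$, using $L^*\cong\bar L$ via the pairing. The deformed subbundle is $L_\varphi=(1+\varphi)L$, which is automatically maximal isotropic. It is involutive if and only if the Maurer--Cartan equation
\begin{equation*}
d_L\varphi+\tfrac12[\varphi,\varphi]=0
\end{equation*}
holds. Here $d_L$ is the Lie algebroid differential and $[\cdot,\cdot]$ the Schouten-type bracket on $\Gamma(\wedge^\bullet L^*)$ induced by the Courant bracket of $E$.

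The point to check is that on a $\mathcal{G}$-flat transitive Courant algebroid the complex $(\Gamma(\wedge^\bullet L^*),d_L)$ is still elliptic. This comes from $L\oplus\bar L=E\otimes\C$ and from the anchor being surjective.

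Choose a Hermitian metric from a generalized metric compatible with $\mathcal{J}$. Then Hodge theory gives the Laplacian $\Delta_L$, the Green operator $G$, the projection $\Hproj$ onto the finite-dimensional harmonic spaces $\mathcal{H}^k\cong H^k(M,L)$, and the decomposition $1=\Hproj+\Delta_L G$.

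\textbf{Kuranishi family and obstruction.} For $t\in\mathcal{H}^2$ small, solve
\begin{equation*}
\varphi=t-\tfrac12 d_L^*G[\varphi,\varphi].
\end{equation*}
This can be done by the implicit function theorem on Hölder/Sobolev completions, since the right-hand side is analytic and the problem is elliptic with finite loss. Elliptic regularity then gives smoothness, and one obtains an analytic family $\varphi_t$.

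The standard argument then shows that $\varphi_t$ is integrable if and only if $\phi(t):=\Hproj[\varphi_t,\varphi_t]=0$. Concretely, set $\psi=d_L\varphi_t+\frac12[\varphi_t,\varphi_t]$. Using the Jacobi identity one derives $\psi=-\frac12 d_L^*G[\varphi_t,\psi]$, and a norm estimate then forces $\psi=0$. This yields $\mathcal{W}$, $\tilde{\mathcal M}$ and the obstruction map $\phi$.

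\textbf{Completeness via Nash--Moser (main obstacle).} We must show that any integrable $\varphi$ near $0$ is equivalent to some $\varphi_t$, i.e.\ that it can be moved into the Kuranishi gauge $d_L^*\varphi=0$ by an autoequivalence. Here we would invoke the earlier results of the paper:
\begin{itemize}
\item the group of autoequivalences of the $\mathcal{G}$-flat transitive Courant algebroid is a tame Fréchet Lie group;
\item it contains a tame subgroup of exact autoequivalences whose Lie algebra acts by inner derivations $\ad_v$, $v\in\Gamma(E)$.
\end{itemize}

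Consider the tame smooth map
\begin{equation*}
F(g,\varphi)=d_L^*(g\cdot\varphi),
\end{equation*}
defined on (exact autoequivalences)$\times$(a neighborhood in $\Gamma(\wedge^2L^*)$). It takes values in the image of $d_L^*$, equivalently $\Gamma(L^*)$ modulo $\ker d_L^*$.

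Since the infinitesimal action is by inner derivations, the action of a section $v$ of $L\oplus\bar L$ on $\varphi$ is $d_L v+(\text{terms linear in }\varphi)$ after projection to $\wedge^2L^*$. Hence $D_gF$ at $(g,\varphi)$ is $v\mapsto d_L^*d_L v_{\bar L}$ plus a tame perturbation.

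The hardest step is to produce a smooth tame family of right inverses of this linearization at \emph{all} nearby points, as Hamilton's theorem requires, not just at $(\id,0)$. We would try first to restrict $v$ to $(\ker d_L)^\perp$ and write the linearization as $d_L^*d_L(1+K_{g,\varphi})$, with $K_{g,\varphi}$ a tame operator depending on the base point. Then $G\,d_L^*$-type formulas give an inverse of $d_L^*d_L$, and a Neumann-series argument gives a tame family of inverses, with the estimates uniform on a neighborhood. The technical points are the transitive setting, where the $\mathcal{G}$-component enters the action, and the need for the exact subgroup's exponential-type chart, both handled by the earlier tame structure results.

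Nash--Moser then yields $g$ near $\id$ with $d_L^*(g\cdot\varphi)=0$. Since $g\cdot\varphi$ is integrable and coclosed, one checks that $g\cdot\varphi=\varphi_t$ with $t=\Hproj(g\cdot\varphi)$ by uniqueness in the fixed-point equation. Finally, if $\phi\equiv0$, then $\mathcal{M}=\tilde{\mathcal M}$ is parametrized smoothly by $\mathcal{W}$, which gives local completeness.
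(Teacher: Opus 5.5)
Your overall architecture matches the paper's. It uses ellipticity of the $\dd_L$-complex from surjectivity of the anchor, Hodge theory for metrics induced by a generalized Hermitian metric, the Kuranishi equation $\varphi+\tfrac12\Q_L[\varphi,\varphi]=t$ with $t$ harmonic, the obstruction $\HprojL[\varphi_t,\varphi_t]$ via the Jacobi identity and a smallness estimate, and Hamilton's implicit function theorem for the gauge condition $\dd_L^*(g\cdot\varphi)=0$ on the exact autoequivalences. The one genuinely different route is how you build the family: you use the Banach inverse function theorem on Sobolev completions plus elliptic regularity, while the paper applies the Nash--Moser inverse function theorem to $\sigma$. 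Yours is legitimate, since $\Q_L$ regains the derivative the bracket loses, and it gives analyticity of $t\mapsto\varphi_t$, hence of $\phi$, directly. The paper's route instead gives smooth tame dependence into the Fr\'echet space on a fixed neighbourhood, without a separate bootstrap across Sobolev levels. There are two small slips. The identity is $\psi=\Q_L[\psi,\varphi_t]$, without the factor $\tfrac12$, and it needs $\HprojL[\varphi_t,\varphi_t]=0$ as input. Also, $\dd_L^*\Gamma(M,\Lambda^2L^*)\cong\Gamma(M,L^*)/\ker\dd_L$, not modulo $\ker\dd_L^*$.

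The completeness step, which you rightly flag as the main obstacle, is only a plan. It misses the two points the paper actually has to supply, and your appeal to ``the earlier tame structure results'' covers neither.

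\emph{First point: the linearization at a general base point.} At $F_\xi\neq\id$, the derivative in chart coordinates is not $\dd_L^*\dd_L$ plus a small perturbation. It contains $c_\xi=DC_\xi(0)$, the derivative of right translation by $F_\xi^{-1}$ in the chart. This involves composition with the underlying diffeomorphism and behaves like a translation operator, so it is not close to the identity in operator norm on any Sobolev space. Hence a Neumann series for $1+K_{g,\varphi}$ with this factor absorbed into $K$ does not converge. The paper right-trivializes instead. From $\mathcal{E}(\xi+\tilde\eta,\varphi)=\mathcal{E}(C_\xi(\tilde\eta),\mathcal{E}(\xi,\varphi))$ it gets $D_\xi\mathcal{E}(\xi,\varphi)\eta=T_\psi(c_\xi\eta)$, where $\psi=\mathcal{E}(\xi,\varphi)$ and $T_\psi\zeta=\dd_L(a_\psi\zeta)+[\psi,a_\psi\zeta]-\iota_{\bar\zeta}(\dd_L\psi+\tfrac12[\psi,\psi])$ (Proposition~\ref{prop: DE}). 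The last term is not linear in $\varphi$ and must be kept, because $\varphi$ ranges over an open set of possibly non-integrable deformations. The paper then inverts $\Q_LT_\psi$ on $\dd_L^*\Gamma(M,\Lambda^2L^*)$ via Lemma~\ref{lem: aux lemma for invertibility of DA} and Proposition~\ref{prop: inverse family from elliptic}. Unlike a Neumann series at one Sobolev level, this produces a tame family in all norms on a fixed neighbourhood. Only then does it apply $c_\xi^{-1}$ (Lemma~\ref{lem: properties b and c}).

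\emph{Second point: the domain of $v$.} ``Restrict $v$ to $(\ker\dd_L)^\perp$'' presupposes that $\dd_L^*\Gamma(M,\Lambda^2L^*)$ lies in $\Psi^{-1}\autex(E)$, the model space of the chart of $\Autex(E)$. Otherwise your right inverse does not take values in the Lie algebra of the group you act with. This issue has no analogue in the complex case, and the paper settles it in Lemma~\ref{lem: autex contains coexact}. There, $\Psi^{-1}\ker\ad\subset\ker\dd_L$ because $\dd_L\xi(u,v)=-\langle[\xi+\bar\xi,u],v\rangle$. With the bisection and $L^2$-metrics chosen from a generalized Hermitian metric (Lemma~\ref{lem: block form}), $\Psi^{-1}$ is a homothety. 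So $\autex(E)\cong(\ker\ad)^\perp$ corresponds to the orthogonal complement of $\Psi^{-1}\ker\ad$, which therefore contains $(\ker\dd_L)^\perp=\dd_L^*\Gamma(M,\Lambda^2L^*)$.
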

The proof deviates in several interesting ways from the proof of the corresponding theorem for classical complex structures  in~\cite{paperoncomplexdefs} (and~\cite{kuranishi64}).

First of all, the proof requires that the group of autoequivalences of a $\mathcal{G}$-flat transitive Courant algebroid is endowed with an infinite-dimensional Lie group structure, more specifically, a tame (Fréchet) Lie group structure. While it has been long established that the diffeomorphism group is a tame Lie group \cite{hamilton82}, as part of the proof of our theorem we need to establish such a Lie group structure for the group of autoequivalences of a $\mathcal{G}$-flat transitive Courant algebroid. 

Moreover, like Gualtieri did in~\cite{Gualtieri:2003dx,gualtieri2011}, to prove local completeness, we will restrict to the action of the subgroup of \emph{exact} autoequivalences, i.e.\ autoequivalences whose infinitesimal action is by the Dorfman bracket of the Courant algebroid. This turns out to be sufficient and has the advantage that the setup resembles the classical complex case, where the infinitesimal action of the diffeomorphism group is by the Lie bracket of vector fields. Therefore, as part of the proof we show that the Lie algebra of inner derivations of the Courant algebroid indeed integrates to a (tame) Lie subgroup. The statement that the (exact) autoequivalences of a $\mathcal{G}$-flat transitive Courant algebroid constitute a tame Lie group is of independent interest and relevant to other moduli problems in generalized geometry.

We would like to point out that it was already shown by Rubio and Tipler in~\cite{rubio20} that the group of autoequivalences of \textit{exact} Courant algebroids has a strong ILH Lie group structure, which is another notion of infinite-dimensional Lie group structure.  Moreover, it is shown in~\cite{rubio20} that the exact autoequivalences of an exact Courant algebroid indeed form a (strong ILH) Lie subgroup and some of our results build on this fact. 

Another subtlety which appears is the following. On a complex manifold, the Lie algebra of the diffeomorphism group may be identified with the space $\Gamma(M,T^{1,0}M)=\mathcal{A}^0(M,T^{1,0}M)$. In a similar way, the Lie algebra of exact autoequivalences of a ($\mathcal{G}$-flat) Courant algebroid with a generalized complex structure may be identified with \emph{a subspace of} $\Gamma(M,L^*)$. To prove local completeness of the deformation family, like in the complex case, we use Hodge theory to show the existence of a family of right inverses to a certain family of linear maps. For this, we need to ensure that the $\dd_L^*$-exact elements of $\Gamma(M,L^*)$ are contained in that subspace. This problem does not arise in the complex case where the entire space $\mathcal{A}^0(M,T^{1,0}M)$ trivially contains the $\dbar^*$-exact elements. 

At the end of the paper we will also explicitly describe deformations of a class of  examples of generalized complex structures on $\mathcal{G}$-flat Courant algebroids. The structures considered are essentially direct sums of ordinary complex structures on the base manifold and complex structures on the fiber of the quadratic Lie algebra bundle. We describe the space of infinitesimal deformations and show that compared to the exact case three new types of infinitesimal deformations occur.

Finally, we want to emphasize that the core of our proof of the main theorem only relies on the fact that the Lie algebroid complex associated with the generalized complex structure is exact (which we prove to hold true for every generalized complex structure on a transitive Courant algebroid) as well as on the fact that the exact autoequivalences (as well as all the autoequivalences) of a $\mathcal{G}$-flat  transitive Courant algebroid form a tame Lie group acting smooth tamely on the sections of the Courant algebroid. It does \emph{not} depend on the specific formulas obtained for the $\mathcal{G}$-flat transitive Courant algebroids. We conjecture that the above-stated facts are true for the group of (exact) autoequivalences on general transitive Courant algebroids. Therefore, we believe that the present paper can serve as the foundation to the extension of Gualtieri's theorem to a theorem on the existence of a locally complete family of generalized complex deformations on transitive Courant algebroids. 

\textbf{This paper is structured as follows.} We begin by introducing some background on (transitive) Courant algebroids and generalized complex structures in Section~\ref{sect: prelim on gc}. To every generalized complex structure one can naturally associate a Lie (bi-)algebroid. We show that for generalized complex structures on transitive Courant algebroids the corresponding Lie algebroid complex is elliptic. Moreover, we discuss some identities for the Schouten bracket and the Lie algebroid differential, as well as some facts from Hodge theory, that are required. 

In Section~\ref{sect: all autoeq}, we endow the group of autoequivalences of a $\mathcal{G}$-flat transitive Courant algebroid with a tame Lie group structure and show that there is a tame Lie subgroup (the group of \emph{exact} autoequivalences) whose Lie algebra is that of inner derivations. Moreover, in Section~\ref{sect: auteq acting on gc defs} we show that the action of the groups of autoequivalences and exact autoequivalences on generalized (almost) complex structures on a $\mathcal{G}$-flat transitive Courant algebroid are smooth tame. We also compute a partial derivative of the latter action, which is essential for the proof of local completeness of the deformation family. 

Then  we prove our main result, Theorem~\ref{thm: deformation theorem} stated above, in Section~\ref{defothm:sec}, using both the Nash--Moser inverse function theorem and Hamilton's Nash--Moser implicit function theorem. 

Finally, in Section~\ref{example of deformation}, we will explicitly describe deformations of the aforementioned class of generalized complex structures of ``product type.''  

The appendix includes (in Section~\ref{appendix proof of lemma}) proofs of the identities for generalized complex structures stated in Section~\ref{sect: prelim on gc}.  Furthermore (in Section~\ref{appendix HNM}) we review some notions and results from Hamilton--Nash--Moser theory that are used in this article and which have not been included in our  previous paper~\cite{paperoncomplexdefs}.

\subsection*{Acknowledgments}
We thank Jan Heck for early collaboration. Moreover, we are grateful to Jan Heck and Aldo Witte for valuable comments on the paper. The authors have received funding from the Deutsche Forschungsgemeinschaft 
(DFG, German Research Foundation) under Germany's Excellence Strategy, EXC 2121 ``Quantum Universe'', 390833306 and under -- SFB-Gesch\"aftszeichen 1624 -- Projektnummer 506632645.

\section{Generalized complex structures on Courant algebroids}\label{sect: prelim on gc}
In this section we give a first overview of the geometric objects studied in our paper, namely generalized complex structures on Courant algebroids. Any such structure gives rise to a pair of Lie algebroids and corresponding differential graded algebras. 
Our focus is on deformations of generalized complex structures (and smooth families of deformations). These are described by solutions of a Maurer--Cartan equation, which makes use of the pair of differential graded algebras associated with the generalized complex structure being deformed.
We show that the Lie algebroid complexes are elliptic if the Courant algebroid is transitive. Thus the Lie
algebroid cohomology, and in particular the space of infinitesimal deformations, is finite-dimensional if the base manifold is compact. Finally, we collect some basic Hodge-theoretic relations to be used later for the study of families of deformations and their local completeness.   

\subsection{Courant algebroids}
\begin{definition}
\label{def: courant algebroid}
    A \emph{Courant algebroid} on a manifold $M$ is a vector bundle $E \rightarrow M$ equipped with a non-degenerate symmetric bilinear form $\langle\cdot, \cdot\rangle \in$ $\Gamma\left(\operatorname{Sym}^2\left(E^*\right)\right)$ (called the \emph{scalar product}), a bilinear operation $[\cdot, \cdot]$ (called the \emph{Dorfman bracket}) on the space of smooth sections $\Gamma(E)$ of $E$ and a homomorphism of vector bundles $\pi: E \rightarrow T M$ (called the \emph{anchor}), such that the following conditions are satisfied: for all sections $u, v, w \in \Gamma(E)$,
    \begin{enumerate}
        \item $[u,[v, w]]=[[u, v], w]+[v,[u, w]]$,
        \item $\pi(u)\langle v, w\rangle=\langle[u, v], w\rangle+\langle v,[u, w]\rangle$,
        \item $\langle [u,v]+[v,u],w\rangle=\pi(w)\langle u,v\rangle$.
    \end{enumerate}
\end{definition}
The following consequences are well-known.
\begin{proposition}\label{prop: extra properties of dorfman bracket}
    The bracket and anchor of a Courant algebroid $E\rightarrow M$ satisfy the following:
    \begin{enumerate}
        \item $[u,fv]=\pi(u)(f)v+f[u,v]$,
        \item $\pi[u,v]=\mathcal{L}_{\pi u}(\pi v)$,
    \end{enumerate}
    for all $u,v\in\Gamma(E)$, $f\in C^{\infty}(M)$.
\end{proposition}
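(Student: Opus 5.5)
The plan is to derive both identities from the three axioms, using only the non-degeneracy of $\langle\cdot,\cdot\rangle$. The Leibniz rule (1) comes first, by pairing with an arbitrary test section. Then (2) follows by applying (1) inside the Jacobi identity (axiom 1), so that the anchor appears as the coefficient of a derivative of a function.

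\textbf{Step 1 (Leibniz rule).} Fix $u,v,w\in\Gamma(E)$ and $f\in C^\infty(M)$. Apply axiom 2 to the pair $(fv,w)$:
\[
\pi(u)\bigl(f\langle v,w\rangle\bigr)=\langle [u,fv],w\rangle+f\langle v,[u,w]\rangle .
\]
Expand the left side as $\pi(u)(f)\langle v,w\rangle+f\,\pi(u)\langle v,w\rangle$. Then use axiom 2 again, now for the pair $(v,w)$, to replace $f\,\pi(u)\langle v,w\rangle$. Cancelling the terms $f\langle v,[u,w]\rangle$ gives
\[
\langle [u,fv]-\pi(u)(f)v-f[u,v],\,w\rangle=0
\]
for all $w$. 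Non-degeneracy of the scalar product then yields (1).

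\textbf{Step 2 (anchor is a bracket homomorphism).} Apply the Jacobi identity (axiom 1) to $u,v,fw$, and expand each side with Step 1. On the left,
\[
[u,[v,fw]]=[u,\pi(v)(f)w+f[v,w]] ,
\]
which expands to
\[
\pi(u)\pi(v)(f)\,w+\pi(v)(f)[u,w]+\pi(u)(f)[v,w]+f[u,[v,w]] .
\]
On the right there are two terms. The first is
\[
[[u,v],fw]=\pi([u,v])(f)\,w+f[[u,v],w] .
\]
The second is
\[
[v,[u,fw]]=\pi(v)\pi(u)(f)\,w+\pi(u)(f)[v,w]+\pi(v)(f)[u,w]+f[v,[u,w]] .
\]
The terms multiplied by $f$ cancel by axiom 1 applied to $u,v,w$, and the mixed first-derivative terms cancel in pairs. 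What remains is
\[
\bigl(\pi(u)\pi(v)-\pi(v)\pi(u)-\pi([u,v])\bigr)(f)\,w=0 .
\]
Hence $\pi[u,v]=[\pi u,\pi v]=\mathcal{L}_{\pi u}(\pi v)$ as derivations of $C^\infty(M)$, and therefore as vector fields.

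\textbf{Main obstacle.} Step 2 needs some $w$ that is nonzero at the point under consideration. If $E$ has positive rank, such a local section always exists, and a bump-function argument (or working pointwise with local sections) makes the conclusion valid everywhere. If $E$ has rank zero, the anchor vanishes and both sides of (2) are zero. The remaining care is purely bookkeeping: keeping track of which derivative terms cancel pairwise in Step 2, where a sign or a swapped $u\leftrightarrow v$ is the likeliest slip.
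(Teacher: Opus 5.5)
Your argument is correct. Identity (1) follows from axiom 2 together with non-degeneracy of $\langle\cdot,\cdot\rangle$. Identity (2) follows from the Jacobi identity applied to $u,v,fw$ combined with (1), and you handle the choice of a section $w$ that is non-vanishing at a given point properly. The paper gives no proof of its own here, only recording these identities as well-known consequences of the axioms, and your derivation is the standard argument that remark relies on.
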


\begin{definition}
    A Courant algebroid $E$ is called 
    \begin{enumerate}
        \item \emph{transitive} if the anchor map $\pi\colon E\rightarrow TM$ is surjective;
        \item \emph{exact} if the following sequence is exact $$0\rightarrow TM^*\xrightarrow{\pi^*}E \xrightarrow{\pi}TM\rightarrow 0.$$
    \end{enumerate}
\end{definition}

\begin{definition}
\label{def: isom}
    A \emph{(Courant) equivalence} between Courant algebroids $(E_1,\pi_1$,  $[\cdot,\cdot]_1,\langle\cdot,\cdot\rangle_1)$ and $(E_2,\pi_2,[\cdot,\cdot]_2,\langle\cdot,\cdot\rangle_2)$ over the same base manifold $M$ is an equivalence of vector bundles $F\colon E_1\to E_2$ covering a diffeomorphism $f$ such that for all $u,v\in \Gamma(M,E_1)$
    \begin{align*}
        &\dd f\circ\pi_1=\pi_2\circ F\\
        &\langle u,v\rangle_1\circ f=\langle F(u),F(v)\rangle_2\\
        &F([u,v]_1)=[F(u),F(v)]_2.
    \end{align*}
A \emph{(Courant) autoequivalence} is an equivalence from a Courant algebroid to itself. A \emph{(Courant) automorphism} is an autoequivalence covering the identity.
\end{definition}

For Courant autoequivalences, the first property in Definition~\ref{def: isom} follows from the second and third.

\begin{definition}
    Let $E$ be a Courant algebroid. A \emph{derivation} of $E$ covering $X\in\X(M)$ is an endomorphism $\mathcal{D}\in \End(\Gamma(M,E))$ such that
    \begin{align*}
    \mathcal{D}(fu)&=f\mathcal{D} u+(\mathcal{L}_X f)u,\\
      X \langle\cdot,\cdot\rangle&=\langle \mathcal{D}\cdot,\cdot\rangle+\langle\cdot,\mathcal{D}\cdot\rangle\\
        \mathcal{D}[\cdot,\cdot]&=[\mathcal{D}\cdot,\cdot]+[\cdot,\mathcal{D}\cdot].
    \end{align*}
\end{definition}
Particular examples of derivations are the maps $\ad(u):=[u,\cdot]$ covering $\pi(u)$ for some $u\in\Gamma(M,E)$. 
The derivations of a Courant algebroid form a Lie subalgebra of $\mathrm{End}\, \Gamma (E)$, which we denote by $\Der(E)$. If $\mathcal{D}_1$ is a derivation covering $X_1$ and $\mathcal{D}_2$ is a derivation covering $X_2$, then $[\mathcal{D}_1,\mathcal{D}_2]$ covers $\mathcal{L}_{X_1}X_2$. The subalgebra $\mathrm{ad}\, \Gamma (E)\subset \Der(E)$ is called the Lie algebra of \emph{inner derivations}.

\subsubsection{Bisections of transitive Courant algebroids}
It was shown in~\cite{chen2013} that every transitive Courant algebroid is isomorphic to a \emph{standard transitive Courant algebroid}. Before we state this, we will introduce some notation.

Let $(\mathcal{G}\to M,\langle\cdot,\cdot\rangle_{\mathcal{G}},[\cdot,\cdot]_{\mathcal{G}})$ be a bundle of quadratic Lie algebras, i.e.\ a vector bundle $\mathcal{G}\to M$ with a scalar product $\langle\cdot,\cdot\rangle_{\mathcal{G}}$ and a Lie bracket $[\cdot,\cdot]_{\mathcal{G}}$ on each fiber such that for every $p\in M$ and $X,Y,Z\in \mathcal{G}_p$ we have
$$\langle [X,Y]_{\mathcal{G}},Z\rangle_{\mathcal{G}}+\langle Y, [X,Z]_{\mathcal{G}}\rangle_{\mathcal{G}}=0.$$

We denote by $\O(\mathcal{G})$ the group of orthogonal equivalences of $\mathcal{G}$, i.e.\ vector bundle morphisms covering a diffeomorphism, which preserve the scalar product of $\mathcal{G}$. Moreover, we denote by $\Auteq(\mathcal{G})$ the group of \emph{autoequivalences} of $\mathcal{G}$, i.e.\ vector bundle morphisms covering a diffeomorphism, which preserve the Lie bracket and scalar product of $\mathcal{G}$. We denote by $\Aut(\mathcal{G})$ the group of \emph{automorphisms}, i.e.\ autoequivalences covering the identity.

Moreover, we denote by $\Omega^p(M,\mathcal{G})$ the $p$-forms with values in $\mathcal{G}$. We define the map
$$
    \langle\cdot\wedge\cdot\rangle_{\mathcal{G}}\colon \Omega^p(M,\mathcal{G})\times \Omega^q(M,\mathcal{G})\longrightarrow \Omega^{p+q}(M)
$$
via
$$\langle \mathbf{A}\wedge \mathbf{B}\rangle_{\mathcal{G}}(X_1,\dots,X_{p+q})=\frac{1}{p!q!}\sum_{\sigma\in S_{p+q}}(-1)^{|\sigma|}\langle \mathbf{A}(X_{\sigma(1)},\dots,X_{\sigma(p)}),\mathbf{B}(X_{\sigma(p+1)},\dots,X_{\sigma(p+q)})\rangle_{\mathcal{G}}
$$
and the map
$$
[\cdot,\cdot]_{\mathcal{G}}\colon \Omega^p(M,\mathcal{G})\times \Omega^q(M,\mathcal{G})\longrightarrow\Omega^{p+q}(M,\mathcal{G})
$$
via
$$[\mathbf{A}, \mathbf{B}]_{\mathcal{G}}(X_1,\dots,X_{p+q})=\frac{1}{p!q!}\sum_{\sigma\in S_{p+q}}(-1)^{|\sigma|}[\mathbf{A}(X_{\sigma(1)},\dots,X_{\sigma(p)}),\mathbf{B}(X_{\sigma(p+1)},\dots,X_{\sigma(p+q)})]_{\mathcal{G}}.
$$
For $\mathbf{A}\in \Omega^p(M,\mathcal{G})$ we furthermore define $\ad_{\mathbf{A}}\in \Omega^p(M,\End \mathcal{G})$ via 
$$\ad_{\mathbf{A}}(X_1,\dots,X_p)(\mathbf{r})=[\mathbf{A}(X_1,\dots,X_p),\mathbf{r}]_{\mathcal{G}}.$$

An autoequivalence $\nu\in \Auteq(\mathcal{G})$ covering $f\in \Diff(M)$ acts on the sections $\Gamma(M,\mathcal{G})$ via $\nu\cdot \mathbf{r}=\nu\circ \mathbf{r}\circ f^{-1}$ and on forms $\mathbf{A}\in\Omega^k(M,\mathcal{G})$ via
$$ (\nu\cdot \mathbf{A})(X_1,\dots,X_k)=\nu\left(\mathbf{A}( df^{-1}X_1,\dots,df^{-1}X_k)\right).$$
Furthermore, given a connection $\nabla$ on $\mathcal{G}$, the autoequivalence $\nu$ acts as
$$(\nu\cdot\nabla)_X r=\nu\cdot\nabla_{f_*^{-1}X}(\nu^{-1}\cdot \mathbf{r}), $$
where $f_*X=df\circ X\circ f^{-1}$ denotes the push-forward.

\begin{definition}
\label{def: standard regular}
A \emph{standard transitive Courant algebroid} $S[\nabla,\mathbf{R},H]$ over a manifold $M$ is a transitive Courant algebroid whose underlying vector bundle is $TM\oplus\mathcal{G}\oplus T^*M$ for some bundle of quadratic Lie algebras $\left(\mathcal{G},[\cdot,\cdot]_{\mathcal{G}},\langle\cdot,\cdot\rangle_{\mathcal{G}}\right)$ with anchor map the projection to $TM$, inner product
\begin{align*}
    \langle X +\mathbf{r}+\xi, Y +\mathbf{s} + \eta\rangle=\tfrac{1}{2}(\xi(Y)+\eta(X))+\langle \mathbf{r},\mathbf{s}\rangle_{\mathcal{G}}
\end{align*}
and Dorfman bracket depending on $H \in \Omega^3(M), \mathbf{R} \in \Omega^2(M, \mathcal{G})$ and some connection $\nabla$ on $\mathcal{G}$ satisfying the conditions
\begin{align*}
& X\langle \mathbf{r}, \mathbf{s}\rangle_{\mathcal{G}}=\left\langle\nabla_X \mathbf{r}, \mathbf{s}\right\rangle_{\mathcal{G}}+\left\langle \mathbf{r}, \nabla_X \mathbf{s}\right\rangle_{\mathcal{G}} \\
& \nabla_X[\mathbf{r}, \mathbf{s}]_{\mathcal{G}}=\left[\nabla_X \mathbf{r}, \mathbf{s}\right]_{\mathcal{G}}+\left[\mathbf{r}, \nabla_X \mathbf{s}\right]_{\mathcal{G}} \\
& \dd_{\nabla} \mathbf{R}=0 \\
& \dd_{\nabla}^2=\mathrm{ad}_\mathbf{R} \\
& \dd H=\tfrac{1}{2}\langle \mathbf{R} \wedge \mathbf{R}\rangle_{\mathcal{G}}.
\end{align*}
 The various components of the Dorfman bracket are given by
\begin{align*}
    \begin{aligned}
& {[X, Y]=i_Y i_X H + \mathbf{R}(X, Y) + \mathcal{L}_X Y} \\
& {[X, \mathbf{r}]=-[\mathbf{r}, X]=-2\langle \mathbf{r}, \iota_X \mathbf{R}\rangle_{\mathcal{G}} + \nabla_X \mathbf{r}} \\
& {[X, \xi]=\mathcal{L}_X \xi} \\
& {[\xi, X]=-i_X \dd\xi}\\
& {[\mathbf{r}, \mathbf{s}]=2 \left\langle \mathbf{s}, \nabla \mathbf{r}\right\rangle_{\mathcal{G}} +[\mathbf{r}, \mathbf{s}]_{\mathcal{G}}} \\
& {[\xi, \mathbf{r}]=[\mathbf{r}, \xi]=[\xi, \eta]=0}
\end{aligned}
\end{align*}
for $\mathbf{r},\mathbf{s}\in\Gamma(\mathcal{G})$, $X,Y\in \Gamma(TM)$ and $\xi,\eta\in \Gamma(T^*M)$.

\end{definition}

\begin{definition}
 A \emph{bisection} of a Courant algebroid $E$ is a Courant isomorphism from $E$ to a standard transitive Courant algebroid.
\end{definition}

\begin{theorem}[\cite{chen2013}]
\label{thm: trans standard}
    Every transitive Courant algebroid admits a bisection. 
\end{theorem}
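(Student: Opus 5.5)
Given a transitive Courant algebroid $(E,\pi,[\cdot,\cdot],\langle\cdot,\cdot\rangle)$ over $M$, I will construct the splitting $E\cong TM\oplus\mathcal{G}\oplus T^*M$ by hand and then read off the data $(\nabla,\mathbf{R},H)$ from the bracket. First identify $T^*M$ with a subbundle via $\pi^*\colon T^*M\to E^*\cong E$, normalised (with the factor $\tfrac12$ matching the inner product in Definition~\ref{def: standard regular}) so that $\langle \pi^*\xi,u\rangle=\tfrac12\xi(\pi u)$. Since $\pi$ is surjective, $\pi^*$ is injective. Axiom~3 of Definition~\ref{def: courant algebroid} together with Proposition~\ref{prop: extra properties of dorfman bracket}(2) gives $\pi\circ\pi^*=0$, so $\im\pi^*\subset\ker\pi=(\im\pi^*)^\perp$ is isotropic. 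Set $\mathcal{G}:=\ker\pi/\im\pi^*$, a vector bundle of rank $\operatorname{rk}E-2\dim M$ with induced nondegenerate pairing. For $\mathbf{r},\mathbf{s}\in\Gamma(\ker\pi)$, the bracket $[\mathbf{r},\mathbf{s}]$ lies in $\ker\pi$ by Proposition~\ref{prop: extra properties of dorfman bracket}(2), and it is $C^\infty$-bilinear modulo $\im\pi^*$ by Proposition~\ref{prop: extra properties of dorfman bracket}(1) and axiom~3. Hence it descends to a fibrewise bracket on $\mathcal{G}$. This bracket is skew, because the symmetric part lies in $\im\pi^*$, and it satisfies Jacobi and ad-invariance by axioms~1 and~2, so $\mathcal{G}$ is a bundle of quadratic Lie algebras.

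Next choose splittings. Pick an isotropic splitting $\sigma\colon TM\to E$ of $\pi$: start from any splitting $\sigma_0$ and correct it by $\sigma=\sigma_0-\pi^*(\langle\sigma_0\cdot,\sigma_0\cdot\rangle)$, which is possible because the correction lies in $\im\pi^*$ and pairs with $\sigma_0$ through $\tfrac12\xi(\pi\cdot)$. Then take $\mathcal{G}$ to be realised as the orthogonal complement of $\sigma(TM)\oplus\pi^*T^*M$ inside $E$. Since $\sigma(TM)\oplus\pi^*T^*M$ is a split signature subbundle on which the form is nondegenerate, this complement is canonically isomorphic to $\ker\pi/\im\pi^*$. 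This yields a vector bundle isometry $\Phi\colon TM\oplus\mathcal{G}\oplus T^*M\to E$ with $\pi\circ\Phi=\pr_{TM}$, and the standard inner product is pulled back.

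Now define the structure data by projecting brackets:
\[
\nabla_X\mathbf{r}:=\pr_{\mathcal{G}}[\sigma X,\mathbf{r}],\qquad \mathbf{R}(X,Y):=\pr_{\mathcal{G}}[\sigma X,\sigma Y],\qquad H(X,Y,Z):=2\langle[\sigma X,\sigma Y],\sigma Z\rangle .
\]
Using Proposition~\ref{prop: extra properties of dorfman bracket} and axioms~2 and~3, one checks the following: $\nabla$ is a connection; $\mathbf{R}$ and $H$ are tensorial and skew; $H$ is totally skew because $\sigma$ is isotropic; and the bracket $[\sigma X,\sigma Y]$ has $TM$-component $\mathcal{L}_XY$ and $T^*M$-component $i_Yi_XH$. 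The remaining components of the Dorfman bracket in Definition~\ref{def: standard regular} are then forced by the Leibniz rules and axiom~3. For example, $[\xi,\cdot]$ and $[\cdot,\xi]$ are determined by $[\pi^*\xi,u]=-\pi^*i_{\pi u}\dd\xi$, which follows from axiom~2, and $[\mathbf{r},X]$ is obtained from $[X,\mathbf{r}]$ via axiom~3.

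The main obstacle is the final step: verifying the five structure equations $\nabla$ metric, $\nabla$ a derivation of $[\cdot,\cdot]_{\mathcal{G}}$, $\dd_\nabla\mathbf{R}=0$, $\dd_\nabla^2=\ad_{\mathbf{R}}$ and $\dd H=\tfrac12\langle\mathbf{R}\wedge\mathbf{R}\rangle_{\mathcal{G}}$, with the correct normalisations. The first two follow from axioms~2 and~1 applied to $(\sigma X,\mathbf{r},\mathbf{s})$. The other three come from the Jacobi identity (axiom~1) applied to triples $(\sigma X,\sigma Y,\mathbf{r})$ and $(\sigma X,\sigma Y,\sigma Z)$, projected to $\mathcal{G}$ and paired with $\sigma W$ respectively, together with the cyclic sum identities. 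Here I would follow the computation of \cite{chen2013}, keeping careful track of the factors of $2$ coming from the convention $\langle \xi,Y\rangle=\tfrac12\xi(Y)$. Once the equations hold, $\Phi^{-1}$ is a Courant isomorphism onto $S[\nabla,\mathbf{R},H]$, i.e.\ a bisection.
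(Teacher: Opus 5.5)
Your proposal is correct and follows the same route as the paper, which cites \cite{chen2013} and sketches this construction in the remark after the statement: $\mathcal{G}=\ker\pi/\im\pi^*=\ker\pi/(\ker\pi)^\perp$, an isotropic splitting of $\pi$, the unique lift of $\mathcal{G}$ into $\ker\pi$ orthogonal to that splitting, and $T^*M$ embedded via $\tfrac12\pi^*$ (you absorb the factor $\tfrac12$ into your normalisation of $\pi^*$), with the structure identities taken from \cite{chen2013}. Two small points should be made explicit: descending the bracket to $\mathcal{G}$ also needs $[\pi^*\xi,\mathbf{r}]=[\mathbf{r},\pi^*\xi]=0$ for $\mathbf{r}\in\Gamma(\ker\pi)$, and your formula $[\pi^*\xi,u]=-\pi^*i_{\pi u}\dd\xi$ uses axiom~3 as well as axiom~2.
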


\begin{remark}
    Explicitly, the isomorphism is given as follows: Let $\mathcal{G}:=\ker\pi/(\ker \pi)^{\perp}$, where the orthogonal complement is taken with respect to the scalar product of the Courant algebroid $E$. Then it turns out that $\mathcal{G}$ together with the induced scalar product and bracket is a bundle of quadratic Lie algebras. It was shown in~\cite{chen2013} that there exist
    \begin{enumerate}
        \item a vector bundle morphism $\lambda\colon TM\longrightarrow E$ such that $\pi\circ\lambda=\id$ and such that $\im\lambda$ is isotropic with respect to $\langle\cdot,\cdot\rangle$,
    \item and a vector bundle morphism $\sigma\colon \mathcal{G}\longrightarrow \ker \pi$ such that $\pr_{\mathcal{G}}\circ \sigma=\id$ such that $\im\sigma$ is orthogonal to $\im\lambda$ with respect to $\langle\cdot,\cdot\rangle$.
    \end{enumerate}
    The isomorphism is then given by
    \begin{align*}
        TM\oplus\mathcal{G}\oplus T^*M&\longrightarrow E\\
        X+\mathbf{r}+\alpha &\longmapsto \lambda(X)+\sigma(\mathbf{r})+\tfrac{1}{2}\pi^*\alpha.
    \end{align*}
\end{remark}

\begin{theorem}[\cite{chen2013}]
    Let $E$ be a transitive Courant algebroid, and let $S[\nabla,\mathbf{R},H]$ and $S[\hat{\nabla},\hat{\mathbf{R}}, \hat{H}]$ be different choices of bisections of $E$. Then there is some $\nu\in \O(\mathcal{G})$ covering the identity, $\mathbf{A}\in \Omega^1(M,\mathcal{G})$ and $B\in \Omega^2(M)$, such that the isomorphism $\delta\colon S[\nabla,\mathbf{R},H]\longrightarrow S[\hat{\nabla},\hat{\mathbf{R}}, \hat{H}]$ describing the change of bisection is given by
    \begin{align*}
        \delta=\begin{pmatrix}
            \id & 0 &0 \\
           \nu\circ \mathbf{A} &\nu & 0\\
            B-\langle \mathbf{A},\mathbf{A}\rangle_{\mathcal{G}}&-2\,\langle \mathbf{A},\cdot\rangle_{\mathcal{G}}&\id
        \end{pmatrix}.
    \end{align*}
    Moreover, the data defining the different standard transitive Courant algebroids is related by
    \begin{align*}
        \hat{\nabla}&=\nu\cdot (\nabla -\ad_\mathbf{A})\\
        \hat{\mathbf{R}}&=\nu\cdot \left(\mathbf{R}-\dd_{\nabla}\mathbf{A}+\tfrac{1}{2}\,[\mathbf{A},\mathbf{A}]_{\mathcal{G}}\right)\\
        \hat{H}&=H-\dd B-2\,\langle \mathbf{A}\wedge \mathbf{R}\rangle_{\mathcal{G}}+\langle \mathbf{A}\wedge \dd_{\nabla} \mathbf{A}\rangle_{\mathcal{G}}-\frac{1}{3}\,\langle \mathbf{A}\wedge [\mathbf{A},\mathbf{A}]_{\mathcal{G}}\rangle_{\mathcal{G}}.
    \end{align*}
\end{theorem}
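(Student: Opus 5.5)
The plan is to reduce the comparison of the two bisections to a fibrewise linear-algebra problem, and then read off the transformed data from the bracket. Write both bisections as Courant isomorphisms $\Phi\colon S[\nabla,\mathbf{R},H]\to E$ and $\hat\Phi\colon S[\hat\nabla,\hat{\mathbf{R}},\hat H]\to E$, constructed from pairs $(\lambda,\sigma)$ and $(\hat\lambda,\hat\sigma)$ as in the preceding remark. Then $\delta:=\hat\Phi^{-1}\circ\Phi$ is a Courant isomorphism between the two standard algebroids covering the identity. I will first determine the shape of $\delta$ as a bundle map and only afterwards use bracket compatibility.

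For the bundle map, I use only anchor and pairing preservation, together with the canonical structure of $E$.
\begin{enumerate}
\item Since $\hat\pi\circ\delta=\pi$, the $TM$-row of $\delta$ is $(\id,0,0)$.
\item Both bisections send $\tfrac12\pi^*\alpha$ to the same element, because $\pi^*$ is intrinsic. Hence $\delta$ fixes $T^*M$, which gives the $T^*M$-column $(0,0,\id)^T$.
\item Both bisections induce the same map onto $\mathcal{G}=\ker\pi/(\ker\pi)^\perp$. Therefore the $\mathcal{G}$-block of $\delta$ on $\mathcal{G}$ is some $\nu$ preserving $\langle\cdot,\cdot\rangle_{\mathcal{G}}$. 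In general this $\nu$ is just the difference between the two identifications, so $\nu\in\O(\mathcal{G})$ covering the identity.
\end{enumerate}
The remaining entries are free a priori. Write the $\mathcal{G}$-component of $\delta(X)$ as $\nu(\mathbf{A}(X))$, which defines $\mathbf{A}\in\Omega^1(M,\mathcal{G})$. Write the $T^*M$-components of $\delta(X)$ and $\delta(\mathbf{r})$ as $C(X)$ and $\beta(\mathbf{r})$.

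Next I impose orthogonality.
\begin{itemize}
\item The condition $\langle\delta\mathbf{r},\delta Y\rangle=0$ gives $\tfrac12\beta(\mathbf{r})(Y)+\langle\nu\mathbf{r},\nu\mathbf{A}(Y)\rangle=0$. Hence $\beta=-2\langle\mathbf{A},\cdot\rangle_{\mathcal{G}}$.
\item Isotropy of $\delta(TM)$ gives $\tfrac12(C(X)(Y)+C(Y)(X))+\langle\mathbf{A}X,\mathbf{A}Y\rangle=0$. So the symmetric part of $C$ is $-\langle\mathbf{A},\mathbf{A}\rangle_{\mathcal{G}}$, and I set $B:=C+\langle\mathbf{A},\mathbf{A}\rangle_{\mathcal{G}}$, which is skew, i.e. $B\in\Omega^2(M)$.
\end{itemize}
This reproduces the stated matrix. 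Conversely, one checks directly that any such matrix is orthogonal.

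The second half derives the transformation laws from $\delta[u,v]=[\delta u,\delta v]^{\wedge}$, comparing components on generators. It is convenient to factor $\delta=\delta_\nu\circ\delta_{\mathbf{A}}\circ\delta_B$, each factor being a Courant isomorphism onto an intermediate standard algebroid, and to treat the factors one at a time.
\begin{itemize}
\item \textbf{$B$-transform.} Comparing $[X,Y]$ gives $H\mapsto H-\dd B$, with everything else unchanged.
\item \textbf{$\nu$-factor.} Here $\nu$ covers the identity and need not preserve brackets a priori, but $\delta$ sends $[\mathbf{r},\mathbf{s}]$ to $[\nu\mathbf{r},\nu\mathbf{s}]$. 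So $\nu$ intertwines the brackets, and all data is transported by $\nu\cdot$.
\item \textbf{$\mathbf{A}$-factor.} This is the substantive computation.
\begin{itemize}
\item Comparing the $\mathcal{G}$-component of $[X,\mathbf{r}]$ gives $\hat\nabla=\nabla-\ad_{\mathbf{A}}$.
\item Comparing the $\mathcal{G}$-component of $[X,Y]$ gives $\hat{\mathbf{R}}(X,Y)$. It equals $\mathbf{R}(X,Y)+\mathbf{A}[X,Y]$ minus $\hat\nabla_X\mathbf{A}Y-\hat\nabla_Y\mathbf{A}X$ and plus bracket terms, which combine to $\mathbf{R}-\dd_\nabla\mathbf{A}+\tfrac12[\mathbf{A},\mathbf{A}]_{\mathcal{G}}$.
\item Comparing the $T^*M$-component of $[X,Y]$ gives the formula for $\hat H$, including the Chern--Simons-type terms $\langle\mathbf{A}\wedge\dd_\nabla\mathbf{A}\rangle$ and $-\tfrac13\langle\mathbf{A}\wedge[\mathbf{A},\mathbf{A}]\rangle$.
\end{itemize}
\end{itemize}

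The main obstacle is the $T^*M$-component computation for $\hat H$. It requires careful bookkeeping of the factor $\tfrac12$ in the pairing, of $\langle\mathbf{A},\mathbf{A}\rangle$ contributions from $[\mathbf{r},\mathbf{s}]=2\langle\mathbf{s},\nabla\mathbf{r}\rangle+\cdots$, and of the normalization conventions in $\langle\cdot\wedge\cdot\rangle_{\mathcal{G}}$. I would handle it by evaluating on $X,Y$ and pairing with $Z$, i.e. computing $\hat H(X,Y,Z)$ through $2\langle[\delta X,\delta Y]^\wedge,\delta Z\rangle$, and using ad-invariance to collect the cubic term.

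As a consistency check, I would verify that the resulting $(\hat\nabla,\hat{\mathbf{R}},\hat H)$ satisfies $\dd\hat H=\tfrac12\langle\hat{\mathbf{R}}\wedge\hat{\mathbf{R}}\rangle$, as it must.
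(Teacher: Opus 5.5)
The paper does not prove this statement; it quotes it from \cite{chen2013}, so there is no internal argument to compare with. Your proposal is a correct proof along the standard route. Anchor preservation, together with the fact that every bisection sends $\alpha$ to $\tfrac12\pi^*\alpha$, fixes the $TM$-row and the $T^*M$-column. Pairing preservation then forces the remaining entries into the stated form. The data $(\hat\nabla,\hat{\mathbf{R}},\hat H)$ can be read off from the $\mathcal{G}$-component of $[X,\mathbf{r}]$ and the $\mathcal{G}$- and $T^*M$-components of $[X,Y]$; this suffices because the target is assumed standard. I checked the signs. The $B$-step gives $H\mapsto H-\dd B$. The $\mathbf{A}$-step gives $\hat{\mathbf{R}}(X,Y)=\mathbf{R}(X,Y)-\dd_\nabla\mathbf{A}(X,Y)+[\mathbf{A}X,\mathbf{A}Y]_{\mathcal{G}}$, which equals $\mathbf{R}-\dd_\nabla\mathbf{A}+\tfrac12[\mathbf{A},\mathbf{A}]_{\mathcal{G}}$ in the paper's normalization.

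Three places deserve a sentence more.

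(i) In step 3, if both bisections come from pairs $(\lambda,\sigma)$ and $(\hat\lambda,\hat\sigma)$ as in the Remark, then $\sigma$ and $\hat\sigma$ are sections of the same projection onto $\ker\pi/(\ker\pi)^\perp$. In that case you get $\nu=\id$. A nontrivial $\nu$ only arises when the model $\mathcal{G}$ is identified with that quotient by a different isometry. Either way, orthogonality of $\nu$ follows directly from $\langle\delta\mathbf{r},\delta\mathbf{s}\rangle=\langle\mathbf{r},\mathbf{s}\rangle$, because the $T^*M$-parts pair trivially.

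(ii) For the factorization, do not simply assert that each factor lands in a standard algebroid. Instead, check directly that $F_B\colon S[\nabla,\mathbf{R},H]\to S[\nabla,\mathbf{R},H-\dd B]$ and $F_\nu$ are Courant isomorphisms onto explicitly given standard algebroids. For $F_\nu$ this uses that $\nu$ intertwines the $\mathcal{G}$-brackets, which you correctly derive from $\delta[\mathbf{r},\mathbf{s}]$. Then compare components only for $\delta_{\mathbf{A}}=\delta_\nu^{-1}\circ\delta\circ\delta_B^{-1}$, whose target is then known to be standard.

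(iii) For $\hat H$, note that $2\langle[\delta X,\delta Y]^{\wedge},\delta Z\rangle=H(X,Y,Z)$ simply because $\delta$ is Courant. The actual computation is therefore the expansion of the left-hand side with the hatted bracket. For $\delta=\delta_{\mathbf{A}}$ this gives
\[
H=\hat H+2\langle\mathbf{A}\wedge\hat{\mathbf{R}}\rangle_{\mathcal{G}}+\langle\mathbf{A}\wedge\dd_{\hat\nabla}\mathbf{A}\rangle_{\mathcal{G}}+\tfrac13\langle\mathbf{A}\wedge[\mathbf{A},\mathbf{A}]_{\mathcal{G}}\rangle_{\mathcal{G}}.
\]
Here the $\hat\nabla$-terms assemble into the cyclic sum because $\hat\nabla=\nabla-\ad_{\mathbf{A}}$ is metric. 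The cubic term uses ad-invariance in the form $\langle\mathbf{A}\wedge[\mathbf{A},\mathbf{A}]_{\mathcal{G}}\rangle_{\mathcal{G}}(X,Y,Z)=6\langle[\mathbf{A}X,\mathbf{A}Y]_{\mathcal{G}},\mathbf{A}Z\rangle_{\mathcal{G}}$. Substituting $\hat{\mathbf{R}}$ and $\dd_{\hat\nabla}\mathbf{A}=\dd_\nabla\mathbf{A}-[\mathbf{A},\mathbf{A}]_{\mathcal{G}}$ yields exactly the stated $\hat H$, including the coefficient $-\tfrac13$.
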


An orthogonal equivalence $\nu\in\O(\mathcal{G})$ of $\mathcal{G}$ covering a diffeomorphism $f\in\mathrm{Diff}(M)$ defines a vector bundle morphism $F_{\nu}$ from $E$ to itself via
\begin{align*}
    F_{\nu}:=\left(\begin{array}{lll}
df & & \\
& \nu & \\
& & (df^{-1})^*
\end{array}\right)
\end{align*}
in the splitting $E=TM\oplus\mathcal{G}\oplus T^*M$.

Given a two-form $B\in\Omega^2(M)$ and a one-form $\mathbf{A}\in\Omega^1(M,\mathcal{G})$ with values in $\mathcal{G}$ we can furthermore define vector bundle morphisms $F_B$ and $F_{\mathbf{A}}$ from $E$ to itself via
\begin{align*}
    F_{B}:=\left(\begin{array}{ccc}
\id &0 & 0 \\
0& \id & 0\\
B&0 & \id
\end{array}\right),\quad 
    F_{\mathbf{A}}:=\left(\begin{array}{ccc}
\id & 0 & 0\\
\mathbf{A}& \id & 0\\
-\langle \mathbf{A},\mathbf{A}\rangle_{\mathcal{G}}&-2\langle \mathbf{A},\cdot\rangle_{\mathcal{G}} & \id
\end{array}\right).
\end{align*}
Note that $F_{\nu}$, $F_B$ and $F_{\mathbf{A}}$ all preserve the scalar product $\langle\cdot,\cdot\rangle$ on $E$ as well as the anchor map. We will denote the morphism obtained by composing $F_{\nu}$, $F_\mathbf{A}$ and $F_B$ by 
\begin{align*}
F_{\nu,\mathbf{A},B}:=F_{\nu}\circ F_{\mathbf{A}}\circ F_B.    
\end{align*}
The group $\O(E,\pi,\langle\cdot,\cdot\rangle)$ of equivalences of $E$ preserving the anchor map and the scalar product is given by \cite{coureaud2017}
\begin{align*}
\O(E,\pi,\langle\cdot,\cdot\rangle)=\{ F_{\nu,\mathbf{A},B}:\nu\in \O(\mathcal{G}),\,  \mathbf{A}\in\Omega^1(M,\mathcal{G}),\, B\in\Omega^2(M)\}. 
\end{align*}
The group of autoequivalences of $E$ is the subgroup of elements in $\O(E,\pi,\langle\cdot,\cdot\rangle)$ that are compatible with the Dorfman bracket of $E$. An explicit description is given in the following theorem.

\begin{theorem}[Theorem 2.37 in \cite{coureaud2017}]
\label{thm: gen automs trans}
    Let $E$ be a standard transitive Courant algebroid. Then the group of autoequivalences of $E$ is given by
    \begin{align*}
        \Auteq(E)= \{F_{\nu,\mathbf{A},B}:&\,\nu\in\Auteq(\mathcal{G})\text{ covers }f\in\mathrm{Diff}(M),\;\mathbf{A}\in\Omega^1(M,\mathcal{G}),\;B\in\Omega^2(M),\\
        &\nabla=\nu\cdot \left(\nabla-\ad_\mathbf{A}\right),\\
        &\mathbf{R}=\nu\cdot \left(\mathbf{R} -\dn \mathbf{A}+\tfrac{1}{2}[\mathbf{A}, \mathbf{A}]_{\mathcal{G}}\right),\\
        &H-f^* H=\dd B+2\langle \mathbf{A}\wedge \mathbf{R}\rangle_{\mathcal{G}}-\langle \mathbf{A}\wedge\dn \mathbf{A}\rangle_{\mathcal{G}}-\frac{1}{3}\left\langle \mathbf{A} \wedge[\mathbf{A} , \mathbf{A}]_{\mathcal{G}}\right\rangle_{\mathcal{G}}\}, 
    \end{align*}
    with the product
    \begin{align*}
        F_{\nu,\mathbf{A},B}\circ F_{{\nu}^{\prime}, \mathbf{A}^{\prime}, B^{\prime}}=F_{\nu^{\prime\prime},\mathbf{A}^{\prime\prime}, B^{\prime\prime}},
    \end{align*}
   where $\nu^{\prime\prime},\mathbf{A}^{\prime\prime},B^{\prime\prime}$ are given by
   \begin{align*}
     \nu^{\prime\prime}&=\nu\nu^{\prime},\\
     \mathbf{A}^{\prime\prime}&=\nu'^{-1}\cdot \mathbf{A}+\mathbf{A}^{\prime},\\
     B^{\prime\prime}&={f^{\prime}}^*B+B^{\prime}+\langle \nu'^{-1}\cdot \mathbf{A}\wedge \mathbf{A}^{\prime}\rangle_{\mathcal{G}}
   \end{align*}
   and the identity element is $((\id_M,0),\id,0)$. The inversion map is given by
    \begin{align*}
    (\nu,\mathbf{A},B)\longmapsto\left(\nu^{-1},-\nu\cdot \mathbf{A},-(f^{-1})^*B\right) 
    \end{align*}
\end{theorem}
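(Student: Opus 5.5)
We know from the result of \cite{coureaud2017} quoted above that every element of $\O(E,\pi,\langle\cdot,\cdot\rangle)$ has the form $F_{\nu,\mathbf{A},B}$ with $\nu\in\O(\mathcal{G})$. The plan is therefore to determine exactly when such a map is compatible with the Dorfman bracket, and then to compute the composition law. Write $F=F_{\nu}\circ F_{\mathbf{A}}\circ F_B$. Since $F_\nu$, $F_{\mathbf{A}}$ and $F_B$ each preserve the scalar product and the anchor, the only remaining condition is $F[u,v]=[Fu,Fv]$.

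For the bracket condition, the cleanest route is the change-of-bisection theorem of \cite{chen2013}. The map $\delta=F_\nu\circ F_{\mathbf{A}}\circ F_B$, with $\nu$ covering the identity, is a Courant isomorphism from $S[\nabla,\mathbf{R},H]$ to $S[\hat\nabla,\hat{\mathbf{R}},\hat H]$, where the hatted data are given by the stated formulas. I would extend this to $\nu$ covering an arbitrary $f$ by factoring $F_\nu=F_{\tilde f}\circ F_{\nu_0}$, with $\nu_0$ covering the identity. Here the first factor is a naturality statement: pushing forward all data by a bundle map covering $f$ sends $S[\nabla,\mathbf{R},H]$ isomorphically onto $S[\nu\cdot\nabla,\nu\cdot\mathbf{R},(f^{-1})^*H]$. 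This follows directly from the naturality of $\mathcal L$, $\iota$ and $\dd$ in the bracket formulas.

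Then $F$ is an autoequivalence if and only if $S[\nabla,\mathbf{R},H]$ equals the transformed standard algebroid. Since the standard Dorfman bracket determines its data, this holds if and only if
\begin{itemize}
\item $\nabla=\nu\cdot(\nabla-\ad_{\mathbf A})$,
\item $\mathbf R=\nu\cdot(\dots)$,
\item $H$ equals the transformed $H$.
\end{itemize}
The determination of the data can be read off from the $[X,\mathbf r]$, $[X,Y]$ and $[\mathbf r,\mathbf s]$ components. The bracket $[\cdot,\cdot]_{\mathcal G}$ is preserved precisely because $\nu\in\Auteq(\mathcal G)$; this comes from the $\mathcal G$-component of $[\mathbf r,\mathbf s]$. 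Pulling the $H$-identity back by $f$ then gives the stated relation $H-f^*H=\dd B+\dots$. Keeping track of the order (pullback versus pushforward) and of the signs in this step is where I expect errors, and I would check it against the case $\mathcal G=0$, which recovers the known $B$-field result $f^*H-H=\dd B$ up to convention.

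For the group law, I would compute $F_{\mathbf A}F_B F_{\nu'}$ by commuting $F_{\nu'}$ to the left. One has $F_B F_{\nu'}=F_{\nu'}F_{f'^*B}$ and $F_{\mathbf A}F_{\nu'}=F_{\nu'}F_{\nu'^{-1}\cdot\mathbf A}$. Next, $F_{\mathbf A}F_{\mathbf A'}=F_{\mathbf A+\mathbf A'}F_{\langle\mathbf A\wedge\mathbf A'\rangle}$, which is a direct $3\times3$ matrix multiplication using $-\langle\mathbf A,\mathbf A'\rangle_{\mathcal G}+\langle\mathbf A',\mathbf A\rangle$ in the skew part. Finally, $B$-transforms commute with $F_{\mathbf A}$ and compose additively. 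Together these give $\nu''$, $\mathbf A''$ and $B''$ as stated. The inverse follows by solving $\nu''=\id$, $\mathbf A''=0$, $B''=0$, using $\langle\mathbf A\wedge\mathbf A\rangle$-type cancellation; in particular $\langle -\nu\cdot\mathbf A\wedge\dots\rangle$ vanishes when the two arguments coincide up to sign.

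The main obstacle is the bracket-compatibility step for $\nu$ covering a nontrivial diffeomorphism. There, the naturality of the standard bracket under $F_\nu$ has to be verified carefully component by component.
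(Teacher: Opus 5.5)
The paper gives no proof of this theorem. It is quoted from \cite{coureaud2017}, so there is no in-text argument to compare with.

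\textbf{Your route.} It is legitimate, and it turns the result into a corollary of two other quoted facts: the description of $\O(E,\pi,\langle\cdot,\cdot\rangle)$, and the change-of-bisection formula of \cite{chen2013}. The steps are:
\begin{enumerate}
\item $F_{\mathbf A}F_B$ covers the identity and carries $S[\nabla,\mathbf R,H]$ to $S[\hat\nabla,\hat{\mathbf R},\hat H]$, using the hatted formulas with $\nu=\id$.
\item $F_\nu$ then acts by naturality.
\item $F$ is an autoequivalence if and only if the resulting data coincide with the original ones.
\item The group law follows from $F_BF_{\nu'}=F_{\nu'}F_{f'^*B}$, $F_{\mathbf A}F_{\nu'}=F_{\nu'}F_{\nu'^{-1}\cdot\mathbf A}$, $F_{\mathbf A}F_{\mathbf A'}=F_{\mathbf A+\mathbf A'}F_{\langle\mathbf A\wedge\mathbf A'\rangle_{\mathcal G}}$ and $F_{\mathbf A}F_B=F_BF_{\mathbf A}$. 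All of these check out.
\end{enumerate}

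\textbf{Two small repairs.} First, the factorization $F_\nu=F_{\tilde f}\circ F_{\nu_0}$ is superfluous, and there is no canonical lift $\tilde f$ of $f$ to $\mathcal G$ anyway. Apply naturality to $F_\nu$ itself: it maps $S[\nabla',\mathbf R',H']$ to $S[\nu\cdot\nabla',\nu\cdot\mathbf R',(f^{-1})^*H']$, with the $\mathcal G$-bracket transported by $\nu$. Second, the change-of-bisection theorem is phrased for two \emph{given} bisections, but you use it for arbitrary $(\mathbf A,B)$. Add the following justification. The structure transported along $F_{\mathbf A}F_B$ is a transitive Courant algebroid on $TM\oplus\mathcal G\oplus T^*M$ with the same anchor and pairing. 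The tautological splitting is a bisection of it, so its bracket is standard and the quoted formulas compute its data.

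\textbf{A claim that fails on execution.} You say that pulling back ``gives the stated relation''. With the quoted formula for $\hat H$, the condition $\hat H=f^*H$ actually reads
\[
H-f^*H=\dd B+2\langle \mathbf{A}\wedge \mathbf{R}\rangle_{\mathcal{G}}-\langle \mathbf{A}\wedge\dn \mathbf{A}\rangle_{\mathcal{G}}+\tfrac{1}{3}\langle \mathbf{A}\wedge[\mathbf{A},\mathbf{A}]_{\mathcal{G}}\rangle_{\mathcal{G}},
\]
so the cubic term has the opposite sign to the statement. This $+\tfrac13$ is the correct sign in the paper's conventions, for two independent reasons.
\begin{enumerate}
\item Directly: set $\delta=F_{\mathbf A}F_B$ and $\hat H(X,Y,Z)=2\langle[\delta^{-1}X,\delta^{-1}Y],\delta^{-1}Z\rangle$. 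The only cubic contribution comes from $[\cdot,\cdot]_{\mathcal G}$ applied to the $\mathcal G$-components $-\mathbf A(X)$ and $-\mathbf A(Y)$. It equals $-2\langle[\mathbf A(X),\mathbf A(Y)]_{\mathcal G},\mathbf A(Z)\rangle_{\mathcal G}=-\tfrac13\langle\mathbf A\wedge[\mathbf A,\mathbf A]_{\mathcal G}\rangle_{\mathcal G}(X,Y,Z)$, which confirms the change-of-bisection formula.
\item By closure: take $\mathcal G=M\times\mathfrak g$, $\nabla=\dd$, $\mathbf R=0$, $H=0$, and the pure gauge elements $\nu_i=\mathrm{Ad}_{g_i}$, $\mathbf A_i=-g_i^{-1}\dd g_i$ with $g_i\colon M\to G$. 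The third condition is preserved by the stated product law only if the coefficient is $+\tfrac13$.
\end{enumerate}
So your argument proves the theorem with the cubic sign corrected. The printed $-\tfrac13$ is inconsistent with the paper's own change-of-bisection formula and product law. This is harmless later, since $[\cdot,\cdot]_{\mathcal G}=0$ in the $\mathcal G$-flat case.

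Your proposed sanity check with $\mathcal G=0$ cannot detect this term. In the paper's conventions that check gives exactly $H-f^*H=\dd B$, with no appeal to ``convention'' needed.
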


\begin{remark}
    Note that in~\cite[Thm 2.37]{coureaud2017} there are some additional factors of $1/2$ appearing. This is due to a different convention for the scalar product of a standard transitive Courant algebroid. Moreover, we remark that the theorems stated above are special cases of more general results in~\cite{chen2013,coureaud2017} for the class of regular Courant algebroids (i.e.\ Courant algebroids whose anchor map has constant rank).
\end{remark}

The group of autoequivalences of a standard transitive Courant algebroid gives rise to a description of the group of autoequivalences of an arbitrary transitive Courant algebroid $E$ by conjugating with the isomorphism to $E$. 

Recall that a Courant algebroid $E$ over a manifold $M$ is called exact if the sequence
 \begin{align}
 \label{eq: exact CA}
0 \longrightarrow T^* \stackrel{\pi^*}{\longrightarrow} E \stackrel{\pi}{\longrightarrow} T \longrightarrow 0
 \end{align}
 is exact. In this case $\mathcal{G}$ vanishes and the above results reduce to the following.
 \begin{theorem}[\cite{Gualtieri:2003dx,gualtieri2011}]
 Every exact Courant algebroid over a smooth manifold $M$ is isomorphic to the generalized tangent bundle $\mathbb{T}M$ with anchor map the canonical projection to $TM$, scalar product defined by
\begin{align*}
    \langle X+\alpha, Y+\beta\rangle=\tfrac{1}{2}(\alpha(Y)+\beta(X))
\end{align*}
and with Dorfman bracket given by
\begin{align*}
    [X+\alpha, Y+\beta]_H=\mathcal{L}_X Y+\mathcal{L}_X \beta-\iota_Y \dd \alpha+H(X, Y, \cdot),
\end{align*}
where $H\in\Omega^3(M)$ is a fixed closed three-form. The isomorphism is defined by a choice of isotropic splitting of the sequence \eqref{eq: exact CA}. Any two choices of such isomorphisms to $(\mathbb{T}M,H)$ and $(\mathbb{T}M,\hat{H})$ are related by the action of some two-form $B\in\Omega^2(M)$ acting on $TM\oplus T^*M$ via
$$F_B:=\begin{pmatrix}
    \id & B\\
    0 & \id
\end{pmatrix}$$
and we must have $\hat{H}=H+\dd B$.

The group of autoequivalences of $(\mathbb{T}M,H)$ is given by
\begin{align*}
        \Auteq(\mathbb{T}M,H)=\{ F_{f,B}:=F_f F_B:H-f^*H=\dd B\},
    \end{align*}
    where for $f\in\Diff(M)$
    \begin{align*}
    F_f:=\begin{pmatrix}
df & 0 \\
0 & {df^{-1}}^*
\end{pmatrix}.
\end{align*}
The multiplication is given by
    \begin{align*}
        F_{f,B}F_{f',B'}=F_{f'',B''},
    \end{align*}
    where $f''=f\circ f'$ and $B''=B'+f'^*B$.
 \end{theorem}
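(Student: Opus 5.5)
The statement has three parts: every exact Courant algebroid is isomorphic to some $(\mathbb{T}M,H)$; two such isomorphisms differ by a $B$-field transform with $\hat H=H+\dd B$; and the autoequivalence group has the stated form and product. I will derive all three as the special case $\mathcal{G}=0$ of the transitive results quoted above, and only check signs and conventions directly.

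\textbf{Step 1: reduction to a standard form.} An exact Courant algebroid is transitive. Exactness of \eqref{eq: exact CA} gives $\ker\pi=\im\pi^*$. This image is isotropic: for $\xi,\eta\in T^*M$, axiom 3 and $\pi\circ\pi^*=0$ imply that $\langle \pi^*\xi,\pi^*\eta\rangle$ vanishes. A rank count then shows that $\ker\pi$ is Lagrangian, so $(\ker\pi)^\perp=\ker\pi$ and $\mathcal{G}=\ker\pi/(\ker\pi)^\perp=0$.

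Theorem~\ref{thm: trans standard} now provides an isomorphism to $S[\nabla,\mathbf{R},H]$ with $\mathcal{G}=0$. In this case $\nabla$ and $\mathbf{R}$ are trivial, and the only remaining condition is $\dd H=\tfrac12\langle\mathbf{R}\wedge\mathbf{R}\rangle_{\mathcal{G}}=0$. The bracket components of Definition~\ref{def: standard regular} are
\[
[X,Y]=\mathcal{L}_XY+\iota_Y\iota_XH,\qquad [X,\xi]=\mathcal{L}_X\xi,\qquad [\xi,X]=-\iota_X\dd\xi,\qquad [\xi,\eta]=0 .
\]
Assembling these bilinearly gives exactly the bracket $[X+\alpha,Y+\beta]_H$ in the statement, because $\iota_Y\iota_XH=H(X,Y,\cdot)$. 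The map $\lambda$ in the remark following Theorem~\ref{thm: trans standard} is precisely an isotropic splitting of \eqref{eq: exact CA}. So the isomorphism is determined by the choice of isotropic splitting, as claimed.

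\textbf{Step 2: change of splitting.} In the change-of-bisection theorem, set $\mathcal{G}=0$. This forces $\nu=\id$ and $\mathbf{A}=0$, so the only surviving entry of $\delta$ is $B$, placed in the $T^*M$-row of the $TM$-column. This is the map $X+\alpha\mapsto X+B(X)+\alpha$, which is the $F_B$ of the statement written with $TM$ and $T^*M$ ordered as there. The formula for $\hat H$ reduces to $\hat H=H-\dd B$.

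The statement says $\hat H=H+\dd B$ instead. The two agree after replacing $B$ by $-B$, or equivalently after checking which direction the map $F_B$ goes. To fix the sign once and for all, I will verify directly that $F_B([u,v]_H)=[F_Bu,F_Bv]_{\hat H}$, using Cartan calculus. The $TM$–$TM$ component gives $\iota_Y\iota_X(\hat H-H)=\mp\iota_Y\iota_X\dd B$, plus terms $\mathcal{L}_X(\iota_YB)-\iota_Y\dd\iota_XB-\iota_{[X,Y]}B$ that combine into $\iota_Y\iota_X\dd B$. I expect this sign and ordering bookkeeping to be the only genuine obstacle, since it depends on the convention for how $F_B$ acts.

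\textbf{Step 3: the autoequivalence group.} Specialize Theorem~\ref{thm: gen automs trans} to $\mathcal{G}=0$. Then $\nu$ is simply the diffeomorphism $f$, $\mathbf{A}=0$, and the conditions on $\nabla$ and $\mathbf{R}$ hold trivially. What remains is $H-f^*H=\dd B$ with $F=F_fF_B$.

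The product formula also specializes directly: $\nu''=\nu\nu'$ becomes $f''=f\circ f'$, and $B''=f'^*B+B'$, since the term $\langle\nu'^{-1}\cdot\mathbf{A}\wedge\mathbf{A}'\rangle_{\mathcal{G}}$ vanishes. This matches the statement. As a check, one can compute $F_BF_{f'}=F_{f'}F_{f'^*B}$ directly, which reproduces the product rule independently.
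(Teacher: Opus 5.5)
Your proposal is correct and follows essentially the paper's own route: the paper states this result, citing Gualtieri, as the $\mathcal{G}=0$ case of the quoted transitive results on bisections, changes of bisection and autoequivalences, which is exactly the specialization you carry out. Your Cartan-calculus check settles the sign as a question of direction: $F_B$ is a Courant isomorphism $(\mathbb{T}M,H)\to(\mathbb{T}M,H-\dd B)$, in agreement with the change-of-bisection formula, so $\hat H=H+\dd B$ holds exactly when the two isomorphisms $\rho\colon E\to(\mathbb{T}M,H)$ and $\hat\rho\colon E\to(\mathbb{T}M,\hat H)$ satisfy $\rho=F_B\circ\hat\rho$.
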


Later, we will consider the following subclass of transitive Courant algebroids for which the bisections take a particularly simple form.
\begin{definition} \label{def: g-flat CA}
    A $\mathcal{G}$-\emph{flat} standard transitive Courant algebroid is a standard transitive Courant algebroid such that $\mathcal{G}\cong M\times \R^K$ is a trivial vector bundle with trivial Lie bracket on the fibers and such that $\nabla$ is flat with trivial holonomy and $\mathbf{R}=0$. A transitive Courant algebroid is called $\mathcal{G}$-\emph{flat} if it is isomorphic to a $\mathcal{G}$-flat standard transitive Courant algebroid. 
\end{definition}

\begin{proposition}
    If $S[\nabla,\mathbf{R},H]$ is a standard $\mathcal{G}$-flat transitive Courant algebroid, then $H$ is closed. 
\end{proposition}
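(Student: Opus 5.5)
This follows directly from the axioms in Definition~\ref{def: standard regular}, since the data of a $\mathcal{G}$-flat standard transitive Courant algebroid force the right-hand side of the Bianchi-type identity for $H$ to vanish. Among the conditions defining $S[\nabla,\mathbf{R},H]$ is
\begin{align*}
\dd H=\tfrac{1}{2}\langle \mathbf{R}\wedge\mathbf{R}\rangle_{\mathcal{G}}.
\end{align*}
By Definition~\ref{def: g-flat CA}, a $\mathcal{G}$-flat standard transitive Courant algebroid has $\mathbf{R}=0$. The pairing $\langle\cdot\wedge\cdot\rangle_{\mathcal{G}}\colon\Omega^2(M,\mathcal{G})\times\Omega^2(M,\mathcal{G})\to\Omega^4(M)$ is bilinear, being defined pointwise through the bilinear scalar product $\langle\cdot,\cdot\rangle_{\mathcal{G}}$. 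Hence $\langle\mathbf{R}\wedge\mathbf{R}\rangle_{\mathcal{G}}=\langle 0\wedge 0\rangle_{\mathcal{G}}=0$, and therefore $\dd H=0$.

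As a consistency check, the remaining assumptions fit this picture. Flatness of $\nabla$ gives $\dd_\nabla^2=0$, and $\ad_{\mathbf{R}}=\ad_0=0$, so the condition $\dd_\nabla^2=\ad_\mathbf{R}$ holds automatically. The trivial fiberwise bracket would give $\ad_{\mathbf R}=0$ regardless of $\mathbf{R}$. None of these facts is needed for the conclusion, though; only $\mathbf{R}=0$ is used.

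There is no real obstacle here. The only point to state explicitly is that $\mathbf{R}=0$ is part of the definition of a $\mathcal{G}$-flat standard Courant algebroid and does not have to be derived from flatness of $\nabla$. The claim is made for the standard model $S[\nabla,\mathbf{R},H]$ itself, so no change of bisection via Theorem~2.9 of the paper is involved.
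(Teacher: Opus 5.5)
Your proof is correct and is the same as the paper's: the defining condition $\dd H=\tfrac{1}{2}\langle \mathbf{R}\wedge\mathbf{R}\rangle_{\mathcal{G}}$ together with $\mathbf{R}=0$ in a $\mathcal{G}$-flat standard transitive Courant algebroid gives $\dd H=0$. Your consistency remarks are accurate, but the argument does not need them.
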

\begin{proof}
    This follows from the condition that $\dd H=\tfrac{1}{2}\langle \mathbf{R} \wedge \mathbf{R}\rangle_{\mathcal{G}}=0$.
\end{proof}

\subsection{Generalized complex structures}
\begin{definition}
    A \emph{generalized almost complex structure} on a Courant algebroid $E\rightarrow M$ is an endomorphism field $\mathcal{J}\in \Gamma(\End E)$ such that $\mathcal{J}^2=-\id$ and such that $\mathcal{J}$ is skew-symmetric with respect to the scalar product of $E$. It is called \emph{integrable} or a \emph{generalized complex structure} if the subbundles $L(\mathcal{J})$, $\bar{L}(\mathcal{J})$ of $E_{\C}$ given by the $+i$- and $-i$-eigenspaces of $\mathcal{J}$ are involutive with respect to the Dorfman bracket. 
\end{definition}
An equivalent definition of a generalized complex structure on a Courant algebroid can be formulated directly in terms of the subbundle $L\subset E_{\C}$:
\begin{definition}
A \emph{generalized almost complex structure} on $E$ is a maximally isotropic subbundle $L\subset E_{\mathbb{C}}$ of the complexification $E_{\C}$ of $E$ such that $E_{\C}=L\oplus \bar{L}$. A generalized almost complex structure is called \emph{integrable} or a \emph{generalized complex structure} if it is closed under the Dorfman bracket.
\end{definition}

Given a generalized complex structure $L$, it follows from the third axiom in Definition~\ref{def: courant algebroid} and the fact that $L$ and $\bar{L}$ are isotropic that these bundles together with the restrictions of $\pi$ and the Dorfman bracket of $E$ form Lie algebroids. 

Both complex and symplectic structures on a manifold $M$ can be seen as examples of generalized complex structures on $(\mathbb{T}M,H)$ for appropriate choices of $H$~\cite{Gualtieri:2003dx,gualtieri2011}.

\subsubsection{Deformations of generalized complex structures}
Let $L$ be a generalized complex structure on some Courant algebroid $E$. Every maximal isotropic having trivial intersection with $\bar{L}$ is described as the graph of some homomorphism $\varphi\colon L\rightarrow \bar{L}$ satisfying $\langle\varphi u, v\rangle+\langle u,\varphi v\rangle=0$ for all $u,v\in \Gamma(L)$. After identifying $\bar{L}\cong L^*$ via the scalar product $\langle\cdot,\cdot\rangle$ of $E$, we may also think of such $\varphi$ as a form $\varphi\in \Gamma(M,\Lambda^2 L^*)$ and the new isotropic is given by
\begin{align*}
    L_{\varphi}:=(1+\varphi)L=\{u+\iota_u\varphi:u\in L\}.
\end{align*}
The isotropic $L_{\varphi}$ will be a generalized almost complex structure if and only if $L_{\varphi}\oplus \overline{L_{\varphi}}=E_{\C}$. Since this is an open condition, it will be satisfied for every $\varphi\in\Gamma(M,\Lambda^2 L^*)$ in a small enough neighborhood of zero. We will call such $\varphi$ a \emph{generalized almost complex deformation}. Next, we will discuss under which conditions the deformed generalized almost complex structure $L_{\varphi}$ is integrable. Before stating the condition, we need to introduce some more structure on the Lie algebroids $L$ and $\bar{L}$.

For every Lie algebroid $L$, we define the Lie algebroid differential
\begin{align*}
    \dd_L\colon \Gamma(\Lambda^k L^*)\longrightarrow \Gamma(\Lambda^{k+1} L^*)
\end{align*}
via
\begin{align*}
    (\dd_L\varphi)(u_0,\dots,u_k):=&\sum_i (-1)^i\pi(u_i)\varphi(u_0,\dots,\hat{u_i},\dots,u_k)\\
    &+\sum_{i<j}(-1)^{i+j}\varphi([u_i,u_j],u_0,\dots,\hat{u_i},\dots,\hat{u_j},\dots,u_k)
\end{align*}
for $\varphi\in\Gamma(\Lambda^k L^*)$, and $u_0,\dots,u_k\in\Gamma(L)$. Then $(\Gamma(\Lambda^{\bullet} L^*),\dd_L)$ forms a differential graded algebra, i.e.~$\dd_L$ is a first order linear differential operator of degree one that satisfies $(\dd_L)^2=0$ and $$\dd_L(\varphi\wedge \psi)=(\dd_L\varphi)\wedge \psi+(-1)^p\varphi\wedge \dd_L\psi\quad \text{for }\varphi\in\Gamma(\Lambda^p L^*),\,\psi\in\Gamma(\Lambda^q L^*).$$

Furthermore, for every Lie algebroid $L$ the \emph{Schouten bracket} 
$$[\cdot,\cdot]_L\colon \Gamma(\Lambda^p L)\times\Gamma(\Lambda^q L)\longrightarrow \Gamma(\Lambda^{p+q-1}L)$$
is defined as follows:
For $u_1,\dots,u_p, v_1,\dots,v_q\in\Gamma(L)$, we set
\begin{align*}
    &[u_1\wedge\dots\wedge u_p,v_1\wedge\dots\wedge v_q]_L\\&:=\sum_{i,j}(-1)^{i+j}[u_i,v_j]\wedge u_1\wedge\dots\wedge \hat{u}_i\wedge\dots\wedge u_p\wedge v_1\wedge\dots\wedge \hat{v}_j\wedge \dots\wedge v_q.
\end{align*}
For $u\in \Gamma(L)$, $f\in C^{\infty}(M)$, we set
\begin{align*}
    [u,f]_L=-[f,u]_L:=\pi(u)(f).
\end{align*}
Finally, we set
\begin{align*}
&[u_1\wedge\dots\wedge u_p,f]_L=(-1)^p[f,u_1\wedge\dots\wedge u_p]_L\\
&=:(-1)^p\left([f,u_1]u_2\wedge\dots\wedge u_p-u_1\wedge [f,u_2]u_3\wedge\dots\wedge u_p\right.\\
&\quad\left. +\dots+(-1)^{p-1}u_1\wedge\dots\wedge [f,u_p]\right).
\end{align*}

\begin{lemma}
\label{lem: bracket and dL}
    Let $L$ be a generalized complex structure. We identify $\bar{L}\cong L^*$. Then for $U\subset M$, $\varphi\in\Gamma(U,\Lambda^p L^*)$, $\psi\in\Gamma(U,\Lambda^q L^*)$, $\eta\in\Gamma(U,\Lambda^r L^*)$ the following hold
    \begin{enumerate}
        \item $[\varphi,\psi]_{L^*}=-(-1)^{(p-1)(q-1)}[\psi,\varphi]_{L^*}$
        \item $[\varphi,[\psi,\eta]_{L^*}]_{L^*}=[[\varphi,\psi]_{L^*},\eta]_{L^*}+(-1)^{(p-1)(q-1)}[\psi,[\varphi,\eta]_{L^*}]_{L^*}$
        \item $\dd_L [\varphi,\psi]_{L^*}=[\dd_L\varphi,\psi]_{L^*}+(-1)^{p-1}[\varphi,\dd_L\psi]_{L^*}$.   
    \end{enumerate}
\end{lemma}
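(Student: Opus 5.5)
We treat this as the standard Lie bialgebroid fact (Liu--Weinstein--Xu): for a Dirac structure $L$ in a Courant algebroid, the pair $(L,\bar L)$ with $\bar L\cong L^*$ is a Lie bialgebroid. Here $[\cdot,\cdot]_{L^*}$ is the Schouten bracket of the Lie algebroid $\bar L$, transported to $\Gamma(\Lambda^\bullet L^*)$ via the identification $\bar L\cong L^*$.

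\textbf{Items 1 and 2.} These are purely algebraic and hold for the Schouten bracket of any Lie algebroid. $\bar L$ is a Lie algebroid because it is involutive, isotropic, and satisfies axiom 3 of Definition~\ref{def: courant algebroid}, so the Dorfman bracket is skew on it. We would verify graded skew-symmetry directly on decomposable elements $u_1\wedge\dots\wedge u_p$ from the defining formula. The mixed cases with functions follow from the definitions $[u,f]_L=\pi(u)f$ and $[u_1\wedge\dots\wedge u_p,f]_L$. Graded Jacobi then reduces, by the Leibniz-type derivation property $[\varphi,\psi\wedge\eta]=[\varphi,\psi]\wedge\eta+(-1)^{(p-1)q}\psi\wedge[\varphi,\eta]$ (itself a direct check from the formula), to the case where all entries have degree $0$ or $1$. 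There it is the Jacobi identity of $\bar L$ (axiom 1 restricted to $\bar L$) and the fact that $\pi$ is a bracket homomorphism (Proposition~\ref{prop: extra properties of dorfman bracket}).

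\textbf{Item 3.} This is the content of the bialgebroid compatibility. We would first reduce to low degrees. The operator $D(\psi):=\dd_L[\varphi,\psi]-[\dd_L\varphi,\psi]-(-1)^{p-1}[\varphi,\dd_L\psi]$ is a graded derivation in $\psi$ of degree $p$, because $\dd_L$ is a derivation of degree $1$ and $[\varphi,\cdot]$ is a derivation of degree $p-1$. The graded commutator of graded derivations is again a derivation, and sign bookkeeping shows that $D$ is exactly such a commutator. By symmetry (item 1), $D$ is also a derivation in the $\varphi$-slot. Hence it suffices to check the identity for $\varphi,\psi$ of degrees $0$ and $1$.
\begin{itemize}
\item For degree $(0,0)$ it is immediate, since the bracket of two functions vanishes and $[\dd_L f,g]$ cancels against $[f,\dd_L g]$, both being $\pm\langle \dd_L f,\dd_{\bar L} g\rangle$-type expressions.
\item For $(0,1)$ it amounts to $\pi(\bar u)$ and $\pi(u)$ commuting appropriately.
\item For $(1,1)$ it is the genuine Liu--Weinstein--Xu condition $\dd_L[\xi,\eta]=[\dd_L\xi,\eta]+[\xi,\dd_L\eta]$ for $\xi,\eta\in\Gamma(\bar L)$.
\end{itemize}

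To check the $(1,1)$ case, we would evaluate both sides on $u,v\in\Gamma(L)$ using $\langle\cdot,\cdot\rangle$ and expand via axioms 2 and 3. The key tool is the decomposition of the Dorfman bracket on $E_\C=L\oplus\bar L$: for $u\in L,\ \xi\in\bar L$, the $L$-component of $[u,\xi]$ is determined by $\langle [u,\xi],\eta\rangle$ for $\eta\in\bar L$, and similarly for the other components. Writing $[u,\xi]=\mathcal L_u\xi-\iota_\xi\dd_{\bar L}u$ and its analogue then reduces everything to the Jacobi identity (axiom 1) applied to mixed triples $u,\xi,\eta$, together with invariance (axiom 2). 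We expect this sign-heavy expansion to be the main obstacle. We would organize it by proving the two formulas $\mathrm{pr}_{\bar L}[u,\xi]=\mathcal L^{L}_u\xi$ and $\mathrm{pr}_L[u,\xi]=-\iota_\xi\dd_{\bar L}u$ first; the derivation condition then follows from the Dorfman Jacobi identity projected onto $\bar L$.

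Finally, all statements are local and the operators are local, so working over an open set $U\subset M$ poses no issue.
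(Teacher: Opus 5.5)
Your proposal is correct and follows the paper's route. The paper likewise treats items 1 and 2 as general Schouten-bracket identities and obtains item 3 from the Lie bialgebroid property of $(L,\bar L)$ (Liu--Weinstein--Xu, Theorem 2.6); your reduction of item 3 to degrees $\le 1$ via the derivation property of $D$ just makes explicit the standard passage from the degree-one bialgebroid condition to all multisections.

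Two small corrections to the sketch. First, $D$ is not exactly a commutator: it is the graded commutator of $\dd_L$ with $[\varphi,\cdot\,]$ minus the degree-$p$ derivation $[\dd_L\varphi,\cdot\,]$, so it is still a derivation and your conclusion stands. Second, the $(0,0)$ and $(0,1)$ cases do not follow from a formal cancellation or from anchors commuting; they rest on the Courant identities $\pi\circ\pi^*=0$ and $[\pi^*\dd f,\cdot\,]=0$.
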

\begin{proof}
    Properties 1 and 2 are general properties of the Schouten bracket. Property 3 follows from the fact that $L$ and $\bar{L}\cong L^*$ form a Lie bialgebroid (see Theorem 2.6 in~\cite{liu1997manin}).
\end{proof}

We will now state the condition for integrability:
\begin{theorem}[Theorem 6.1 in~\cite{liu1997manin}]
    The generalized almost complex structure $L_{\varphi}$ corresponding to a generalized almost complex deformation $\varphi \in \Gamma(M,\Lambda^2L^*)$ is integrable if and only if $\varphi$ satisfies the Maurer--Cartan equation
    \begin{align}
    \label{eq: Maurer-Cartan gc}
        \dd_L\varphi +\tfrac{1}{2}[\varphi,\varphi]_{L^*}=0.
    \end{align}
    In this case $\varphi$ is called a \emph{generalized complex deformation}.
\end{theorem}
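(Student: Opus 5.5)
The plan is to compute the Dorfman bracket of two sections of $L_\varphi=(1+\varphi)L$ and compare it with the defining data of $L_\varphi$. Since $L_\varphi$ is maximally isotropic, it is closed under the Dorfman bracket if and only if the Courant tensor
\[
T_\varphi(a,b,c):=\langle [a,b],c\rangle,\qquad a,b,c\in\Gamma(L_\varphi),
\]
vanishes identically. By the axioms in Definition~\ref{def: courant algebroid} and isotropy, $T_\varphi$ is tensorial and totally skew on $L_\varphi$. Skewness in the first two slots uses axiom 3 together with $\langle a,b\rangle=0$; for the last two slots use axiom 2, which gives $\langle[a,b],c\rangle+\langle b,[a,c]\rangle=\pi(a)\langle b,c\rangle=0$. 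Tensoriality then follows from Proposition~\ref{prop: extra properties of dorfman bracket}. Hence the integrability condition is equivalent to the vanishing of a section of $\Lambda^3 L^*$, obtained by pulling $T_\varphi$ back along $1+\varphi\colon L\to L_\varphi$.

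To evaluate this section, take $u,v,w\in\Gamma(L)$ and set $a=u+\iota_u\varphi$, $b=v+\iota_v\varphi$, $c=w+\iota_w\varphi$, with $\iota_u\varphi\in\Gamma(\bar L)$ via $\bar L\cong L^*$. Expanding $\langle[a,b],c\rangle$ by bilinearity produces terms of degree $0,1,2,3$ in $\varphi$.
\begin{enumerate}
\item The degree-$0$ term $\langle[u,v],w\rangle$ vanishes because $L$ is integrable and isotropic.
\item The degree-$1$ terms are $\langle[u,v],\iota_w\varphi\rangle+\langle[u,\iota_v\varphi],w\rangle+\langle[\iota_u\varphi,v],w\rangle$. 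After rewriting them with axioms 2 and 3 as anchors acting on pairings plus brackets inside $L$, they assemble (up to the factor coming from the identification $\bar L\cong L^*$) into $\dd_L\varphi(u,v,w)$ by the formula defining $\dd_L$.
\item The degree-$2$ terms involve brackets of sections of $\bar L$ paired with elements of $L$. Using the Lie algebroid structure on $\bar L\cong L^*$, they become $\tfrac12[\varphi,\varphi]_{L^*}(u,v,w)$.
\item The degree-$3$ term $\langle[\iota_u\varphi,\iota_v\varphi],\iota_w\varphi\rangle$ vanishes because $\bar L$ is integrable and isotropic.
\end{enumerate}
This yields $T_\varphi\circ(1+\varphi)^{\otimes 3}=c\,(\dd_L\varphi+\tfrac12[\varphi,\varphi]_{L^*})$ for a nonzero constant $c$, which proves the claim.

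An alternative route is to invoke the Lie bialgebroid structure $(L,L^*)$, as in Lemma~\ref{lem: bracket and dL}, together with the identification of $E_\C$ with the Drinfeld double $L\oplus L^*$, which holds since the Dorfman bracket restricts to $L$ and $\bar L$. The statement is then the general fact that graphs of $\varphi\in\Gamma(\Lambda^2L^*)$ in a double are Dirac exactly when the Maurer--Cartan equation holds, and this is checked by the same expansion.

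The main obstacle is step 3, the degree-$2$ computation. Matching the mixed brackets with the Schouten bracket $[\varphi,\varphi]_{L^*}$ requires care with signs and with the factor $\tfrac12$ in the scalar product identification $\bar L\cong L^*$. I would first check this step on decomposable $\varphi=\alpha\wedge\beta$ and then extend by $C^\infty$-linearity.
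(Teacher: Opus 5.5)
Your skeleton is correct. The paper gives no proof of its own: it cites Theorem~6.1 of~\cite{liu1997manin}, which is formulated for the double of a Lie bialgebroid, i.e.\ your alternative route. However, its Lemma~\ref{lem: different formulas for dL and brackets}, proved in Section~\ref{appendix proof of lemma}, contains exactly the identities your expansion needs. Tensoriality, total skewness, and steps~1, 2 and~4 are fine. Step~2 in fact holds with constant $1$: since $\langle[u,v],\iota_w\varphi\rangle=\varphi(w,[u,v])=-\langle\iota_{[u,v]}\varphi,w\rangle$, your degree-one terms are literally item~2 of that lemma. You also need not worry about a factor from $\bar L\cong L^*$. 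The paper identifies via $\langle\cdot,\cdot\rangle$ itself, and in any case rescaling the identification by a constant $\lambda$ multiplies $[\cdot,\cdot]_{L^*}$ by $\lambda^{-1}$ and turns the subbundle attached to $\varphi$ into the one previously attached to $\lambda^{-1}\varphi$. The Maurer--Cartan condition is therefore unchanged.

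You rightly flag step~3 as the main obstacle. It carries all the content and is only asserted, and the verification you propose would not work as stated. The degree-two part
\[
Q(\varphi)(u,v,w)=\langle[u,\iota_v\varphi]+[\iota_u\varphi,v],\iota_w\varphi\rangle+\langle[\iota_u\varphi,\iota_v\varphi],w\rangle
\]
is quadratic in $\varphi$ and involves first derivatives of $\varphi$. So it is not $C^\infty$-linear in $\varphi$, and an identity between quadratic expressions is not implied by its decomposable instances: the map $\varphi\mapsto\varphi\wedge\varphi$ vanishes on every decomposable $\varphi$. You have to polarize and prove $Q(\varphi+\varphi')-Q(\varphi)-Q(\varphi')=[\varphi,\varphi']$ for pairs $\varphi=\eta\wedge\psi$, $\varphi'=\eta'\wedge\psi'$, using locality and absorbing coefficient functions into $\eta,\eta'$. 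This is precisely how the paper proves item~4 of the lemma. Granting that item, the match takes two lines. Write
\[
[\varphi,\varphi](u,v,w)=2\langle[\iota_u\varphi,\iota_v\varphi],w\rangle+\langle\iota_{\pr_L y}\varphi,w\rangle,\qquad y=[v,\iota_u\varphi]-[\iota_u\varphi,v]-[u,\iota_v\varphi]+[\iota_v\varphi,u].
\]
Isotropy of $\bar L$ gives $\langle\iota_{\pr_L y}\varphi,w\rangle=-\langle y,\iota_w\varphi\rangle$, while axiom~3 of Definition~\ref{def: courant algebroid} gives
\[
\bigl\langle[v,\iota_u\varphi]+[\iota_u\varphi,v]+[u,\iota_v\varphi]+[\iota_v\varphi,u],\iota_w\varphi\bigr\rangle=\pi(\iota_w\varphi)\bigl(\varphi(u,v)+\varphi(v,u)\bigr)=0.
\]
Hence $-\langle y,\iota_w\varphi\rangle=2\langle[u,\iota_v\varphi]+[\iota_u\varphi,v],\iota_w\varphi\rangle$. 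It follows that $Q(\varphi)=\tfrac12[\varphi,\varphi]$, and your constant is $c=1$.
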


We will sometimes drop the subscript $L$ and $L^*$ of the Schouten bracket. In the following we will collect some identities for generalized complex structures that will become very useful in Section~\ref{defothm:sec}.
\begin{lemma}
\label{lem: different formulas for dL and brackets}
    Let $L$ be a generalized complex structure. We identify $\bar{L}\cong L^*$. Then for $U\subset M$, $\varphi,\varphi'\in\Gamma(U,\Lambda^2 L^*)$, $\xi\in\Gamma(U,L^*)$, $u,v,w\in\Gamma(U,L)$ the following hold
    \begin{enumerate}
        \item $\dd_L\xi(u,v)=-\langle [\xi,u],v\rangle$
        \item $\dd_L\varphi(u,v,w)=\langle [u,\iota_v\varphi]+[\iota_u\varphi,v]-\iota_{[u,v]}\varphi,w\rangle$
        \item $[\varphi,\xi](u,v)=\langle [\iota_u\varphi,\xi]+\iota_{\pr_L[\xi,u]}\varphi,v\rangle$
        \item $$\begin{aligned}
        [\varphi,\varphi'](u,v,w)=&\langle [\iota_u\varphi,\iota_v\varphi']-[\iota_v\varphi,\iota_u\varphi'],w\rangle\\
        &+\tfrac{1}{2}\langle \iota_{\pr_L\left([v,\iota_u\varphi]-[\iota_u\varphi,v]-[u,\iota_v\varphi]+ [\iota_v\varphi,u]\right)}\varphi',w\rangle\\
        &+\tfrac{1}{2}\langle \iota_{\pr_L\left([v,\iota_u\varphi']-[\iota_u\varphi',v]-[u,\iota_v\varphi']+[\iota_v\varphi',u]\right)}\varphi,w\rangle
        \end{aligned}$$
        in particular, $$\begin{aligned}
        [\varphi,\varphi](u,v,w)=&2\langle [\iota_u\varphi,\iota_v\varphi],w\rangle+\langle \iota_{\pr_L\left([v,\iota_u\varphi]-[\iota_u\varphi,v]-[u,\iota_v\varphi]+[\iota_v\varphi,u]\right)}\varphi,w\rangle.
        \end{aligned}$$
        
    \end{enumerate}
\end{lemma}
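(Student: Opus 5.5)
We verify all four identities by direct computation from the definitions, working on $U$ with sections of $L$ and of $\bar L\cong L^*$. Throughout we use three facts. First, the pairing is $\xi(u)=2\langle\xi,u\rangle$ or $\langle\xi,u\rangle$, depending on the convention chosen for $\bar L\cong L^*$; we fix the convention so that $\iota_u\varphi$ is the element of $\bar L$ with $\langle\iota_u\varphi,v\rangle=\varphi(u,v)$. Second, $L$ and $\bar L$ are isotropic and involutive. Third, the Courant axioms of Definition~\ref{def: courant algebroid} hold, in particular axiom 2 (ad-invariance) and axiom 3 (the symmetric part of the bracket). The Schouten bracket on $\Gamma(\Lambda^\bullet L^*)$ is the one coming from the Lie algebroid $\bar L$, with bracket the Dorfman bracket restricted to $\bar L$.

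For (1), expand $\dd_L\xi(u,v)=\pi(u)\langle\xi,v\rangle-\pi(v)\langle\xi,u\rangle-\langle\xi,[u,v]\rangle$. By axiom 2, $\pi(u)\langle\xi,v\rangle-\langle\xi,[u,v]\rangle=\langle[u,\xi],v\rangle$. By axiom 3 together with $\langle u,\xi\rangle$-terms, $[u,\xi]=-[\xi,u]+\pi^*\dd\langle u,\xi\rangle$-type correction, whose pairing with $v$ produces exactly $\pi(v)\langle\xi,u\rangle$. Combining these gives $-\langle[\xi,u],v\rangle$. Item (2) is the same computation for a 2-form. Apply axiom 2 to $\pi(u)\varphi(v,w)=\pi(u)\langle\iota_v\varphi,w\rangle$, and do the same for the other two anchor terms. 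Use axiom 3 to move brackets of the form $[w,\iota_u\varphi]$ into $[\iota_u\varphi,w]$, and use isotropy of $L$ to discard $\langle u,[v,w]\rangle$-type terms. The remaining terms assemble into $\langle[u,\iota_v\varphi]+[\iota_u\varphi,v]-\iota_{[u,v]}\varphi,w\rangle$.

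For (3) and (4), first note that for $\varphi=\sum\xi_i\wedge\eta_i$ both sides are $C^\infty$-bilinear derivations in the appropriate sense. It therefore suffices to check the identities on decomposable elements, or to verify tensoriality of the right-hand sides directly and then compute. The more robust route is to use the standard formula expressing the Schouten bracket of a 1-vector with a 2-vector, and of two 2-vectors, through Lie derivatives: $[\varphi,\xi]=\mathcal L^{\bar L}_\xi\varphi$ up to sign, with $(\mathcal L_\xi\varphi)(u,v)=\pi(\xi)\varphi(u,v)-\varphi(\mathcal L_\xi u,v)-\varphi(u,\mathcal L_\xi v)$. The $\bar L$-Lie derivative of a section $u$ of $L=\bar L^*$ is $\mathcal L_\xi u=\pr_L[\xi,u]$, which one obtains by pairing with $\bar L$ and applying axiom 2. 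Substituting this and using axiom 2 once more on $\pi(\xi)\langle\iota_u\varphi,v\rangle$ yields (3). For (4) we use the analogous formula $[\varphi,\varphi'](u,v,w)$ written as a sum over contractions, namely $[\iota_u\varphi,\iota_v\varphi']$ plus Lie derivative corrections. We then replace each Lie derivative by its $\pr_L$-expression and symmetrize using axiom 3; the symmetrization accounts for the factors $\tfrac12$ and the four bracket terms. The special case $\varphi=\varphi'$ follows by setting $\varphi'=\varphi$.

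The main obstacle is the bookkeeping in (4), together with keeping sign and factor conventions consistent across the identification $\bar L\cong L^*$ and the Schouten bracket normalization. We would settle these conventions once by checking (4) on $\varphi=\xi_1\wedge\xi_2$ and $\varphi'=\eta_1\wedge\eta_2$. The Leibniz rule $[\xi_1\wedge\xi_2,\eta]=\dots$ reduces everything to brackets of 1-forms, which are Dorfman brackets in $\bar L$. Finally we verify that the $\pr_L$-terms reproduce the cross terms.
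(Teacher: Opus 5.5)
Your proposal is correct, but for (3), and as proposed for (4), it argues differently from the paper. Items (1) and (2) follow the paper: expand $\dd_L$, apply axiom 2 to the anchor terms and axiom 3 to flip brackets. The paper applies axiom 3 directly to $\pi(w)\langle\iota_u\varphi,v\rangle$, which is quicker than running axiom 2 on all three anchor terms. Your fallback for (4) is to expand on $\varphi=\eta\wedge\psi$, $\varphi'=\eta'\wedge\psi'$ using the defining formula of the Schouten bracket, and that is exactly how the paper proves both (3) and (4).

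Where you genuinely differ is in reading $[\varphi,\xi]=-\mathcal{L}^{\bar L}_\xi\varphi$ with $\mathcal{L}^{\bar L}_\xi u=\pr_L[\xi,u]$. This gives (3) in a few lines; after applying axiom 2 you also need $[\xi,\iota_u\varphi]=-[\iota_u\varphi,\xi]$, which follows from axiom 3 and isotropy of $\bar L$.

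For (4), however, your ``analogous formula'' is never stated, and any such formula is just (4) rewritten. So that part contributes nothing beyond the fallback as it stands. It can be made precise as follows. Axiom 3 gives $[v,\iota_u\varphi]=-[\iota_u\varphi,v]+\pi^*\dd(\varphi(u,v))$, and $\pr_L\pi^*\dd f=\dd_{\bar L}f$, where $\dd_{\bar L}f(\eta)=\pi(\eta)f$. Hence
\[
\tfrac12\pr_L\bigl([v,\iota_u\varphi]-[\iota_u\varphi,v]-[u,\iota_v\varphi]+[\iota_v\varphi,u]\bigr)=-[u,v]_\varphi,\qquad [u,v]_\varphi:=\mathcal{L}^{\bar L}_{\iota_u\varphi}v-\mathcal{L}^{\bar L}_{\iota_v\varphi}u-\dd_{\bar L}\bigl(\varphi(u,v)\bigr),
\]
where $[u,v]_\varphi$ is the Koszul bracket of $\varphi$ on $\Gamma(L)\cong\Gamma(\bar L^*)$. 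So (4) reads $[\varphi,\varphi'](u,v,w)=\langle[\iota_u\varphi,\iota_v\varphi']+[\iota_u\varphi',\iota_v\varphi]-\iota_{[u,v]_\varphi}\varphi'-\iota_{[u,v]_{\varphi'}}\varphi,\,w\rangle$.

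Both sides are symmetric and bilinear in $(\varphi,\varphi')$. By polarization, (4) is therefore equivalent to its case $\varphi=\varphi'$, which is the standard bivector identity $\tfrac12[\varphi,\varphi](u,v,\cdot)=[\iota_u\varphi,\iota_v\varphi]-\iota_{[u,v]_\varphi}\varphi$ on the Lie algebroid $\bar L$. You would still have to cite or prove that identity; proving it again means checking on decomposables, but now with a single bivector.

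Your route thus explains the $\tfrac12$ and the four-bracket pattern and reduces the bookkeeping. The paper's brute-force expansion is longer but self-contained and fixes every sign and normalization at once.

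A minor point: the reduction to decomposables needs only bilinearity and locality of both sides. The Schouten bracket is not $C^\infty$-linear in $\varphi$, so the phrase ``$C^\infty$-bilinear derivations'' should be dropped.
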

\begin{proof}
See Section~\ref{appendix proof of lemma} in the appendix.   
\end{proof}

\subsubsection{Families of deformations}
Next, we will introduce \emph{smooth families of generalized almost complex deformations}. For this, let $S$ be an open neighborhood of $0$ in some finite-dimensional vector space.
\begin{definition}
    A \emph{(smooth) family of generalized almost complex deformations} of a generalized complex structure $L$ is a smooth map
    \begin{align*}
        \varphi\colon S\longrightarrow \Gamma(M,\Lambda^2 L^*)
    \end{align*}
    such that $\varphi(0)=0$ and such that for every $s\in S$, the form $\varphi(s)$ is a generalized almost complex deformation. The family $\{\varphi(s):s\in S\}$ is a \emph{(smooth) family of generalized complex deformations} if in addition for every $s\in S$, the form $\varphi(s)$ satisfies equation~\eqref{eq: Maurer-Cartan gc}.
\end{definition}

The group of autoequivalences $\Auteq(E)$ of a Courant algebroid $E$ acts on the space of generalized almost complex structures $\operatorname{GAC}(E)$ on $E$ via
\begin{align*}
\mu_{\mathrm{GAC}}\colon \Auteq(E)\times \operatorname{GAC}(E) &\longrightarrow \operatorname{GAC}(E)\\
    (F,\mathcal{J})&\longmapsto F\cdot \mathcal{J}:= F\circ \mathcal{J}\circ F^{-1}.
\end{align*}
If the (1-jet of the) autoequivalence $F$ is close enough to the identity in an appropriate sense, then $F\cdot \mathcal{J}$ can be described by a generalized (almost) complex deformation $\Theta(F)\in \Gamma(M,\Lambda^2 L^*)$ of $\mathcal{J}$ (see Section~\ref{sect: auteq acting on gc defs} for more details). Similarly, given a generalized almost complex deformation $\varphi\in\Gamma(M,\Lambda^2 L^*)$ of $\mathcal{J}$, for $F$ close enough to the identity, the generalized almost complex structure $F\cdot \mathcal{J}_{\varphi}$ is again described by a generalized almost complex deformation $\Theta(F,\varphi)\in \Gamma(M,\Lambda^2 L^*)$. We will also denote $F\cdot\varphi:=\Theta(F,\varphi)$. We will consider two generalized complex deformations equivalent if they are related by the action of some autoequivalence.
\begin{definition}
    A family $\varphi'\colon S'\longrightarrow \Gamma(M,\Lambda^2 L^*)$ of generalized complex deformations is \emph{obtainable from} $\varphi\colon S\longrightarrow \Gamma(M,\Lambda^2 L^*)$
    if there is a smooth map $\tau\colon S'\rightarrow S$ such that $\tau(0)=0$ and a family of autoequivalences $\{F_{s'}\}_{s'\in S'}$ depending smoothly on $s'$, with $F_0=\id$ such that for every $s'\in S'$
    \begin{align*}
        \varphi(\tau(s'))=F_{s'}\cdot\varphi'(s').
    \end{align*}
\end{definition}
\begin{definition}
    A family of deformations $\{\varphi(s):s\in S\}$ is called \emph{locally complete} if for every family of deformations the restriction to some neighborhood of $0$ is obtainable from $\{\varphi(s):s\in S\}$.
\end{definition}

\subsubsection{The Lie algebroid complex of a generalized complex structure}\label{sect: LA of GC}
Next, we will take a closer look at the Lie algebroid complex
\begin{align}
\label{eq: LA complex}
    \dd_L\colon\Gamma(\Lambda^k L^*)\longrightarrow\Gamma(\Lambda^{k+1} L^*).
\end{align}
We will see that if $L$ is a generalized complex structure on a transitive Courant algebroid, then this complex is elliptic. First, we will show the following Lemma:
\begin{lemma}
\label{lem: proj pi(L) to real part}
We denote by $\Re\colon TM_{\mathbb{C}}\to TM$ the projection to the real part of $TM_{\mathbb{C}}$. Suppose $E$ is a transitive Courant algebroid with a generalized complex structure $L\subset E_{\mathbb{C}}$. Then the map
    \begin{align*}
        \Re\circ \pi\colon L\to TM    
    \end{align*}
    is surjective.
\end{lemma}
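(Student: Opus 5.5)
We argue via the decomposition $E_{\mathbb{C}}=L\oplus\bar{L}$ together with the fact that complex conjugation commutes with the anchor. Here $\pi$ denotes the complex-linear extension $E_{\mathbb{C}}\to TM_{\mathbb{C}}$, and $\Re(Z)=\tfrac12(Z+\bar Z)$ for $Z\in TM_{\mathbb{C}}$.

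Fix $p\in M$ and $X\in T_pM$. Since $E$ is transitive, $\pi\colon E\to TM$ is surjective, so we may choose a real element $e\in E_p$ with $\pi(e)=X$. Regarding $e$ as an element of $E_{\mathbb{C}}$, we decompose $e=u+\bar{w}$ with $u\in L_p$ and $\bar w\in\bar L_p$, so that $w\in L_p$. Because $e$ is real, $e=\bar e=\bar u+w$. Uniqueness of the decomposition in $L\oplus\bar L$ then forces $w=u$, and hence $e=u+\bar u=2\Re(u)$.

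Since $\pi$ is the complexification of a real bundle map, it commutes with conjugation. Therefore
$$X=\pi(e)=\pi(u)+\overline{\pi(u)}=2\Re(\pi(u))=\Re\circ\pi(2u),$$
and $2u\in L_p$. This shows that $\Re\circ\pi\colon L\to TM$ is surjective at every point.

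The one step that needs care is the identity $\pi(\bar u)=\overline{\pi(u)}$, that is, the compatibility of the complexified anchor with conjugation. It holds by construction, since $\pi_{\mathbb{C}}=\pi\otimes\mathrm{id}_{\mathbb{C}}$.
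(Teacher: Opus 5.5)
Your proof is correct and is essentially the paper's argument. The paper writes $L=\{u-\ii\mathcal{J}u : u\in E\}$ and observes that $\Re\circ\pi(L)=\pi(E)=TM$; your element $2u$ is exactly $e-\ii\mathcal{J}e$, which you obtain from the decomposition $E_{\mathbb{C}}=L\oplus\bar{L}$ instead of from the explicit formula.
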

\begin{proof}
Recall that $L\subset E_{\mathbb{C}}$ is the $\ii$-eigenbundle of the generalized almost complex structure $\mathcal{J}$ on $E$, which is given by
\begin{align*}
    L=\left\{ u - \ii\mathcal{J}u\,:\,u\in E\right\}.
\end{align*}
By the assumption that $E$ is transitive, we obtain 
\begin{align*}
    \Re\circ \pi(L)=\pi(E)=TM.  
\end{align*}  
\end{proof}

\begin{proposition}
    If $E$ is transitive, then the Lie algebroid complex $(\Gamma(\Lambda^{\bullet}L^*),\dd_L)$ is elliptic.
\end{proposition}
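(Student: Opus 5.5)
We compute the principal symbol of $\dd_L$ and show that the symbol complex is exact at every nonzero real covector. From the defining formula of $\dd_L$, the only terms differentiating the coefficients of $\varphi$ are $\pi(u_i)\varphi(\dots)$. Hence for $\varphi\in\Gamma(\Lambda^k L^*)$ and a function $g$ we have $\dd_L(g\varphi)=\dd_L g\wedge\varphi+g\,\dd_L\varphi$, where $\dd_L g=\pi^*\dd g\in\Gamma(L^*)$ is given by $(\dd_L g)(u)=\pi(u)(g)$, with $\pi$ extended complex-linearly. Up to a factor of $\ii$, the principal symbol at $\xi\in T^*_pM\setminus\{0\}$ is therefore
\[
\sigma_\xi(\varphi)=\rho(\xi)\wedge\varphi,\qquad \rho(\xi):=\xi_{\mathbb{C}}\circ\pi|_{L_p}\in L_p^*,
\]
where $\xi_{\mathbb{C}}$ is the complex-linear extension of $\xi$.

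The complex $\Lambda^\bullet L_p^*$ with differential given by wedging with a fixed covector $\theta$ is exact whenever $\theta\neq 0$. This is the standard Koszul fact: choose $e\in L_p$ with $\theta(e)=1$. Then $\iota_e$ is a contracting homotopy, since
\[
\iota_e(\theta\wedge\varphi)+\theta\wedge\iota_e\varphi=\theta(e)\,\varphi=\varphi.
\]
So ellipticity reduces to showing $\rho(\xi)\neq0$ for every real $\xi\neq0$.

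This is the key step, and it uses Lemma~\ref{lem: proj pi(L) to real part}. Suppose $\rho(\xi)=0$, so that $\xi(\pi(u))=0$ for all $u\in L_p$. Since $\xi$ is real, taking real parts gives
\[
\xi(\Re\,\pi(u))=\Re\,\xi(\pi(u))=0 \quad\text{for all } u\in L_p.
\]
By Lemma~\ref{lem: proj pi(L) to real part}, $\Re\circ\pi\colon L_p\to T_pM$ is surjective, so $\xi$ vanishes on $T_pM$, which contradicts $\xi\neq0$.

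Alternatively, one can argue via conjugation: $\xi\circ\pi$ vanishing on $L$ implies it vanishes on $\bar L$, hence on $E_{\mathbb{C}}=L\oplus\bar L$, which contradicts surjectivity of $\pi$. The only point needing care is the correct identification of the symbol, namely that the bracket terms in the definition of $\dd_L$ are tensorial in $\varphi$ modulo zeroth order terms. That is routine, so I expect no serious obstacle; it is the conceptual observation above that carries the argument.
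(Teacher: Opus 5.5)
Your proof is correct and follows the paper's argument. The symbol of $\dd_L$ at $\alpha$ is wedging with $\alpha\circ\pi|_L$, and this covector is nonzero for $\alpha\neq 0$ by taking real parts and using the surjectivity of $\Re\circ\pi$ on $L$; you also write out the Koszul contracting homotopy that the paper leaves implicit when it concludes that the symbol sequence is exact.
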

\begin{proof}
For $\alpha\in T^*M$, the symbol is given by
\begin{align*}
    s_{\alpha}(\dd_L)=\mathrm{pr}_{L^*}\left(\pi^*\alpha\right)\wedge\cdot\,.
\end{align*}
We choose $v\in L$ and compute
\begin{align*}
    \mathrm{pr}_{L^*}\left(\pi^*\alpha\right)(v)=\langle\pi^*\alpha,v\rangle=\alpha\circ\pi(v).
\end{align*}
Suppose $\alpha\circ\pi(v)=0$ for all $v\in L$. By decomposing the equation into real and imaginary parts and using Lemma~\ref{lem: proj pi(L) to real part}, this implies that $\alpha=0$. 

In particular, this shows that the symbol sequence is exact and therefore the complex $(\Gamma(\Lambda^{\bullet}L^*),\dd_L)$ is elliptic.
\end{proof}

\begin{corollary}
    Let $L$ be a generalized complex structure on a transitive Courant algebroid $E\to M$ over a compact manifold $M$. Then the cohomology $H^{\bullet}(M,L)$ of the complex~\eqref{eq: LA complex}, called \emph{Lie algebroid cohomology}, is a finite-dimensional graded ring.
\end{corollary}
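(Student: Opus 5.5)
The plan is to deduce the corollary from the preceding Proposition by standard elliptic theory on compact manifolds, and then to obtain the ring structure from the differential graded algebra property of $(\Gamma(\Lambda^\bullet L^*),\dd_L)$.

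\textbf{Finite-dimensionality.} By the preceding Proposition, $(\Gamma(\Lambda^\bullet L^*),\dd_L)$ is an elliptic complex of first order differential operators on the complex vector bundles $\Lambda^k L^*$ over $M$.
\begin{enumerate}
\item Choose Hermitian metrics on the bundles $\Lambda^k L^*$ and a volume form on $M$. This is possible since $M$ is compact, using a partition of unity.
\item Form the formal adjoint $\dd_L^*$ and the Laplacians $\Delta_L=\dd_L\dd_L^*+\dd_L^*\dd_L$. Exactness of the symbol sequence, established in the proof of the Proposition, means that each $\Delta_L$ has injective, hence invertible, principal symbol $s_\alpha(\dd_L)s_\alpha(\dd_L)^*+s_\alpha(\dd_L)^*s_\alpha(\dd_L)$ for $\alpha\neq 0$. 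So $\Delta_L$ is elliptic.
\item Apply standard Hodge theory for elliptic complexes on compact manifolds. The space $\mathcal{H}^k=\ker\Delta_L$ is finite-dimensional, and there is an $L^2$-orthogonal decomposition
$$\Gamma(\Lambda^kL^*)=\mathcal{H}^k\oplus\dd_L\Gamma(\Lambda^{k-1}L^*)\oplus \dd_L^*\Gamma(\Lambda^{k+1}L^*).$$
\item Conclude that $\ker\dd_L=\mathcal{H}^k\oplus \im\dd_L$. Hence $H^k(M,L)\cong\mathcal{H}^k$ is finite-dimensional.
\item Since $\Lambda^kL^*=0$ for $k>\operatorname{rank}L$, the total cohomology $\bigoplus_k H^k(M,L)$ is also finite-dimensional.
\end{enumerate}

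\textbf{Ring structure.} The wedge product descends to cohomology because $\dd_L$ is a graded derivation of the wedge product, as stated when the Lie algebroid differential was introduced. For $\dd_L$-closed $\varphi,\psi$ we have $\dd_L(\varphi\wedge\psi)=0$. Moreover, $(\dd_L\chi)\wedge\psi=\dd_L(\chi\wedge\psi)$ whenever $\dd_L\psi=0$. Therefore the class $[\varphi\wedge\psi]$ depends only on the classes of $\varphi$ and $\psi$. This makes $H^\bullet(M,L)$ a graded-commutative ring, with unit the class of the constant function $1$, which satisfies $\dd_L 1=0$.

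\textbf{Main obstacle.} The only nontrivial input is the passage from exactness of the symbol sequence to ellipticity of the Laplacian, together with the Hodge decomposition, for bundles of complex (rather than real) rank. I would take these from standard references on elliptic complexes over compact manifolds rather than reprove them. Everything else is formal.
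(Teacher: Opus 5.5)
Your argument is correct and takes the same route as the paper, which presents the corollary as an immediate consequence of the ellipticity proposition together with standard Hodge theory over a compact base (the Hodge decomposition and the isomorphism $\mathcal{H}^k_L(M)\cong H^k(M,L)$ are stated right afterwards), with the ring structure coming from $(\Gamma(\Lambda^\bullet L^*),\dd_L)$ being a differential graded algebra. One small fix: the corollary does not assume $M$ orientable, so define the $L^2$-pairing with a smooth positive density rather than a volume form. Such a density exists on any manifold, so it is not compactness that makes this choice possible.
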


For the remainder of this section, we will assume that $L$ is a generalized complex structure on a transitive Courant algebroid $E\to M$ over a compact orientable manifold $M$. We fix an auxiliary Riemannian metric on $M$ and a Hermitian bundle metric $h_L$ on $L$. These induce a scalar product $(\cdot,\cdot)$ on $\Gamma(M,\Lambda^{\bullet}L^*)$ via
\begin{align*}
    (\alpha,\beta):=\int_M h_L(\alpha,\beta) * 1.
\end{align*}
We denote by $\dd_L^*$ the formal adjoint of $\dd_L$ with respect to $(\cdot,\cdot)$ and define the Laplace operator 
\begin{align*}
\Delta_L:=\dd_L\dd_L^*+\dd_L^*\dd_L.    
\end{align*}
Moreover, we denote by  
\begin{align*}
    \mathcal{H}_L^{k}(M):=\ker \Delta_L|_{\Gamma(M,\Lambda^k L^*)}=\ker \dd_L|_{\Gamma(M,\Lambda^k L^*)}\cap \ker \dd_L^*|_{\Gamma(M,\Lambda^k L^*)}
\end{align*}
the $\Delta_L$-\emph{harmonic forms}.

\begin{theorem}[Hodge decomposition]
There exists a natural orthogonal decomposition 
\begin{align*}
    \Gamma(M,\Lambda^k L^*)=\dd_L\Gamma(M,\Lambda^{k-1}L^*)\oplus\mathcal{H}_L^k(M)\oplus \dd_L^*\Gamma(M,\Lambda^{k+1}L^*).
\end{align*}
Moreover, the canonical projection $\mathcal{H}^k_L(M)\rightarrow H^k(M,L)$ is an isomorphism.
\end{theorem}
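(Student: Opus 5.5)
The Hodge decomposition will follow from the general theory of elliptic complexes on compact manifolds, which applies because the proposition above shows that $(\Gamma(\Lambda^{\bullet}L^*),\dd_L)$ is elliptic. First, I form the Laplacian $\Delta_L=\dd_L\dd_L^*+\dd_L^*\dd_L$ on $\Gamma(M,\Lambda^kL^*)$. Its principal symbol at $\alpha\in T^*M\setminus\{0\}$ is $s_\alpha(\dd_L)s_\alpha(\dd_L)^*+s_\alpha(\dd_L)^*s_\alpha(\dd_L)$. Exactness of the symbol sequence makes this operator positive definite: if $\omega$ lies in its kernel, then $s_\alpha(\dd_L)\omega=0$ and $s_\alpha(\dd_L)^*\omega=0$. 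The first condition together with exactness puts $\omega$ in the image of $s_\alpha(\dd_L)$, and that image is orthogonal to $\ker s_\alpha(\dd_L)^*$, so $\omega=0$. Hence $\Delta_L$ is a second-order elliptic, formally self-adjoint operator on the Hermitian bundle $\Lambda^kL^*$. Here the Hermitian structure comes from $h_L$ and the Riemannian volume form, and orientability of $M$ is used to integrate.

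Next I invoke standard elliptic theory on a compact manifold: Gårding's inequality, Rellich compactness and elliptic regularity. These show that $\mathcal{H}^k_L(M)=\ker\Delta_L$ is finite-dimensional and consists of smooth sections. They also provide a Green operator $G$ and the harmonic projection $\Hproj$, satisfying $\id=\Hproj+\Delta_L G$ on smooth sections, with $G$ commuting with $\dd_L$ and $\dd_L^*$. The identity $\ker\Delta_L=\ker\dd_L\cap\ker\dd_L^*$ follows by integrating by parts: $(\Delta_L\omega,\omega)=\|\dd_L\omega\|^2+\|\dd_L^*\omega\|^2$.

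The decomposition itself comes from writing
\begin{align*}
\omega=\Hproj\omega+\dd_L(\dd_L^*G\omega)+\dd_L^*(\dd_LG\omega).
\end{align*}
The three summands are pairwise orthogonal. Indeed, $(\dd_L a,\dd_L^*b)=(\dd_L^2a,b)=0$, and harmonic forms are orthogonal to both images because they are annihilated by $\dd_L^*$ and by $\dd_L$ respectively. Orthogonality makes the sum direct.

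For the isomorphism with cohomology, take a $\dd_L$-closed form $\omega$. Then $\dd_L^*\dd_LG\omega$ is orthogonal to $\ker\dd_L$, since $(\dd_L^*\dd_LG\omega,\omega)=(\dd_LG\omega,\dd_L\omega)=0$; more to the point, it is closed and lies in the image of $\dd_L^*$, so $\|\dd_L^*\dd_LG\omega\|^2=(\dd_LG\omega,\dd_L\dd_L^*\dd_LG\omega)=0$, and it vanishes. Hence $\omega=\Hproj\omega+\dd_L(\dots)$, which gives surjectivity. For injectivity, a harmonic form that is exact, say $h=\dd_La$, satisfies $\|h\|^2=(a,\dd_L^*h)=0$.

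The only step beyond routine is verifying ellipticity of $\Delta_L$ from the exactness of the symbol complex, which is the linear-algebra argument above. The analytic input is the standard package for elliptic operators on vector bundles over compact manifolds, and I would cite it rather than reprove it. Naturality of the decomposition refers to its dependence only on the chosen metric data, and it is immediate from the construction.
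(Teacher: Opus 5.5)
Your proposal is correct and follows the paper's route. The paper proves that $(\Gamma(M,\Lambda^\bullet L^*),\dd_L)$ is elliptic: the symbol $\pr_{L^*}(\pi^*\alpha)\wedge\cdot$ is nonzero for $\alpha\neq 0$, since $\Re\circ\pi|_L$ is surjective when $E$ is transitive. It then takes the Hodge decomposition as the standard consequence of elliptic theory on compact manifolds, citing Wells, which is the package you unpack. One small simplification: for $\dd_L$-closed $\omega$ you get $\dd_L^*\dd_L\G_L\omega=0$ directly from $\dd_L\G_L\omega=\G_L\dd_L\omega=0$.
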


We denote by $\HprojL\colon \Gamma(M,\Lambda^{\bullet}L^*)\rightarrow \mathcal{H}_L^{\bullet}(M)$ the orthogonal projection and by $\G_L\colon\Gamma(M,\Lambda^{\bullet}L^*)\rightarrow\Gamma(M,\Lambda^{\bullet}L^*)$ the \emph{Green's operator}, satisfying 
    \begin{align*}
        \im \G_L=\ker\HprojL,\quad\ker \G_L=\im\HprojL,\quad \HprojL+\G_L\Delta_L=\id.
    \end{align*}
Moreover, we define the operator $\Q_L:=\dd_L^*\G_L$. We note the following standard identities (see e.g.\ \cite{Wells2007-er}).
\begin{proposition}
    The operators $\HprojL$, $\G_L$ and $\Q_L$ satisfy 
    \begin{enumerate}
        \item $\G_L\dd_L=\dd_L \G_L$, $\G_L\dd_L^*=\dd_L^*\G_L$;
        \item $\HprojL+\dd_L \Q_L+\Q_L\dd_L=\id$;
        \item $\Q_L^2=\dd_L^*\Q_L=\Q_L\dd_L^*=\HprojL \Q_L=\Q_L\HprojL=0$.
    \end{enumerate}
\end{proposition}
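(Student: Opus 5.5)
The plan is to derive all three groups of identities from the two defining properties of the Green's operator, $\HprojL+\G_L\Delta_L=\id$ and $\im \G_L=\ker\HprojL$, $\ker\G_L=\im\HprojL$, together with $\dd_L^2=0$ and the Hodge decomposition stated above. The first step is to show that $\G_L$ commutes with every operator commuting with $\Delta_L$. Since $\dd_L^2=0$, we have $\dd_L\Delta_L=\dd_L\dd_L^*\dd_L=\Delta_L\dd_L$, and dually $\dd_L^*\Delta_L=\Delta_L\dd_L^*$. Next we check that $\dd_L$ and $\dd_L^*$ annihilate harmonic forms, which holds because $\mathcal{H}_L=\ker\dd_L\cap\ker\dd_L^*$. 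We also check that $\HprojL\dd_L=0$ and $\HprojL\dd_L^*=0$. For the first, note that $\im\dd_L$ is orthogonal to $\mathcal{H}_L$ by the orthogonality of the Hodge decomposition. Hence $\HprojL$ commutes with $\dd_L$ and $\dd_L^*$, both composites being zero.

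Now write $\id=\HprojL+\Delta_L\G_L$. To justify this symmetric form, note that $\G_L\Delta_L=\Delta_L\G_L$: both sides vanish on $\mathcal{H}_L$, and on $\mathcal{H}_L^\perp$ the operator $\Delta_L$ is a bijection onto $\mathcal{H}_L^\perp$ by the Hodge decomposition, with $\G_L$ its inverse there. Then compute
\[
\G_L\dd_L=\G_L\dd_L(\HprojL+\Delta_L\G_L)=\G_L\Delta_L\dd_L\G_L=(\id-\HprojL)\dd_L\G_L=\dd_L\G_L,
\]
where the first term drops because $\dd_L\HprojL=0$ and the last because $\HprojL\dd_L=0$. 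The same argument gives $\G_L\dd_L^*=\dd_L^*\G_L$. This proves item 1.

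For item 2, compute
\[
\dd_L\Q_L+\Q_L\dd_L=\dd_L\dd_L^*\G_L+\dd_L^*\G_L\dd_L=(\dd_L\dd_L^*+\dd_L^*\dd_L)\G_L=\Delta_L\G_L=\id-\HprojL,
\]
using item 1 in the second equality.

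For item 3, the identities $\dd_L^*\Q_L=(\dd_L^*)^2\G_L=0$ and $\Q_L\dd_L^*=\dd_L^*\G_L\dd_L^*=(\dd_L^*)^2\G_L=0$ follow from $(\dd_L^*)^2=(\dd_L^2)^*=0$ and item 1. Then $\Q_L^2=\dd_L^*\G_L\dd_L^*\G_L=(\dd_L^*)^2\G_L^2=0$. Next, $\HprojL\Q_L=\HprojL\dd_L^*\G_L=0$, and $\Q_L\HprojL=\dd_L^*\G_L\HprojL=0$ since $\ker\G_L=\im\HprojL$.

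The only point needing care is establishing $\G_L\Delta_L=\Delta_L\G_L$ and that $\Delta_L$ restricts to an isomorphism of $\mathcal{H}_L^\perp$. This is the standard elliptic theory input, available because the complex is elliptic by the preceding proposition. Everything else is formal algebra.
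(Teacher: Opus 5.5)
Your proposal is correct and is exactly the standard Hodge-theoretic derivation that the paper omits. The paper simply lists these identities as standard with a reference to Wells; your argument uses $\HprojL+\G_L\Delta_L=\id$, $\ker\G_L=\im\HprojL$, $\im\G_L=\ker\HprojL$ and the orthogonality of the Hodge decomposition, which is the same route. The step you flag as external elliptic input is in fact formal: $\G_L(\Delta_L\G_L-\id)=-\HprojL\G_L=0$ shows that $\Delta_L\G_L-\id$ takes values in $\ker\G_L=\mathcal{H}_L$, and applying $\HprojL$ (which kills $\im\Delta_L$ because $\HprojL\dd_L=\HprojL\dd_L^*=0$) gives $\Delta_L\G_L=\id-\HprojL$ directly.
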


\section{Autoequivalences as tame Lie groups}\label{sect: all autoeq}

In this section, we will endow the group of autoequivalences of a $\mathcal{G}$-flat transitive Courant algebroid with a tame Fréchet Lie group structure. Most of the required concepts and results needed from Hamilton--Nash--Moser theory are reviewed in~\cite[Sec~3]{paperoncomplexdefs}. Those, which are not covered and required for the present paper are summarized in Section~\ref{appendix HNM} of the appendix. For a comprehensive introduction to Hamilton--Nash--Moser theory we refer the reader to~\cite{hamilton82}.

We will construct the tame Lie group structure in two steps: First, we will consider the group of autoequivalences of an exact Courant algebroid in Section~\ref{sect: autoequ exact case}. We will see that these admit a tame Fréchet Lie group structure and there is a tame Fréchet Lie subgroup of so-called \emph{exact} autoequivalences, which will be relevant to the deformation theory of generalized complex structures in Section~\ref{defothm:sec}. Building on these results, we will show in Section~\ref{sect: auteq G-flat transitive} that the group of autoequivalences of every $\mathcal{G}$-flat transitive Courant algebroid is a tame Lie group and there is again a tame subgroup of exact autoequivalences. In Section~\ref{sect: auteq acting on gc defs} we will then study the action of (exact) autoequivalences on generalized almost complex structures and show that this action is smooth tame. 

\subsection{Autoequivalences of exact Courant algebroids}\label{sect: autoequ exact case}

As a first step, we will show that the autoequivalences of an exact Courant algebroid form a tame Fréchet Lie group. Moreover, we will see that there is a tame Lie subgroup of \emph{exact} autoequivalences, whose infinitesimal action on sections is by inner derivations.

\subsubsection{The group of autoequivalences as a tame Lie group}\label{sect: autoequ of exact CA are Lie group}

In the following, we will endow the group of autoequivalences of an exact Courant algebroid over a compact manifold with a tame Lie group structure. Note that it was already proved by Rubio and Tipler in~\cite{rubio20} that the group of autoequivalences of an exact Courant algebroid over a compact manifold is a strong inverse limit Hilbert (ILH) Lie group. 

Since the definition of strong ILH Lie groups is even more involved than that of tame Fréchet Lie groups, we will refer to~\cite{Omori2016} for an introduction to strong ILH Lie groups and simply note that these are special kinds of Fréchet Lie groups that are modeled on an ILH chain, a particular type of Fréchet space, such that in a local chart around the identity, the group multiplication and the inversion map satisfy certain properties. Below, we will follow a different approach to describe a (tame) Fr\'echet Lie group structure for the group of autoequivalences, which has already been laid out in the master thesis of Heck~\cite{jans_thesis}. We will then see that this Fr\'echet Lie group structure is equivalent to the one described in~\cite{rubio20}.

For the remainder of this section, let $E\rightarrow M$ be the generalized tangent bundle with $H$-twisted Dorfman bracket over a compact manifold $M$. We will denote by 
\begin{align*}
    \Diff(M)_{[H]}:=\{f\in\Diff(M):[f^*H-H]=0\}
\end{align*}
the group of diffeomorphisms preserving $[H]\in H^3_{\mathrm{dR}}(M)$. For $f\in \Diff(M)$ the class $[H-f^*H]\in H^3_{\mathrm{dR}}(M)$ only depends on the connected component of $f\in \Diff(M)$. We conclude that it is a tame Fréchet Lie subgroup of $\Diff(M)$ (containing the connected component of the identity in $\Diff(M)$).  

Furthermore, we will fix an auxiliary Riemannian metric $h$ on $M$. Recall the atlas for $\Diff(M)$ defined using the Riemannian exponential map $\exp^h$ of $h$ (see e.g. \cite[Sec 3.3.1]{paperoncomplexdefs}). This gives rise to an atlas of $\Diff(M)_{[H]}$, such that in a neighborhood of $f\in \Diff(M)_{[H]}$, the tame Fr\'echet Lie group $\Diff(M)_{[H]}$ is locally modeled on a neighborhood of 0 in $\Gamma(M,f^*TM)$.

The autoequivalences $\Auteq(E)$ are in bijection with $\Diff(M)_{[H]}\times \Omega^2_{\mathrm{cl}}(M)$ via 
\begin{align*}
    \Diff(M)_{[H]}\times \Omega^2_{\mathrm{cl}}(M)\ni (f,b)\longmapsto F_{f,B(f)+b}\in \Auteq(E)
\end{align*}
where $B(f):=\QdR (H-f^*H)$. Here $\QdR=\dd^*\G_{\mathrm{dR}}$, where $\G_{\mathrm{dR}}$ is the Greens operator of the Laplacian $\Delta_{\mathrm{dR}}=\dd\dd^*+\dd^*\dd$ and $\dd^*$ is the formal adjoint of the de Rham differential (with respect to the $L^2$-inner product). We have the following: 
\begin{theorem}[\cite{jans_thesis}]\label{thm: autoequivalences of exact CA}
    Let $E\rightarrow M$ be the generalized tangent bundle of a compact manifold $M$ with $H$-twisted Dorfman bracket. Then its group of autoequivalences is a tame Fréchet Lie group. The underlying manifold structure is given by $\Diff(M)_{[H]}\times \Omega^2_{\mathrm{cl}}(M)$ and the multiplication is given by
    \begin{align*}
        \left((f_1,b_1),(f_2,b_2)\right)\longmapsto (f_3,b_3),
    \end{align*}
    where $f_3=f_1\circ f_2$ and
    \begin{align*}
        b_3= f_2^*b_1+b_2+[f_2^*,\QdR](H-f_1^*H).
    \end{align*}
    The inversion map is given by
    \begin{align*}
        (f,b)\longmapsto \left(f^{-1},-(f^{-1})^*b+[\QdR,(f^{-1})^*](H-f^*H)\right).
    \end{align*}
    The Lie algebra $\auteq(E)$ of the group of autoequivalences is the space $\X(M)\oplus \Omega^2_{\mathrm{cl}}(M)$ with bracket given by
\begin{align*}
[(X,b),(Y,c)]=\left(\mathcal{L}_Y X, \mathcal{L}_Y b-\mathcal{L}_X c+(\mathcal{L}_X \QdR\mathcal{L}_Y-\mathcal{L}_Y\QdR\mathcal{L}_X)H-\QdR(\mathcal{L}_{\mathcal{L}_X Y}H)\right).
\end{align*}
\end{theorem}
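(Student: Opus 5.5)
The strategy is to transport the group structure of $\Auteq(E)$ through the bijection
\[
\Phi\colon \Diff(M)_{[H]}\times \Omega^2_{\mathrm{cl}}(M)\to \Auteq(E),\qquad (f,b)\mapsto F_{f,B(f)+b},
\]
and then check that the transported multiplication and inversion are smooth tame maps between tame Fréchet manifolds.

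\textbf{Step 1: $\Phi$ is a bijection.}
By the description of $\Auteq(\mathbb{T}M,H)$, an element $F_{f,B}$ is an autoequivalence exactly when $\dd B=H-f^*H$. This forces $[H-f^*H]=0$, so $f\in\Diff(M)_{[H]}$. For such $f$, the form $H-f^*H$ is exact. The Hodge identity $\id=\mathrm{pr}_{\mathcal{H}}+\dd\QdR+\QdR\dd$, combined with $\mathrm{pr}_{\mathcal{H}}(H-f^*H)=0$ and $\dd(H-f^*H)=0$, gives $\dd B(f)=H-f^*H$. Hence $b:=B-B(f)$ is closed, and $\Phi$ is a bijection.

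\textbf{Step 2: the model space.}
$\Omega^2_{\mathrm{cl}}(M)=\ker\dd$ is a closed subspace of $\Omega^2(M)$ that is a tame direct summand, with tame projection $\id-\QdR\dd$. So it is a tame Fréchet space. $\Diff(M)_{[H]}$ is a union of connected components of $\Diff(M)$, hence a tame Fréchet Lie group. The product is therefore a tame Fréchet manifold.

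\textbf{Step 3: the multiplication formula.}
Using $F_{f_1,B_1}F_{f_2,B_2}=F_{f_1f_2,\,B_2+f_2^*B_1}$, the product has $B$-component $f_2^*B(f_1)+f_2^*b_1+B(f_2)+b_2$. Subtracting
\[
B(f_1f_2)=\QdR\bigl(H-f_2^*H\bigr)+\QdR f_2^*\bigl(H-f_1^*H\bigr)
\]
yields exactly $b_3=f_2^*b_1+b_2+[f_2^*,\QdR](H-f_1^*H)$. Inversion is computed the same way from $F_{f,B}^{-1}=F_{f^{-1},-(f^{-1})^*B}$.

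\textbf{Step 4: smooth tameness (main obstacle).}
The maps $(f,\omega)\mapsto f^*\omega$ and composition/inversion on $\Diff(M)$ are smooth tame (Hamilton). $\QdR$ is a linear pseudodifferential operator of order $-1$, hence tame. So $b_3$ is a composite of smooth tame maps and is itself smooth tame.

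The subtle point is that $b_3$ must land in $\Omega^2_{\mathrm{cl}}$ and the maps must be tame into that subspace. Closedness of $b_3$ follows automatically from the group law, and tameness into the subspace follows from the tame splitting in Step 2. The one real difficulty is the dependence $f\mapsto f^*$ inside the commutator, where derivatives of $f$ appear. I would handle this with Hamilton's result that pullback is smooth tame on the diffeomorphism group, written in the exponential charts of Step 2's atlas.

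\textbf{Step 5: the Lie algebra.}
The Lie algebra is identified with $\X(M)\oplus\Omega^2_{\mathrm{cl}}(M)$, the tangent space at $(\id,0)$. The bracket is obtained by differentiating the conjugation map $(g,h)\mapsto ghg^{-1}$ twice at the identity. Write $f_t,g_s$ for flows, use $\frac{\dd}{\dd t}f_t^*=\mathcal{L}_X$, and apply Hamilton's sign convention for the diffeomorphism group, which gives $\mathcal{L}_YX$ in the first slot. The second-order terms in the $\QdR$-commutators produce $(\mathcal{L}_X\QdR\mathcal{L}_Y-\mathcal{L}_Y\QdR\mathcal{L}_X)H-\QdR\mathcal{L}_{\mathcal{L}_XY}H$. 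This step is a routine though sign-sensitive computation.
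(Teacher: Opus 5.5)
Your proposal is correct and takes essentially the paper's route; the paper omits this proof and defers to the proof of Theorem~\ref{thm: Auteq of flat std trans is tame Lie group} and the Lie algebra computation that follows it, where it transports the structure through $(f,b)\mapsto F_{f,B(f)+b}$ with $B(f)=\QdR(H-f^*H)$, reads off the $b$-components of product and inverse, deduces smooth tameness from the tameness of $\QdR$ and of pullback, and obtains the bracket by differentiating the conjugation map twice at the identity. The differences are cosmetic: the paper proves smooth tameness of pullback itself (Proposition~\ref{prop: pullback}, writing forms as finite sums of $g\,\alpha_1\wedge\dots\wedge\alpha_k$ with exact $\alpha_i$) instead of citing Hamilton, and it uses curves $\exp^h(tX)$ rather than flows, which gives the same bracket since only first-order data at $t=0$ enter.
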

We will omit the proof of Theorem~\ref{thm: autoequivalences of exact CA} since most of the computations and arguments necessary for the proof will be explained in the proof of the more general Theorem~\ref{thm: Auteq of flat std trans is tame Lie group}.

\begin{remark}\label{rk: from rubio-tipler to our tame structure}
    To argue that the Fréchet Lie group structure obtained in Theorem~\ref{thm: autoequivalences of exact CA} is indeed equivalent to that described in~\cite{rubio20}, let us briefly review the construction in \cite{rubio20}. 

    First, Rubio and Tipler consider the group $\Diff(M)\ltimes \Omega^2(M)$ with neutral element $(\id,0)$, multiplication given by \[(f_1,B_1)(f_2,B_2)=(f_1\circ f_2,f_2^*B_1+B_2)\] and inversion given by $(f,B)^{-1}=(f^{-1},-B)$. This is a strong ILH Lie group and in particular a Fréchet Lie group. Moreover, elements of this group are precisely equivalences of the generalized tangent bundle preserving both the anchor map and the scalar product. Then Rubio and Tipler argue that the group of autoequivalences of the generalized tangent bundle $E$ with $H$-twisted Dorfman bracket is a strong ILH Lie subgroup of $\Diff(M)\ltimes \Omega^2(M)$ by defining the map\footnote{In \cite{rubio20}, the map $\tilde{\rho}$ is defined with a minus sign in front of $\dd B$, due to a different sign convention for $H$.}
    \begin{align*}
        \tilde{\rho}\colon \Omega^3(M)\times (\Diff(M)\ltimes \Omega^2(M))&\longrightarrow \Omega^3(M)\\
        \left(h,(f,B)\right)&\longmapsto f^*h+\dd B,
    \end{align*}
    observing that $\Auteq(E)=\tilde{\rho}(H,\cdot)^{-1}(H)$ and using an implicit function theorem to show that this indeed has a strong ILH submanifold structure. Again, since a strong ILH Lie group structure induces a Fréchet Lie group structure, this proves that the autoequivalences of the generalized tangent bundle form a Fréchet Lie group.

    To see how their construction relates to ours, we consider the (tame) Fréchet Lie group $\Diff(M)\ltimes_H \Omega^2(M)$, whose underlying (tame) Fréchet manifold is $\Diff(M)\times \Omega^2(M)$ with neutral element $(\id,0)$, multiplication map \[(f_1,b_1)\circ (f_2,b_2)=(f_1\circ f_2,b_3),\]
    where \[ b_3= f_2^*b_1+b_2+[f_2^*,\QdR](H-f_1^*H)\] and with inversion map \[(f,b)^{-1}=\left(f^{-1},-(f^{-1})^*b+[\QdR,(f^{-1})^*](H-f^*H)\right).\]
    There is a smooth (tame) isomorphism of Lie groups
    \begin{align*}
        \Phi_H\colon \Diff(M)\ltimes \Omega^2(M)&\longrightarrow \Diff(M)\ltimes_H \Omega^2(M)\\
         (f,B)&\longmapsto \left(f, B-\QdR(H-f^*H)\right).
    \end{align*}
    We observe that
    \begin{align*}
        \Phi_H\left(\tilde{\rho}(H,\cdot)^{-1}(H)\right)=\left(\tilde{\rho}(H,\Phi_H^{-1}\cdot)\right)^{-1}(H)=\Diff(M)_{[H]}\times \Omega^2_{\mathrm{cl}}(M).
    \end{align*}
    Moreover, the map $\Phi_H^{-1}(f,b)=\left(f,b+\QdR(f^*H-H)\right)$ is precisely the bijection that we used in order to identify $\Auteq(E)$ with $\Diff(M)_{[H]}\times\Omega^2_{\mathrm{cl}}(M)$. This shows that the (tame) Fr\'echet Lie group structure described in Theorem~\ref{thm: autoequivalences of exact CA} is related to the one in \cite{rubio20} by a conjugation with $\Phi_H$. Since this is a smooth (tame) map, we conclude that both Fr\'echet Lie group structures are equivalent (see Section~\ref{appendix HNM} of the appendix for the facts that $\QdR$ is tame linear and that the pullback of forms is smooth tame). 
\end{remark}

Finally, note that by choosing a splitting, we can use Theorem~\ref{thm: autoequivalences of exact CA} in order to find that the group of autoequivalences of every exact Courant algebroid is a tame Fréchet Lie group. To see that this is independent of the choice of splitting, recall that different choices of splittings are related by conjugation with a $B$-field transformation. This gives rise to a tame Lie group isomorphism between the groups of autoequivalences corresponding to different choices of splittings.

\subsubsection{Exact autoequivalences of an exact Courant algebroid}\label{sect: exact autoequ of exact CA}
In this section, we will introduce a tame Lie subgroup of the Courant autoequivalences, which will be important in the proof of local completeness of the deformation family in Section~\ref{defothm:sec}. First, we will describe the corresponding Lie subalgebra. To this end, we consider the map
\begin{align*}
    q\colon \X(M)\oplus \Omega_{\mathrm{cl}}^2(M)&\longrightarrow \mathcal{H}^2(M)\\
    (X,b)&\longmapsto \Hproj(b+\iota_X H)= \Hproj(b-\QdR \mathcal{L}_X H+\iota_X H),
\end{align*}
where $\mathcal{H}^2(M)$ denotes the harmonic 2-forms on $M$ (with respect to the auxiliary Riemannian metric $h$) and $\pr_{\mathcal{H}}$ denotes the projection to harmonic forms.

We denote $\autex(E):=\ker q\subset \auteq(E)$. Note that since $\dd\QdR\mathcal{L}_XH=\dd\QdR \dd\iota_X H=\dd \iota_X H$, for $b\in \Omega^2_{\mathrm{cl}}(M)$, we have 
\[ \dd(b-\QdR \mathcal{L}_X H+\iota_X H)=0.\]
Hence, $(X,b)\in \ker q$ if and only if there is some $\alpha \in \Omega^1(M)$ such that 
\[b-\QdR \mathcal{L}_X H+\iota_X H=\dd\alpha.\]

\begin{proposition}
    The space $\autex(E):=\ker q\subset \auteq(E)$ is a tame Lie subalgebra of finite codimension. 
\end{proposition}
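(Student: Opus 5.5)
Three things have to be checked: that $\autex(E)=\ker q$ is closed under the bracket of Theorem~\ref{thm: autoequivalences of exact CA}, that it has finite codimension, and that it is tame, i.e.\ a tame direct summand. The most conceptual route is to identify $\ker q$ with a natural quotient of the inner derivations. For $(X,b)\in\auteq(E)$ write $\beta(X,b):=b-\QdR\mathcal{L}_XH+\iota_XH$. As observed above, $\beta(X,b)$ is closed, and $(X,b)\in\ker q$ exactly when $\beta(X,b)=\dd\alpha$ for some $\alpha\in\Omega^1(M)$. I would first show that the derivation of $E$ induced by $(X,b)$ (differentiating $t\mapsto F_{f_t,B(f_t)+tb}$) acts on sections by $Y+\eta\mapsto \mathcal{L}_XY+\mathcal{L}_X\eta-\iota_Y\beta(X,b)$. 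This follows from $\frac{d}{dt}B(f_t)|_{t=0}=-\QdR\mathcal{L}_XH$ together with the standard description of derivations of $(\mathbb{T}M,H)$ as pairs $(X,\beta)$ with $\dd\beta=-\mathcal{L}_XH+\dd\iota_XH=\dots$. Comparing with $\ad(X+\xi)=[X+\xi,\cdot]_H$, which acts by $\mathcal{L}_X$ and $-\iota_Y(\dd\xi-\iota_XH)$ up to sign conventions, shows that $\ker q$ consists precisely of those elements whose derivation is inner.

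\textbf{Subalgebra.} The map $\auteq(E)\to\Der(E)$ obtained by differentiating the action is a Lie algebra (anti)homomorphism, because it is the infinitesimal action of the group $\Auteq(E)$ on $\Gamma(E)$. The inner derivations form an ideal, since $[\mathcal{D},\ad u]=\ad(\mathcal{D}u)$, and in particular a subalgebra. The preimage of a subalgebra is a subalgebra, so $\ker q$ is closed under the bracket. If one wants to avoid the derivation picture, the fallback is to compute $\beta$ of the bracket formula directly and check that it is exact modulo $\mathcal{L}_X\dd\alpha_2-\mathcal{L}_Y\dd\alpha_1$ plus $\QdR$-terms. I expect this direct computation to be the main obstacle: the commutator terms $\mathcal{L}_X\QdR\mathcal{L}_YH$ have to be handled via $\dd\QdR+\QdR\dd=\id-\Hproj$ and Cartan's formula, and showing that the leftover is harmonic-free is exactly where care is needed. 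So I would rely on the homomorphism argument, verifying just the identification of the infinitesimal action.

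\textbf{Finite codimension and tameness.} The map $q$ is linear with values in the finite-dimensional space $\mathcal{H}^2(M)$. Hence $\ker q$ has codimension at most $\dim H^2_{\mathrm{dR}}(M)$. The map $q$ is tame, because $\Hproj$ (a smoothing operator), $\QdR$, $\mathcal{L}_X$ and $\iota_X$ are tame linear, as recalled in Section~\ref{appendix HNM}; it is therefore also continuous, so $\ker q$ is closed. To get a tame complement, choose a basis $h_1,\dots,h_m$ of $q(\auteq(E))$ and preimages $(0,h_i)$; these lie in $\auteq(E)$ because harmonic forms are closed, and $q(0,h_i)=h_i$. The projection $(X,b)\mapsto (X,b)-\sum_i c_i(q(X,b))(0,h_i)$, where the $c_i$ are the coordinates with respect to the basis $h_1,\dots,h_m$, is then a tame linear projection onto $\ker q$. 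This makes $\ker q$ a tame direct summand, i.e.\ a tame Lie subalgebra of finite codimension.
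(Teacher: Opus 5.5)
Your argument is correct, but for the subalgebra property it takes a different route from the paper.

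\textbf{How the paper does it.} The paper argues by direct computation. It writes $b=\dd\alpha_1+\QdR\mathcal{L}_XH-\iota_XH$ and $c=\dd\alpha_2+\QdR\mathcal{L}_YH-\iota_YH$, inserts these into the bracket of Theorem~\ref{thm: autoequivalences of exact CA}, and rewrites the second component as an exact form plus $\QdR\mathcal{L}_{\mathcal{L}_YX}H-\iota_{\mathcal{L}_YX}H$.

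The obstacle you anticipated there does not occur:
\begin{itemize}
\item the terms $(\mathcal{L}_X\QdR\mathcal{L}_Y-\mathcal{L}_Y\QdR\mathcal{L}_X)H$ cancel exactly against $\mathcal{L}_Y\QdR\mathcal{L}_XH-\mathcal{L}_X\QdR\mathcal{L}_YH$ coming from $\mathcal{L}_Yb-\mathcal{L}_Xc$;
\item the leftover $-\QdR\mathcal{L}_{\mathcal{L}_XY}H$ is already the $\QdR$-term required for the vector field $\mathcal{L}_YX$;
\item the rest is Cartan calculus with $\dd H=0$, so no harmonic projections enter.
\end{itemize}

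\textbf{How your route compares.} You identify $\ker q$ with the preimage of the inner derivations under the infinitesimal action, and then use $[\ad u,\ad v]=\ad[u,v]$. This explains why the statement holds. It also anticipates the later corollary $\autex(E)=\im(-\ad)$ and carries over verbatim to the $\mathcal{G}$-flat case.

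The price is that you need the infinitesimal action to be a Lie algebra homomorphism. You get this from the smoothness of $\mu_E$ plus Fréchet Lie theory; the paper proves that smoothness only afterwards, though independently. You can avoid the machinery with a computation no longer than the paper's. Set $\rho(X,b)(Z+\zeta):=-\mathcal{L}_X(Z+\zeta)+\iota_Z(b-\QdR\mathcal{L}_XH)$. Then one finds
\[[\rho(X,b),\rho(Y,c)](Z+\zeta)=\mathcal{L}_{\mathcal{L}_XY}(Z+\zeta)+\iota_Z\bigl(\mathcal{L}_Y(b-\QdR\mathcal{L}_XH)-\mathcal{L}_X(c-\QdR\mathcal{L}_YH)\bigr),\]
which is exactly $\rho$ applied to the bracket of Theorem~\ref{thm: autoequivalences of exact CA}.

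Your tameness and finite-codimension argument is the paper's splitting $\auteq(E)=\autex(E)\oplus\mathcal{H}^2(M)$, with $\mathcal{H}^2(M)$ embedded as $\{(0,h)\}$, written out in more detail. Since $q(0,h)=h$, the map $q$ is onto and the codimension is exactly $\dim\mathcal{H}^2(M)$.

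\textbf{A correction to your derivation formula.} The derivation induced by $(X,b)$ is, up to overall sign, $Y+\eta\mapsto\mathcal{L}_X(Y+\eta)-\iota_Y(b-\QdR\mathcal{L}_XH)$; this is the paper's computation of $D_F\mu_E$. So the two-form that appears is $\beta(X,b)-\iota_XH$, whose differential is $-\mathcal{L}_XH$. It is not the closed form $\beta(X,b)$, as you wrote.

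This matters for the matching step. $\ad(X+\xi)$ acts by $\mathcal{L}_X-\iota_Y(\dd\xi-\iota_XH)$. With your formula as written, matching would give $\beta(X,b)=\dd\xi-\iota_XH$, which is not the condition defining $\ker q$. With the corrected formula it gives $\beta(X,b)=\dd\xi$. Comparing vector parts pins down the vector field of the inner derivation, so both inclusions in your identification of $\ker q$ with the preimage of the inner derivations then hold.
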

\begin{proof}
Since $\mathcal{H}^2(M)$ is finite-dimensional and $\auteq(E)=\autex(E)\oplus \mathcal{H}^2(M)$, it follows that $\autex(E)$ is a tame subspace of finite codimension. To see that it is a Lie subalgebra, we consider $(X,b),(Y,c)\in\autex(E)$. Then there are some $\alpha,\beta\in\Omega^1(M)$ such that 
\begin{align*}
    b=\dd \alpha+\QdR \mathcal{L}_X H-\iota_X H,\quad c=\dd \beta +\QdR \mathcal{L}_Y H-\iota_Y H.
\end{align*} 
We compute
\begin{align*}
    &[(X,b),(Y,c)]\\
    &=(\mathcal{L}_Y X,\mathcal{L}_Y(\dd\alpha +\QdR \mathcal{L}_X-\iota_X H)-\mathcal{L}_X(\dd\beta +\QdR \mathcal{L}_Y H-\iota_Y H)+\\&\quad+(\mathcal{L}_X \QdR\mathcal{L}_Y-\mathcal{L}_Y\QdR\mathcal{L}_X)H-\QdR(\mathcal{L}_{\mathcal{L}_XY}H))\\
    &=(\mathcal{L}_Y X,\mathcal{L}_Y \dd \alpha-\mathcal{L}_X \dd \beta-\mathcal{L}_X\iota_Y H+\mathcal{L}_Y \iota_X H-\QdR\mathcal{L}_{\mathcal{L}_XY}H)\\
    &=(\mathcal{L}_Y X,\dd(\iota_Y\dd \alpha-\iota_X\dd\beta+ \iota_X \iota_Y H)+\iota_X\mathcal{L}_Y H-\mathcal{L}_Y \iota_X H-\QdR\mathcal{L}_{\mathcal{L}_XY}H)\\
    &=(\mathcal{L}_Y X,\dd(\iota_X\dd\beta-\iota_Y\dd \alpha- \iota_X \iota_Y H)+\QdR\mathcal{L}_{\mathcal{L}_Y X}H-\iota_{\mathcal{L}_Y X}H),
\end{align*}
where we used the identities $\mathcal{L}_X=\dd \iota_X+\iota_X\dd$ and $[\mathcal{L}_Y,\iota_X]=\iota_{\mathcal{L}_Y X}$, as well as the fact that $H$ is closed. This shows that $[(X,b),(Y,c)]\in \autex(E)$ for all $(X,b),(Y,c)\in\autex(E)$ and hence $\autex(E)$ is a subalgebra.
\end{proof}
The Lie subalgebra $\autex(E)$ of $\auteq(E)$ in fact integrates to a tame Lie subgroup of $\Auteq(E)$.
\begin{theorem}[\cite{rubio20}]
There is a tame Lie subgroup of $\Auteq(E)$, the group of \emph{exact Courant autoequivalences} $\Autex(E)$ such that $\operatorname{Lie}(\Autex(E))=\autex(E)$.     
\end{theorem}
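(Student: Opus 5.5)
The plan is to build $\Autex(E)$ explicitly as the kernel of a homomorphism to a finite-dimensional group, rather than abstractly integrating $\autex(E)$. Define
\begin{align*}
    \tilde q\colon \Auteq(E)&\longrightarrow \mathcal{H}^2(M)\quad\text{(or a quotient torus/vector group)},\\
    (f,b)&\longmapsto \Hproj\big(B(f)+b\big)+\text{correction},
\end{align*}
chosen so that $\tilde q$ is a group homomorphism on a suitable open subgroup and $d\tilde q_{\id}=q$. For the correction term I would use a path $f_t$ from $\id$ to $f$ in the identity component of $\Diff(M)$ with generating vector fields $X_t$, and add $\int_0^1 \Hproj\big(f_t^{*}\iota_{X_t}H\big)\,dt$. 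The motivation is that an exact autoequivalence should be a composition of time-one flows of inner derivations $\ad(X+\xi)$. Such a flow is $F_{f,B}$ with $B=\int_0^1 f_t^*(\dd\xi_t+\iota_{X_t}H)\,dt$, whose harmonic part is exactly this integral.

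The key steps, in order, are as follows.
\begin{enumerate}
\item Check that $\tilde q$ is independent of the path up to periods. On $\Diff_0(M)$, the homotopy formula and $\dd H=0$ show that the $\iota_{X_t}H$-integral changes only by elements of a period lattice $\Lambda\subset H^2(M,\R)$. So $\tilde q$ is well defined as a map to $\mathcal{H}^2(M)/\Lambda$, restricted to the open subgroup $\Diff_0(M)\times\Omega^2_{\mathrm{cl}}(M)$.
\item Verify the homomorphism property using the product formula from Theorem~\ref{thm: autoequivalences of exact CA}, together with $\Hproj f^*=\Hproj$ on closed forms for $f\in\Diff_0(M)$.
\item Show that $\tilde q$ is smooth tame and a submersion, since its differential $q$ is surjective onto the finite-dimensional $\mathcal{H}^2(M)$ (take $X=0$, $b$ harmonic).
\item Apply the finite-codimensional preimage theorem for tame Fréchet manifolds (Section~\ref{appendix HNM}). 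This shows $\ker\tilde q$ is a tame submanifold. Being the kernel of a homomorphism, it is a subgroup, and its Lie algebra is $\ker q=\autex(E)$.
\end{enumerate}

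An alternative route, following \cite{rubio20}, is to transport their strong ILH subgroup via $\Phi_H$ from Remark~\ref{rk: from rubio-tipler to our tame structure}. One would then check tameness of its charts, which reduce to graphs over $\autex(E)$ of tame maps. I would use this as a consistency check that the two definitions give the same group.

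The main obstacle is step 1: making the path-integral correction well defined and smooth tame as a function of $f$. For smoothness, I would use exponential-chart paths $f_t=\exp^h(t\,\xi)$, which depend smoothly tamely on $f$ near the identity, and then translate to other points. The lattice ambiguity is handled either by working with the connected component or by passing to the quotient $\mathcal{H}^2/\Lambda$, which is harmless for Lie-subgroup purposes.
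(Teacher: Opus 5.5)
Your route differs from the paper's. The paper constructs nothing: it takes Rubio--Tipler's strong ILH subgroup integrating the inner derivations, transports it into $\Auteq(E)$ via $\Phi_H$, and gets tameness from Freyn's lemma on finite-codimensional submanifolds. A flux homomorphism would give a self-contained proof, and your steps 2--4 are fine \emph{provided} $\tilde q$ lands in a Hausdorff finite-dimensional Lie group. (The preimage theorem you cite is not in the appendix, but for a finite-dimensional target it is elementary.)

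The gap is the word ``lattice'' in step 1. The ambiguity group $\Lambda$ is the image of the $H$-flux map $\pi_1(\Diff_0(M))\to H^2(M,\R)$, $[f_t]\mapsto[\int_0^1 f_t^*\iota_{X_t}H\,\dd t]$, and nothing forces it to be discrete. Take $M=T^4=\R^4/\mathbb{Z}^4$ and $H=a\,\dd x^1\wedge\dd x^2\wedge\dd x^3+b\,\dd x^1\wedge\dd x^2\wedge\dd x^4$ with $a/b\notin\mathbb{Q}$. The translation loops in the $x^3$- and $x^4$-directions have fluxes $a[\dd x^1\wedge\dd x^2]$ and $b[\dd x^1\wedge\dd x^2]$, so $\Lambda$ contains a dense subgroup of a line. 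Then $\mathcal{H}^2(M)/\Lambda$ is not Hausdorff, and $\tilde q$ is not a submersion onto a manifold. Moreover, $\ker\tilde q$ contains the closed $B$-field transform $F_{c\,\dd x^1\wedge\dd x^2}$ exactly for the countable dense set of $c$ with $c[\dd x^1\wedge\dd x^2]\in\Lambda$, so it is not even locally closed. Restricting to the identity component does not help, since all these elements lie in it.

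Nor can a cleverer choice of $\tilde q$ fix this: in this example \emph{no} tame Lie subgroup with Lie algebra $\autex(E)$ exists. Let $\tau_v$ denote translation by $v$. Then $\Phi_t:=(\tau_{te_3},-ta\,\dd x^1\wedge\dd x^2)$ is a one-parameter subgroup of $\Auteq(E)$ with generator $(\partial_3,-a\,\dd x^1\wedge\dd x^2)=-\ad(\partial_3)\in\autex(E)$ and $\Phi_1=F_{-a\,\dd x^1\wedge\dd x^2}$; likewise for $x^4$ and $b$. From the paper's bracket formula one gets $q[(X,b),(Y,c)]=\Hproj\,\dd(\iota_Y\iota_XH)=0$, so $\theta:=q\circ\omega^R$ is closed, where $\omega^R$ is the right Maurer--Cartan form.

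Now suppose $K$ is a tame Lie subgroup with $T_{\id}K=\autex(E)$. A local primitive of $\theta$ vanishes on $K$ near $\id$. Both $K$ and the primitive's zero set are submanifolds of codimension $\dim\mathcal{H}^2(M)$ tangent to $\ker\theta$, so they agree near $\id$. Since $\theta(\dot\Phi_t)=0$, we get $\Phi_t\in K$ for small $t$, hence for all $t$. Being locally closed, $K$ is closed, so $F_{c\,\dd x^1\wedge\dd x^2}\in K$ for all $c\in\overline{a\mathbb{Z}+b\mathbb{Z}}=\R$. The velocity of the curve $c\mapsto F_{c\,\dd x^1\wedge\dd x^2}$ in $K$ then gives $(0,\dd x^1\wedge\dd x^2)\in\autex(E)$, which is false because $\dd x^1\wedge\dd x^2$ is not exact.

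So your argument proves the statement exactly under an extra hypothesis making $\Lambda$ discrete. For example, it suffices that $[H]$ is a real multiple of a rational class, since then the fluxes pair $H_2(M,\mathbb{Z})$ into a cyclic subgroup of $\R$; in particular it suffices that $H$ is exact. The paper's proof defers entirely to Rubio--Tipler and does not address this point, so it offers no way around such a hypothesis either.
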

\begin{proof}
    In the setup of \cite{rubio20}, the Lie algebra $\autex(E)$ corresponds to the Lie (sub-)algebra $$\left\{(X,b)\in\X(M)\oplus\Omega^2(M):\iota_X H-b=\dd\alpha\text{ for some }\alpha\in\Omega^1(M)\right\}$$
    with Lie bracket
    \[[(X_1,b_1),(X_2,b_2)]=([X_2,X_1],\mathcal{L}_{X_2}b_1-\mathcal{L}_{X_1}b_2).\]
    It was proved in~\cite{rubio20} that this Lie algebra integrates to a strong ILH Lie subgroup of the autoequivalences. In particular, this strong ILH Lie subgroup is a Fréchet submanifold, which under conjugation with $\Phi_H$ (cf.\ Remark~\ref{rk: from rubio-tipler to our tame structure}) is mapped to a Fréchet submanifold $\Autex(E)$ of the tame Fréchet manifold $\Auteq(E)$. Since $\Autex(E)$ is of finite codimension it follows from~\cite[Lem 3.5]{freyn15} that it is a tame Fréchet submanifold and hence a tame Lie subgroup. 
\end{proof}

In the following, it will be useful for us to identify $\autex(E)$ with a tame subspace of $\Gamma(M,E)$.

\begin{proposition}\label{prop: hex isom to ad perp exact case}
    There is a tame isomorphism
    \begin{align*}
    \autex(E)\cong \{X+\alpha\in \Gamma(M,E):\alpha\in \dd^*\Omega^2(M)\}\subset \Gamma(M,E),
\end{align*}
    where $\dd^*$ is the formal adjoint of $\dd$ with respect to the $L^2$-inner product induced by the auxiliary metric $h$.
\end{proposition}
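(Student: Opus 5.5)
We construct the isomorphism explicitly from the description of $\autex(E)=\ker q$ obtained above. There, $(X,b)\in\autex(E)$ if and only if
\[b-\QdR\mathcal{L}_XH+\iota_XH=\dd\alpha \quad\text{for some }\alpha\in\Omega^1(M).\]
The $1$-form $\alpha$ is determined only up to closed forms. We normalize it by requiring $\alpha\in\dd^*\Omega^2(M)$. By the Hodge decomposition $\Omega^1=\dd\Omega^0\oplus\mathcal{H}^1\oplus\dd^*\Omega^2$, the map $\dd$ is injective on $\dd^*\Omega^2(M)$, and $\dd$ restricted to it is a bijection onto the exact $2$-forms. Its inverse is $\QdR=\dd^*\G_{\mathrm{dR}}$, because $\dd\QdR\beta=\beta$ for exact $\beta$, by $\Hproj+\dd\QdR+\QdR\dd=\id$.

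We therefore define
\[\Psi\colon\autex(E)\to\Gamma(M,E),\qquad \Psi(X,b):=X+\QdR\bigl(b-\QdR\mathcal{L}_XH+\iota_XH\bigr).\]
Its image lies in $\{X+\alpha:\alpha\in\dd^*\Omega^2(M)\}$, since $\im\QdR\subset\dd^*\Omega^2$.

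The candidate inverse is
\[\Phi(X+\alpha):=\bigl(X,\;\dd\alpha+\QdR\mathcal{L}_XH-\iota_XH\bigr).\]
We check the following points.
\begin{enumerate}
\item The second component $b$ of $\Phi(X+\alpha)$ is closed. Indeed, $\dd\QdR\mathcal{L}_XH=\dd\QdR\dd\iota_XH=\dd\iota_XH$, using that $H$ is closed and $\QdR\dd\dd=0$, so $\dd b=\dd\iota_XH-\dd\iota_XH=0$.
\item $\Phi(X+\alpha)$ lies in $\ker q$, since $b-\QdR\mathcal{L}_XH+\iota_XH=\dd\alpha$ is exact.
\item $\Psi\circ\Phi=\id$. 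This amounts to $\QdR\dd\alpha=\alpha$ for $\alpha\in\dd^*\Omega^2$, which follows from $\alpha=\Hproj\alpha+\dd\QdR\alpha+\QdR\dd\alpha$ together with $\Hproj\alpha=0$ and $\QdR\alpha=\QdR\dd^*\beta=0$.
\item $\Phi\circ\Psi=\id$. This amounts to $\dd\QdR\gamma=\gamma$ for the exact form $\gamma=b-\QdR\mathcal{L}_XH+\iota_XH$, which holds because $\Hproj\gamma=0$ and $\dd\gamma=0$.
\end{enumerate}

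For tameness, both $\Psi$ and $\Phi$ are built from linear differential operators ($\dd$, $\mathcal{L}_X$ applied to the fixed form $H$, and $\iota_X H$) and the pseudodifferential operator $\QdR$. $\QdR$ is tame linear (see Section~\ref{appendix HNM}), and linear differential operators are tame. Hence $\Psi$ and $\Phi$ are tame linear maps.

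The target $\{X+\alpha:\alpha\in\dd^*\Omega^2\}$ is a tame subspace of $\Gamma(M,E)$. It is the image of the tame projection $\id-\Hproj-\dd\QdR$ on the $T^*M$-component, and this projection equals $\QdR\dd$. So the target is a tame direct summand and is itself a tame Fréchet space. Since $\autex(E)$ is a tame subspace of $\auteq(E)$ by the preceding proposition, $\Psi$ is a tame isomorphism between tame spaces.

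The main obstacle is the bookkeeping of Hodge identities in points 3 and 4, in particular ensuring that the normalization $\alpha\in\dd^*\Omega^2$ makes $\alpha$ unique; everything else is routine.
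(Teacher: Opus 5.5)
Your proposal is correct and is essentially the paper's proof: your $\Phi$ is the paper's map $\tilde q\colon X+\alpha\mapsto X+\dd\alpha+\QdR\mathcal{L}_XH-\iota_XH$ restricted to $(\ker\tilde q)^{\perp}=\X(M)\oplus\dd^*\Omega^2(M)$ (with $\ker\tilde q=\Omega^1_{\mathrm{cl}}(M)$), and your $\Psi$ is exactly the inverse $(X,b)\mapsto X+\QdR(b-\QdR\mathcal{L}_XH+\iota_XH)$ that the paper writes down. The only difference is that you check both compositions explicitly with the Hodge identities, whereas the paper leaves that verification implicit.
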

\begin{proof}
    We may think of $\autex(E)$ as the image of the tame linear map (see Section~\ref{appendix HNM} of the appendix for the fact that $\QdR$ is tame linear)
    \begin{align*}
        \tilde{q}: \Gamma(M,E)&\longrightarrow \auteq(E),\\
        X+\alpha &\longmapsto X+\dd \alpha +\QdR\mathcal{L}_X H-\iota_X H.
    \end{align*}
    We observe that $\ker \tilde{q}=\Omega^1_{\mathrm{cl}}(M)\subset \Gamma(M,E)$. Therefore, we have $$\im \tilde{q}\cong (\ker \tilde{q})^{\perp}=\X(M)\oplus \dd^*\Omega^2(M),$$ where the orthogonal complement is taken with respect to the $L^2$-inner product defined by the bundle metric $h\oplus h^*$ on $E= TM\oplus T^*M$. To see that the inverse map is tame, note that it is given by 
    \begin{align*}
        \im \tilde{q}\ni(X,b)&\longmapsto \left(X,\, \QdR(b-\QdR \mathcal{L}_XH+\iota_X H)\right)\in\X(M)\oplus \dd^*\Omega^2(M).
    \end{align*}
\end{proof}

In a neighborhood $\mathcal{V}$ of $\id \in \Autex(E)$, we consider a chart $\Phi$ taking values in some neighborhood $\mathcal{U}\subset\autex(E)$ around $0$. We may assume that the differential of this chart at the identity is the identity map on $\autex(E)$, $D\Phi(\id)=\id$. We denote by $F_v:=\Phi^{-1}(v)$ the exact autoequivalence in $\mathcal{V}$ corresponding to $v\in\mathcal{U}$ with respect to this chart. 

In the following we will see that the Lie algebra of the exact autoequivalences can in fact be identified with the Lie algebra of inner derivations. First, we consider the natural action of autoequivalences on sections $u\in\Gamma(M,E)$, given by
\begin{align*}
   \mu_E\colon(F,u)\longmapsto F\circ u\circ f^{-1},
\end{align*}
where $f$ is the diffeomorphism covered by $F$. 
\begin{proposition}
    The map $\mu_E\colon \Auteq(E)\times \Gamma(M,E)\rightarrow \Gamma(M,E)$ is smooth tame. 
\end{proposition}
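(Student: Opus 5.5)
We use the global chart of Theorem~\ref{thm: autoequivalences of exact CA}, which identifies $\Auteq(E)$ with $\Diff(M)_{[H]}\times\Omega^2_{\mathrm{cl}}(M)$ via $(f,b)\mapsto F_{f,B(f)+b}$, where $B(f)=\QdR(H-f^*H)$. In this chart the action is explicit. Writing $u=X+\alpha$ and $F_{f,B}=F_fF_B$, we get
\begin{align*}
\mu_E\bigl((f,b),X+\alpha\bigr)=f_*X+(f^{-1})^*\bigl(\alpha+\iota_X(B(f)+b)\bigr).
\end{align*}
The plan is to show that $\mu_E$ is a composition of maps already known to be smooth tame. Composition of smooth tame maps is smooth tame, so this suffices.

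\textbf{Step 1: the building blocks.} We collect three facts:
\begin{itemize}
\item[(i)] The inclusion $\Omega^2_{\mathrm{cl}}(M)\hookrightarrow\Omega^2(M)$ and the operator $\QdR$ are tame linear (Section~\ref{appendix HNM}).
\item[(ii)] The map $\Diff(M)\times\Omega^3(M)\to\Omega^3(M)$, $(f,H)\mapsto f^*H$, is smooth tame (Section~\ref{appendix HNM}). Hence $f\mapsto B(f)$ is smooth tame, and so is $(f,b)\mapsto B(f)+b$.
\item[(iii)] The contraction $(X,B)\mapsto\iota_XB$ is a bilinear zeroth-order operation. It is therefore smooth tame, with the usual tame estimate coming from the interpolation/product inequality.
\end{itemize}
Combining these, $\bigl((f,b),X+\alpha\bigr)\mapsto\alpha+\iota_X(B(f)+b)\in\Omega^1(M)$ is smooth tame.

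\textbf{Step 2: the geometric action.} It remains to treat the action of $\Diff(M)$ on vector fields and one-forms:
\begin{align*}
(f,X)&\mapsto f_*X=df\circ X\circ f^{-1},\\
(f,\beta)&\mapsto(f^{-1})^*\beta.
\end{align*}
Inversion on $\Diff(M)$ is smooth tame (Hamilton), and pullback of forms is smooth tame. Hence $(f,\beta)\mapsto(f^{-1})^*\beta$ is smooth tame. For $f_*X$ we have two options:
\begin{itemize}
\item[(a)] Write $f_*X$ as composition with $f^{-1}$, which is smooth tame by Hamilton's results on composition of maps, together with the smooth tame map $f\mapsto df$ acting fibrewise.
\item[(b)] Identify $\X(M)$ with the Lie algebra of $\Diff(M)$ and write $f_*X=\mathrm{Ad}_fX$, using that the adjoint action of a tame Lie group is smooth tame.
\end{itemize}

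\textbf{Main obstacle.} The delicate point is the charts. Near a general $f\in\Diff(M)_{[H]}$, the manifold is modelled on $\Gamma(M,f^*TM)$ via $\exp^h$, so tameness of $f\mapsto f^*H$ and $f\mapsto f_*X$ must be checked in these local coordinates. We would reduce to charts at the identity using right translation by the fixed $f$, which is smooth tame. We would then quote Hamilton's theorem that the composition map $(g,\phi)\mapsto\phi\circ g$ is smooth tame. The loss of one derivative coming from $df$ is admissible in tame estimates, since tame maps only require bounds of the form $\|\cdot\|_n\le C(1+\|\cdot\|_{n+r})$.
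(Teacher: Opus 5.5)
Your proposal is correct and follows the paper's proof: the paper likewise writes $\mu_E\bigl((f,b),X+\alpha\bigr)=f_*X+(f^{-1})^*\bigl(\alpha+\iota_X(b+\QdR(H-f^*H))\bigr)$ in the identification $\Auteq(E)\cong\Diff(M)_{[H]}\times\Omega^2_{\mathrm{cl}}(M)$. It then concludes that $\mu_E$ is smooth tame as a composition of smooth tame maps (pushforward of vector fields, pullback of forms, the tame linear operator $\QdR$, and contraction); your extra discussion of checking tameness in charts away from the identity goes beyond what the paper spells out, since it simply quotes these building blocks as known smooth tame maps.
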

\begin{proof}
    Let $F=(f,b)\in \Diff_{[H]}(M)\times \Omega^2_{\mathrm{cl}}(M)$, $u=X+\alpha\in \Gamma(M,E)$. We compute
    \begin{equation}\label{eq: muE}
    \begin{aligned}
        \mu_E(F,u)&=F_f\circ (X+\alpha+\iota_X(b+\QdR(H-f^*H)))\circ f^{-1}\\
        &=\dd f\circ X\circ f^{-1}+(\dd f^{-1})^*(\alpha+\iota_X (b+\QdR(H-f^*H)))\circ f^{-1}\\
        &=f_* X+{f^{-1}}^*(\alpha+\iota_X( b+\QdR(H-f^*H))).
    \end{aligned}
    \end{equation}
    This shows that $\mu_E$ is a composition of smooth tame maps and therefore smooth tame (see~\cite[Sec~3]{paperoncomplexdefs} and Proposition~\ref{prop: pullback} for the fact that the pushforward of a vector field and the pullback of a form are smooth tame). 
\end{proof}
For exact autoequivalences we find the following:
\begin{proposition}
\label{prop: mu for small exact autoeqs exact case}
Let $v\in \autex(E)$, $u\in\Gamma(M,E)$. Then we have 
\begin{align*}
    D_F\mu_E(F=\id,u)v=-[v,u].
\end{align*}
In particular, for some small exact autoequivalence $\Phi_v$, we have
\begin{align*}
    \mu_E(\Phi_v,u)=u-[v,u]+R_{\mu_E}(v)u,
\end{align*}
where $R_{\mu_E}(tv)=t^2\tilde{R}_{\mu_E}(v,t)$, for a real parameter $t$, where $\tilde{R}_{\mu_E}(v,t)$ depends smoothly on $t$ for small $t$.
\end{proposition}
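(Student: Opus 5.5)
We differentiate the explicit formula \eqref{eq: muE} at the identity along a curve in $\Autex(E)$ tangent to $v$. By Proposition~\ref{prop: hex isom to ad perp exact case} we may regard $v$ in two ways. As an element of $\autex(E)\subset\auteq(E)$, it is a pair $(X,b)$ with $b\in\Omega^2_{\mathrm{cl}}(M)$. As a section of $E$, it is $X+\alpha$, where
\[
b-\QdR\mathcal{L}_XH+\iota_XH=\dd\alpha .
\]
We fix a smooth curve $t\mapsto F_t=(f_t,b_t)$ in $\Autex(E)$ with $F_0=\id$ and $\frac{\dd}{\dd t}|_{t=0}F_t=(X,b)$. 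Here $f_t$ has velocity $X$ at $t=0$ and $\dot b_0=b$. Since $\mu_E$ is smooth tame, $D_F\mu_E(\id,u)v$ equals $\frac{\dd}{\dd t}|_{t=0}\mu_E(F_t,u)$.

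We then differentiate \eqref{eq: muE} term by term, writing $u=Y+\beta$. The standard identities $\frac{\dd}{\dd t}|_0 (f_t)_*Y=-\mathcal{L}_XY$ and $\frac{\dd}{\dd t}|_0 (f_t^{-1})^*\beta=-\mathcal{L}_X\beta$ handle the first two contributions. For the last term, note that $b_t+\QdR(H-f_t^*H)$ vanishes at $t=0$. Its derivative is therefore just $b-\QdR\mathcal{L}_XH$, and no derivative of the pullback acting on it survives. This gives
\[
D_F\mu_E(\id,u)v=-\mathcal{L}_XY-\mathcal{L}_X\beta+\iota_Y\bigl(b-\QdR\mathcal{L}_XH\bigr).
\]
Substituting $b-\QdR\mathcal{L}_XH=\dd\alpha-\iota_XH$ yields
\[
-\mathcal{L}_XY-\mathcal{L}_X\beta+\iota_Y\dd\alpha-\iota_Y\iota_XH .
\]
We compare this with the $H$-twisted Dorfman bracket
\[
[X+\alpha,Y+\beta]_H=\mathcal{L}_XY+\mathcal{L}_X\beta-\iota_Y\dd\alpha+H(X,Y,\cdot),
\]
using $H(X,Y,\cdot)=\iota_Y\iota_XH$. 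The two expressions agree up to an overall sign, so $D_F\mu_E(\id,u)v=-[v,u]$.

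The main obstacle is bookkeeping the identification of $v\in\autex(E)$ with a section. We must check that the $\alpha$ produced by the inverse map in Proposition~\ref{prop: hex isom to ad perp exact case} satisfies exactly $\dd\alpha=b-\QdR\mathcal{L}_XH+\iota_XH$. Only $\dd\alpha$ enters the bracket, so the choice of $\alpha$ within its class is irrelevant. We also need to fix sign conventions for the flow versus the pushforward, and we check these against the bracket formula.

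For the second statement, set $g(t):=\mu_E(\Phi_{tv},u)$. This is smooth in $t$, because the chart $\Phi$ and $\mu_E$ are smooth tame. We have $g(0)=u$. Since $D\Phi(\id)=\id$, the curve $t\mapsto\Phi_{tv}$ is tangent to $v$ at $t=0$, and so $g'(0)=-[v,u]$. Taylor's formula with integral remainder then gives
\[
g(t)=u-t[v,u]+t^2\int_0^1(1-s)\,g''(st)\,\dd s .
\]
We define $R_{\mu_E}(v)u:=\mu_E(\Phi_v,u)-u+[v,u]$ and $\tilde R_{\mu_E}(v,t)u$ as the integral term. Then $R_{\mu_E}(tv)=t^2\tilde R_{\mu_E}(v,t)$, with $\tilde R_{\mu_E}(v,t)$ depending smoothly on $t$ for small $t$.
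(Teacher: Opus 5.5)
Your proposal is correct and follows essentially the same route as the paper. You differentiate the explicit formula \eqref{eq: muE} at the identity to get $-\mathcal{L}_X(Y+\beta)+\iota_Y(b-\QdR\mathcal{L}_XH)$, substitute the defining relation $b-\QdR\mathcal{L}_XH=\dd\alpha-\iota_XH$ of $\autex(E)$, and recognize $-[X+\alpha,Y+\beta]_H$. For the second statement, the paper simply cites Hamilton's Taylor theorem, which is exactly the integral-remainder formula you write out for $g(t)=\mu_E(\Phi_{tv},u)$.
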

\begin{proof}
    First, using equation~\eqref{eq: muE}, we observe that for $(Y,b)\in \auteq(E)$, $u=X+\alpha\in \Gamma(M,E)$, we have
    \begin{align*}
        D_F\mu_E(F=\id,u)(Y,b)=-\mathcal{L}_Y (X+\alpha)+\iota_X b-\iota_X\QdR\mathcal{L}_Y H.
    \end{align*}
    Now, if $b=\dd\beta+\QdR\mathcal{L}_Y H-\iota_YH$, for some $\beta\in \im(\QdR)$, i.e.~in the case that we are taking the derivative in the direction $v=(Y,\beta)\in \autex(E)$, we find
    \begin{align*}
        D_F\mu_E(F=\id,u)v&=-\mathcal{L}_Y(X+\alpha)+\iota_X(\dd\beta+\QdR\mathcal{L}_Y H-\iota_Y H)-\iota_X\QdR\mathcal{L}_Y H\\
        &=-\mathcal{L}_Y(X+\alpha)+\iota_X \dd\beta-\iota_X\iota_Y H\\
        &=-[v,u].
    \end{align*}
    The second statement follows from \cite[Thm I.3.5.6]{hamilton82} (see also~\cite[Prop 3.4]{paperoncomplexdefs}).
\end{proof}
The derivative at the identity of the action $\mu_E$ of autoequivalences on sections of $E$ yields an identification of elements in the Lie algebra $\auteq(E)$ of the autoequivalences with the derivations of $E$.
\begin{corollary}
    Under the identification of $\auteq(E)$ with the derivations of $E$, we have $\autex(E)=\im (-\ad:\Gamma(M,E)\rightarrow \operatorname{Der}(E))$, where $\ad(u):=[u,\cdot]$. 
\end{corollary}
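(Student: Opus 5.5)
The plan is to make the identification $\auteq(E)\to\Der(E)$ explicit and then compare images. An element $w=(Y,b)\in\auteq(E)$ is sent to the operator $\mathcal{D}_w:=D_F\mu_E(F=\id,\cdot)w$ on $\Gamma(M,E)$. From the proof of Proposition~\ref{prop: mu for small exact autoeqs exact case} we already have the formula
\begin{align*}
\mathcal{D}_{(Y,b)}(X+\alpha)=-\mathcal{L}_Y(X+\alpha)+\iota_X b-\iota_X\QdR\mathcal{L}_Y H .
\end{align*}

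The inclusion $\autex(E)\subset\im(-\ad)$ is immediate from Proposition~\ref{prop: mu for small exact autoeqs exact case}. That proposition shows $\mathcal{D}_v=-[v,\cdot]=-\ad(v)$ for $v\in\autex(E)$, where $v$ is viewed as the section $Y+\beta$ via Proposition~\ref{prop: hex isom to ad perp exact case}.

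For the reverse inclusion, let $u=Y+\alpha\in\Gamma(M,E)$ be arbitrary. We need some $w\in\autex(E)$ with $\mathcal{D}_w=-\ad(u)$. Proposition~\ref{prop: hex isom to ad perp exact case} presents $\autex(E)$ as the image of $\tilde q$. So the natural candidate is $w=\tilde q(u)=(Y,\dd\alpha+\QdR\mathcal{L}_YH-\iota_YH)$. Substituting into the formula above, the $\QdR$ terms cancel and we get
\begin{align*}
\mathcal{D}_w(X+\xi)=-\mathcal{L}_Y(X+\xi)+\iota_X\dd\alpha-\iota_X\iota_YH .
\end{align*}
Comparing with the Dorfman bracket $[Y+\alpha,X+\xi]_H=\mathcal{L}_YX+\mathcal{L}_Y\xi-\iota_X\dd\alpha+H(Y,X,\cdot)$ shows that this equals $-[u,X+\xi]_H$. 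This is essentially the same computation as in Proposition~\ref{prop: mu for small exact autoeqs exact case}, except that $\alpha$ is no longer restricted to $\dd^*\Omega^2(M)$. The only bookkeeping is the sign and ordering convention in $H(Y,X,\cdot)=\iota_X\iota_YH$. Replacing $\alpha$ by its coexact part changes neither $\dd\alpha$ nor $\ad(u)$, because closed $1$-forms act trivially. This matches $\ker\tilde q=\Omega^1_{\mathrm{cl}}(M)$ and the fact that $\ad(\xi)=0$ for closed $\xi$.

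Finally, I would confirm that the map $w\mapsto\mathcal{D}_w$ is injective on $\auteq(E)$, so that "identification" is justified. If $\mathcal{D}_{(Y,b)}=0$, then testing on functions times sections forces $Y=0$. Then $\iota_Xb=0$ for all $X$, so $b=0$. The main obstacle I expect is purely one of conventions. The identification of the abstract Lie algebra element with a section $Y+\beta$ through Proposition~\ref{prop: hex isom to ad perp exact case} must be compatible with the parametrization of $\auteq(E)$ by $(X,b)$ with $b=B$-part minus $\QdR$-correction. Otherwise stray $\QdR\mathcal{L}_YH$ terms survive, so I would recheck that cancellation carefully.
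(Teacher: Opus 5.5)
Your proposal is correct and is essentially the paper's argument. The corollary is read off from the derivative computation in Proposition~\ref{prop: mu for small exact autoeqs exact case}, which goes through verbatim for arbitrary $\alpha$ (not only $\alpha\in\im\QdR$) and shows $D_F\mu_E(\id,\cdot)\circ\tilde q=-\ad$, so $\autex(E)=\im\tilde q$ is carried exactly onto $\im(-\ad)$, as the paper remarks right after the corollary. Your injectivity check and the observation that closed $1$-forms lie in $\ker\ad$ are harmless additions that the paper leaves implicit.
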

Finally, we observe that the map $\tilde{q}$ considered in the proof of Proposition~\ref{prop: hex isom to ad perp exact case} is in fact the map $-\ad\colon \Gamma(E)\longrightarrow \Der(E)\cong \auteq(E)$ and hence what we constructed in Proposition~\ref{prop: hex isom to ad perp exact case} is a tame isomorphism $\auteq(E)\cong (\ker \ad)^{\perp}$, where the orthogonal complement is taken with respect to the $L^2$-inner product defined by the bundle metric $h\oplus h^*$.

\subsection{Autoequivalences of $\mathcal{G}$-flat transitive Courant algebroids} \label{sect: auteq G-flat transitive}

We will now focus on $\mathcal{G}$-flat transitive Courant algebroids in the sense of Definition~\ref{def: g-flat CA}. First, we fix the following $L^2$-inner products. We choose a positive definite bundle metric $\langle\cdot,\cdot\rangle_{\mathrm{Eucl}}$ on $\mathcal{G}$ and denote the $L^2$-inner products on $\Gamma(M,\mathcal{G})$ and $\Omega^k(M,\mathcal{G})$ defined by $\langle\cdot,\cdot\rangle_{\mathrm{Eucl}}$ and the auxiliary Riemannian metric by $(\cdot,\cdot)_{\mathcal{G}}$. Using $\langle\cdot,\cdot\rangle_{\mathrm{Eucl}}$ and the auxiliary Riemannian metric $h$, we fix a (positive definite) bundle metric on $\Gamma(E)= TM\oplus \mathcal{G}\oplus T^*M$ via $h\oplus \langle\cdot,\cdot\rangle_{\mathrm{Eucl}}\oplus h^*$ and denote the $L^2$-inner product on sections of $E$ defined by this bundle metric and $h$ by $(\cdot,\cdot)_E$.

Note that when $\mathcal{G}\cong M\times \R^K$, after choosing a global orthonormal frame $\{\mathbf{e}_i\}_{i=1}^K$, we may think of sections $\mathbf{r}\in \Gamma(M,\mathcal{G})$ as vectors of smooth functions $\mathbf{r}=(r_i)_{i=1}^K$, $r_i\in C^{\infty}(M)$ and of $p$-forms with values in $\mathcal{G}$ as vectors of $p$-forms on $M$. Given $\mathbf{A}=(A_i)_{i=1}^K\in \Omega^p(M,\mathcal{G})$ and $\mathbf{B}=(B_i)_{i=1}^K\in\Omega^q(M,\mathcal{G})$, we note that \[\langle \mathbf{A}\wedge \mathbf{B}\rangle_{\mathcal{G}}=\sum_{i=1}^K \epsilon_i\,A_i\wedge B_i,\]
where $\epsilon_i=\langle \mathbf{e}_i,\mathbf{e}_i\rangle_{\mathcal{G}}\in\{\pm1\}$. Moreover, we may assume that $\{\mathbf{e}_i\}$ is a global parallel orthonormal frame for the connection $\nabla$ on $\mathcal{G}$, since $\nabla$ has trivial holonomy and $\langle\cdot,\cdot\rangle_{\mathcal{G}}$ is parallel by the compatibility conditions. Then we have $\nabla \mathbf{r}=(\dd r_i)_{i=1}^K=:\dd\mathbf{r}$, and $\dn \mathbf{A}= (\dd A_i)_{i=1}^K=:\dd\mathbf{A}$ for $\mathbf{A}\in \Omega^p(M,\mathcal{G})$.

The differential operator $\dd\colon \Omega^{\bullet}(M,\mathcal{G})\longrightarrow \Omega^{\bullet+1}(M,\mathcal{G})$ defines an elliptic complex. We denote the formal adjoint of $\dd$ with respect to $(\cdot,\cdot)_{\mathcal{G}}$ by $\dd_{\nabla}^*$, its Laplace operator by $\Delta_{\nabla}=\dd\dd_{\nabla}^*+\dd_{\nabla}^*\dd$, the Green's operator by $\G_{\nabla}$ and by $\Q_{\nabla}:=\dd_{\nabla}^*\G_{\nabla}$. Note that the map $\Q_{\nabla}$ is tame linear (see Section~\ref{appendix HNM} of the appendix).

\begin{remark}\label{rk: coexact times parallel are top compl}
    Given a fixed parallel orthonormal frame $\{\mathbf{e}_i\}_{i=1}^K$ for $\mathcal{G}$, we may define a (constant) positive definite bundle metric $\langle\cdot,\cdot\rangle'_{\mathrm{Eucl}}$ on $\mathcal{G}$ by $\langle \mathbf{e}_i,\mathbf{e}_j\rangle'_{\mathrm{Eucl}}=\delta_{ij}$, i.e.\ $\{\mathbf{e}_i\}$ is an orthonormal basis for $\langle\cdot,\cdot\rangle'_{\mathrm{Eucl}}$. Together with the auxiliary Riemannian metric $h$, this bundle metric defines an $L^2$-inner product on $\Gamma(M,\mathcal{G})$ and $\Omega^{\bullet}(M,\mathcal{G})$, which we denote by $(\cdot,\cdot )'_{\mathcal{G}}$. The formal adjoint of $\dd\colon \Omega^k(M,\mathcal{G})\longrightarrow \Omega^{k+1}(M,\mathcal{G})$ with respect to $(\cdot,\cdot )'_{\mathcal{G}}$ is simply given by
$\dd_{\nabla}^*\mathbf{A}=\sum_{i=1}^K(\dd^*A_i)\mathbf{e}_i$ for $\mathbf{A}=\sum_{i=1}^KA_i\mathbf{e}_i$, where $\dd^*$ is the formal adjoint of $\dd$ with respect to the $L^2$-inner product induced by $h$. In particular, the space $\{\mathbf{A}=\sum_{i=1}^KA_i\mathbf{e}_i\in\Omega^k(M,\mathcal{G}): A_i\in\dd^*\Omega^{k+1}(M)\}$ is a topological complement of $\Omega_{\mathrm{cl}}^k(M,\mathcal{G})=\{\mathbf{A}\in\Omega^k(M,\mathcal{G}):
\dd \mathbf{A}=0\}$. 
\end{remark}

Finally, we note that if $\langle\cdot,\cdot\rangle_{\mathcal{G}}$ is of signature $(q,K-q)$, the autoequivalences of $\mathcal{G}$ are given by $\Diff(M)\times C^{\infty}(M,\O(q,K-q))$ acting on $(p,v)\in\mathcal{G}=M\times \R^K$ as \[(f,O)(p,v)=\left(f(p),O(p)v\right).\]
These act on sections $\mathbf{r}\in\Gamma(M,\mathcal{G})$ via \[(f,O)\cdot \mathbf{r}=(O\mathbf{r})\circ f^{-1},\] where $(Or)_i(p)=\sum_j O_{ij}(p)r_j(p)$.

 For $f\in \Diff(M)$ and $\mathbf{A}\in\Omega^k(M,\mathcal{G})$, we will denote by $f^*\mathbf{A}:=(f^*A_i)_{i=1}^K$. We observe that
 \begin{align}\label{eq: d and pullback commute on forms with values in G}
     \dd f^*\mathbf{A}=\sum_{i=1}^K\dd f^*A_i\,\mathbf{e}_i=\sum_{i=1}^K f^*\dd A_i\,\mathbf{e}_i=f^*\dd\mathbf{A}.
 \end{align}
 
\subsubsection{Autoequivalences of $\mathcal{G}$-flat standard transitive Courant algebroids}

In this section, we will see that the autoequivalences of a $\mathcal{G}$-flat transitive Courant algebroid form a tame Fr\'echet Lie group. First, we show the following. 
\begin{proposition}
\label{prop: flat autos are tame manifold}
    Let $E=TM\oplus\mathcal{G}\oplus T^*M$ be a $\mathcal{G}$-flat standard transitive Courant algebroid with $\mathcal{G}\cong M\times \R^K$ and $\langle\cdot,\cdot\rangle_{\mathcal{G}}$ of signature $(q,K-q)$. Then the group of autoequivalences $\Auteq(E)$ has a tame Fréchet manifold structure as a product
    \[\Auteq(E)=\Auteq(\mathbb{T}M,H)\times \O(q,K-q)\times \Omega^1_{\mathrm{cl}}(M,\mathcal{G}),\]
    where $\Auteq(\mathbb{T}M,H)$ denotes the autoequivalences of the $H$-twisted generalized tangent bundle and $\Omega^1_{\mathrm{cl}}(M,\mathcal{G})$ denotes all $\mathbf{A}\in\Omega^1(M,\mathcal{G})$ such that $\dd \mathbf{A}=0$. 
\end{proposition}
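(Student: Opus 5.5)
We specialise Theorem~\ref{thm: gen automs trans} to the $\mathcal{G}$-flat case and write down an explicit bijection with the claimed product. In the parallel orthonormal frame $\{\mathbf{e}_i\}$ we have $\mathbf{R}=0$, a trivial bracket $[\cdot,\cdot]_{\mathcal{G}}=0$ (so $\ad_{\mathbf{A}}=0$), and $\nabla=\dd$. An element $\nu=(f,O)\in\Auteq(\mathcal{G})$ with $O\in C^\infty(M,\O(q,K-q))$ therefore gives an autoequivalence $F_{\nu,\mathbf{A},B}$ if and only if the following three conditions hold:
\begin{align*}
&\dd=\nu\cdot\dd, \\
&0=\nu\cdot\dd\mathbf{A}, \\
&H-f^*H=\dd B-\langle\mathbf{A}\wedge\dd\mathbf{A}\rangle_{\mathcal{G}}.
\end{align*}
We treat them in turn.

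The first condition says that $\nu$ maps parallel sections to parallel sections. Concretely, $(O\mathbf{e}_j)\circ f^{-1}$ must be constant in the frame, so $O$ must be locally constant. We would work componentwise on $M$, or assume $M$ connected so that $O\in\O(q,K-q)$ is a constant matrix. For disconnected $M$ the factor should be read as a product over components; we note this caveat but do not pursue it. The second condition is equivalent to $\dd\mathbf{A}=0$, i.e.\ $\mathbf{A}\in\Omega^1_{\mathrm{cl}}(M,\mathcal{G})$. Once $\mathbf{A}$ is closed, the third condition reduces to $H-f^*H=\dd B$. Hence the pair $(f,B)$ is precisely an element $F_{f,B}\in\Auteq(\mathbb{T}M,H)$.

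This gives a bijection
\[
\Auteq(\mathbb{T}M,H)\times\O(q,K-q)\times\Omega^1_{\mathrm{cl}}(M,\mathcal{G})\to\Auteq(E),\qquad (F_{f,B},O,\mathbf{A})\mapsto F_{(f,O),\mathbf{A},B}.
\]
The first factor carries the tame Fréchet manifold structure from Theorem~\ref{thm: autoequivalences of exact CA}, namely $\Diff(M)_{[H]}\times\Omega^2_{\mathrm{cl}}(M)$ with $B=B(f)+b$. The group $\O(q,K-q)$ is a finite-dimensional Lie group and so is trivially tame. For the third factor we use Remark~\ref{rk: coexact times parallel are top compl}: $\Omega^1_{\mathrm{cl}}(M,\mathcal{G})$ is a closed subspace with a topological complement, namely the coexact forms in each component. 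Moreover, the Hodge projection $\id-\dd\Q_{\nabla}-\Q_{\nabla}\dd$ plus harmonic part, which is tame linear by the appendix, realises it as a tame direct summand of the tame space $\Omega^1(M,\mathcal{G})$. A product of tame Fréchet manifolds is again a tame Fréchet manifold, and we transport this structure to $\Auteq(E)$ through the bijection, which proves the claim.

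The main point requiring care is the claim that $O$ must be constant. We would verify it from $(\nu\cdot\nabla)_X\mathbf{r}=\nu\cdot\nabla_{f_*^{-1}X}(\nu^{-1}\cdot\mathbf{r})$. Applying this to $\mathbf{r}=\mathbf{e}_j$ gives $\nu^{-1}\cdot\mathbf{e}_j=(O^{-1}\mathbf{e}_j)\circ f$. Requiring $\nu\cdot\nabla=\nabla$ then forces $\dd(O^{-1})=0$. A second point is that the chart on the $\Auteq(\mathbb{T}M,H)$ factor must stay compatible with the parametrisation $B=B(f)+b$ used here. We would fix it exactly as in Theorem~\ref{thm: autoequivalences of exact CA}, so no extra transition maps arise. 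The group operations (smooth tameness) are left for the subsequent theorem.
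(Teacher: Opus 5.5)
Your proposal is correct and follows the paper's proof. You specialize Theorem~\ref{thm: gen automs trans} with $[\cdot,\cdot]_{\mathcal{G}}=0$, $\mathbf{R}=0$ and $\nabla=\dd$. The connection condition then forces $O$ to be constant, the curvature condition forces $\dd\mathbf{A}=0$, and the remaining condition reduces to $H-f^*H=\dd B$, i.e.\ $F_{f,B}\in\Auteq(\mathbb{T}M,H)$.

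You go somewhat beyond the paper in two ways. You flag its implicit assumption that $M$ is connected, and you justify the tame structure of each factor. For the last factor, the tame projection onto closed forms is $\id-\Q_{\nabla}\dd$; your formula "$\id-\dd\Q_{\nabla}-\Q_{\nabla}\dd$ plus harmonic part" is garbled and should read this way.
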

\begin{proof}
    Using Theorem \ref{thm: gen automs trans}, the fact that $[\cdot,\cdot]_{\mathcal{G}}\equiv 0$, $\mathbf{R}=0$ and that $\nabla$ is flat, we know that
    \begin{align*}
        \Auteq(E)= \{F_{\nu,\mathbf{A},B}:&\,\nu\in\Auteq(\mathcal{G})\text{ covers }f\in\mathrm{Diff}(M),\;\mathbf{A}\in\Omega^1(M,\mathcal{G}),\;B\in\Omega^2(M),\\
        &\nabla=\nu\cdot \nabla,\ 0=\nu\cdot \dd \mathbf{A},\ H-f^* H=\dd B+\langle \mathbf{A}\wedge \dd \mathbf{A}\rangle_{\mathcal{G}}\}.
    \end{align*}
    Note that the second equation implies that $\dd \mathbf{A}=0$. Therefore, the last equation becomes $H-f^*H=\dd B$ and hence $F_{f,B}\in \Auteq(\mathbb{T}M,H)$. Finally, let $O\in C^{\infty}(M,\O(q,K-q))$ such that $\nu=(f,O)$, $X\in \X(M)$ and $\mathbf{r}\in \Gamma(M,\mathcal{G})$. We compute
    \begin{align*}
        (\nu\cdot \nabla)_X\mathbf{r}&=\nu\cdot \left(\nabla_{f_*^{-1}X}(\nu^{-1}\cdot \mathbf{r})\right)\\
        &=\nu\cdot \dd(O^{-1}\mathbf{r}\circ f)(f_*^{-1}X)\\
        &=\nu\cdot (\dd(O^{-1}\mathbf{r})X)\circ f\\
        &=O(\dd(O^{-1}\mathbf{r})X)\\
        &=\iota_X\left[(OdO^{-1})\mathbf{r}+\dd \mathbf{r}\right].
    \end{align*}
    Hence, the condition $\nu\cdot \nabla=\nabla$ becomes $OdO^{-1}=0$, which implies that $dO=0$ and hence $O$ must be constant. This implies that as a manifold
    \[\Auteq(E)=\Auteq(\mathbb{T}M,H)\times \O(q,K-q)\times \Omega^1_{\mathrm{cl}}(M,\mathcal{G}).\]
    This finishes the proof.
\end{proof}
Recalling the bijection $\Auteq(\mathbb{T}M,H)\cong \Diff(M)_{[H]}\times \Omega^2_{\mathrm{cl}}(M)$ via 
$$
\Diff(M)_{[H]}\times \Omega^2_{\mathrm{cl}}(M)\ni (f,b)\longmapsto F_{f,b+B(f)},
$$ where $B(f)=\QdR(H-f^*H)$, which we used in order to define the manifold structure on $\Auteq(\mathbb{T}M,H)$, we find the following:
\begin{theorem}
\label{thm: Auteq of flat std trans is tame Lie group}
Let $E=TM\oplus\mathcal{G}\oplus T^*M$ be a $\mathcal{G}$-flat standard transitive Courant algebroid with $\mathcal{G}\cong M\times \R^K$ and three-form $H\in\Omega^3_{\mathrm{cl}}(M)$. Then $\Auteq(E)$ is a tame Fréchet Lie group with underlying manifold structure
$$ \Auteq(E)=\Diff(M)_{[H]}\times\Omega^2_{\mathrm{cl}}(M)\times \O(q,K-q)\times \Omega^1_{\mathrm{cl}}(M,\mathcal{G}).$$
The multiplication map is given by
\[(f_1,b_1,O_1,\mathbf{A}_1)\times (f_2,b_2,O_2,\mathbf{A}_2)\longmapsto (f_1\circ f_2,b_3,O_1O_2,\mathbf{A}_3),\]
where
\begin{align*}
    b_3&=f_2^*b_1+b_2+\langle O_2^{-1}f_2^*\mathbf{A}_1\wedge \mathbf{A}_2\rangle_{\mathcal{G}}+[f_2^*,\QdR](H-f_1^*H),\\
    \mathbf{A}_3&=O_2^{-1}f_2^*\mathbf{A}_1+\mathbf{A}_2,
\end{align*}
and the inversion map is given by
\begin{align*}
    (f,b,O,\mathbf{A})\longmapsto (f^{-1},-(f^{-1})^*b+[\QdR,(f^{-1})^*](H-f^*H),O^{-1},-O(f^{-1})^*\mathbf{A}).
\end{align*}
\end{theorem}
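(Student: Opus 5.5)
The plan is to derive the group law in the coordinates $(f,b,O,\mathbf{A})$ from the product formula of Theorem~\ref{thm: gen automs trans}. We then check that the resulting multiplication and inversion are smooth tame maps between the tame Fréchet manifolds given by Proposition~\ref{prop: flat autos are tame manifold} and Theorem~\ref{thm: autoequivalences of exact CA}.

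First, we specialize the composition law. For $\nu=(f,O)$ with $O$ constant, the action on forms is $\nu^{-1}\cdot\mathbf{A}=O^{-1}f^*\mathbf{A}$, since the frame $\{\mathbf{e}_i\}$ is parallel and $O$ is constant. Theorem~\ref{thm: gen automs trans} then gives
\begin{align*}
\nu''&=(f_1\circ f_2,O_1O_2),\\
\mathbf{A}''&=O_2^{-1}f_2^*\mathbf{A}_1+\mathbf{A}_2,\\
B''&=f_2^*B_1+B_2+\langle O_2^{-1}f_2^*\mathbf{A}_1\wedge\mathbf{A}_2\rangle_{\mathcal{G}}.
\end{align*}
We substitute $B_j=b_j+\QdR(H-f_j^*H)$ and subtract $\QdR(H-(f_1f_2)^*H)$ to obtain $b_3$. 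Using $H-(f_1f_2)^*H=(H-f_2^*H)+f_2^*(H-f_1^*H)$, the $\QdR$ terms combine to
\[
f_2^*\QdR(H-f_1^*H)-\QdR f_2^*(H-f_1^*H)=[f_2^*,\QdR](H-f_1^*H),
\]
which is the stated formula for $b_3$. Exactly as in the exact case, the inverse is obtained from $(\nu^{-1},-\nu\cdot\mathbf{A},-(f^{-1})^*B)$ with $\nu\cdot\mathbf{A}=O(f^{-1})^*\mathbf{A}$, after rewriting $B$ in terms of $b$.

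Next, we check that the image lies in the stated manifold. We need $\dd\mathbf{A}_3=0$, which follows from \eqref{eq: d and pullback commute on forms with values in G} together with the constancy of $O_2$. We also need $\dd b_3=0$. For this we use $\dd\QdR=\id-\Hproj-\QdR\dd$ on exact forms, and the fact that $\dd\langle\mathbf{A}\wedge\mathbf{A}'\rangle_{\mathcal{G}}=0$ for closed $\mathbf{A},\mathbf{A}'$. Alternatively, closedness of $b_3$ follows from the fact that $B''$ satisfies $H-(f_1f_2)^*H=\dd B''$, which holds because the product is an autoequivalence.

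Finally, we argue smooth tameness. The maps $(f,\mathbf{A})\mapsto f^*\mathbf{A}$ and $(f,b)\mapsto f^*b$ are smooth tame by Proposition~\ref{prop: pullback}. $\QdR$ is tame linear. Multiplication by a constant matrix and the wedge pairing $\langle\cdot\wedge\cdot\rangle_{\mathcal{G}}$ are tame (bilinear zeroth-order operators). Composition and inversion in $\Diff(M)_{[H]}$ are smooth tame by Hamilton's results, and $\O(q,K-q)$ is a finite-dimensional Lie group. Hence multiplication and inversion are compositions of smooth tame maps.

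The main obstacle is that the target of these maps must be the submanifold $\Omega^2_{\mathrm{cl}}(M)\times\Omega^1_{\mathrm{cl}}(M,\mathcal{G})$. Tame maps into the ambient spaces $\Omega^2(M)\times\Omega^1(M,\mathcal{G})$ that land in the closed forms are tame into these subspaces, because closed forms form tame (complemented) closed subspaces by Hodge theory, as in Remark~\ref{rk: coexact times parallel are top compl}. Those complements provide the required tame projections, so this settles it.
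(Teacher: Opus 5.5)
Your proposal is correct and takes essentially the same route as the paper. You specialize the product and inversion formulas of Theorem~\ref{thm: gen automs trans} (with $O$ constant), substitute $B_j=b_j+\QdR(H-f_j^*H)$ to get $b_3=f_2^*b_1+b_2+\langle O_2^{-1}f_2^*\mathbf{A}_1\wedge\mathbf{A}_2\rangle_{\mathcal{G}}+[f_2^*,\QdR](H-f_1^*H)$ and the inverse, and conclude smooth tameness because everything is a composition of smooth tame maps. The extra checks you add are sound details that the paper leaves implicit: that $\dd b_3=0$ and $\dd\mathbf{A}_3=0$, and that maps landing in the closed forms are tame into those subspaces.
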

\begin{proof}
    It follows from the corresponding expressions in Theorem \ref{thm: gen automs trans} that multiplication on $\Auteq(E)=\Auteq(\mathbb{T}M,H)\times \O(q,K-q)\times \Omega^1_{\mathrm{cl}}(M,\mathcal{G})$ is given by
    \[\left(F_{f_1,B_1},O_1,\mathbf{A}_1\right)\circ \left(F_{f_2,B_2},O_2,\mathbf{A}_2\right)=\left(F_{f_3,B_3},O_3,\mathbf{A}_3\right),\]
    where $f_3=f_1\circ f_2$, $O_3=O_1O_2$ and
    \begin{align*}
        B_3&= f_2^*B_1+B_2+\langle O_2^{-1}(f_2^*\mathbf{A}_1)\wedge \mathbf{A}_2\rangle_{\mathcal{G}},\\
        \mathbf{A}_3&=O_2^{-1} f_2^*\mathbf{A}_1+\mathbf{A}_2
    \end{align*}
    and the identity element is $(\id,\id,0)$. The inversion map is given by
    \begin{align*}
    \left(F_{f,B},O,\mathbf{A}\right)\longmapsto\left(F_{f,B}^{-1},O^{-1},-O(f^{-1})^*\mathbf{A}\right). 
    \end{align*} 
    We only need to check the formulas for the multiplication and inversion map under $\Auteq(\mathbb{T}M,H)\cong \Diff(M)_{[H]}\times \Omega^2_{\mathrm{cl}}(M)$. In each case, the only non-trivial part is the component in $\Omega^2_{\mathrm{cl}}(M)$. For the multiplication, we find
    \begin{align*}
        b_3&=f_2^*\left(b_1+B(f_1)\right)+b_2+B(f_2)+\langle O_2^{-1}f_2^*\mathbf{A}_1\wedge \mathbf{A}_2\rangle_{\mathcal{G}}-B(f_1\circ f_2)\\
        &=f_2^*b_1+b_2+\langle O_2^{-1}f_2^*\mathbf{A}_1\wedge \mathbf{A}_2\rangle_{\mathcal{G}}+f_2^*\QdR(H-f_1^*H)\\&\quad+\QdR(H-f_2^*H)-\QdR(H-f_2^*f_1^*H)\\
        &=f_2^*b_1+b_2+\langle O_2^{-1}f_2^*\mathbf{A}_1\wedge \mathbf{A}_2\rangle_{\mathcal{G}}-[\QdR,f_2^*](H-f_1^*H).
    \end{align*}
    Similarly, we find that the $\Omega^2_{\mathrm{cl}}(M)$-component of the inverse of $(f,b,O,\mathbf{A})$ is given by
    \begin{align*}
        &-(f^{-1})^*\left(b+B(f)\right)-B(f^{-1})\\
        &=-(f^{-1})^*b-(f^{-1})^*\QdR(H-f^*H)-\QdR(H-(f^{-1})^*H)\\
        &=-(f^{-1})^*b+[\QdR,(f^{-1})^*](H-f^*H).
    \end{align*}
    Both the multiplication and the inversion map are compositions of smooth tame maps and hence smooth tame.
\end{proof}

\begin{remark}
    Note that for (not necessarily $\mathcal{G}$-flat) transitive Courant algebroids with $\operatorname{rank} \mathcal{G}=1$ a strong ILH Lie group structure of $\Auteq(E)$ has been constructed in~\cite{rubio20}.
\end{remark}

Using the atlas of $\Diff(M)_{[H]}$ in terms of the Riemannian exponential map of the auxiliary metric $h$, we find that in a neighborhood of the identity the autoequivalences of a $\mathcal{G}$-flat standard transitive Courant algebroid are modeled on the tame Fréchet space $\X(M)\oplus\Omega^2_{\mathrm{cl}}(M)\oplus \mathfrak{so}(q,K-q)\oplus \Omega^1_{\mathrm{cl}}(M,\mathcal{G})$.

\begin{proposition}
    The Lie algebra of $\Auteq(E)$, where $E$ is the $\mathcal{G}$-flat standard transitive Courant algebroid with $\mathcal{G}\cong M\times \R^K$ and three-form $H$, has Lie algebra $$\auteq(E)=\X(M)\oplus\Omega^2_{\mathrm{cl}}(M)\oplus \mathfrak{so}(q,K-q)\oplus \Omega^1_{\mathrm{cl}}(M,\mathcal{G})$$ with Lie bracket 
    \begin{align*}
        &[X_1+b_1+T_1+\mathbf{A}_1,\ X_2+b_2+T_2+\mathbf{A}_2]\\&\ =\mathcal{L}_{X_2}X_1+\mathcal{L}_{X_2}b_1-\mathcal{L}_{X_1}b_2+2\langle \mathbf{A}_1\wedge \mathbf{A}_2\rangle_{\mathcal{G}}+(\mathcal{L}_{X_1}\QdR \mathcal{L}_{X_2}-\mathcal{L}_{X_2}\QdR\mathcal{L}_{X_1})H\\
        &\quad-\QdR \mathcal{L}_{\mathcal{L}_{X _1}X_2}H +[T_1,T_2]-T_2 \mathbf{A}_1+\mathcal{L}_{X_2}\mathbf{A}_1+T_1 \mathbf{A}_2-\mathcal{L}_{X_1}\mathbf{A}_2.
        \end{align*}
\end{proposition}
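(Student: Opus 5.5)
The plan is to compute the Lie bracket as the derivative of the group commutator (equivalently, of the adjoint action), using the explicit multiplication and inversion maps of Theorem~\ref{thm: Auteq of flat std trans is tame Lie group}. Tangent vectors at the identity are $X+b+T+\mathbf{A}$, represented by curves $g_i(t)=(f_{i,t},b_{i,t},O_{i,t},\mathbf{A}_{i,t})$ with $\partial_t f_{i,t}|_0=X_i$, $\partial_t b_{i,t}|_0=b_i$, $\partial_t O_{i,t}|_0=T_i$ and $\partial_t\mathbf{A}_{i,t}|_0=\mathbf{A}_i$. The bracket is then the mixed second derivative
\[
[v_1,v_2]=\partial_s\partial_t\big|_{s=t=0}\; g_1(s)\,g_2(t)\,g_1(s)^{-1}.
\]
Here I follow the convention already fixed in Theorem~\ref{thm: autoequivalences of exact CA}, whose $\X(M)$-component is $\mathcal{L}_{X_2}X_1$. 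I would check that this convention reproduces the exact-case formula, and that is the normalization I use. Since the manifold structure is a product, the computation splits by components.

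The $\Diff(M)_{[H]}$-component and the $\O(q,K-q)$-component are the bracket of $\Diff(M)$ (with the paper's sign convention) and the matrix commutator $[T_1,T_2]$. For the $\Omega^1_{\mathrm{cl}}(M,\mathcal{G})$-component I would first compute the conjugation explicitly. From $\mathbf{A}_3=O_2^{-1}f_2^*\mathbf{A}_1+\mathbf{A}_2$ and the inversion formula, only the terms bilinear in $(v_1,v_2)$ survive the mixed derivative. Differentiating $O_{t}^{-1}$ gives $-T$, and differentiating $f_t^*$ gives $\mathcal{L}_X$. The mixed terms should be $-T_2\mathbf{A}_1+\mathcal{L}_{X_2}\mathbf{A}_1+T_1\mathbf{A}_2-\mathcal{L}_{X_1}\mathbf{A}_2$. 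Constant matrices commute with $f^*$ and with $\dd$ by \eqref{eq: d and pullback commute on forms with values in G}, so these cross terms are unambiguous.

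For the $\Omega^2_{\mathrm{cl}}(M)$-component, the part not involving $\mathbf{A}$ or $O$ coincides with the exact-case multiplication law. So I would take the exact-case bracket from Theorem~\ref{thm: autoequivalences of exact CA} as given, namely $\mathcal{L}_{X_2}b_1-\mathcal{L}_{X_1}b_2+(\mathcal{L}_{X_1}\QdR\mathcal{L}_{X_2}-\mathcal{L}_{X_2}\QdR\mathcal{L}_{X_1})H-\QdR\mathcal{L}_{\mathcal{L}_{X_1}X_2}H$. (If one prefers to verify it, it comes from differentiating $[f_2^*,\QdR](H-f_1^*H)$ twice.) The new contribution is the term $\langle O_2^{-1}f_2^*\mathbf{A}_1\wedge\mathbf{A}_2\rangle_{\mathcal{G}}$ in $b_3$, together with its occurrences when multiplying by the inverse. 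To first order in each variable this contributes $\langle\mathbf{A}_1\wedge\mathbf{A}_2\rangle_{\mathcal{G}}$ from $g_1g_2$. A second contribution $-\langle\mathbf{A}_2\wedge\mathbf{A}_1\rangle_{\mathcal{G}}=\langle\mathbf{A}_1\wedge\mathbf{A}_2\rangle_{\mathcal{G}}$ comes from $(g_1g_2)g_1^{-1}$, using the inverse's $\mathbf{A}$-component $-\mathbf{A}_1$ and the antisymmetry of $\langle\cdot\wedge\cdot\rangle_{\mathcal{G}}$ on 1-forms. Together these give $2\langle\mathbf{A}_1\wedge\mathbf{A}_2\rangle_{\mathcal{G}}$. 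Any $O$- or $f$-derivatives hitting this term are of higher order, since the term is already bilinear in the $\mathbf{A}$'s.

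I expect the main obstacle to be sign and ordering bookkeeping. One issue is keeping the convention consistent with the exact case, where the $\X(M)$-component is $\mathcal{L}_{X_2}X_1$ rather than $\mathcal{L}_{X_1}X_2$. The other is correctly tracking which factor receives $O^{-1}f^*$ in the conjugation. I would fix this by checking the $\mathbf{A}$-component against the case $X_i=0$, $b_i=0$, where the group is just the semidirect product $\O(q,K-q)\ltimes\Omega^1_{\mathrm{cl}}$. I would then confirm that the remaining terms match the stated formula.
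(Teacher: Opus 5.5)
Your proposal is correct and follows the paper's proof: you differentiate the conjugation $g_1g_2g_1^{-1}$ along curves (the paper uses $\exp^h(tX_i)$, $tb_i$, $\exp(tT_i)$, $t\mathbf{A}_i$) and take the mixed second derivative, and the $\Diff(M)$, $\O(q,K-q)$ and $\Omega^1_{\mathrm{cl}}(M,\mathcal{G})$ components come out exactly as you describe. Your bookkeeping of two wedge contributions, one from each multiplication in the conjugation, is precisely what produces the factor $2\langle\mathbf{A}_1\wedge\mathbf{A}_2\rangle_{\mathcal{G}}$; note that the paper's displayed intermediate formula for $b_3$ omits the term $-\langle O_1(f_1^{-1})^*\tilde{\mathbf{A}}_2\wedge O_1(f_1^{-1})^*\tilde{\mathbf{A}}_1\rangle_{\mathcal{G}}$ carried over from $g_2g_1^{-1}$, and that term is needed to get the factor $2$.
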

\begin{proof}
    First, given $(f_1,\tilde{b}_1,O_1,\mathbf{\tilde{A}}_1),(f_2,\tilde{b}_2,O_2,\mathbf{\tilde{A}}_2)\in\Auteq(E)$, we determine $b_3\in\Omega_{\mathrm{cl}}^2(M)$  and $\mathbf{A}_3\in\Omega^1_{\mathrm{cl}}(M,\mathcal{G})$ such that
    \begin{align*}
        &(f_1,\tilde{b}_1,O_1,\mathbf{\tilde{A}}_1)(f_2,\tilde{b}_2,O_2,\mathbf{\tilde{A}}_2)(f_1,\tilde{b}_1,O_1,\mathbf{\tilde{A}}_1)^{-1}=(f_1f_2f_1^{-1},b_3,O_1O_2O_1^{-1},\mathbf{A}_3).
        \end{align*}
        For this, we first compute
        \begin{align*}
        &(f_2,\tilde{b}_2,O_2,\mathbf{\tilde{A}}_2)(f_1,\tilde{b}_1,O_1,\mathbf{\tilde{A}}_1)^{-1}\\&=
        (f_2,\tilde{b}_2,O_2,\mathbf{\tilde{A}}_2)(f_1^{-1},-(f_1^{-1})^*\tilde{b}_1+[\QdR,(f_1^{-1})^*](H-f_1^*H),O_1^{-1},\,-O_1(f_1^{-1})^*\mathbf{\tilde{A}}_1)\\
        &=(f_2f_1^{-1},\,(f_1^{-1})^*\tilde{b}_2-(f_1^{-1})^*\tilde{b}_1+[\QdR,(f_1^{-1})^*](H-f_1^*H)\\&\quad-\langle O_1(f_1^{-1})^*\mathbf{\tilde{A}}_2\wedge O_1(f_1^{-1})^*\mathbf{\tilde{A}}_1\rangle_{\mathcal{G}}  -[\QdR,(f_1^{-1})^*](H-f_2^*H),\,O_2O_1^{-1},O_1(f_1^{-1})^*(\mathbf{\tilde{A}}_2-\mathbf{\tilde{A}}_1)).
    \end{align*}
    Multiplying from the left by $(f_1,\tilde{b}_1,O_1,\mathbf{\tilde{A}}_1)$ yields
    \begin{align*}
        b_3&=(f_2f_1^{-1})^*\tilde{b}_1+(f_1^{-1})^*(\tilde{b}_2-\tilde{b}_1)-[\QdR,(f_1^{-1})^*](f_1^*H-f_2^*H)\\&\quad-[\QdR,(f_2f_1^{-1})^*](H-f_1^*H)+\langle O_1O_2^{-1}(f_2f_1^{-1})^*\mathbf{\tilde{A}}_1\wedge O_1(f_1^{-1})^*(\mathbf{\tilde{A}}_2-\mathbf{\tilde{A}}_1)\rangle_{\mathcal{G}}\\
        \mathbf{A}_3&=O_1O_2^{-1}(f_2f_1^{-1})^*\mathbf{\tilde{A}}_1+O_1(f_1^{-1})^*(\mathbf{\tilde{A}}_2-\mathbf{\tilde{A}}_1).
    \end{align*}
    Here, we used $\langle f^*\mathbf{A}\wedge f^*\mathbf{A'}\rangle_{\mathcal{G}}=f^*\langle\mathbf{A}\wedge \mathbf{A'}\rangle_{\mathcal{G}}$ and $\langle O\mathbf{A}\wedge O\mathbf{A'}\rangle_{\mathcal{G}}=\langle \mathbf{A}\wedge \mathbf{A'}\rangle_{\mathcal{G}}$. Setting $f_1(t)=\exp^h(t X_1)$, $\tilde{b}_1(t)=tb_1$, $\mathbf{\tilde{A}}_1(t)=t\mathbf{A}_1$ and $O_1(t)=\exp(t T_1)$ and similarly $f_2(s)=\exp^h(s X_2)$, $\tilde{b}_2(s)=sb_2$, $\mathbf{\tilde{A}}_2(s)=s\mathbf{A}_2$ and $O_2(s)=\exp(s T_2)$ and taking the derivative $\left.\frac{\dd}{\dd s}\right|_{s=0}\left.\frac{\dd}{\dd t}\right|_{t=0}$ yields the expression for the Lie bracket.
\end{proof}
Next, we identify a tame Lie subgroup of $\Auteq(E)$, which generalizes the group of exact autoequivalences of an exact Courant algebroid.
\begin{theorem}
    The tame submanifold $\Autex(\mathbb{T}M,H)\times \Omega^1_{\mathrm{ex}}(M,\mathcal{G})\subset \Auteq(E)$ inherits the structure of a tame Lie subgroup of $\Auteq(E)$, the tame Lie subgroup $\Autex(E)$ of \emph{exact autoequivalences} of $E$. 
\end{theorem}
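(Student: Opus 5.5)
The plan is to check two things separately. First, the subset $\Autex(\mathbb{T}M,H)\times\Omega^1_{\mathrm{ex}}(M,\mathcal{G})$ is a tame submanifold of the product manifold of Theorem~\ref{thm: Auteq of flat std trans is tame Lie group}. Here I read the subset as sitting inside $\Auteq(E)$ with the $\O(q,K-q)$-component equal to the identity. Second, it is closed under the multiplication and inversion maps written there. Smooth tameness of the restricted operations then comes for free, because restricting smooth tame maps to tame submanifolds gives smooth tame maps.

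\textbf{Submanifold structure.} $\Autex(\mathbb{T}M,H)$ is a tame submanifold of $\Auteq(\mathbb{T}M,H)=\Diff(M)_{[H]}\times\Omega^2_{\mathrm{cl}}(M)$ by the theorem of \cite{rubio20} recalled above. The point $\{\id\}\subset\O(q,K-q)$ is a submanifold of a finite-dimensional Lie group. For $\Omega^1_{\mathrm{ex}}(M,\mathcal{G})\subset\Omega^1_{\mathrm{cl}}(M,\mathcal{G})$, I would use the Hodge decomposition of the elliptic complex $(\Omega^\bullet(M,\mathcal{G}),\dd)$ in the parallel frame of Remark~\ref{rk: coexact times parallel are top compl}. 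This gives $\Omega^1_{\mathrm{cl}}=\Omega^1_{\mathrm{ex}}\oplus\mathcal{H}^1(M)^{\oplus K}$, a tame splitting with finite-dimensional complement, and the projection $\dd\Q_\nabla$ is tame linear. Hence $\Omega^1_{\mathrm{ex}}$ is a tame closed subspace, and the product of these tame submanifolds is a tame submanifold.

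\textbf{Closure under inversion.} By Theorem~\ref{thm: Auteq of flat std trans is tame Lie group}, the inverse of $(f,b,\id,\mathbf{A})$ has $\O$-component $\id$ and $\mathcal{G}$-component $-(f^{-1})^*\mathbf{A}$. The latter is exact by \eqref{eq: d and pullback commute on forms with values in G}. The $(f,b)$-part is exactly the inverse taken in $\Auteq(\mathbb{T}M,H)$, since no $\mathbf{A}$-term enters there, and this inverse lies in $\Autex(\mathbb{T}M,H)$.

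\textbf{Closure under multiplication.} For the product with $O_1=O_2=\id$, the $\mathcal{G}$-component is $f_2^*\mathbf{A}_1+\mathbf{A}_2$, which is again exact. The $b$-component equals the $b$-component of the product $(f_1,b_1)(f_2,b_2)$ in $\Auteq(\mathbb{T}M,H)$ plus the correction $\langle f_2^*\mathbf{A}_1\wedge\mathbf{A}_2\rangle_{\mathcal{G}}$. Writing $\mathbf{A}_j=\dd\boldsymbol{\alpha}_j$ componentwise in the parallel frame, this correction is $\dd\sum_i\epsilon_i\, f_2^*\alpha_{1,i}\,\dd\alpha_{2,i}$, an exact $2$-form $\dd\beta$. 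By the multiplication formula, $(f,b)(\id,c)=(f,b+c)$, because the commutator term vanishes for $f_2=\id$. So the $(f,b)$-part of the product is $(f_1,b_1)(f_2,b_2)\cdot(\id,\dd\beta)$ in $\Auteq(\mathbb{T}M,H)$.

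It therefore suffices to show $(\id,\dd\beta)\in\Autex(\mathbb{T}M,H)$. This is the step I expect to be the main obstacle, since it needs an actual membership criterion for the integrated subgroup rather than just its Lie algebra. My first attempt is as follows. The element $(0,\dd\beta)$ lies in $\autex(\mathbb{T}M,H)$, since $\Hproj(\dd\beta)=0$. Moreover, $t\mapsto(\id,t\,\dd\beta)$ is a smooth one-parameter subgroup of $\Auteq(\mathbb{T}M,H)$ with this tangent. A one-parameter subgroup whose generator lies in the Lie algebra of a Lie subgroup must lie in that subgroup. I would argue this via the local chart around $\id$, i.e.\ uniqueness of integral curves of the right-invariant field in the submanifold, for small $t$, and then extend to all $t$ by the group property.

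If that is awkward in the Fréchet setting, the fallback is the explicit description in \cite{rubio20}. There, exact autoequivalences are characterized through the conjugation $\Phi_H$ of Remark~\ref{rk: from rubio-tipler to our tame structure}, and the pure $B$-field transforms $F_{\dd\beta}$ are exact autoequivalences by construction.
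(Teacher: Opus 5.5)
This is essentially the paper's proof: the paper also writes the product as an element of $\Autex(\mathbb{T}M,H)$ composed with the exact $B$-field transform by $\langle f_2^*\mathbf{A}_1\wedge\mathbf{A}_2\rangle_{\mathcal{G}}$, notes that the $\mathcal{G}$-component stays exact, and takes closure under inversion and the submanifold structure as evident; your right-hand placement of the $B$-field is the one actually consistent with the group law. For the step $F_{\dd\beta}\in\Autex(\mathbb{T}M,H)$, the paper simply asserts that exact $B$-field transforms are exact autoequivalences, which is your fallback via \cite{rubio20}, and you should use that rather than your first attempt: in the Fréchet setting the difficulty is not uniqueness of integral curves (that holds via the logarithmic derivative) but the \emph{existence} of a one-parameter subgroup with generator $(0,\dd\beta)$ inside $\Autex(\mathbb{T}M,H)$, which nothing in the paper provides, so membership in a subgroup specified only through its construction has to be read off from that construction.
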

\begin{proof}
    Let $(F_1,\mathbf{A}_1),(F_2,\mathbf{A}_2)\in \Autex(\mathbb{T}M,H)\times \Omega^1_{\mathrm{ex}}(M,\mathcal{G})$. Then applying the restriction of the multiplication map in $\Auteq(E)$ described in Theorem \ref{thm: Auteq of flat std trans is tame Lie group} yields
    \begin{align*}
       (F_1,\mathbf{A}_1)\times (F_2,\mathbf{A}_2)&\longmapsto \left(F_{\langle f_2^*\mathbf{A}_1\wedge \mathbf{A}_2\rangle_{\mathcal{G}}}F_1F_2,f_2^*\mathbf{A}_1+\mathbf{A}_2\right)\in \Auteq(\mathbb{T}M,H)\times \Omega^1_{\mathrm{cl}}(M,\mathcal{G}), 
    \end{align*}
    where $f_2$ is the diffeomorphism covered by $F_2$. Clearly, $f_2^*\mathbf{A}_1+\mathbf{A}_2\in \Omega^1_{\mathrm{ex}}(M,\mathcal{G})$. Moreover, $\langle f_2^*\mathbf{A}_1\wedge \mathbf{A}_2\rangle_{\mathcal{G}}\in \Omega^2_{\mathrm{ex}}(M)$ and hence $F_{\langle f_2^*\mathbf{A}_1\wedge \mathbf{A}_2\rangle_{\mathcal{G}}}$ is an exact $B$-field transformation and in particular an exact autoequivalence of $\mathbb{T}M$. Since $\Autex(\mathbb{T}M,H)$ is closed under multiplication, we conclude that $\Autex(E)$ is closed under multiplication. Since it is closed under inversion too and clearly contains the identity, it follows that it is a subgroup of $\Auteq(E)$. As it is also a tame submanifold it follows that it is a tame Lie subgroup.
\end{proof}
In the following, we describe the Lie subalgebra of the exact autoequivalences, which again can be identified with a tame subspace of $\Gamma(M,E)$.
\begin{proposition}\label{prop: hex in flat std isom to ad perp}
    The tame Lie subalgebra corresponding to $\Autex(E)$ is given by
    \[\autex(E)=\{X+\dd \alpha+\QdR \mathcal{L}_X H-\iota_X H+\mathbf{A}:X\in \X(M),\, \alpha\in \Omega^1(M),\, \mathbf{A}\in \Omega^1_{\mathrm{ex}}(M,\mathcal{G})\}.\]
    It is tamely isomorphic to the tame Fréchet space 
    \begin{align*}
        \autex(E)\cong\X(M)\oplus \dd^*\Omega^2(M)\oplus \dd_{\nabla}^*\Omega^1(M,\mathcal{G})\subset \Gamma(M,E).
    \end{align*}
\end{proposition}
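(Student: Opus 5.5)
The plan has two parts: first identify the tangent space at the identity of $\Autex(E)$, then transport the tame isomorphism of Proposition~\ref{prop: hex isom to ad perp exact case} componentwise.

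\textbf{The Lie algebra.} By the preceding theorem, $\Autex(E)=\Autex(\mathbb{T}M,H)\times\Omega^1_{\mathrm{ex}}(M,\mathcal{G})$ is a tame submanifold of the product manifold of Theorem~\ref{thm: Auteq of flat std trans is tame Lie group}. Its $\O(q,K-q)$-component is trivial. Hence its tangent space at the identity is the product of two tangent spaces.
\begin{enumerate}
\item The first factor is $\operatorname{Lie}(\Autex(\mathbb{T}M,H))=\ker q$. By the characterization of $\ker q$ given just after its definition, this is $\{X+\dd\alpha+\QdR\mathcal{L}_XH-\iota_XH\}$.
\item The second factor is the tangent space of the linear subspace $\Omega^1_{\mathrm{ex}}(M,\mathcal{G})$, which is $\Omega^1_{\mathrm{ex}}(M,\mathcal{G})$ itself. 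This subspace is closed, being the kernel of the harmonic projection restricted to closed forms.
\end{enumerate}
Together these give the stated description of $\autex(E)$. It is automatically a Lie subalgebra of $\auteq(E)$, since it is the Lie algebra of a tame Lie subgroup. As a consistency check, the bracket formula from the preceding proposition gives $2\langle\mathbf{A}_1\wedge\mathbf{A}_2\rangle_{\mathcal{G}}$ in the $\Omega^2$-slot. For exact $\mathbf{A}_i$ this term is exact, so it only shifts $\alpha$.

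\textbf{The tame isomorphism.} Working in a parallel orthonormal frame, I would use the constant metric $(\cdot,\cdot)'_{\mathcal{G}}$ of Remark~\ref{rk: coexact times parallel are top compl}, with respect to which $\dd_\nabla^*$ acts componentwise as $\dd^*$. I then define the map
\[\X(M)\oplus\dd^*\Omega^2(M)\oplus\dd_\nabla^*\Omega^1(M,\mathcal{G})\longrightarrow\autex(E),\qquad (X,\alpha,\mathbf{s})\longmapsto \big(\tilde q(X+\alpha),\ \dd\mathbf{s}\big).\]
The first component $\tilde q$ is the tame isomorphism of Proposition~\ref{prop: hex isom to ad perp exact case}. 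For the second component, note that $\dd_\nabla^*\Omega^1(M,\mathcal{G})=\dd_\nabla^*\Omega^1_{\mathrm{ex}}(M,\mathcal{G})$ consists of $\mathcal{G}$-valued functions with zero $L^2$-mean componentwise. On this space, $\dd$ maps bijectively onto $\Omega^1_{\mathrm{ex}}(M,\mathcal{G})$, with inverse $\Q_\nabla$. Both $\dd$ and $\Q_\nabla$ are tame linear, as recalled in the appendix. The map is therefore a direct sum of tame isomorphisms, hence a tame isomorphism onto $\autex(E)$.

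Viewing the domain inside $\Gamma(M,E)=\Gamma(TM)\oplus\Gamma(\mathcal{G})\oplus\Gamma(T^*M)$ realizes it as a tame subspace of sections. To see that this subspace is a tame Fréchet space, I would use that it is the image of the tame projection $\id\oplus\dd^*\G_{\mathrm{dR}}\dd\oplus\dd^*_\nabla\G_\nabla\dd$, so it is a tame direct summand.

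\textbf{Main obstacle.} The step needing care is the identification of the $\mathcal{G}$-component with $\dd_\nabla^*$-coexact sections, since this depends on which metric defines $\dd_\nabla^*$. With the nonconstant Euclidean metric $\langle\cdot,\cdot\rangle_{\mathrm{Eucl}}$, the space $\dd_\nabla^*\Omega^1$ is still the $L^2$-complement of $\ker\dd=$ parallel sections. I would argue that it is complementary to the finite-dimensional space of constants, so $\dd$ is still a tame isomorphism from it onto the exact forms. The inverse is $\Q_\nabla$ composed with the tame finite-rank projection onto this complement.
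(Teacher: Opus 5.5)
Your proposal is correct and follows the paper's argument. The paper likewise reads off $T_{\id}\Autex(E)$ from the product structure, and it realizes your componentwise map as the restriction of the single tame linear map $p(X+\mathbf{r}+\alpha)=X+\dd\alpha+\QdR\mathcal{L}_XH-\iota_XH+\dd\mathbf{r}$ to $(\ker p)^{\perp}$, with inverse $X+b+\mathbf{A}\mapsto X+\QdR(b-\QdR\mathcal{L}_XH+\iota_XH)+\Q_\nabla\mathbf{A}$.

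The paper defines $\Q_\nabla=\dd_\nabla^*\G_\nabla$ using the same metric $\langle\cdot,\cdot\rangle_{\mathrm{Eucl}}$ as $\dd_\nabla^*$, so $\Q_\nabla$ already lands in $\dd_\nabla^*\Omega^1(M,\mathcal{G})$ and inverts $\dd$ there. The extra finite-rank projection in your last paragraph is therefore unnecessary, though harmless.
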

\begin{proof}
    Recall that the tangent space at the identity of $\Autex(\mathbb{T}M,H)$ is given by 
    \begin{align*}
        T_{\id}\Autex(\mathbb{T}M,H)&=\{X+\dd \alpha+\QdR \mathcal{L}_X H-\iota_X H:X\in \X(M),\, \alpha\in \Omega^1(M)\}.
    \end{align*}
    It follows that 
    \begin{align*}
        T_{\id}\Autex(E)&=\{X+\dd \alpha+\QdR \mathcal{L}_X H-\iota_X H+\dd \mathbf{r}:X\in \X(M),\ \alpha\in \Omega^1(M),\ \mathbf{r}\in \Omega^0(M,\mathcal{G})\}.
    \end{align*}
    We may therefore think of $\autex(E)=T_{\id}\Autex(E)$ as the image of the tame linear map
    \begin{align*}
        p\colon \Gamma(M,E)&\longrightarrow \auteq(E)\\
        X+\mathbf{r}+\alpha&\longmapsto X+\dd \alpha+\QdR \mathcal{L}_X H-\iota_X H+\dd \mathbf{r}.
    \end{align*}
    We observe that $\ker p=\Omega_{\mathrm{cl}}^1(M)\oplus \Omega^0_{\mathrm{cl}}(M,\mathcal{G})$. It follows from the Hodge decomposition that with respect to the $L^2$-inner product $(\cdot,\cdot)_E$, the orthogonal complement of the kernel of $p$ is given by $(\ker p)^{\perp}=\X(M)\oplus\dd^*\Omega^2(M)\oplus \dd_{\nabla}^*\Omega^1(M,\mathcal{G})$ and hence
    \begin{align*}
        \autex(E)=\im p\cong (\ker p)^{\perp}=\X(M)\oplus\dd^*\Omega^2(M)\oplus \dd_{\nabla}^*\Omega^1(M,\mathcal{G}).
    \end{align*}
    To see that this is a tame isomorphism, note that its inverse map  is given by
    \begin{align*}
        \autex(E)\ni X+b+\mathbf{A}\longmapsto X+\QdR (b-(\QdR \mathcal{L}_X H-\iota_X H))+ \Q_{\nabla} \mathbf{A}.    \end{align*}
\end{proof}
Like in the case of exact Courant algebroids, we consider a local chart neighborhood $\mathcal{V}$ of $\id \in \Autex(E)$, with a chart $\Phi$ taking values in some neighborhood $\mathcal{U}\subset\autex(E)$ around $0$ and assume that $D\Phi(\id)=\id$. We will again denote by $F_v:=\Phi^{-1}(v)$ the exact autoequivalence in $\mathcal{V}$ corresponding to $v\in\mathcal{U}$. 

Like for exact Courant algebroids, the Lie algebra of exact autoequivalences can be identified with the Lie algebra of inner derivations. First, we consider the natural action of autoequivalences on sections $u\in\Gamma(M,E)$, which we denote again by $\mu_E$.
\begin{proposition}
\label{prop: action of ex auteq is smooth tame flat trans}
    The map $\mu_E\colon \Auteq(E)\times \Gamma(M,E)\rightarrow \Gamma(M,E)$ is smooth tame. 
\end{proposition}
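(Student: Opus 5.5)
We follow the proof of the analogous statement for exact Courant algebroids: write $\mu_E$ explicitly in the global coordinates of Theorem~\ref{thm: Auteq of flat std trans is tame Lie group} and observe that it is a composition of maps already known to be smooth tame. Let $F=(f,b,O,\mathbf{A})\in\Diff(M)_{[H]}\times\Omega^2_{\mathrm{cl}}(M)\times\O(q,K-q)\times\Omega^1_{\mathrm{cl}}(M,\mathcal{G})$ and $u=X+\mathbf{r}+\alpha\in\Gamma(M,E)$. Under the identification, $F$ corresponds to $F_{\nu}\circ F_{\mathbf{A}}\circ F_{B}$ with $\nu=(f,O)$ and $B=b+\QdR(H-f^*H)$.

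First apply $F_B$ and then $F_{\mathbf{A}}$ pointwise, using the matrix formulas. This gives
\begin{align*}
F_{\mathbf{A}}F_B u = X+\big(\mathbf{r}+\iota_X\mathbf{A}\big)+\big(\alpha+\iota_X B-\langle \mathbf{A},\mathbf{A}\rangle_{\mathcal{G}}(X)-2\langle \mathbf{A},\mathbf{r}\rangle_{\mathcal{G}}\big),
\end{align*}
where $\langle \mathbf{A},\mathbf{A}\rangle_{\mathcal{G}}(X)$ denotes the one-form $\langle \iota_X\mathbf{A},\mathbf{A}\rangle_{\mathcal{G}}$, with the convention fixed by the matrix of $F_{\mathbf{A}}$. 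Composing with $F_\nu$ and precomposing with $f^{-1}$ then yields
\begin{align*}
\mu_E(F,u)=f_*X+O\,(f^{-1})^*\big(\mathbf{r}+\iota_X\mathbf{A}\big)+(f^{-1})^*\big(\alpha+\iota_X(b+\QdR(H-f^*H))-\langle\iota_X\mathbf{A},\mathbf{A}\rangle_{\mathcal{G}}-2\langle\mathbf{A},\mathbf{r}\rangle_{\mathcal{G}}\big).
\end{align*}
Here $(f^{-1})^*$ acts componentwise on functions and forms with values in $\mathcal{G}$, with respect to the parallel frame $\{\mathbf{e}_i\}$.

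Each ingredient is smooth tame. The map $(f,X)\mapsto f_*X$ is smooth tame by~\cite[Sec~3]{paperoncomplexdefs}. The map $(f,\omega)\mapsto (f^{-1})^*\omega$ is smooth tame by Proposition~\ref{prop: pullback} combined with the smooth tame inversion on $\Diff(M)$. The map $f\mapsto \QdR(H-f^*H)$ is smooth tame because $\QdR$ is tame linear. Contractions $\iota_X$ and the pairings $\langle\cdot,\cdot\rangle_{\mathcal{G}}$ are bilinear zeroth-order differential operators, hence smooth tame. Multiplication by the constant matrix $O$ depends smoothly (indeed polynomially) on the finite-dimensional parameter $O\in\O(q,K-q)$. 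Since compositions of smooth tame maps are smooth tame, $\mu_E$ is smooth tame.

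The only step needing care is the first one: the global product chart must really be a chart of the tame manifold structure, and the coordinate change $B=b+\QdR(H-f^*H)$ must enter tamely. The latter holds by the same argument as in the exact case. Because this is handled entirely through the explicit formula, the proof reduces to citing the tame-map facts listed above.
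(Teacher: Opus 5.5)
Your proposal is correct and follows the paper's proof essentially verbatim: the paper likewise expands $F_{f,O}\circ F_{\mathbf{A}}\circ F_{b+B(f)}$ applied to $X+\mathbf{r}+\alpha$ and precomposed with $f^{-1}$, obtains the same explicit formula, and concludes that $\mu_E$ is a composition of smooth tame maps. Your extra remarks (using smooth tame inversion on $\Diff(M)$ for $(f^{-1})^*$, and tameness of $f\mapsto \QdR(H-f^*H)$) simply make explicit what the paper leaves implicit.
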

\begin{proof}
    Given $(f,b,O,\mathbf{A})\in \Auteq(E)$ and $u=X+\mathbf{r}+\alpha\in \Gamma(M,E)$, we compute
    \begin{align*}
        &(f,b,O,\mathbf{A})\circ u\circ f^{-1}\\
        &=F_{f,O}\circ F_A\circ F_{b+B(f)}(X+\mathbf{r}+\alpha)\circ f^{-1}\\
        &=F_{f,O}\circ F_{\mathbf{A}}\left(X+\mathbf{r}+\alpha+\iota_Xb+\iota_X \QdR(H-f^*H)\right)\circ f^{-1}\\
        &=F_{f,O}\circ \left(X+\mathbf{A}(X)+\mathbf{r}+\alpha+\iota_X b+\iota_X\QdR(H-f^*H)-\iota_X\langle \mathbf{A},\mathbf{A}\rangle_{\mathcal{G}}-2\langle \mathbf{A},\mathbf{r}\rangle_{\mathcal{G}}\right)\circ f^{-1}\\
        &=f_*X+O(\mathbf{A}(X)+\mathbf{r})\circ f^{-1}+(f^{-1})^*\left(\alpha+\iota_Xb+\iota_X\QdR(H-f^*H)-\iota_X\langle \mathbf{A},\mathbf{A}\rangle_{\mathcal{G}}-2\langle \mathbf{A},\mathbf{r}\rangle_{\mathcal{G}}\right),
    \end{align*}
    which is smooth tame as a composition of smooth tame maps.
\end{proof}
\begin{proposition}
\label{prop: mu for small exact autoeqs}
Let $v\in \autex(E)$, $u\in\Gamma(M,E)$. Then we have 
\begin{align*}
    D_F\mu_E(F=\id,u)v=-[v,u].
\end{align*}
In particular, for some small exact autoequivalence $\Phi_v$, we have
\begin{align*}
    \mu_E(\Phi_v,u)=u-[v,u]+R_{\mu_E}(v)u,
\end{align*}
where $R_{\mu_E}(tv)=t^2\tilde{R}_{\mu_E}(v,t)$, for a real parameter $t$, where $\tilde{R}_{\mu_E}(v,t)$ depends smoothly on $t$ for small $t$.
\end{proposition}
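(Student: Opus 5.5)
The plan is to mirror the proof of Proposition~\ref{prop: mu for small exact autoeqs exact case}. We differentiate the explicit formula for $\mu_E$ obtained in the proof of Proposition~\ref{prop: action of ex auteq is smooth tame flat trans} at $F=\id$, in the direction of a general element $(Y,b,T,\mathbf{A})\in\auteq(E)$. We then restrict to directions in $\autex(E)$ and compare with the Dorfman bracket of Definition~\ref{def: standard regular}, specialized to the $\mathcal{G}$-flat case.

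Concretely, take a curve $f_t=\exp^h(tY)$, $b_t=tb$, $O_t=\mathrm{id}$ (exact directions have no $\mathfrak{so}(q,K-q)$ component, by Proposition~\ref{prop: hex in flat std isom to ad perp}), and $\mathbf{A}_t=t\mathbf{A}$. Let $u=X+\mathbf{r}+\alpha$. Since $\frac{\dd}{\dd t}|_{t=0}(f_t)_*X=-\mathcal{L}_YX$, $\frac{\dd}{\dd t}|_{t=0}(f_t^{-1})^*=-\mathcal{L}_Y$, $\frac{\dd}{\dd t}|_{t=0}\mathbf{r}\circ f_t^{-1}=-\mathcal{L}_Y\mathbf{r}$ and $B(f_t)$ has derivative $-\QdR\mathcal{L}_YH$, and the quadratic term $\iota_X\langle\mathbf{A},\mathbf{A}\rangle_{\mathcal{G}}$ drops out, we expect
\begin{align*}
D_F\mu_E(\id,u)(Y,b,0,\mathbf{A})=-\mathcal{L}_Y(X+\mathbf{r}+\alpha)+\mathbf{A}(X)+\iota_Xb-\iota_X\QdR\mathcal{L}_YH-2\langle\mathbf{A},\mathbf{r}\rangle_{\mathcal{G}}.
\end{align*}
Next, we insert the exact direction $v=Y+\mathbf{s}+\beta$, which via Proposition~\ref{prop: hex in flat std isom to ad perp} corresponds to $b=\dd\beta+\QdR\mathcal{L}_YH-\iota_YH$ and $\mathbf{A}=\dd\mathbf{s}$. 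The $\QdR$ terms cancel, and what remains is
\begin{align*}
-\mathcal{L}_YX-\dd\mathbf{s}(X)... \text{(sign to be fixed)} -\mathcal{L}_Y\mathbf{r}-\mathcal{L}_Y\alpha+\iota_X\dd\beta-\iota_X\iota_YH-2\langle\dd\mathbf{s},\mathbf{r}\rangle_{\mathcal{G}}.
\end{align*}

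We then compute $[v,u]$ from Definition~\ref{def: standard regular} with $\mathbf{R}=0$, $\nabla=\dd$ in the parallel frame, and $[\cdot,\cdot]_{\mathcal{G}}=0$:
\begin{align*}
[v,u]=\mathcal{L}_YX+\iota_X\iota_YH+\mathcal{L}_Y\mathbf{r}-\iota_X\dd\mathbf{s}+2\langle\mathbf{r},\dd\mathbf{s}\rangle_{\mathcal{G}}+\mathcal{L}_Y\alpha-\iota_X\dd\beta.
\end{align*}
Matching this against the derivative term by term gives $D_F\mu_E(\id,u)v=-[v,u]$, provided the sign of the $\mathbf{A}$-term is right. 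The identification of $v$ with an element of $\autex(E)$ carries a sign convention: the exact case used $-\ad$. The $\mathcal{G}$-component of the parametrization in Proposition~\ref{prop: hex in flat std isom to ad perp} must therefore be read as $\mathbf{A}=-\dd\mathbf{s}$, and possibly $b$ with an overall sign as well, consistent with that case. Pinning down these signs, so that the cross terms $\mathbf{A}(X)$ and $-2\langle\mathbf{A},\mathbf{r}\rangle_{\mathcal{G}}$ reproduce $\iota_X\dd\mathbf{s}$ and $-2\langle\mathbf{r},\dd\mathbf{s}\rangle_{\mathcal{G}}$, is the main point to check carefully. It is also where the $\mathcal{G}$-flat computation genuinely differs from the exact case.

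Finally, the expansion $\mu_E(\Phi_v,u)=u-[v,u]+R_{\mu_E}(v)u$ follows as before. The map $t\mapsto\mu_E(\Phi_{tv},u)$ is smooth, since $\mu_E$ is smooth tame by Proposition~\ref{prop: action of ex auteq is smooth tame flat trans} and $\Phi$ is a chart with $D\Phi(\id)=\id$. Taylor's formula with integral remainder \cite[Thm I.3.5.6]{hamilton82} then gives the remainder in the form $t^2\tilde R_{\mu_E}(v,t)$, with $\tilde R_{\mu_E}(v,t)$ depending smoothly on $t$.
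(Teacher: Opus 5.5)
Your strategy is the paper's: differentiate the explicit formula for $\mu_E$ from the proof of Proposition~\ref{prop: action of ex auteq is smooth tame flat trans} at $F=\id$, insert the exact direction, and compare with the Dorfman bracket. Your treatment of the remainder via \cite[Thm I.3.5.6]{hamilton82} is also fine. As written, however, the proposal never verifies the identity, which is the entire content of the proposition, and the sign repair you suggest would make it false.

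Your general derivative formula and your expression for $[v,u]$ are both correct. Use the identification from the proof of Proposition~\ref{prop: hex in flat std isom to ad perp} literally: $v=Y+\mathbf{s}+\beta$ corresponds to $Y+\dd\beta+\QdR\mathcal{L}_YH-\iota_YH+\dd\mathbf{s}$, so $b=\dd\beta+\QdR\mathcal{L}_YH-\iota_YH$ and $\mathbf{A}=+\dd\mathbf{s}$. Substituting gives
\[
-\mathcal{L}_YX+\iota_X\dd\mathbf{s}-\mathcal{L}_Y\mathbf{r}-\mathcal{L}_Y\alpha+\iota_X\dd\beta-\iota_X\iota_YH-2\langle\dd\mathbf{s},\mathbf{r}\rangle_{\mathcal{G}},
\]
which is $-[v,u]$ term by term. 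Nothing needs fixing. The apparent discrepancy comes from your intermediate line, where you wrote $-\dd\mathbf{s}(X)$ even though your own formula gives $+\mathbf{A}(X)=+\iota_X\dd\mathbf{s}$.

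Your proposed repair breaks the identity. With $\mathbf{A}=-\dd\mathbf{s}$, the two $\mathcal{G}$-cross terms become $-\iota_X\dd\mathbf{s}+2\langle\dd\mathbf{s},\mathbf{r}\rangle_{\mathcal{G}}$, which are the corresponding terms of $+[v,u]$. Flipping the sign of $b$ would destroy the cancellation of the $\iota_X\QdR\mathcal{L}_YH$ terms, leaving $-2\iota_X\QdR\mathcal{L}_YH$, and would also flip $\iota_X\dd\beta$ and $\iota_X\iota_YH$.

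The ``$-\ad$'' in the exact-case corollary is not an extra sign convention in the parametrization of $\autex(E)$. It records exactly the statement being proved here: the Lie algebra element corresponding to $w$ acts on sections through $D_F\mu_E$ as $-\ad(w)$. The minus sign comes from the $f^{-1}$ in $\mu_E(F,u)=F\circ u\circ f^{-1}$. So drop the sign discussion and carry out the substitution with $\mathbf{A}=\dd\mathbf{s}$ and the unsigned $b$.
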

\begin{proof}
    First, using the formula from the proof of Proposition~\ref{prop: action of ex auteq is smooth tame flat trans}, for $(Y+b+T+\mathbf{A})\in \auteq(E)$ and $u=X+\mathbf{r}+\alpha$, we find
    \begin{align*}
        &D_F\mu_E(F=\id,u)\{Y+b+T+\mathbf{A}\}\\&=-\mathcal{L}_YX+\mathbf{A}(X)+T\mathbf{r}-\mathcal{L}_Y\mathbf{r}-\mathcal{L}_Y\alpha+\iota_Xb-\iota_X\QdR\mathcal{L}_YH-2\langle \mathbf{A},\mathbf{r}\rangle_{\mathcal{G}}.
    \end{align*}
    Setting $T=0$, $\mathbf{A}=\dd \mathbf{s}$ for some $\mathbf{s}\in \Omega^0(M,\mathcal{G})$, $b=\dd \beta+\QdR \mathcal{L}_Y H-\iota_YH$, and using $\mathcal{L}_Y \mathbf{r}=\dd \mathbf{r}(Y)$, we find
    \begin{align*}
        &D_F\mu_E(F=\id,u)\{Y+\dd \beta+\QdR \mathcal{L}_Y H-\iota_YH+\dd \mathbf{s}\}\\
        &=-\mathcal{L}_YX+\dd \mathbf{s}(X)-\dd \mathbf{r}(Y)-\mathcal{L}_Y \alpha+\iota_X\dd \beta -\iota_X\iota_Y H-2\langle \dd \mathbf{s},\mathbf{r}\rangle_{\mathcal{G}}\\
        &=-[Y+\mathbf{s}+\beta,X+\mathbf{r}+\alpha].
    \end{align*}
\end{proof}
Recall again that the derivative at the identity of $\mu_E$ gives rise to an identification of $\auteq(E)$ with the derivations of $E$.
\begin{corollary}
    Under the identification $\Der(E)\cong\auteq(E)$, we have $\autex(E)=\im (-\ad)$.
\end{corollary}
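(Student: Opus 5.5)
The identification $\Der(E)\cong\auteq(E)$ is the one induced by the infinitesimal action: to $w\in\auteq(E)$ we associate the derivation $D_F\mu_E(F=\id,\cdot)\,w$ of $\Gamma(M,E)$. So the statement amounts to showing that, under this map, $\autex(E)$ corresponds exactly to $\{-\ad(u'):u'\in\Gamma(M,E)\}$. I would prove the two inclusions separately. Both rely on Proposition~\ref{prop: mu for small exact autoeqs}, on the description of $\autex(E)$ in Proposition~\ref{prop: hex in flat std isom to ad perp}, and on the explicit formula for $D_F\mu_E(F=\id,u)$ on all of $\auteq(E)$ obtained in the proof of Proposition~\ref{prop: mu for small exact autoeqs}.

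For ``$\subseteq$'', take $v\in\autex(E)$. Proposition~\ref{prop: mu for small exact autoeqs} gives $D_F\mu_E(\id,u)v=-[v,u]$ for all $u$. Here $v$ is read as a section of $E$ via Proposition~\ref{prop: hex in flat std isom to ad perp}: if $v=Y+\dd\beta+\QdR\mathcal{L}_YH-\iota_YH+\dd\mathbf{s}$, then the section is $Y+\mathbf{s}+\beta$. Hence the derivation attached to $v$ is $-\ad(Y+\mathbf{s}+\beta)$, which lies in $\im(-\ad)$.

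For ``$\supseteq$'', take any $u'=Y+\mathbf{s}+\beta\in\Gamma(M,E)$. Set
\[
w:=Y+\dd\beta+\QdR\mathcal{L}_YH-\iota_YH+\dd\mathbf{s}.
\]
I would first check that $w$ is a legitimate element of $\auteq(E)$. The $2$-form component is closed, since $\dd\QdR\mathcal{L}_YH=\dd\iota_YH$ because $H$ is closed. The $\mathcal{G}$-valued $1$-form component $\dd\mathbf{s}$ is exact. Therefore $w\in\autex(E)$ by Proposition~\ref{prop: hex in flat std isom to ad perp}. The computation in the proof of Proposition~\ref{prop: mu for small exact autoeqs} applies to arbitrary $\beta$ and $\mathbf{s}$, because the $\QdR$-terms cancel identically. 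It gives $D_F\mu_E(\id,\cdot)w=-[u',\cdot]=-\ad(u')$. Hence $-\ad(u')$ lies in the image of $\autex(E)$.

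The main point needing care is that the correspondence $\auteq(E)\to\Der(E)$ is well defined and injective, so that comparing images is meaningful. I would read injectivity off the explicit formula
\[
-\mathcal{L}_YX+\mathbf{A}(X)+T\mathbf{r}-\mathcal{L}_Y\mathbf{r}-\mathcal{L}_Y\alpha+\iota_Xb-\iota_X\QdR\mathcal{L}_YH-2\langle\mathbf{A},\mathbf{r}\rangle_{\mathcal{G}}.
\]
Suppose this vanishes for all $u=X+\mathbf{r}+\alpha$. Taking $u=X$ forces $Y=0$, then $\mathbf{A}=0$ and $b=0$. Taking $u=\mathbf{r}$ then forces $T=0$. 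So the map is injective and the two inclusions establish the identification.
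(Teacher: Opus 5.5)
Your proposal is correct and is essentially the paper's argument: the corollary is read off from the computation in Proposition~\ref{prop: mu for small exact autoeqs}, which shows that the infinitesimal action of $p(Y+\mathbf{s}+\beta)$ is $-\ad(Y+\mathbf{s}+\beta)$, combined with $\autex(E)=\im p$ from Proposition~\ref{prop: hex in flat std isom to ad perp}. Your injectivity check for $\auteq(E)\to\Der(E)$ is a harmless addition that the paper takes for granted; also, the section attached to $v$ under the paper's identification is really the $(\ker p)^{\perp}$-component of $Y+\mathbf{s}+\beta$, but this slight imprecision does not matter since $\ker p=\ker\ad$.
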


\begin{remark}\label{rk: hex id with orthog complement of ad for std flat}
Analogous to the exact case we observe that Proposition~\ref{prop: hex in flat std isom to ad perp} in fact states that $\autex(E)$ is tamely isomorphic to $(\ker \ad)^{\perp}$, where the orthogonal complement is taken with respect to the $L^2$-inner product $(\cdot,\cdot)_E$ on $\Gamma(M,E)$ defined by the bundle metric $h\oplus \langle\cdot,\cdot\rangle_{\mathrm{Eucl}}\oplus h^*$.\end{remark}

The tame Fréchet Lie group structure of the standard $\mathcal{G}$-flat transitive Courant algebroid gives rise to a tame Fréchet Lie group structure on a general $\mathcal{G}$-flat transitive Courant algebroid. Given a $\mathcal{G}$-flat transitive Courant algebroid, there is no unique standard $\mathcal{G}$-flat transitive Courant algebroid it is isomorphic to: A $B$-field transformation only changes $H\to H+\dd B$ and hence maps standard $\mathcal{G}$-flat transitive Courant algebroids to standard $\mathcal{G}$-flat transitive Courant algebroids. Similarly, elements in $\Aut(\mathcal{G})= C^{\infty}(M,\O(q,K-q))$ are simply changes of trivializations and again yield a standard $\mathcal{G}$-flat transitive Courant algebroid.

In contrast, when acting with $\mathbf{A}\in \Omega^1(M,\mathcal{G})$, in general $\mathbf{R}$ no longer vanishes but rather $\mathbf{R}=-\dd \mathbf{A}$. Later, we will require more flexibility in the choice of bisection and need to also consider those bisections in which $\mathbf{R}$ does not vanish. Therefore, we will be interested in an explicit description of the group of autoequivalences of a standard transitive Courant algebroid obtained from a standard $\mathcal{G}$-flat Courant algebroid by an $\mathbf{A}$-field transformation.

\subsubsection{Autoequivalences in a general choice of bisection}
Suppose we change the bisection from a flat standard transitive Courant algebroid by acting with $-\mathbf{S}\in \Omega^1(M,\mathcal{G})$. Since the action by a closed $\mathbf{A}$-field transform on a standard $\mathcal{G}$-flat transitive Courant algebroid will again yield a standard $\mathcal{G}$-flat transitive Courant algebroid, we will assume without loss of generality that $\mathbf{S}=\sum_{i=1}^KS_i\mathbf{e}_i$, with $S_i\in\dd^*\Omega^2(M)$ (recall that by Remark~\ref{rk: coexact times parallel are top compl} the $\mathbf{S}$ of this kind form a topological complement to $\Omega^1_{\mathrm{cl}}(M,\mathcal{G})$).

Then the data defining the bracket is as follows: the connection on $\mathcal{G}$ is still flat with trivial holonomy, $\mathbf{R}=\dd \mathbf{S}$ and the three-form $H$ satisfies $H-\langle \mathbf{S}\wedge \dd \mathbf{S}\rangle_{\mathcal{G}}\in \Omega^3_{\mathrm{cl}}(M)$. The group of autoequivalences of this Courant algebroid is the subgroup of the group of orthogonal equivalences conjugate to the autoequivalences of the standard $\mathcal{G}$-flat transitive Courant algebroid by the action of $F_{-\mathbf{S}}$. In particular, it will still have a tame Fréchet Lie group structure, which we explicitly describe in the following theorem. 

\begin{theorem}\label{thm: auteq for R=dS}
    Let $E\cong TM\oplus \mathcal{G}\oplus T^*M$ be a standard transitive Courant algebroid with $\mathcal{G}=M\times \R^K$, $\nabla$ a flat connection on $\mathcal{G}$ with trivial holonomy, $\mathbf{R}=\dd \mathbf{S}$ for some $\mathbf{S}=\sum_{i=1}^K S_i\mathbf{e}_i$, $S_i\in\dd^*\Omega^2(M)$ and $H\in \Omega^3(M)$ such that $H-\langle \mathbf{S}\wedge \dd \mathbf{S}\rangle_{\mathcal{G}}\in \Omega^3_{\mathrm{cl}}(M)$. Then
    \begin{align*}
        \Auteq(E)&=\{F_{\nu,\mathbf{A},B}:\, \nu=(f,O)\text{ s.t. } f\in \Diff_{[H-\langle \mathbf{S}\wedge \dd \mathbf{S}\rangle_{\mathcal{G}}]}(M),\, O\in \O(q,K-q),\\
        &\quad\quad \mathbf{a}:=\mathbf{A}-\mathbf{S}+O^{-1}f^*\mathbf{S}\in \Omega^1_{\mathrm{cl}}(M,\mathcal{G}),\\
        &\quad\quad b:=B-\QdR\left(H-\langle \mathbf{S}\wedge \dd \mathbf{S}\rangle_{\mathcal{G}}-f^*(H-\langle \mathbf{S}\wedge \dd \mathbf{S}\rangle_{\mathcal{G}})\right)\\&\quad \quad\quad-\langle \mathbf{a}\wedge (\mathbf{S}+O^{-1}f^*\mathbf{S})\rangle_{\mathcal{G}}+\langle O^{-1}f^*\mathbf{S}\wedge \mathbf{S}\rangle_{\mathcal{G}}\in \Omega^2_{\mathrm{cl}}(M)\}\\
        &\cong \Diff(M)_{[H-\langle \mathbf{S}\wedge \dd \mathbf{S}\rangle_{\mathcal{G}}]}\times \Omega^2_{\mathrm{cl}}(M)\times \O(q,K-q)\times \Omega^1_{\mathrm{cl}}(M,\mathcal{G}).
    \end{align*}
    The multiplication map is given by \[(f_1,b_1,O_1,\mathbf{a}_1)\circ (f_2,b_2,O_2,\mathbf{a}_2)\longmapsto (f_1f_2,b_3,O_1O_2,\mathbf{a}_3),\]
    where 
    \begin{align*}
        \mathbf{a}_3&=O_2^{-1}f_2^*\mathbf{a}_1+\mathbf{a}_2\\
        b_3&=f_2^*b_1+b_2+\langle O_2^{-1}f_2^*\mathbf{a}_1\wedge \mathbf{a}_2\rangle_{\mathcal{G}}+[f_2^*,\QdR]\left(H-\langle \mathbf{S}\wedge \dd \mathbf{S}\rangle_{\mathcal{G}}-f_1^*(H-\langle \mathbf{S}\wedge \dd \mathbf{S}\rangle_{\mathcal{G}})\right),
    \end{align*}
    and the inversion map is given by
    \begin{align*}
        (f,b,O,\mathbf{a})\mapsto(f^{-1},b',O^{-1},-O(f^{-1})^*\mathbf{a}),
    \end{align*}
    where \[b'=-(f^{-1})^*b+[\QdR,(f^{-1})^*]\left(H-\langle \mathbf{S}\wedge \dd \mathbf{S}\rangle_{\mathcal{G}}-f^*(H-\langle \mathbf{S}\wedge \dd \mathbf{S}\rangle_{\mathcal{G}})\right).\]
\end{theorem}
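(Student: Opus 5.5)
The approach is to realize $E$ as the image of a $\mathcal{G}$-flat standard transitive Courant algebroid $E_0=S[\nabla,0,H_0]$ under the bisection change $F_{-\mathbf{S}}$, with $H_0:=H-\langle \mathbf{S}\wedge \dd\mathbf{S}\rangle_{\mathcal{G}}$. Then $\Auteq(E)=F_{-\mathbf{S}}\circ\Auteq(E_0)\circ F_{-\mathbf{S}}^{-1}$, and the claimed description and group law should follow by transporting Proposition~\ref{prop: flat autos are tame manifold} and Theorem~\ref{thm: Auteq of flat std trans is tame Lie group} through this conjugation.

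\textbf{Step 1 (data of $E$).} First I check the data. Apply the change-of-bisection formulas from \cite{chen2013} with $\nu=\id$, $\mathbf{A}=-\mathbf{S}$, $B=0$, starting from $(\nabla,0,H_0)$ with $[\cdot,\cdot]_{\mathcal{G}}=0$. This gives $\hat\nabla=\nabla$ and $\hat{\mathbf{R}}=\dd\mathbf{S}$. For the three-form it gives $\hat H=H_0+\langle \mathbf{S}\wedge \dd \mathbf{S}\rangle_{\mathcal{G}}$, up to the sign conventions, which I will track carefully. This confirms that $E$ is precisely $F_{-\mathbf{S}}(E_0)$, so conjugation by $F_{-\mathbf{S}}$ is a Courant isomorphism $E_0\to E$.

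\textbf{Step 2 (conjugating elements).} Next I take a general element $F_{f,O,\mathbf{a},B_0}\in\Auteq(E_0)$ with $B_0=b+\QdR(H_0-f^*H_0)$ and compute $F_{-\mathbf{S}}\,F_{(f,O)}F_{\mathbf{a}}F_{B_0}\,F_{\mathbf{S}}$. The rules I need are the commutation relations for $F_\nu$, $F_{\mathbf{A}}$ and $F_B$, which are read off from the product formula in Theorem~\ref{thm: gen automs trans}. The first is $F_{\mathbf{A}}F_{\mathbf{A}'}=F_{\langle\mathbf{A}\wedge\mathbf{A}'\rangle_{\mathcal{G}}}F_{\mathbf{A}+\mathbf{A}'}$, up to order and sign. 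The second is $F_{\mathbf{A}}F_{(f,O)}=F_{(f,O)}F_{O^{-1}f^*\mathbf{A}}$. The third is that $F_B$ commutes with $F_{\mathbf{A}}$ and satisfies $F_BF_{(f,O)}=F_{(f,O)}F_{f^*B}$.

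Using these, I move $F_{-\mathbf{S}}$ to the right past $F_{(f,O)}$, where it becomes $F_{-O^{-1}f^*\mathbf{S}}$, and then collect all the $\mathbf{A}$-fields. The total $\mathbf{A}$-field comes out as $\mathbf{A}=\mathbf{a}+\mathbf{S}-O^{-1}f^*\mathbf{S}$, which is the claimed relation for $\mathbf{a}$. The $B$-component collects $B_0$ together with the cross terms $\langle\cdot\wedge\cdot\rangle_{\mathcal{G}}$ generated by merging the three $\mathbf{A}$-fields. Solving for $b$ should reproduce the stated formula $b=B-\QdR(\dots)-\langle\mathbf{a}\wedge(\mathbf{S}+O^{-1}f^*\mathbf{S})\rangle_{\mathcal{G}}+\langle O^{-1}f^*\mathbf{S}\wedge\mathbf{S}\rangle_{\mathcal{G}}$.

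Since this correspondence $(f,b,O,\mathbf{a})\mapsto F_{\nu,\mathbf{A},B}$ is a bijection onto $\Auteq(E)$, I obtain the parametrization by $\Diff(M)_{[H_0]}\times\Omega^2_{\mathrm{cl}}(M)\times\O(q,K-q)\times\Omega^1_{\mathrm{cl}}(M,\mathcal{G})$.

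\textbf{Step 3 (group law).} Because conjugation is a group isomorphism and the parameters $(f,b,O,\mathbf{a})$ are exactly the parameters of the conjugated element of $\Auteq(E_0)$, the multiplication and inversion are literally those of Theorem~\ref{thm: Auteq of flat std trans is tame Lie group} with $H$ replaced by $H_0$. No further computation is needed here.

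Tameness then follows because the coordinate change is a composition of smooth tame operations: pullbacks, $\QdR$, wedge products and multiplication by constant matrices. This is the same reasoning as in Remark~\ref{rk: from rubio-tipler to our tame structure}.

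\textbf{Main obstacle.} I expect the only delicate step to be the bookkeeping of the $B$-field cross terms and signs in Step 2, in particular keeping consistent conventions for $\langle\mathbf{A}\wedge\mathbf{A}\rangle_{\mathcal{G}}$ and for the order of composition. I would first verify the composition rule for $F_{\mathbf{A}}F_{\mathbf{A}'}$ directly from the matrices. After that I would check the final formula in the special case $f=\id$, $O=\id$, where it must reduce to $b=B-\langle\mathbf{a}\wedge2\mathbf{S}\rangle_{\mathcal{G}}+\langle\mathbf{S}\wedge\mathbf{S}\rangle_{\mathcal{G}}$ with $\langle\mathbf{S}\wedge\mathbf{S}\rangle_{\mathcal{G}}=0$.
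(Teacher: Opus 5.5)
Your proposal is correct, but it argues differently from the paper. The paper does not use the conjugation by $F_{-\mathbf{S}}$ in its proof; it mentions it only in the paragraph before the theorem. Instead it plugs the data $(\nabla,\dd\mathbf{S},H)$ directly into the constraints of Theorem~\ref{thm: gen automs trans}:
it shows $O$ is constant, solves $\dd(\mathbf{A}+O^{-1}f^*\mathbf{S}-\mathbf{S})=0$, and expands the $H$-constraint with wedge identities to get $f\in\Diff(M)_{[H_0]}$ and $B=b+B(f,\mathbf{a})$. It then obtains $\mathbf{a}_3,b_3$ by expanding the product formula and cancelling all the $\mathbf{S}$-cross terms by hand.

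Your route replaces both computations with a single conjugation, and it goes through exactly as you hope:
\begin{itemize}
\item $F_{\mathbf{A}}F_{\mathbf{A}'}=F_{\mathbf{A}+\mathbf{A}'}F_{\langle\mathbf{A}\wedge\mathbf{A}'\rangle_{\mathcal{G}}}$, with no sign ambiguity;
\item $F_B$ commutes with $F_{\mathbf{A}}$;
\item $F_{\mathbf{S}}^{-1}=F_{-\mathbf{S}}$.
\end{itemize}
These give
\[F_{-\mathbf{S}}F_{(f,O)}F_{\mathbf{a}}F_{B_0}F_{\mathbf{S}}=F_{(f,O)}F_{\mathbf{a}+\mathbf{S}-O^{-1}f^*\mathbf{S}}F_{B},\quad B=B_0+\langle\mathbf{a}\wedge(\mathbf{S}+O^{-1}f^*\mathbf{S})\rangle_{\mathcal{G}}-\langle O^{-1}f^*\mathbf{S}\wedge\mathbf{S}\rangle_{\mathcal{G}}.\]
With $B_0=b+\QdR(H_0-f^*H_0)$ this is precisely the paper's parametrization. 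So the conjugation is the identity in the coordinates $(f,b,O,\mathbf{a})$.

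This explains structurally why the group law is that of Theorem~\ref{thm: Auteq of flat std trans is tame Lie group} with $H$ replaced by $H_0$. In the paper that fact only emerges after the cancellations. It also makes your separate tameness remark about a coordinate change unnecessary.

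The price is that you rely on $F_{-\mathbf{S}}$ being a Courant isomorphism $S[\nabla,0,H_0]\to S[\nabla,\dd\mathbf{S},H]$. That is the change-of-bisection theorem read in the converse direction: an arbitrary orthogonal, anchor-preserving $\delta$ transports the bracket to the standard one with the transformed data. The paper takes this for granted just before the theorem but does not state it. By contrast, the paper's direct computation needs only Theorem~\ref{thm: gen automs trans} applied to $E$ itself.

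You should also state explicitly that your composition rules are plain matrix identities in $\O(E,\pi,\langle\cdot,\cdot\rangle)$. This matters because $F_{\pm\mathbf{S}}$ are not autoequivalences of either algebroid, and the product formula in Theorem~\ref{thm: gen automs trans} is stated only for autoequivalences.
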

\begin{proof}
    Like in the proofs of Proposition~\ref{prop: flat autos are tame manifold} and Theorem~\ref{thm: Auteq of flat std trans is tame Lie group}, we will first take a closer look at the constraints in Theorem~\ref{thm: gen automs trans} on $\nu=(f,O)\in\Auteq(\mathcal{G})$, $\mathbf{A}\in\Omega^1(M,\mathcal{G})$ and $B\in \Omega^2(M)$ to define an autoequivalence $F_{\nu,\mathbf{A},B}$ of $E$.

    By the same argument as in Proposition~\ref{prop: flat autos are tame manifold}, the first condition implies that $O$ must be constant, i.e.\ $O\in\O(q,K-q)$. The second condition becomes
    \begin{align*}
        O^{-1}f^*\dd\mathbf{S}=\dd\mathbf{S}-\dd\mathbf{A},
    \end{align*}
    which, by equation~\eqref{eq: d and pullback commute on forms with values in G} and the constancy of $O$, is equivalent to
    \begin{align*}
        \dd(\mathbf{A}+O^{-1}f^*\mathbf{S}-\mathbf{S})=0.
    \end{align*}
    Henceforth, we will write
    \begin{align*}
        \mathbf{A}=\mathbf{a}+\mathbf{S}-O^{-1}f^*\mathbf{S}\text{ for some }\mathbf{a}\in\Omega^1_{\mathrm{cl}}(M,\mathcal{G}).
    \end{align*}
    The last condition in Theorem~\ref{thm: gen automs trans} becomes
    \begin{align*}
        H-f^*H&=\dd B+ 2\langle\mathbf{A}\wedge \dd\mathbf{S}\rangle_{\mathcal{G}}-\langle\mathbf{A}\wedge\dd\mathbf{A}\rangle_{\mathcal{G}}\\
        &=\dd B+2\left\langle (\mathbf{a}+\mathbf{S}-O^{-1}f^*\mathbf{S})\wedge\dd\mathbf{S}\right\rangle_{\mathcal{G}}-\left\langle(\mathbf{a}+\mathbf{S}-O^{-1}f^*\mathbf{S})\wedge (\dd \mathbf{S}-O^{-1}f^*\dd\mathbf{S})\right\rangle_{\mathcal{G}}.
    \end{align*}
    Observing that for general $\mathbf{A'}\in\Omega^k(M,\mathcal{G})$ and $\mathbf{B'}\in\Omega^l(M,\mathcal{G})$, we have
    \begin{align*}
        \left\langle O^{-1}\mathbf{A'}\wedge O^{-1}\mathbf{B'}\right\rangle_{\mathcal{G}}&=\left\langle\mathbf{A'}\wedge \mathbf{B'}\right\rangle_{\mathcal{G}},\\
        \left\langle f^*\mathbf{A'}\wedge f^*\mathbf{B'}\right\rangle_{\mathcal{G}}&=f^*\left\langle\mathbf{A'}\wedge \mathbf{B'}\right\rangle_{\mathcal{G}},\\
        \dd\langle\mathbf{A'}\wedge\mathbf{B'}\rangle_{\mathcal{G}}&=\langle \dd\mathbf{A'}\wedge \mathbf{B'}\rangle_{\mathcal{G}}+(-1)^k\langle \mathbf{A'}\wedge \dd\mathbf{B'}\rangle_{\mathcal{G}},
    \end{align*}
    we find
    \begin{align*}
        H-f^*H=\dd B-\dd\left\langle \mathbf{a}\wedge(\mathbf{S}+O^{-1}f^*\mathbf{S})\right\rangle_{\mathcal{G}}-\dd\left\langle \mathbf{S}\wedge O^{-1}f^*\mathbf{S}\right\rangle_{\mathcal{G}}+\langle\mathbf{S}\wedge\dd\mathbf{S}\rangle_{\mathcal{G}}-f^*\langle\mathbf{S}\wedge\dd\mathbf{S}\rangle_{\mathcal{G}}.
    \end{align*}
    In particular, this shows that $f\in\Diff(M)_{[H-\langle \mathbf{S}\wedge \dd \mathbf{S}\rangle_{\mathcal{G}}]}$ and that
    \begin{align*}
        B=b+B(f,\mathbf{a})\text{ for some }b\in\Omega^2_{\mathrm{cl}}(M),
    \end{align*}
    where we defined 
    \begin{align*}
        B(f,\mathbf{a})=\QdR\left(H-\langle \mathbf{S}\wedge \dd \mathbf{S}\rangle_{\mathcal{G}}\right)+\left\langle \mathbf{a}\wedge(\mathbf{S}+O^{-1}f^*\mathbf{S})\right\rangle_{\mathcal{G}}+\left\langle \mathbf{S}\wedge O^{-1}f^*\mathbf{S}\right\rangle_{\mathcal{G}}.
    \end{align*}
    In total, we obtain a bijection
    \begin{align*}
        \Diff(M)_{[H-\langle \mathbf{S}\wedge \dd \mathbf{S}\rangle_{\mathcal{G}}]}\times \Omega^2_{\mathrm{cl}}(M)\times \O(q,K-q)\times \Omega^1_{\mathrm{cl}}(M,\mathcal{G})&\xlongrightarrow{\cong} \Auteq(E)\\
        (f,b,O,\mathbf{a})&\longmapsto F_{(f,O),\,\mathbf{a}+\mathbf{S}-O^{-1}f^*\mathbf{S},\,b+B(f,\mathbf{a})}.
    \end{align*}
    To express the multiplication in $\Auteq(E)$ in terms of this bijection, we write
    \begin{align*}
        (f_1,\,b_1,\,O_1,\,\mathbf{a}_1)(f_2,\,b_2,\,O_2,\,\mathbf{a}_2)=(f_1f_2,\,b_3,\,O_1O_2,\mathbf{a}_3).
    \end{align*}
    Then, using the formulas for the multiplication in Theorem~\ref{thm: gen automs trans}, we find that $\mathbf{a}_3$ is given by
    \begin{align*}
        \mathbf{a}_3&=O_2^{-1}f_2^*(\mathbf{a}_1+\mathbf{S}-O_1^{-1}f_1^*\mathbf{S})+\mathbf{a}_2+\mathbf{S}-O_2^{-1}f_2^*\mathbf{S}-\left(\mathbf{S}-(O_1O_2)^{-1}(f_1f_2)^*\mathbf{S}\right)\\
        &=O_2^{-1}f_2^*\mathbf{a}_1+O_2^{-1}f_2^*\mathbf{S}-(O_1O_2)^{-1}(f_1f_2)^*\mathbf{S}+\mathbf{a}_2+\mathbf{S}-O_2^{-1}f_2^*\mathbf{S}-\left(\mathbf{S}-(O_1O_2)^{-1}(f_1f_2)^*\mathbf{S}\right)\\
        &=O_2^{-1}f_2^*\mathbf{a}_1+\mathbf{a}_2.
    \end{align*}
    Similarly, 
    \begin{align*}
        b_3&=f_2^*\left(b_1+B(f_1,\mathbf{a}_1)\right)+\left(b_2+B(f_2,\mathbf{a}_2)\right)\\&\quad+\left\langle O_2^{-1}f_2^*\left(\mathbf{a}_1+\mathbf{S}-O_1^{-1}f_1^*\mathbf{S}\right)\wedge\left(\mathbf{a}_2+\mathbf{S}-O_2^{-1}f_2^*\mathbf{S}\right) \right\rangle_{\mathcal{G}}-B(f_1f_2,O_2^{-1}f_2^*\mathbf{a}_1+\mathbf{a}_2).
    \end{align*}
    To simplify this expression, we first compute
    \begin{align*}
        &f_2^*\left(b_1+B(f_1,\mathbf{a}_1)\right)+\left(b_2+B(f_2,\mathbf{a}_2)\right)-B(f_1f_2,O_2^{-1}f_2^*\mathbf{a}_1+\mathbf{a}_2)\\
        &=[f_2^*,\QdR]\left(H-\langle \mathbf{S}\wedge \dd\mathbf{S}\rangle_{\mathcal{G}}-f_1^*\left(H-\langle\mathbf{S}\wedge\dd\mathbf{S}\rangle_{\mathcal{G}}\right)\right)+\left\langle f_2^*\mathbf{a}_1\wedge f_2^*\mathbf{S}\right\rangle_{\mathcal{G}}\\
        &\quad -\left\langle O_1^{-1}(f_1f_2)^*\mathbf{S}\wedge f_2^*\mathbf{S}\right\rangle_{\mathcal{G}}+\left\langle\mathbf{a}_2\wedge O_2^{-1}f_2^*\mathbf{S}\right\rangle_{\mathcal{G}}-\left\langle O_2^{-1}f_2^*\mathbf{S}\wedge\mathbf{S}\right\rangle_{\mathcal{G}}\\
        &\quad-\left\langle O_2^{-1}f_2^*\mathbf{a}_1\wedge \mathbf{S}\right\rangle_{\mathcal{G}}-\left\langle\mathbf{a}_2\wedge (O_1O_2)^{-1}(f_1 f_2)^*\mathbf{S}\right\rangle_{\mathcal{G}}+\left\langle (O_1O_2)^{-1}(f_1f_2)^*\mathbf{S}\wedge\mathbf{S}\right\rangle_{\mathcal{G}}.
    \end{align*}
    Expanding and adding the remaining terms $$f_2^*b_1+b_2+\left\langle O_2^{-1}f_2^*\left(\mathbf{a}_1+\mathbf{S}-O_1^{-1}f_1^*\mathbf{S}\right)\wedge\left(\mathbf{a}_2+\mathbf{S}-O_2^{-1}f_2^*\mathbf{S}\right) \right\rangle_{\mathcal{G}}$$
    we find that
    \[b_3=f_2^*b_1+b_2+\langle O_2^{-1}f_2^*\mathbf{a}_1\wedge \mathbf{a}_2\rangle_{\mathcal{G}}+[f_2^*,\QdR]\left(H-\langle \mathbf{S}\wedge \dd \mathbf{S}\rangle_{\mathcal{G}}-f_1^*(H-\langle \mathbf{S}\wedge \dd \mathbf{S}\rangle_{\mathcal{G}})\right).\]
    Finally, using these formulas for the multiplication, we confirm that the inversion map is given by the expression stated in the theorem.
\end{proof}

Since $\Auteq(E)$ is conjugate to the group of autoequivalences of a flat standard transitive Courant algebroid by some $\mathbf{A}$-field transformation, and since the action of $\mathbf{A}$-field transformations on $\Gamma(M,E)$ is smooth tame, we conclude from Proposition~\ref{prop: action of ex auteq is smooth tame flat trans} that the action of $\Auteq(E)$ on $\Gamma(M,E)$ is smooth tame. 

Moreover, $\Auteq(E)$ has a tame Fréchet Lie subgroup, the group $\Autex(E)$ of exact autoequivalences, which is conjugate to the group of exact autoequivalences of some standard $\mathcal{G}$-flat transitive Courant algebroid. Its Lie algebra is again the image of $-\ad\colon \Gamma(M,E)\rightarrow \auteq(E)$. Using this fact, we will give an explicit description of the underlying tame Fréchet space below.

\begin{proposition}
\label{prop: autex for R exact}
Let $E$ be as in Theorem~\ref{thm: auteq for R=dS}. Then the Lie algebra of $\Autex(E)$ is given by
    \begin{align*}
        \autex(E)&=\{X+b+\mathbf{a}\in\auteq(E):\exists\, \alpha\in \Omega^1(M),\,\exists\,\mathbf{r}\in\Gamma(M,\mathcal{G})\text{ s.t.\ }\mathbf{a}=\dd \mathbf{r}+\dd \iota_X\mathbf{S},\\
        &\qquad b=\dd(\alpha-\langle 2\mathbf{r}+\iota_X\mathbf{S},\mathbf{S}\rangle_{\mathcal{G}})+\QdR\mathcal{L}_X(H-\langle\mathbf{S}\wedge \dd\mathbf{S}\rangle_{\mathcal{G}})-\iota_X(H-\langle\mathbf{S}\wedge \dd\mathbf{S}\rangle_{\mathcal{G}})\},
    \end{align*}
    where as above $\mathbf{S}=\sum_{i=1}^K S_i\mathbf{e}_i$ is such that $\mathbf{R}=\dd\mathbf{S}$, $S_i\in\dd^*\Omega^2(M)$.
\end{proposition}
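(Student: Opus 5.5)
The plan is to compute $\autex(E)$ by transporting the description of the exact autoequivalences from the flat case through the $\mathbf{A}$-field transformation $F_{-\mathbf{S}}$. Two facts are used. First, $\Autex(E)=F_{-\mathbf{S}}\,\Autex(E_0)\,F_{-\mathbf{S}}^{-1}$, where $E_0$ is the standard $\mathcal{G}$-flat Courant algebroid with three-form $H_0:=H-\langle\mathbf{S}\wedge\dd\mathbf{S}\rangle_{\mathcal{G}}$. Second, Proposition~\ref{prop: hex in flat std isom to ad perp} describes $\autex(E_0)$ in the coordinates $(X,b,T,\mathbf{A})$. Since the identification of $\Auteq(E)$ in Theorem~\ref{thm: auteq for R=dS} is, by construction, conjugation of the flat parametrisation by $F_{-\mathbf{S}}$, I first want to pin down the differential at the identity of the map
\[
(f,b,O,\mathbf{a})\mapsto F_{(f,O),\,\mathbf{a}+\mathbf{S}-O^{-1}f^*\mathbf{S},\,b+B(f,\mathbf{a})}
\]
and compare it with the conjugated flat chart.

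Rather than track the conjugation of charts, I will work with the characterization $\autex(E)=\im(-\ad)$ and compute the infinitesimal action directly. Take a curve in $\Auteq(E)$ through $\id$ with tangent $X+b+T+\mathbf{a}$ in the coordinates of Theorem~\ref{thm: auteq for R=dS}, and write the corresponding orthogonal equivalence $F_{\nu,\mathbf{A},B}$. The derivative of $\mathbf{A}=\mathbf{a}+\mathbf{S}-O^{-1}f^*\mathbf{S}$ is
\[
\dot{\mathbf{A}}=\mathbf{a}+T\mathbf{S}-\mathcal{L}_X\mathbf{S}.
\]
The derivative of $B=b+B(f,\mathbf{a})$ is
\[
\dot B=b-\QdR\mathcal{L}_XH_0+\langle\mathbf{a}\wedge 2\mathbf{S}\rangle_{\mathcal{G}}+\langle\mathbf{S}\wedge(\mathcal{L}_X\mathbf{S}-T\mathbf{S})\rangle_{\mathcal{G}},
\]
with signs to be fixed carefully from the definition of $B(f,\mathbf{a})$.

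Next I linearize the action of $F_{\nu,\mathbf{A},B}$ on $u=Y+\mathbf{r}'+\eta$, using the block matrices of $F_\nu,F_{\mathbf{A}},F_B$, exactly as in the proof of Proposition~\ref{prop: mu for small exact autoeqs}. I then set this equal to $-[v,u]$ for $v=X'+\mathbf{r}+\alpha$, with the bracket of Definition~\ref{def: standard regular} for $\nabla=\dd$, $\mathbf{R}=\dd\mathbf{S}$ and the given $H$. Comparing components will determine the tangent vector in terms of $v$:
\begin{itemize}
\item The $TM$-component gives $X=X'$.
\item The $\mathcal{G}$-component gives $T=0$ and $\dot{\mathbf{A}}=\dd\mathbf{r}+\iota_X\dd\mathbf{S}$, coming from $[X,\mathbf{r}']$, $[\mathbf{r},Y]$ and $\mathbf{R}(X,Y)$. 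Hence $\mathbf{a}=\dd\mathbf{r}+\iota_X\dd\mathbf{S}+\mathcal{L}_X\mathbf{S}-\iota_X\dd\mathbf{S}$. Absorbing the piece $\iota_X\dd\mathbf{S}$ properly should give $\mathbf{a}=\dd\mathbf{r}+\dd\iota_X\mathbf{S}$ by Cartan's formula, matching the statement.
\item The $T^*M$-component, matched against $\iota_Y$ of $\dd\alpha+\iota_X H-2\langle\mathbf{r},\dd\mathbf{S}\rangle_{\mathcal{G}}$-type terms, gives $\dot B$. Substituting into the formula for $\dot B$ then yields $b$.
\end{itemize}

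The main obstacle is this last step: checking that the collected quadratic $\mathbf{S}$-terms rearrange into
\[
\dd(\alpha-\langle 2\mathbf{r}+\iota_X\mathbf{S},\mathbf{S}\rangle_{\mathcal{G}})+\QdR\mathcal{L}_XH_0-\iota_XH_0 .
\]
For this I will use $\mathcal{L}_X=\dd\iota_X+\iota_X\dd$ on $\mathcal{G}$-valued forms, the Leibniz rule for $\langle\cdot\wedge\cdot\rangle_{\mathcal{G}}$ stated in the proof of Theorem~\ref{thm: auteq for R=dS}, and $\iota_X\langle\mathbf{S}\wedge\dd\mathbf{S}\rangle_{\mathcal{G}}=\langle\iota_X\mathbf{S},\dd\mathbf{S}\rangle_{\mathcal{G}}-\langle\mathbf{S}\wedge\iota_X\dd\mathbf{S}\rangle_{\mathcal{G}}$.

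Conversely, every such triple $(X,\mathbf{r},\alpha)$ produces an element of $\im(-\ad)$ by the same computation, which gives equality of the two sets. As a consistency check I will verify that for $\mathbf{S}=0$ the formula reduces to Proposition~\ref{prop: hex in flat std isom to ad perp}.
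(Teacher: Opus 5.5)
Your strategy is the paper's: linearize $\mu_E$ at $\id$ in the direction of a general $X+b+T+\mathbf{a}\in\auteq(E)$, written in the coordinates of Theorem~\ref{thm: auteq for R=dS}. You then match the result with $-[v,u]$ and read off $T=0$, $\mathbf{a}$ and $b$. Your formulas for $\dot{\mathbf{A}}$ and $\dot B$ in terms of $(X,b,T,\mathbf{a})$ are correct.

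The $\mathcal{G}$-component, however, is wrong as written, and the sentence about ``absorbing the piece $\iota_X\dd\mathbf{S}$ properly'' hides this. For $u=Y+\mathbf{r}'+\eta$, the $\mathcal{G}$-part of $-[v,u]$ is $-\dd\mathbf{S}(X,Y)-\iota_X\dd\mathbf{r}'+\iota_Y\dd\mathbf{r}$, and $\dd\mathbf{S}(X,Y)=(\iota_X\dd\mathbf{S})(Y)$. Comparing with $\dot{\mathbf{A}}(Y)+T\mathbf{r}'-\iota_X\dd\mathbf{r}'$ gives $T=0$ and
\[
\dot{\mathbf{A}}=\dd\mathbf{r}-\iota_X\dd\mathbf{S},
\]
not $\dd\mathbf{r}+\iota_X\dd\mathbf{S}$. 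With your sign you would get $\mathbf{a}=\dot{\mathbf{A}}+\mathcal{L}_X\mathbf{S}=\dd\mathbf{r}+\dd\iota_X\mathbf{S}+2\iota_X\dd\mathbf{S}$. This is not closed in general, so it cannot be a tangent vector to $\Auteq(E)$, whose $\mathbf{a}$-coordinate lies in $\Omega^1_{\mathrm{cl}}(M,\mathcal{G})$. The extra $-\iota_X\dd\mathbf{S}$ you inserted has no source. With the correct sign, Cartan's formula gives $\mathbf{a}=\dd\mathbf{r}-\iota_X\dd\mathbf{S}+\mathcal{L}_X\mathbf{S}=\dd\mathbf{r}+\dd\iota_X\mathbf{S}$ at once. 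The $\mathbf{r}'$-linear part of the $T^*M$-component gives an independent check: $-2\langle\dot{\mathbf{A}},\mathbf{r}'\rangle_{\mathcal{G}}$ must equal $2\langle\mathbf{r}',\iota_X\dd\mathbf{S}\rangle_{\mathcal{G}}-2\langle\mathbf{r}',\dd\mathbf{r}\rangle_{\mathcal{G}}$, which again forces the minus sign.

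The $T^*M$-component has a similar sign issue. Its $Y$-dependent terms give $\dot B=\dd\alpha-\iota_XH-2\langle\mathbf{r},\dd\mathbf{S}\rangle_{\mathcal{G}}$, with $-\iota_XH$ rather than the $+\iota_XH$ in your bullet, because $-[X,Y]$ contributes $-\iota_Y\iota_XH=\iota_Y(-\iota_XH)$. Solving your formula for $\dot B$ with $T=0$ and writing $H_0=H-\langle\mathbf{S}\wedge\dd\mathbf{S}\rangle_{\mathcal{G}}$ gives
\[
b=\dot B+\QdR\mathcal{L}_XH_0-2\langle\mathbf{a}\wedge\mathbf{S}\rangle_{\mathcal{G}}-\langle\mathbf{S}\wedge\mathcal{L}_X\mathbf{S}\rangle_{\mathcal{G}}.
\]
Substitute $\mathbf{a}=\dd\mathbf{r}+\dd\iota_X\mathbf{S}$ and use the identities you list: the Leibniz rule for $\dd\langle 2\mathbf{r}+\iota_X\mathbf{S},\mathbf{S}\rangle_{\mathcal{G}}$, $\langle\mathbf{S}\wedge\dd\iota_X\mathbf{S}\rangle_{\mathcal{G}}=-\langle\dd\iota_X\mathbf{S}\wedge\mathbf{S}\rangle_{\mathcal{G}}$, and your contraction formula for $\iota_X\langle\mathbf{S}\wedge\dd\mathbf{S}\rangle_{\mathcal{G}}$. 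These yield exactly the stated expression for $b$. With these two sign corrections your argument is complete and coincides with the paper's.
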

\begin{proof}
    This can be proved by first computing $D_F\mu_E(F=\id,Y+\mathbf{s}+\beta)\{X+b+T+\mathbf{a}\}$ for general $X+b+T+\mathbf{a}\in\auteq(E)$ and then comparing to the expression of $-[X+\mathbf{r}+\alpha,Y+\mathbf{s}+\beta]$. For both expressions to agree with each other, we must have $T=0$, $\mathbf{a}=\dd\mathbf{r}+\dd\iota_X\mathbf{S}$ and $$b=\dd(\alpha-\langle 2\mathbf{r}+\iota_X\mathbf{S},\mathbf{S}\rangle_{\mathcal{G}})+\QdR\mathcal{L}_X(H-\langle\mathbf{S}\wedge \dd\mathbf{S}\rangle_{\mathcal{G}})-\iota_X(H-\langle\mathbf{S}\wedge \dd\mathbf{S}\rangle_{\mathcal{G}}).$$
\end{proof}

As before, we will identify $\autex (E)$ with a tame Fréchet subspace of $\Gamma(M,E)$. 

\begin{proposition}
\label{prop: hex as orthog complement of ad}
    Let $\{\mathbf{e}_i\}_{i=1}^K$ be some parallel orthonormal frame of $\mathcal{G}$ and $S_i\in\dd^*\Omega^2(M)$ such that $\mathbf{S}=\sum_i S_i\mathbf{e}_i$, where $\mathbf{S}$ is such that $\mathbf{R}=\dd\mathbf{S}$. We denote by $(G_{ij})\in\R^{K^2}$ the (positive definite symmetric) matrix defined by $G_{ij}=(\mathbf{e}_i,\mathbf{e}_j)_{\mathcal{G}}$, where $(\cdot,\cdot)_{\mathcal{G}}$ denotes the $L^2$-inner product on $\mathcal{G}$ defined in the beginning of Section~\ref{sect: auteq G-flat transitive}. There is a tame isomorphism 
    \begin{align*}
        \autex(E)&\cong (\ker\ad)^{\perp}\\&=\left\{X+\mathbf{r}-2\sum_{i,j}G_{ji}^{-1}(\epsilon_iS_i,\alpha)\mathbf{e}_j+\alpha:X\in \X(M),\,\mathbf{r}\in\dd_{\nabla}^*\Omega^1(M,\mathcal{G}),\,\alpha\in\dd^*\Omega^2(M)\right\},
    \end{align*}
    where the orthogonal complement of $\ker\ad$ is taken with respect to the $L^2$-inner product $(\cdot,\cdot)_E$ on $\Gamma(M,E)$ (defined in the beginning of Section~\ref{sect: auteq G-flat transitive}) and $(\cdot,\cdot)$ denotes the $L^2$-inner product induced by the auxiliary Riemannian metric $h$. 
\end{proposition}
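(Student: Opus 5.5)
We follow the pattern of Proposition~\ref{prop: hex in flat std isom to ad perp}. The plan is to compute $\ker\ad$ explicitly, take its $L^2$-orthogonal complement with respect to $(\cdot,\cdot)_E$, and show that $\ad$ restricted to that complement is a tame isomorphism onto $\autex(E)=\im(-\ad)$.

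\textbf{Step 1: the kernel.} By Proposition~\ref{prop: autex for R exact}, the inner derivation $-\ad(X+\mathbf{r}+\alpha)$ corresponds to the triple with
\begin{align*}
\mathbf{a}&=\dd\mathbf{r}+\dd\iota_X\mathbf{S},\\
b&=\dd(\alpha-\langle 2\mathbf{r}+\iota_X\mathbf{S},\mathbf{S}\rangle_{\mathcal{G}})+(\text{terms in }X),
\end{align*}
together with the vector field $X$. Vanishing of the image forces $X=0$, then $\dd\mathbf{r}=0$, so $\mathbf{r}=\sum_i c_i\mathbf{e}_i$ is parallel with constants $c_i$. Finally $\dd(\alpha-2\langle\mathbf{r},\mathbf{S}\rangle_{\mathcal{G}})=0$. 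Hence
\begin{align*}
\ker\ad=\Big\{\mathbf{c}+\alpha:\ \mathbf{c}=\sum_i c_i\mathbf{e}_i,\ c_i\in\R,\ \alpha=2\sum_i\epsilon_ic_iS_i+\gamma,\ \gamma\in\Omega^1_{\mathrm{cl}}(M)\Big\}.
\end{align*}
Equivalently, $\ker\ad=\Omega^1_{\mathrm{cl}}(M)\oplus\operatorname{span}\{\mathbf{e}_j+2\epsilon_jS_j\}$.

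\textbf{Step 2: the complement.} An element $X+\mathbf{r}'+\alpha$ is orthogonal to $\ker\ad$ if and only if two conditions hold. First, it is orthogonal to $\Omega^1_{\mathrm{cl}}(M)$, which by the Hodge decomposition means $\alpha\in\dd^*\Omega^2(M)$; $X$ is unconstrained. Second, for each $j$,
\begin{align*}
(\mathbf{r}',\mathbf{e}_j)_{\mathcal{G}}+2\epsilon_j(S_j,\alpha)=0.
\end{align*}
To solve this, decompose $\mathbf{r}'=\mathbf{r}+\sum_k x_k\mathbf{e}_k$ with $\mathbf{r}\in\dd_\nabla^*\Omega^1(M,\mathcal{G})$. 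This is a topological complement of the parallel sections, by Remark~\ref{rk: coexact times parallel are top compl} (with respect to $(\cdot,\cdot)'_{\mathcal{G}}$; note the paper's statement uses $\dd_\nabla^*$ as defined there). One must check that $(\mathbf{r},\mathbf{e}_j)_{\mathcal{G}}=0$ for $\mathbf{r}$ in this complement; this is the subtle point if $(\cdot,\cdot)_{\mathcal{G}}$ and $(\cdot,\cdot)'_{\mathcal{G}}$ differ. I would handle it by noting that $\dd_\nabla^*$ is the adjoint for $(\cdot,\cdot)_{\mathcal{G}}$, so coexact elements are orthogonal to parallel ones. The condition then becomes the linear system $\sum_k G_{kj}x_k=-2\epsilon_j(S_j,\alpha)$, which gives $x_j=-2\sum_i G^{-1}_{ji}(\epsilon_iS_i,\alpha)$. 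This is exactly the stated description.

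\textbf{Step 3: the isomorphism and tameness.} The map $\ad|_{(\ker\ad)^\perp}$ is injective and tame linear with image $\autex(E)$; it is surjective because $\Gamma(M,E)=\ker\ad\oplus(\ker\ad)^\perp$. This splitting holds because $\ker\ad$ is the closed $1$-forms plus a finite-dimensional space, so the complement exists in the Fréchet sense. The main obstacle is producing a tame inverse. Following Proposition~\ref{prop: hex in flat std isom to ad perp}, I would write it explicitly:
\begin{align*}
X+b+\mathbf{a}\ \longmapsto\ X+\mathbf{r}+\alpha,\qquad
\mathbf{r}&=\Q_\nabla(\mathbf{a}-\dd\iota_X\mathbf{S}),\\
\alpha&=\QdR\Big(b-\big(\QdR\mathcal{L}_X H'-\iota_XH'\big)\Big)+\QdR\,\dd\langle 2\mathbf{r}+\iota_X\mathbf{S},\mathbf{S}\rangle_{\mathcal{G}},
\end{align*}
where $H'=H-\langle\mathbf{S}\wedge\dd\mathbf{S}\rangle_{\mathcal{G}}$. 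The parallel component is then fixed by the finite-dimensional correction from Step~2. Each piece is built from $\QdR$, $\Q_\nabla$, differential operators and finite-rank maps, so the inverse is tame.
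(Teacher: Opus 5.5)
Your proof is correct and follows the paper's outline (kernel, complement, splitting, explicit tame inverse). It differs in how you identify the complement and justify the splitting, and Step~3 needs a correction.

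For the complement, you compute $(\ker\ad)^{\perp}$ directly as an annihilator. Orthogonality to $\Omega^1_{\mathrm{cl}}(M)$ forces $\alpha\in\dd^*\Omega^2(M)$, and orthogonality to the vectors $\mathbf{e}_j+2\epsilon_jS_j$ gives a linear system with matrix $G$, so you derive the set rather than guess it. The paper instead writes down the candidate, checks orthogonality, and proves it is an algebraic complement by decomposing an arbitrary $\mathbf{t}+\gamma$. That reduces to invertibility of $A_{ik}=\delta_{ik}+4\sum_jG^{-1}_{ij}(\epsilon_jS_j,\epsilon_kS_k)$, shown by a ``positive definite plus positive semidefinite'' argument.

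Your one-line justification of $\Gamma(M,E)=\ker\ad\oplus(\ker\ad)^{\perp}$ is valid, and it rests on the same fact. The span of the $\mathbf{e}_j+2\epsilon_jS_j$ is already orthogonal to $\Omega^1_{\mathrm{cl}}(M)$. Its Gram matrix $G_{jk}+4(\epsilon_jS_j,\epsilon_kS_k)$ is exactly the paper's $GA$, and positive definite. Hence the projection onto $(\ker\ad)^{\perp}$ is a Hodge projection corrected by a finite-rank map, so it is tame. Your version is more conceptual; the paper's explicit solve gives the tame projections directly.

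The correction in Step~3: you cannot fix the parallel part by simply appending $-2\sum_{i,j}G^{-1}_{ji}(\epsilon_iS_i,\alpha)\mathbf{e}_j$ to $X+\mathbf{r}+\alpha$. Adding a parallel $\mathbf{c}$ changes the $b$-component of the image by $-2\,\dd\langle\mathbf{c},\mathbf{S}\rangle_{\mathcal{G}}=-2\sum_jc_j\epsilon_j\,\dd S_j$, which is in general nonzero. With your formulas, $X+\mathbf{r}+\alpha$ is a tame right inverse of the map $\Gamma(M,E)\to\autex(E)$ from Step~1. You must then project it onto $(\ker\ad)^{\perp}$ along $\ker\ad$. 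This shifts $\alpha$ by $2\langle\mathbf{c},\mathbf{S}\rangle_{\mathcal{G}}$ and determines $\mathbf{c}$ through the matrix $A$. The inverse formula displayed in the paper has the same slip, but tameness of the inverse is unaffected.
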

\begin{proof}
    First, we observe by inspecting the expression of the Dorfman bracket that
    \begin{align*}
        Y+\mathbf{s}+\beta\in\ker(-\ad)=\ker(\ad)\iff Y=0,\, \dd\mathbf{s}=0,\,\beta-2\langle \mathbf{s},\mathbf{S}\rangle_{\mathcal{G}}\in \Omega^1_{\mathrm{cl}}(M). 
    \end{align*}
    We claim that the orthogonal complement of $\ker (\ad)$ with respect to $(\cdot,\cdot)_E$ exists and that it is given by 
    \begin{align*}
        (\ker\ad)^{\perp}=\left\{X+\mathbf{r}-2\sum_{i,j}G_{ji}^{-1}(\epsilon_iS_i,\alpha)\mathbf{e}_j+\alpha:X\in \X(M),\,\mathbf{r}\in\dd_{\nabla}^*\Omega^1(M,\mathcal{G}),\,\alpha\in\dd^*\Omega^2(M)\right\}.
    \end{align*}
    To see that it is orthogonal, we observe that for $\mathbf{s}+\beta\in\ker (\ad)$ and $X+\mathbf{r}-2\sum_{i,j}G_{ji}^{-1}(\epsilon_iS_i,\alpha)\mathbf{e}_j+\alpha$ with $\mathbf{r}\in\dd_{\nabla}^*\Omega^1(M,\mathcal{G})$, $\alpha\in\dd^*\Omega^2(M)$ we have 
    \begin{align*}
        \left(\mathbf{s}+\beta,\,X+\mathbf{r}-2\sum_{i,j}G_{ji}^{-1}(\epsilon_iS_i,\alpha)\mathbf{e}_j+\alpha\right)=-2\sum_is_i(\epsilon_iS_i,\alpha)+2(\alpha,\langle \mathbf{s},\mathbf{S}\rangle_{\mathcal{G}})=0,
    \end{align*}
    where we used that $s_i$ is constant for every $i$ since $\dd \mathbf{s}=0$ and $\pr_{\im\dd^*}\beta=2\pr_{\im\dd^*}\langle \mathbf{s},\mathbf{S}\rangle_{\mathcal{G}}$.

    To see that it is an algebraic complement, we need to show that for every $\mathbf{t}+\gamma\in \Gamma(M,\mathcal{G})\oplus \Omega^1(M)$ there exist $\mathbf{s}\in \Omega^0_{\mathrm{cl}}(M,\mathcal{G})$, $\mathbf{r}\in\dd_{\nabla}^*\Omega^1(M,\mathcal{G})$, $\tilde{\beta}\in\Omega^1_{\mathrm{cl}}(M)$ and $\alpha\in \dd^*\Omega^2(M)$ such that
    \begin{align*}
        \mathbf{t}+\gamma=\mathbf{s}+\tilde{\beta}+2\langle \mathbf{s},\mathbf{S}\rangle_{\mathcal{G}}+\mathbf{r}+\alpha-2\sum_{i,j}G_{ji}^{-1}(\epsilon_iS_i,\alpha)\mathbf{e}_j.
    \end{align*}
    Writing $\mathbf{t}=\mathbf{t}_0+\mathbf{t}_1$ with $\mathbf{t}_0\in\Omega^0_{\mathrm{cl}}(M,\mathcal{G})$, $\mathbf{t}_1\in\dd_{\nabla}^*\Omega^1(M,\mathcal{G})$ and $\gamma=\gamma_0+\gamma_1$ with $\gamma_0\in\Omega^1_{\mathrm{cl}}(M)$, $\gamma_1\in\dd^*\Omega^2(M)$, this is equivalent to the system
    \begin{equation}
    \label{eq: system for complement of ker ad}
        \begin{aligned}
            \gamma_0&=\tilde{\beta},\quad \gamma_1=\alpha + 2\langle\mathbf{s},\mathbf{S}\rangle_{\mathcal{G}}, \quad \mathbf{t}_1=\mathbf{r},\\
            \mathbf{t}_0&=\mathbf{s}-2\sum_{i,j}G_{ji}^{-1}(\epsilon_iS_i,\alpha)\mathbf{e}_j=\mathbf{s}-2\sum_{i,j=1}^KG^{-1}_{ji}(\epsilon_iS_i,\gamma_1-2\langle \mathbf{s},\mathbf{S}\rangle_{\mathcal{G}})\mathbf{e}_j,
        \end{aligned}
    \end{equation}
   where we substituted the equation for $\gamma_1$ in the second line and used that $\pr_{\im \dd^*}\langle \mathbf{s},\mathbf{S}\rangle_{\mathcal{G}}=\langle \mathbf{s},\mathbf{S}\rangle_{\mathcal{G}}$, since $S_i\in \dd^*\Omega^2(M)$ and $s_i$ are constant. Then~\eqref{eq: system for complement of ker ad} has a (unique) solution if and only if
   \begin{equation}\label{eq: complement of ker ad aux equation}\Omega^0_{\mathrm{cl}}(M,\mathcal{G})\ni\mathbf{s}\longmapsto \mathbf{s}+4\sum_{i,j=1}^KG^{-1}_{ji}(\epsilon_iS_i,\langle\mathbf{s},\mathbf{S}\rangle_{\mathcal{G}})\mathbf{e}_j\in \Omega^0_{\mathrm{cl}}(M,\mathcal{G})\end{equation}
   is invertible. This is equivalent to the invertibility of the matrix $A\in\R^{K^2}$ with
   \[A_{ik}=\delta_{ik}+4\sum_{j}G^{-1}_{ij}(\epsilon_jS_j,\epsilon_kS_k).\]
   Note that the matrix defined by $(\epsilon_iS_i,\epsilon_jS_j)$ is positive semi-definite. Indeed, given $(\lambda_i)\in\R^K$, we have 
   \begin{align*}
       \sum_{i,j}\lambda_i(\epsilon_iS_i,\epsilon_jS_j)\lambda_j=\left(\sum_i \lambda_i \epsilon_iS_i,\sum_j\lambda_j\epsilon_j S_j\right)\geq0.
   \end{align*}
   Since $G$ is positive-definite, it follows that $GA$ is invertible as a sum of a positive-definite and positive semi-definite matrix. Hence, $A$ is invertible.
   
   Note that both $\ker \ad$ and $(\ker \ad)^{\perp}$ are closed subspaces of $\Gamma(M,E)$ and therefore graded Fréchet spaces. Moreover, the projections of $\Gamma(M,E)$ to the respective subspaces are tame linear maps, which can be seen from the fact that the solution $(\mathbf{s},\alpha,\mathbf{r},\tilde{\beta})$ of \eqref{eq: system for complement of ker ad}  depends tame linearly on $\mathbf{t}+\gamma$ (note that the inverse map of \eqref{eq: complement of ker ad aux equation} is automatically tame linear, being a linear map between finite dimensional vector spaces). This shows that $\ker \ad$ and $(\ker \ad)^{\perp}$ are tame direct summands of $\Gamma(M,E)$ and thus tame Fréchet spaces.

   We conclude that
   \begin{align*}
       \autex(E)&\cong (\ker \ad)^{\perp}\\&=\left\{X+\mathbf{r}-2\sum_{i,j}G_{ji}^{-1}(\epsilon_iS_i,\alpha)\mathbf{e}_j+\alpha:X\in \X(M),\,\mathbf{r}\in\dn^*\Omega^1(M,\mathcal{G}),\,\alpha\in\dd^*\Omega^2(M)\right\},
   \end{align*}
   where the map $(\ker \ad)^{\perp}\longrightarrow \autex(E)$ is given by the restriction of $\ad\colon \Gamma(M,E)\longrightarrow \autex(E)$. To see that this is a tame isomorphism, using Proposition~\ref{prop: autex for R exact} we observe that its inverse is given by
   \[\autex(E)\ni X+b+\mathbf{a}\longmapsto X+\mathbf{r}(X,\mathbf{a})-2\sum_{i,j}G_{ji}^{-1}\left(\epsilon_iS_i,\alpha(X,b,\mathbf{a})\right)\mathbf{e}_j+\alpha(X,b,\mathbf{a}),\]
   where 
   \begin{align*}
       \mathbf{r}(X,\mathbf{a})&=\Q_{\nabla}(\mathbf{a}-\dd \iota_X \mathbf{S}),\\
       \alpha(X,b,\mathbf{a})&=\QdR\left(b+\dd\langle 2\,\mathbf{r}(X,\mathbf{a})+\iota_X\mathbf{S},\mathbf{S}\rangle_{\mathcal{G}}-\QdR\mathcal{L}_X(H-\langle\mathbf{S}\wedge \dd\mathbf{S}\rangle_{\mathcal{G}})+\iota_X(H-\langle\mathbf{S}\wedge \dd\mathbf{S}\rangle_{\mathcal{G}})\right).
   \end{align*}
   Here we use that for $\mathbf{A}\in\dd\Omega^0(M,\mathcal{G})$, we have $\dd\Q_{\nabla}\mathbf{A}=\mathbf{A}$ and similarly, for $B\in\dd\Omega^1(M)$, we have $\dd\QdR B=B$.
\end{proof}

In view of Proposition~\ref{prop: hex as orthog complement of ad} (and Remark~\ref{rk: hex id with orthog complement of ad for std flat}) we will from now on always identify 
\begin{align}\label{eq: hex as a subspace of sections of E}
  \autex(E)\cong (\ker\ad)^{\perp}\subset \Gamma(M,E). 
\end{align}
Last, we observe that given some generalized complex structure $L$, we may use the isomorphism of real vector bundles
\begin{equation}\label{eq: definition of Psi}\begin{aligned}
    \Psi\colon \bar{L}&\longrightarrow E\\
    \xi&\longmapsto \xi+\bar{\xi}\end{aligned}
\end{equation}
to describe exact autoequivalences by elements in $\Psi^{-1}(\autex(E))\subset\Gamma(M,L^*)$. For $\xi\in \Psi^{-1}(\autex(E))$, we will then also denote $F_{\xi}:=F_{\Psi\xi}$. 

\subsubsection{Choices of bisection and bundle metrics}

Next, we consider a $\mathcal{G}$-flat transitive Courant algebroid with a generalized complex structure. Note that the existence of a generalized complex structure forces $\langle\cdot,\cdot\rangle$ to be of signature $(2k,2\ell)$, where $2k=\dim M+q$ and $2\ell = \dim M +K-q$. In other words, it forces $K$ and $\dim M +q$ to be even.

Recall that a \emph{generalized (Riemannian) metric}  on a Courant algebroid $E$ is a positive definite scalar product $\mathfrak H$ on $E$ such that the endomorphism $\mathfrak H^{\mathrm{end}}$ defined by $\mathfrak H=\langle \mathfrak H^{\mathrm{end}}\cdot , \cdot \rangle$ is an involution. We denote by $E_\pm\subset E$ the $\pm 1$-eigenbundles of $\mathfrak H^{\mathrm{end}}$.

\begin{lemma} \label{lem: ex gen herm}
    Given a generalized complex structure $\mathcal{J}$ on a Courant algebroid $E$, there exists a generalized Hermitian metric with respect to $\mathcal{J}$, i.e.\ a generalized metric $\mathfrak{H}$ such that $\mathfrak{H}(\mathcal{J}\cdot,\mathcal{J}\cdot)=\mathfrak{H}(\cdot,\cdot)$.
\end{lemma}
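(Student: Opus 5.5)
The plan is to build the metric by averaging over the compact group generated by $\mathcal{J}$, starting from an arbitrary generalized metric, and then to repair the involution property by a linear-algebra construction compatible with $\mathcal{J}$. Everything is pointwise and canonical, so it globalizes once we have one $\mathcal{J}$-compatible splitting.

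\textbf{Step 1: maximal negative subbundle.} Let $(2k,2\ell)$ be the signature of $\langle\cdot,\cdot\rangle$. Since $\mathcal{J}$ is orthogonal ($\mathcal{J}^2=-\id$ and skew), we first show there is a $\mathcal{J}$-invariant maximal negative definite subbundle $E_-\subset E$ of rank $2\ell$. Then $E_+:=E_-^{\perp}$ is positive definite and also $\mathcal{J}$-invariant, because $\mathcal{J}$ preserves $\langle\cdot,\cdot\rangle$. Setting $\mathfrak{H}:=\langle\cdot,\cdot\rangle|_{E_+}-\langle\cdot,\cdot\rangle|_{E_-}$ gives $\mathfrak{H}^{\mathrm{end}}=\id_{E_+}\oplus(-\id_{E_-})$, which is an involution. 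This $\mathfrak{H}$ is $\mathcal{J}$-invariant since $\mathcal{J}$ preserves both $E_\pm$ and $\langle\cdot,\cdot\rangle$.

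\textbf{Step 2: existence of $E_-$ via polar decomposition.} The cleanest route to $E_-$ is as follows.
\begin{enumerate}
\item Pick any positive definite bundle metric $g_0$ on $E$ (partition of unity) and average it: $g:=\tfrac12\bigl(g_0+g_0(\mathcal{J}\cdot,\mathcal{J}\cdot)\bigr)$, which is $\mathcal{J}$-invariant.
\item Define $A\in\Gamma(\End E)$ by $\langle\cdot,\cdot\rangle=g(A\cdot,\cdot)$. Then $A$ is $g$-symmetric and invertible, and it commutes with $\mathcal{J}$ because both $g$ and $\langle\cdot,\cdot\rangle$ are $\mathcal{J}$-invariant and $\mathcal{J}^{-1}=-\mathcal{J}$.
\item Let $E_-$ be the sum of the eigenspaces of $A$ with negative eigenvalues, i.e.\ the image of the spectral projector $\tfrac{1}{2}(\id-A|A|^{-1})$. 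This projector is smooth, since $|A|=\sqrt{A^2}$ depends smoothly on $A$ for $A$ positive-definite squared and invertible, so the rank is constant and $E_-$ is a smooth subbundle.
\item $E_-$ is $\mathcal{J}$-invariant because $\mathcal{J}$ commutes with $A$, and it is negative definite for $\langle\cdot,\cdot\rangle$ of rank equal to the negative index.
\end{enumerate}
Its $\langle\cdot,\cdot\rangle$-orthogonal complement coincides with the positive eigenspace sum, because the eigenspaces are $g$-orthogonal and $A$-invariant.

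\textbf{Step 3: conclusion.} Equivalently, $\mathfrak{H}=g(|A|\cdot,\cdot)$ directly, with $\mathfrak{H}^{\mathrm{end}}=A^{-1}|A|=\mathrm{sign}(A)$, which squares to $\id$. The main point to verify carefully is the smoothness of $|A|$ and $\mathrm{sign}(A)$. I would handle it via the holomorphic functional calculus: write $\mathrm{sign}(A)$ as a contour integral of $(z-A)^{-1}$ around the positive spectrum, using that the spectrum of $A$ avoids $0$ locally uniformly. The $\mathcal{J}$-invariance $\mathfrak{H}(\mathcal{J}\cdot,\mathcal{J}\cdot)=\mathfrak{H}$ then follows from $[\mathcal{J},|A|]=0$ together with the $\mathcal{J}$-invariance of $g$. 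Integrability of $\mathcal{J}$ is not needed; the argument works for any generalized almost complex structure.
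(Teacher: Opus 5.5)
Your argument is correct. It makes explicit the fact the paper only cites, namely that the $\mathrm{U}(k,\ell)$-structure given by $(\langle\cdot,\cdot\rangle,\mathcal{J})$ reduces to $\mathrm{U}(k)\times\mathrm{U}(\ell)$. The paper's follow-up Remark sketches a different, iterative Gram--Schmidt construction: choose $u_1$ with $\langle u_1,u_1\rangle=\pm1$, observe that $\operatorname{span}\{u_1,\mathcal{J}u_1\}$ is definite, pass to its orthogonal complement and repeat. You instead average an auxiliary metric over $\{\id,\mathcal{J}\}$, so that the $g$-symmetric endomorphism $A$ representing $\langle\cdot,\cdot\rangle$ commutes with $\mathcal{J}$, and then set $\mathfrak{H}^{\mathrm{end}}=\mathrm{sign}(A)$. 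This is the standard polar-decomposition mechanism behind the existence of such reductions to a maximal compact subgroup.

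What your route buys is that, once $g_0$ is fixed, the construction is canonical, so smoothness and globality of $E_\pm$ come for free. The Remark's procedure is fiberwise and does not globalize verbatim. On $\mathbb{T}S^2$, for instance, a global $u_1$ with $\langle u_1,u_1\rangle=1$ would have nowhere-vanishing component in $E_+\cong TS^2$ for any generalized metric, contradicting the hairy ball theorem; the case $-1$ is the same with $E_-$. The paper's formulation is shorter and makes plain that only the pointwise $\mathrm{U}(k,\ell)$-structure is used. This is consistent with your observation that integrability of $\mathcal{J}$ plays no role.
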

\begin{proof}
    This follows from the fact that every $\mathrm{U}(k,\ell )$-structure admits a reduction to the maximal compact subgroup $\mathrm{U}(k)\times \mathrm{U}(\ell)\subset \mathrm{U}(k,\ell )$.
\end{proof}
\begin{remark}\label{rk: construction generalized metric} 
    Explicitly, a generalized Hermitian metric can be obtained by constructing $\mathcal{J}$-invariant subspaces $E_{\pm}$ in the following way: Take an orthonormal vector $u_1\in E$, say $\langle u_1,u_1\rangle=\epsilon\in \{\pm 1\}$. Then 
    \[\langle \mathcal{J}u_1,\mathcal{J}u_1\rangle=\langle u_1,u_1\rangle=\epsilon.\]
    Hence, $\operatorname{span}\{u_1,\mathcal{J}u_1\}$ is either a positive or a negative subbundle of $E$.

    Next, we pick some vector $u_2\in \left(\operatorname{span}\{u_1,\mathcal{J}u_1\}\right)^{\perp}$ and normalize such that $|\langle u_2,u_2\rangle|=1$. Proceeding as before and iterating the procedure yields two subbundles $E_{\pm}$ that are $\mathcal{J}$-invariant and define a generalized metric that is in particular a generalized Hermitian metric.
\end{remark}

A statement similar to the following lemma already appeared in~\cite{Juro2016} but with a different proof.

\begin{lemma}\label{lem: block form}
    Let $\mathfrak{H}$ be a generalized metric on a transitive Courant algebroid $E$. Then there is a bisection of $E$ in which $\mathfrak{H}=\frac12 g\oplus\langle\cdot,\cdot\rangle_{\mathrm{Eucl}}\oplus \frac12 g^*$, where $g$ is a Riemannian metric on $M$ and $\langle\cdot,\cdot\rangle_{\mathrm{Eucl}}$ is a positive definite scalar product on $\mathcal{G}$.
\end{lemma}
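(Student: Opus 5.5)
We construct the bisection directly from the eigenbundle decomposition $E=E_+\oplus E_-$ of $\mathfrak H^{\mathrm{end}}$, following the recipe of the Remark after Theorem~\ref{thm: trans standard}. Since $\mathfrak H$ is positive definite and $\mathfrak H=\langle\mathfrak H^{\mathrm{end}}\cdot,\cdot\rangle$ with $(\mathfrak H^{\mathrm{end}})^2=\id$, the scalar product $\langle\cdot,\cdot\rangle$ is positive definite on $E_+$, negative definite on $E_-$, and $E_+\perp E_-$ for both $\langle\cdot,\cdot\rangle$ and $\mathfrak H$.

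\textbf{Step 1: the splitting $\lambda$.} Consider $\pi_\pm:=\pi|_{E_\pm}$. We have $\ker\pi\supset(\ker\pi)^\perp=\pi^*T^*M$, which is isotropic of rank $\dim M$. An isotropic subspace meets the definite bundles $E_\pm$ trivially, so $(\ker\pi)^\perp\cap E_\pm=0$. The first thing to check is that $\pi_\pm$ are surjective. If $\pi_+$ were not onto, some $\alpha\neq0$ would satisfy $\alpha\circ\pi|_{E_+}=0$, i.e.\ $\pi^*\alpha\perp E_+$, hence $\pi^*\alpha\in E_+^\perp=E_-$, contradicting definiteness. So $\pi_\pm$ are surjective, and we obtain the $\mathfrak H$-orthogonal right inverses $\sigma_\pm\colon TM\to E_\pm$ with image $(\ker\pi_\pm)^{\perp}\cap E_\pm$. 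Define
\[
\lambda(X):=\tfrac12\bigl(\sigma_+(X)+\sigma_-(X)\bigr)+\text{correction},\qquad g(X,Y):=\langle\sigma_+X,\sigma_+Y\rangle-\langle\sigma_-X,\sigma_-Y\rangle .
\]
Here $g$ is positive definite, being a sum of two positive forms. The natural choice is to take $\lambda(X)$ to be the unique element $e\in E$ with $\pi e=X$, $e$ isotropic-compatible, and $\mathfrak H^{\mathrm{end}}e-e\in\pi^*T^*M$. This is the analogue of the exact case, where $E_\pm=\{X\pm gX\}$ and $\lambda(X)=X$.

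\textbf{Step 2: the choice of $\lambda$ and $\mathcal G$.} Concretely, I would set $V_\pm:=(\ker\pi\cap E_\pm)^{\perp}\cap E_\pm$. Then $\pi\colon V_\pm\to TM$ is an isomorphism, because $\ker\pi\cap V_\pm=0$ and the ranks match. Write $s_\pm:=(\pi|_{V_\pm})^{-1}$ and put $\lambda:=\tfrac12(s_++s_-)$. One verifies
\[
\langle\lambda X,\lambda Y\rangle=\tfrac14\bigl(\langle s_+X,s_+Y\rangle+\langle s_-X,s_-Y\rangle\bigr),
\]
which is generally not zero. I would therefore correct this by subtracting $\tfrac12\pi^*$ of the symmetric form; this keeps $\pi\circ\lambda=\id$ and makes the image isotropic. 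Next, take $\mathcal G$ to be represented by $\sigma(\mathcal G):=(\ker\pi\cap E_+)\oplus(\ker\pi\cap E_-)$. This lies in $\ker\pi$ and is orthogonal to $V_\pm$. It is also orthogonal to $\pi^*T^*M$, since $\ker\pi\perp\pi^*T^*M$. Hence it is orthogonal to $\lambda(TM)$, and it is complementary to $(\ker\pi)^\perp$ by a rank count. This gives the bisection of Theorem~\ref{thm: trans standard}.

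\textbf{Step 3: computing $\mathfrak H$ in this bisection.} In the resulting splitting, $\mathcal G$ is $\mathfrak H$-orthogonal to $\lambda(TM)$ and to $\pi^*T^*M$. This holds because $\sigma(\mathcal G)$ is $\mathfrak H^{\mathrm{end}}$-invariant, sitting inside $E_+\oplus E_-$ componentwise, and is $\langle\cdot,\cdot\rangle$-orthogonal to $V_+\oplus V_-$. On $\mathcal G$, $\mathfrak H$ restricts to $\langle\cdot,\cdot\rangle$ on the $E_+$ part and to $-\langle\cdot,\cdot\rangle$ on the $E_-$ part, which defines $\langle\cdot,\cdot\rangle_{\mathrm{Eucl}}$. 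The remaining block lives on $V_+\oplus V_-\cong TM\oplus T^*M$, and there $\mathfrak H$ is a generalized metric of an exact-type structure. This reduces the problem to Gualtieri's exact-case computation: the metric has the form $\begin{pmatrix}g-bg^{-1}b& bg^{-1}\\ -g^{-1}b& g^{-1}\end{pmatrix}$ (suitably scaled to $\tfrac12$ by our convention for the pairing). Applying the $B$-field transform $F_b$ changes the bisection and removes $b$, leaving $\tfrac12g\oplus\tfrac12g^*$. The expected main obstacle is exactly this bookkeeping: checking that the isotropic correction of $\lambda$ and the $B$-transform do not reintroduce cross terms with $\mathcal G$, and tracking the factors of $\tfrac12$. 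If it becomes messy, I would simply choose $\lambda$ from the outset as $\lambda(X)=s_+(X)-\tfrac12\pi^*(g(X))$. Then the pairing $\langle\lambda X,\pi^*\xi\rangle=\tfrac12\xi(X)$ together with the relation $\mathfrak H^{\mathrm{end}}\lambda X=\lambda X+\ldots$ directly yields the $\tfrac12g$ block, with no $b$ present.
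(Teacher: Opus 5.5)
Your proof is correct and is essentially the paper's construction. Your $V_+$ is the paper's $(E_+\cap\ker\pi)^{\perp}\subset E_+$, so $s_+=\pi_\perp^{-1}$. Your $(\ker\pi\cap E_+)\oplus(\ker\pi\cap E_-)$ is the image of the paper's $\sigma$. Your $\lambda=\tfrac12(s_++s_-)$ coincides with the paper's $\lambda=s_+-\tfrac12\pi^*g$, where $g(X,Y):=\langle s_+X,s_+Y\rangle$.

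The one real difference is how you show that $\pi|_{V_\pm}$ is an isomorphism. You prove $\pi|_{E_\pm}$ is onto: otherwise a nonzero isotropic $\pi^*\alpha\perp E_+$ would lie in the definite bundle $E_-$. The paper instead counts ranks, using $\operatorname{rank}E_+=\dim M+q$ and $\operatorname{rank}\mathcal G_\pm\le q,\ K-q$. Your argument is cleaner, since it does not need the signature of $\langle\cdot,\cdot\rangle$ read off from a previously chosen bisection.

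Several intermediate claims are wrong, although none breaks your main line.

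\textbf{The anticipated obstacle does not exist.} The element $s_+X-\pi^*(gX)$ is $\langle\cdot,\cdot\rangle$-orthogonal to $V_+$, to $\ker\pi\cap E_+$ and to $\ker\pi\cap E_-$. Hence it lies in $V_-$ over $X$, so $s_-X=s_+X-\pi^*(gX)$ and $\langle s_-X,s_-Y\rangle=-g(X,Y)$. Consequently:
\begin{itemize}
\item $\langle\lambda X,\lambda Y\rangle$ vanishes identically; it is not ``generally not zero'', and your isotropic correction is zero.
\item In this bisection $E_\pm=\{X\pm gX+\mathbf r_\pm\}$ directly, so no $b$-field ever appears. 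The reduction to Gualtieri's exact case and the $B$-transform are unnecessary.
\item The blocks $\tfrac12 g$ and $\tfrac12 g^*$ follow at once from $\mathfrak H(s_\pm X,s_\pm X)=g(X,X)$ and $E_+\perp E_-$.
\end{itemize}

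\textbf{Normalization of $g$.} The $g$ of your Step 1 equals $2\langle s_+\cdot,s_+\cdot\rangle$. With that $g$, your fallback $s_+-\tfrac12\pi^*g$ is exactly $s_-$, which is not isotropic. You must take $g=\langle s_+\cdot,s_+\cdot\rangle$, which is also the normalization that yields $\tfrac12 g\oplus\tfrac12 g^*$.

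\textbf{The heuristic characterization.} It should read $\mathfrak H^{\mathrm{end}}e\in\pi^*T^*M$; together with $\pi e=X$, this pins down $e=\tfrac12(s_++s_-)X$. The condition you wrote, $\mathfrak H^{\mathrm{end}}e-e\in\pi^*T^*M$, forces $e\in E_+$, so $\lambda(TM)$ could never be isotropic.
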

\begin{proof}
    We apply similar ideas as in \cite{GarciaFernandez2014}. Since $\mathfrak{H}$ is positive definite, we must have \[E_+\cap T^*M=\{0\}=E_-\cap T^*M\]
    and it follows that the restrictions
    \[\pr_{\mathcal{G}}|_{\ker\pi\cap E_{\pm}}\colon \ker\pi\cap E_{\pm}\longrightarrow \mathcal{G}\]
    are injective. We denote $\mathcal{G}_{\pm}=\pr_{\mathcal{G}}(E_{\pm}\cap\ker\pi)$. Note that $\langle\cdot,\cdot\rangle_{\mathcal{G}}|_{\mathcal{G}_+}>0$ and $\langle\cdot,\cdot\rangle_{\mathcal{G}}|_{\mathcal{G}_-}<0$. It follows that
    \[\mathcal{G}_+\cap\mathcal{G}_-=\{0\},\quad \operatorname{rank} \mathcal{G}_+\leq q,\quad \operatorname{rank} \mathcal{G}_-\leq K-q.\]
    Moreover, we denote by $(E_+\cap\ker\pi)^{\perp}\subset E_+$ the orthogonal complement of $E_+\cap\ker\pi$. Then the anchor map defines an injective vector bundle morphism
    \[\pi_{\perp} =\pi|_{(E_{+}\cap\ker \pi)^{\perp}}\colon (E_+\cap\ker \pi)^{\perp}\longrightarrow TM.\]
    In particular, $\operatorname{rank}(E_+\cap\ker\pi)^{\perp}\leq\dim M$ and we find 
    \[\dim M\geq \operatorname{rank}  (E_+\cap\ker\pi)^{\perp}=\operatorname{rank} E_+-\operatorname{rank} \mathcal{G}_+\geq \dim M+q-q=\dim M\]
    and hence 
    \[\operatorname{rank}\mathcal{G}_+=q,\quad\operatorname{rank} (E_+\cap\ker\pi)^{\perp}=\dim M.\]
    A similar argument shows that $\operatorname{rank} \mathcal{G}_-=K-q$. Hence $\mathcal{G}=\mathcal{G}_+\oplus \mathcal{G}_-$ and $\pi_{\perp}$ is in fact an isomorphism. Moreover, we obtain isomorphisms
\[\pr_{\mathcal{G}_{\pm}}=\pr_\mathcal{G}|_{\ker\pi\cap E_{\pm}}\colon \ker\pi\cap E_{\pm}\longrightarrow \mathcal{G}_{\pm}.\]

    The isomorphism $\pi_{\perp}$ determines a Riemannian metric on $TM$ via 
    \[g(\cdot,\cdot):=\mathfrak{H}(\pi_{\perp}^{-1}\cdot,\pi_{\perp}^{-1}\cdot )=\langle \pi_{\perp}^{-1}\cdot,\pi_{\perp}^{-1}\cdot \rangle.\]
    We define the map
    \begin{align*}
        \lambda\colon TM &\longrightarrow E\\
        X &\longmapsto \pi_{\perp}^{-1}(X)-\frac12\pi^*g(X).
    \end{align*}
    Then clearly $\pi\circ\lambda=\id$ and 
    \begin{align*}
        \langle \lambda(X),\lambda(Y)\rangle&=\langle\pi_{\perp}^{-1}(X)-\frac12\pi^*g(X),\pi_{\perp}^{-1}(Y)-\frac12\pi^*g(Y)\rangle\\
        &=g(X,Y)-\frac12  g(X)(Y)-\frac12 g(Y)(X)\\
        &=g(X,Y)-g(X,Y)\\
        &=0.
    \end{align*}

    Moreover, for every $\mathbf{r}\in\mathcal{G}$ we write $\mathbf{r}=\mathbf{r}_+ +\mathbf{r}_-$ and define
    \begin{align*}
        \sigma\colon \mathcal{G}&\longrightarrow \ker\pi\\
        \mathbf{r}&\longmapsto \pr_{\mathcal{G}_+}^{-1}(\mathbf{r}_+)+\pr_{\mathcal{G}_-}^{-1}(\mathbf{r}_-).
    \end{align*}
    Then for every $X\in TM$ and $\mathbf{r}\in\mathcal{G}$, we find
    \begin{align*}
        \langle\sigma(\mathbf{r}),\lambda(X)\rangle=\langle \pr_{\mathcal{G}_+}^{-1}(\mathbf{r}_+),\pi_{\perp}^{-1}(X)\rangle=0.
    \end{align*}

    Therefore, $(\lambda,\sigma)$ define a bisection of $E$. Since
    \begin{align*}
     E_+&=\{\pi_{\perp}^{-1}(X)+\pr_{\mathcal{G}_+}^{-1}(\mathbf{r}_+):X\in TM,\, \mathbf{r}_+\in \mathcal{G}_+\}\\
     &=\{\lambda(X)+\frac12\pi^*g(X)+\pr_{\mathcal{G}_+}^{-1}(\mathbf{r}_+):X\in TM,\, \mathbf{r}_+\in \mathcal{G}_+\},
    \end{align*}
    we observe that in the bisection defined by $(\lambda,\sigma)$ we have
    \begin{align*}
        E_+&=\{X+g(X)+\mathbf{r}_+:X\in TM,\, \mathbf{r}_+\in \mathcal{G}_+\},\\
        E_-&=\{X-g(X)+\mathbf{r}_-:X\in TM,\, \mathbf{r}_-\in \mathcal{G}_-\}.
    \end{align*}
   It follows that in this splitting $\mathfrak{H}$ takes the form $\mathfrak{H}=\frac12 g\oplus\langle\cdot,\cdot\rangle_{\mathrm{Eucl}}\oplus \frac12 g^*$, where $$\langle\cdot,\cdot\rangle_{\mathrm{Eucl}}=\langle\cdot,\cdot\rangle_{\mathcal{G}}|_{\mathcal{G}_+}- \langle\cdot,\cdot\rangle_{\mathcal{G}}|_{\mathcal{G}_-}.$$
\end{proof}

Note that in the case where $\langle\cdot,\cdot\rangle_{\mathcal{G}}$ is positive definite we must have $\langle\cdot,\cdot\rangle_{\mathrm{Eucl}}=\langle\cdot,\cdot\rangle_{\mathcal{G}}$. Combining Lemma~\ref{lem: ex gen herm} and Lemma~\ref{lem: block form}, we obtain the following.
\begin{corollary}
    Suppose that $E$ is a $\mathcal{G}$-flat transitive Courant algebroid with a generalized complex structure $\mathcal{J}$. Then there exists a generalized Hermitian metric $\mathfrak{H}$, a Riemannian metric $h$ on $M$ and a choice of bisection such that $\mathfrak{H}=h\oplus \langle\cdot,\cdot\rangle_{\mathrm{Eucl}}\oplus h^*$.
\end{corollary}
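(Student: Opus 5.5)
The plan is to combine Lemma~\ref{lem: ex gen herm} and Lemma~\ref{lem: block form} directly and then absorb the factors of $\tfrac12$ by rescaling.

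\textbf{Step 1.} By Lemma~\ref{lem: ex gen herm}, $E$ carries a generalized Hermitian metric $\mathfrak{H}$ with respect to $\mathcal{J}$. Since $E$ is transitive, Lemma~\ref{lem: block form} gives a bisection with $\mathfrak{H}=\tfrac12 g\oplus\langle\cdot,\cdot\rangle_{\mathrm{Eucl}}\oplus\tfrac12 g^*$. Here $g$ is a Riemannian metric on $M$ and $\langle\cdot,\cdot\rangle_{\mathrm{Eucl}}$ is positive definite on $\mathcal{G}$.

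\textbf{Step 2.} Set $h:=\tfrac12 g$. The dual metric of $h$ is $h^*=2g^*$, not $\tfrac12 g^*$, so the normalization of the cotangent block needs care. The paper's convention identifies $T^*M$ via the pairing $\langle X,\xi\rangle=\tfrac12\xi(X)$. So I will read ``$h^*$'' in the statement as the metric induced on $T^*M$ through this pairing. Concretely, the proof of Lemma~\ref{lem: block form} gives $E_\pm=\{X\pm g(X)+\mathbf{r}_\pm\}$. For $u=X+g(X)$ one gets $\mathfrak{H}(u,u)=\langle u,u\rangle=g(X,X)$, which has to be split between the $TM$ and $T^*M$ components in the way prescribed. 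I will check the normalization on this element and choose $h$, either $g$ or $\tfrac12 g$, so that the three-block form matches the stated one. This bookkeeping of factors of $2$ is the only real point to verify. If the literal duals do not match, the statement is to be read with $h^*$ meaning the metric induced via $\langle\cdot,\cdot\rangle$.

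\textbf{Step 3.} I also need the new bisection to be of the type the paper uses for $\mathcal{G}$-flat algebroids. For this I will not claim it is a $\mathcal{G}$-flat standard form. Instead I note that any bisection differs from a $\mathcal{G}$-flat standard one by the change-of-bisection formula in \cite{chen2013}, composed of $\nu$, $\mathbf{A}$ and $B$ transformations. This places us in the setting of Theorem~\ref{thm: auteq for R=dS}, where $\nabla$ is flat with trivial holonomy and $\mathbf{R}=\dd\mathbf{S}$ after a gauge change. This step is not needed for the stated corollary itself, only for its later use, so I will just remark on it.

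The main obstacle is the normalization in Step 2. Everything else is a direct composition of the two lemmas.
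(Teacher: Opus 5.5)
Your proposal is correct and is the paper's argument: combine Lemma~\ref{lem: ex gen herm} with Lemma~\ref{lem: block form} and set $h:=\tfrac12 g$. The $TM$-block forces this choice, so $h=g$ is not actually an option, and Step~3 is unnecessary. Your reading of $h^*$ in Step~2 is the one that makes the statement true: for any block-diagonal generalized metric $a\oplus c\oplus b$ the involution condition gives $b=\tfrac14 a^{-1}$, which is exactly the dual of $a$ under $\xi\mapsto\langle\cdot,\xi\rangle=\tfrac12\xi$, and with $a=h=\tfrac12 g$ this equals $\tfrac12 g^{-1}$ as required.
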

\begin{proof}
    This is a direct consequence of Lemma~\ref{lem: ex gen herm} and Lemma~\ref{lem: block form} by defining $h:=\tfrac{1}{2}g$.
\end{proof}

Given a $\mathcal{G}$-flat transitive Courant algebroid with a generalized complex structure, we will fix a bisection and bundle metrics on $TM$, $L$ and $\bar{L}$ in the following way: First, we choose a generalized Hermitian metric $\mathfrak{H}$ on $E$. Then we choose a bisection $E\cong TM\oplus\mathcal{G}\oplus T^*M$ and a Riemannian metric $h$ on $M$ such that the generalized Hermitian metric is given by $\mathfrak{H}=h\oplus\langle\cdot,\cdot\rangle_{\mathrm{Eucl}}\oplus h^*$. Restricting the complex sesqui-linear extension of $\mathfrak{H}$ to the subbundles $L$ and $\bar{L}$ yields Hermitian bundle metrics on these subbundles. 

In the following, we will always assume that the $L^2$-inner products on the spaces $\Gamma(M,\Lambda^{\bullet}L^*)$, as well as on the spaces $\Omega^{\bullet}(M)$ and $\X(M)$ are defined in terms of these choices of bundle metrics and with respect to the Riemannian metric $h$. Moreover, the Riemannian metric defining the charts for $\Diff(M)_{[H]}$ will be chosen to be $h$.

Like in the case of classical complex deformations~\cite{kuranishi64,paperoncomplexdefs}, when proving the local completeness of the deformation family, we will use Hodge theory in order to argue why a given smooth tame family of linear maps is invertible in an open neighborhood. In order to make sense of this, it is necessary to argue that the subspace $\dd_L^*\left(\Gamma(M,\Lambda^2L^*)\right)\subset \Gamma(M,L^*)$ is contained in the subspace $\Psi^{-1}\autex(E)\subset \Gamma(M,L^*)$ that the Lie algebra of the exact autoequivalences is identified with. The following lemma shows that this is the case for our choice of bundle metrics.

Note that this technicality does not arise in the case of (classical) complex deformations, where the Lie algebra of the diffeomorphisms is identified with the entire space of sections $\Gamma(M,T^{1,0}M)$ and therefore trivially contains the $\dbar^*$-exact elements.

\begin{lemma}
\label{lem: autex contains coexact}
    Let $E$ be a $\mathcal{G}$-flat transitive Courant algebroid with a generalized complex structure $L$. With the above choices of bundle metrics defining the $L^2$-inner products and the identification $\autex(E)\cong (\ker\ad)^{\perp}$ as in~\eqref{eq: hex as a subspace of sections of E}, it holds that $$\Psi^{-1}\autex(E)\supset\dd_L^* \left(\Gamma(M,\Lambda^2 L^*)\right).$$
    Moreover, $\dd_L^* \left(\Gamma(M,\Lambda^2 L^*)\right)$ has finite codimension in $\Psi^{-1}\autex(E)$ and if $H^1(M,L)=0$, then 
    $$\Psi^{-1}\autex(E)=\dd_L^* \left(\Gamma(M,\Lambda^2 L^*)\right).$$
\end{lemma}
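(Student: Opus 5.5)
The first inclusion should follow from one orthogonality computation. Recall from Proposition~\ref{prop: hex as orthog complement of ad} and Remark~\ref{rk: hex id with orthog complement of ad for std flat} that $\autex(E)$ is identified with $(\ker\ad)^{\perp}$, taken with respect to $(\cdot,\cdot)_E$. With our choices, $(\cdot,\cdot)_E$ is the $L^2$-product of the generalized Hermitian metric $\mathfrak{H}=h\oplus\langle\cdot,\cdot\rangle_{\mathrm{Eucl}}\oplus h^*$ with the volume form of $h$. So it suffices to show that $(\Psi(\dd_L^*\psi),w)_E=0$ for every $\psi\in\Gamma(M,\Lambda^2L^*)$ and every real $w\in\ker\ad$.

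\textbf{Step 1: reduce to a pairing on $\bar L$.} Since $\mathfrak{H}(\mathcal{J}\cdot,\mathcal{J}\cdot)=\mathfrak{H}$, the eigenbundles $L$ and $\bar L$ are orthogonal for the Hermitian (sesquilinear) extension of $\mathfrak{H}$. Write a real section as $w=\zeta+\bar\zeta$ with $\zeta:=\pr_{\bar L}w$. Then for $\eta\in\Gamma(\bar L)$ we get $\mathfrak{H}(\eta+\bar\eta,w)=2\Re\,\mathfrak{H}(\eta,\zeta)$. Because the Hermitian metric on $\bar L\cong L^*$ is the restriction of $\mathfrak{H}$, this gives
\[
(\Psi\eta,w)_E=2\Re(\eta,\zeta).
\]
Taking $\eta=\dd_L^*\psi$, the right-hand side becomes $2\Re(\psi,\dd_L\zeta)$.

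\textbf{Step 2: $\dd_L\zeta=0$.} By Lemma~\ref{lem: different formulas for dL and brackets}(1), $\dd_L\zeta(u,v)=-\langle[\zeta,u],v\rangle$ for $u,v\in\Gamma(L)$. Since $[w,u]=0$, we have $[\zeta,u]=-[\bar\zeta,u]$. Now $\bar\zeta\in\Gamma(L)$ and $L$ is involutive, so $[\bar\zeta,u]\in\Gamma(L)$, and $L$ is isotropic. Hence $\dd_L\zeta(u,v)=\langle[\bar\zeta,u],v\rangle=0$. This proves $\dd_L^*\Gamma(\Lambda^2L^*)\subset\Psi^{-1}\autex(E)$. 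The argument shows more: $\Psi^{-1}\autex(E)$ is the real orthogonal complement, for $\Re(\cdot,\cdot)$, of the real subspace $Z:=\pr_{\bar L}(\ker\ad)$, and $Z$ lies in the closed forms $\ker\dd_L\cap\Gamma(L^*)=\mathcal{H}^1_L(M)\oplus\dd_L\Gamma(M,\C)$.

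\textbf{Step 3: finite codimension and the case $H^1(M,L)=0$.} By the Hodge decomposition, $\Psi^{-1}\autex(E)=\dd_L^*\Gamma(\Lambda^2L^*)\oplus V$, where $V$ is the real orthogonal complement of $Z$ inside $\mathcal{H}^1_L(M)\oplus\dd_L\Gamma(M,\C)$. For the $\dd_L$-exact part I would use explicit elements of $\ker\ad$. Pullbacks $\pi^*\dd f$ of exact real one-forms lie in $\ker\ad$ by the bracket formulas, and they give $Z\supset\dd_L\Gamma(M,\R)$, up to the factor in the identification $\bar L\cong L^*$. The remaining elements of $\ker\ad$ (constant $\mathbf s$, closed $\beta$) add only a finite-dimensional piece. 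So the claim reduces to showing that the real orthogonal complement of $\dd_L\Gamma(M,\R)$ inside $\dd_L\Gamma(M,\C)$, intersected with $V$, is finite-dimensional, and is zero when $H^1(M,L)=0$.

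\textbf{Main obstacle.} The delicate step is Step 3, specifically the imaginary directions $\dd_L(\ii g)$ with $g$ real. I would first try to show that the real part of $(\dd_L g,\dd_L f)$ forces these directions to be paired nontrivially against $Z$. Concretely, I would show that $\Re(\dd_L(\ii g),\dd_L f)=\Im(\dd_L g,\dd_L f)$ is a nondegenerate pairing in $g$ and $f$ modulo constants, using the ellipticity of $\dd_L$ on functions and the fact that $\Re\circ\pi|_L$ is surjective (Lemma~\ref{lem: proj pi(L) to real part}). If that fails, I would enlarge the test set by using $\mathcal{J}$-rotated exact forms. Once the exact part is controlled, the complement is contained in a finite-dimensional space built from $\mathcal{H}^1_L(M)$ and the constant sections $\mathbf s$. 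When $H^1(M,L)=0$ this complement vanishes, giving equality.
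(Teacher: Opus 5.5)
Your Steps 1 and 2 are correct and are the paper's own argument for the inclusion. $\Psi$ is a homothety for $\Re(\cdot,\cdot)$ because $L\perp\bar L$ for the generalized Hermitian metric. Also $\pr_{\bar L}(\ker\ad)\subset\ker\dd_L$ by Lemma~\ref{lem: different formulas for dL and brackets}(1), together with the involutivity and isotropy of $L$.

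Step 3, however, cannot be completed, because the finite-codimension claim is false, and your own reduction shows why. Write $Z=\pr_{\bar L}(\ker\ad)=\dd_L C^\infty(M,\R)+Z_0$, where $\dim Z_0<\infty$ accounts for the constant $\mathbf{s}$ and harmonic $1$-forms. For real $f,g$ you have $\Re(\dd_L f,\dd_L g)=\tfrac12(\pi^*\dd f,\pi^*\dd g)_E=c\,(\dd f,\dd g)$ with a constant $c>0$. So for every real $f_2$ you can solve $c\,\Delta_{\mathrm{dR}}f_1=\Im(\dd_L^*\dd_L f_2)$, since the right-hand side integrates to $\Im(\dd_L f_2,\dd_L 1)=0$. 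Then $\dd_L(f_1+\ii f_2)\in\ker\dd_L$ is $\Re$-orthogonal to $\dd_L C^\infty(M,\R)$. Orthogonality to $Z_0$ imposes only finitely many conditions, and $\ker(\dd_L|_{C^\infty(M,\C)})$ is finite-dimensional. Hence your $V=\Psi^{-1}\autex(E)\cap\ker\dd_L$, a complement of $\dd_L^*\Gamma(M,\Lambda^2L^*)$ in $\Psi^{-1}\autex(E)$, is infinite-dimensional whenever $\dim M>0$. No nondegeneracy property of $\Im(\dd_L g,\dd_L f)$ can change this, because the real part $f_1$ absorbs it.

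Here is a concrete instance. Take $E=\mathbb{T}M$ with $H=0$ over a compact Kähler manifold, $\mathcal{J}=\mathrm{diag}(-J,J^*)$, and $\mathfrak{H}$ built from the Kähler metric. Then $\ker\ad=\Omega^1_{\mathrm{cl}}(M)$. For real $f$, the form $\Psi(\ii\dbar f)=\ii(\dbar-\partial)f$ satisfies $\dd^*\bigl(\ii(\dbar-\partial)f\bigr)=\ii(\Delta_{\dbar}-\Delta_\partial)f=0$ and is orthogonal to harmonic forms, so it lies in $\dd^*\Omega^2(M)$. Thus $\ii\dbar f\in\Psi^{-1}\autex(E)$. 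Yet $\ii\dbar f$ is $\dd_L$-exact, hence orthogonal to $\dd_L^*\Gamma(M,\Lambda^2L^*)$. In this example the pairing you hoped was nondegenerate vanishes identically. For a K3 surface, $H^1(M,L)=H^{0,1}(M)\oplus H^0(M,T^{1,0}M)=0$, so the equality claim fails as well.

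Your fallback of enlarging the test set is not available either. The test set is fixed as $\pr_{\bar L}(\ker\ad)$, and $\mathcal{J}$-rotated exact forms are not in $\ker\ad$.

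The paper's proof breaks at the same spot. It asserts $\dd_L C^\infty(M)\subset\Psi^{-1}\ker(\ad)$ via $\pr_{\bar L}\pi^*\dd f=\Psi^{-1}(\pi^*\dd f)$, which is valid only for real-valued $f$. It then concludes $\Psi^{-1}\autex(E)\cap\ker\dd_L\subset\mathcal{H}^1_L(M)$, which would require complex-valued $f$; these are exactly the imaginary directions you flagged. Only the inclusion $\dd_L^*\Gamma(M,\Lambda^2L^*)\subset\Psi^{-1}\autex(E)$ is used later, in the local completeness argument, and that part your proposal does establish.
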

\begin{proof}
Under the identification $\autex(E)\cong (\ker\ad)^{\perp}$, we have an orthogonal decomposition
\begin{align*}
\Gamma(M,E)=\autex(E)\oplus \ker(\ad).    
\end{align*}
Since $\Psi^{-1}=\pr_{\bar{L}}:\Gamma(M,E)\longrightarrow \Gamma(M,\bar{L})$, with our choices of bundle metrics $\Psi^{-1}$ is in fact a homothety and we obtain an orthogonal decomposition
\begin{align*}
    \Gamma(M,\bar{L})=\Psi^{-1}\autex(E)\oplus \Psi^{-1}\ker (\ad).
\end{align*}

On the other hand recall that by the Hodge decomposition $$\left(\dd_L^* \left(\Gamma(M,\Lambda^2 L^*)\right)\right)^{\perp}=\ker \left(\dd_L:\Gamma(M,L^*)\longrightarrow \Gamma(M,\Lambda^2L^*)\right).$$
We consider $\xi\in\Gamma(M,L^*)\cong\Gamma(M,\bar{L})$ such that $\Psi\xi=\xi+\bar{\xi}\in\ker \ad$, and $u,v\in \Gamma(M,L)$. Then by Lemma~\ref{lem: different formulas for dL and brackets} we have
\begin{align*}
    \dd_L\xi(u,v)=-\langle [\xi,u],v\rangle=-\langle [\xi+\bar{\xi},u],v\rangle=0.
\end{align*}
It follows that $\xi\in\ker \dd_L$ and therefore $\ker \left(\dd_L:\Gamma(M,L^*)\longrightarrow \Gamma(M,\Lambda^2L^*)\right)\supset \Psi^{-1}\ker(\ad)$. This implies 
\begin{align*}
   \dd_L^*\left(\Gamma(M,\Lambda^2L^*)\right)=\left(\ker \left(\dd_L:\Gamma(M,L^*)\longrightarrow \Gamma(M,\Lambda^2L^*)\right)\right)^{\perp}\subset \left(\Psi^{-1}\ker \ad\right)^{\perp}=\Psi^{-1}\autex(E).
\end{align*}
We claim that $\dd_L C^{\infty}(M)\subset \Psi^{-1}\ker (\ad).$ Given $f\in C^{\infty}(M)$ and $u\in\Gamma(M,L)$, we compute
\begin{align*}
    \dd_Lf(u)=\pi(u)[f]=\pr_{\bar{L}}\pi^*\dd f(u).
\end{align*}
Since $$\pr_{\bar{L}}\pi^*\dd f=\Psi^{-1}(\pi^*\dd f)\in\Psi^{-1}\ker(\ad),$$
we conclude that $$\dd_L C^{\infty}(M)\subset \Psi^{-1}\ker (\ad)\quad\implies \Psi^{-1}\autex(E)\cap \dd_L C^{\infty}(M)=\{0\}.$$
Hence 
$$\Psi^{-1}\autex(E)\cap\ker \left(\dd_L:\Gamma(M,L^*)\longrightarrow \Gamma(M,\Lambda^2L^*)\right)\subset \mathcal{H}_L^1(M).$$
\end{proof}

\subsection{Exact autoequivalences acting on generalized complex deformations}
\label{sect: auteq acting on gc defs}
In the following, let $E$ be a $\mathcal{G}$-flat transitive Courant algebroid over a compact manifold. Generalized almost complex structures on $E$ can be seen as sections to a fiber subbundle of $\End E$, whose fibers are isomorphic to $\O(2k,2\ell)/\mathrm{U}(k,\ell)$. In particular, the space of generalized almost complex structures on $E$ is the space of all sections $\mathrm{GAC}(E)$ of this fiber bundle and thus a tame Fréchet manifold~\cite[Thm II.2.3.1]{hamilton82}.

 Recall that the group of autoequivalences acts on the tame Fréchet manifold of generalized almost complex structures via
\begin{align*}
    \Auteq(E)\times \operatorname{GAC}(E)\ni (F,\mathcal{J})&\longmapsto F\cdot \mathcal{J}:= F\circ \mathcal{J}\circ F^{-1}.
\end{align*} 
For exact $E$, it was already shown in \cite{jans_thesis} that this action is smooth tame. In the following, we will see this more generally in the case of $\mathcal{G}$-flat transitive Courant algebroids. For this, we need the following lemma.
\begin{lemma}
\label{lem: action auteq on end}
    The map
    \begin{align*}
        \mu_{\mathrm{End}}\colon\Auteq(E)\times\Gamma(M,\End E)&\longrightarrow \Gamma(M,\End E)\\
        (F,A)&\longmapsto F\circ A\circ F^{-1}
    \end{align*}
    is smooth tame.
\end{lemma}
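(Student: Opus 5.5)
The plan is to write $\mu_{\mathrm{End}}$ explicitly in the splitting $E=TM\oplus\mathcal{G}\oplus T^*M$, in the manifold coordinates $(f,b,O,\mathbf{A})$ of Theorem~\ref{thm: Auteq of flat std trans is tame Lie group}, and check that it is a composition of maps already known to be smooth tame. This mirrors the proof of Proposition~\ref{prop: action of ex auteq is smooth tame flat trans}. By conjugating with an $\mathbf{A}$-field transformation, as in the discussion after Theorem~\ref{thm: auteq for R=dS}, it suffices to treat a $\mathcal{G}$-flat standard transitive Courant algebroid.

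First, for $F=(f,b,O,\mathbf{A})$ write $F=F_{f,O}\circ G$ with
\[G:=F_{\mathbf{A}}\circ F_{b+B(f)},\qquad B(f)=\QdR(H-f^*H).\]
The factor $G$ is a vector bundle automorphism covering the identity. Its matrix entries are zeroth order expressions in $b$, $\QdR(H-f^*H)$, $\mathbf{A}$ and $\langle\mathbf{A},\mathbf{A}\rangle_{\mathcal{G}}$. Each of these depends smoothly tamely on $F$, since $\QdR$ is tame linear and pullback of forms is smooth tame (Proposition~\ref{prop: pullback}). Its inverse is $G^{-1}=F_{-b-B(f)}\circ F_{-\mathbf{A}}$, which has the same polynomial form. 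Pointwise composition of sections of $\End E$ is a bilinear zeroth order operator and hence tame, so
\[(F,A)\longmapsto G\circ A\circ G^{-1}\]
is smooth tame.

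Second, it remains to handle $A'\mapsto F_{f,O}\circ A'\circ F_{f,O}^{-1}$, i.e.\ the action $A'\mapsto F_{f,O}\circ A'\circ f^{-1}\circ F_{f,O}^{-1}$ of a bundle map covering $f$ on endomorphism fields. Since $O$ is a constant matrix, the $\mathcal{G}$-part is just conjugation by $O$, followed by composition with $f^{-1}$ on coefficient functions. Decompose $A'$ into its nine blocks, which are tensor fields of types such as $TM\otimes T^*M$, $\mathcal{G}\otimes T^*M$, $\Lambda$-type forms with values in $\mathcal{G}$, and so on. The conjugation then acts on each block as push-forward $f_*$ of the corresponding tensor field, twisted by $O$ on the $\mathcal{G}$ factors. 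So the problem reduces to the statement that push-forward of tensor fields,
\[\Diff(M)\times\Gamma(M,T^{r}_{s}M)\to\Gamma(M,T^r_sM),\]
is smooth tame.

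This last step is the main obstacle. For vector fields and forms it is already available (\cite[Sec~3]{paperoncomplexdefs} and Proposition~\ref{prop: pullback}). For general tensors I would argue in the same way: in local coordinates,
\[f_*T=(\dd f\otimes\dots\otimes(\dd f^{-1})^*)\cdot(T\circ f^{-1}).\]
This is a composition of the smooth tame maps $f\mapsto f^{-1}$, $f\mapsto\dd f$, composition $T\mapsto T\circ f^{-1}$ (\cite{hamilton82}), and tame pointwise multiplication. Alternatively, one can note that $\Gamma(\End E)$ is tamely isomorphic to $\Gamma(E\otimes E^*)$, which is spanned by tensor products of sections. Since $\mu_{\mathrm{End}}(F,u\otimes\eta)=\mu_E(F,u)\otimes\mu_{E^*}(F,\eta)$, the result would then follow from Proposition~\ref{prop: action of ex auteq is smooth tame flat trans} and its dual version. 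However, this only works after choosing a finite generating family of sections and writing $A=\sum_j a_j u_j\otimes\eta_j$ with coefficient functions $a_j$. I would go with the direct coordinate argument and regard this alternative as a check.
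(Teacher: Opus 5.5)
Your argument is correct, but it is not the route the paper takes.

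\textbf{The paper's route.} The paper never factorizes $F$. It notes that $(F,A,u)\mapsto F\circ A\circ F^{-1}u=\mu_E(F,A\,\mu_E(F^{-1},u))$ is smooth tame, being built from $\mu_E$ (Proposition~\ref{prop: action of ex auteq is smooth tame flat trans}), inversion in $\Auteq(E)$ and pointwise evaluation. It then reconstructs the endomorphism field from its values on a local frame,
\[
\mu_{\mathrm{End}}(F,A)|_U=\sum_i\mu_E(F,A\,\mu_E(F^{-1},e_i))\otimes\alpha_i,
\]
patched with a partition of unity. This is essentially your ``alternative check'', carried out on frames so that no separate dual action $\mu_{E^*}$ is needed. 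Its advantage is generality: it only uses that $\Auteq(E)$ is a tame Lie group acting smooth tamely on $\Gamma(M,E)$ by bundle maps. This fits the paper's remark that the deformation argument does not depend on the explicit $\mathcal{G}$-flat formulas.

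\textbf{Your route.} Your main argument instead exploits those formulas: the factorization $F=F_{f,O}\circ F_{\mathbf{A}}\circ F_{b+B(f)}$, with the last two factors covering the identity and $O$ constant. This makes transparent that conjugation splits into a zeroth-order algebraic part and a natural push-forward of tensor fields. The price is an ingredient the paper does not establish: smooth tameness of $f_*$ on arbitrary tensor fields (the paper only has vector fields and forms).

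\textbf{What needs tightening.} Your local-coordinate formula for $f_*T$ needs a global formulation, since a diffeomorphism need not map a chart into a single chart. The clean fix is to write every tensor field globally as
\[
\sum_I a_I\,X_{i_1}\otimes\cdots\otimes\alpha_{j_1}\otimes\cdots,
\]
for a fixed finite spanning family of vector fields and one-forms (e.g.\ coming from an embedding $M\subset\R^N$), with coefficients $a_I$ depending tame linearly on the tensor. Then $f_*$ reduces to $a_I\mapsto a_I\circ f^{-1}$, push-forward of vector fields and pull-back of forms (Proposition~\ref{prop: pullback}), all of which are already known to be smooth tame.
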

\begin{proof}
    We consider the auxiliary map
    \begin{align*}
        \Auteq(E)\times\Gamma(M,\End E)\times \Gamma(M,E)&\longrightarrow \Gamma(M,E)\\
        (F,A,u)&\longmapsto F\circ A\circ F^{-1}u.
    \end{align*}
    Then we observe that this map is smooth tame since
    \begin{align*}
        F\circ A\circ F^{-1}u&=F\circ A\circ F^{-1}\circ u\circ f\circ f^{-1}\\
        &=\mu_E(F,A\mu_E(F^{-1},u)),
    \end{align*}
    where $f$ is the diffeomorphism covered by $F$. In a locally trivial neighborhood $U\subset M$ for $E$, the map $\mu_{\mathrm{End}}$ can be recovered from the auxiliary map using a local frame $\{e_i\}$ for $E$ and its dual local frame $\{\alpha_i\}$ for $E^*$ via
    \begin{align*}
        \mu_{\mathrm{End}}(F,A)|_U=\sum_i \mu_E(F,A\mu_E(F^{-1},e_i))\otimes \alpha_i.
    \end{align*}
    These patch together to the global section $\mu_{\mathrm{End}}(F,A)$ of $\End E$. The patching can be made explicit using a partition of unity. In total, this yields a smooth tame map.
\end{proof}

\begin{proposition}
    The action of $\Auteq(E)$ on $\mathrm{GAC}(E)$ is smooth tame.
\end{proposition}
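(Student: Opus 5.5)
The plan is to deduce the statement directly from Lemma~\ref{lem: action auteq on end}. The space $\mathrm{GAC}(E)$ is the space of sections of a fiber subbundle $\mathcal{F}\subset\End E$ with fibers $\O(2k,2\ell)/\mathrm{U}(k,\ell)$. By \cite[Thm II.2.3.1]{hamilton82}, it is a tame Fréchet submanifold of $\Gamma(M,\End E)$ (in fact of an open subset of it). The action $\mu_{\mathrm{GAC}}$ is the restriction of $\mu_{\mathrm{End}}$ to $\Auteq(E)\times\mathrm{GAC}(E)$. First I check that this restriction takes values in $\mathrm{GAC}(E)$: since $F$ preserves $\langle\cdot,\cdot\rangle$, the conjugate $F\circ\mathcal{J}\circ F^{-1}$ again squares to $-\id$ and is skew, fiberwise over $f(p)$.

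The key step is to show that a smooth tame map into $\Gamma(M,\End E)$ with image in $\mathrm{GAC}(E)$ is smooth tame as a map into $\mathrm{GAC}(E)$. For this I use the concrete form of Hamilton's charts on spaces of sections of fiber bundles. Around a given $\mathcal{J}_0$, choose a tubular neighborhood of $\mathcal{F}$ in $\End E$, or equivalently a fiber-preserving smooth retraction $\rho$ from a neighborhood $\mathcal{N}$ of $\mathcal{F}$ onto $\mathcal{F}$. Concretely, one can take the fiberwise smooth map $A\mapsto A(-A^2)^{-1/2}$, followed by projection to the $\langle\cdot,\cdot\rangle$-skew part composed with a further normalization. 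An alternative is the Cayley-type parametrization $\varphi\mapsto L_\varphi$ used in the introduction.

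The chart on $\mathrm{GAC}(E)$ near $\mathcal{J}_0$ is given by sections of the vertical bundle $\mathcal{J}_0^*V\mathcal{F}$ via a fiberwise exponential. Composing with $\rho$ gives a nonlinear differential operator of order zero, $\Gamma(M,\mathcal{N})\to\Gamma(M,\mathcal{J}_0^*V\mathcal{F})$. This operator is smooth tame by Hamilton's result that operators induced by smooth fiber bundle maps are smooth tame \cite[II.2.2]{hamilton82}. The chart representation of $\mu_{\mathrm{GAC}}$ is then this order-zero operator composed with $\mu_{\mathrm{End}}$, restricted to the tame submanifold. Restriction to a tame submanifold preserves smooth tameness, because in submanifold charts it is a restriction to a tame direct summand.

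I expect the main obstacle to be making the submanifold and retraction step precise, that is, ensuring that $\mathrm{GAC}(E)\subset\Gamma(M,\End E)$ really is a tame submanifold with a smooth tame local retraction. I would try first the explicit polar-type retraction above, because its fiberwise smoothness near $\mathcal{F}$ is elementary. Local continuity in the required neighborhoods follows because $\mu_{\mathrm{End}}(F,\mathcal{J})$ lies in $\mathcal{F}$ exactly, so only small perturbations in $\mathcal{J}$ within charts are needed.
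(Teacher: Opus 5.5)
Your proposal is correct and follows the paper's argument: the paper deduces the statement directly from Lemma~\ref{lem: action auteq on end}, using that $\mathrm{GAC}(E)$ is a tame Fréchet submanifold of $\Gamma(M,\End E)$. Your order-zero retraction argument simply spells out the corestriction step that the paper's one-line proof leaves implicit; the retraction is cleanest as projection to the skew part followed by $A\mapsto A(-A^2)^{-1/2}$.
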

\begin{proof}
    This follows from Lemma~\ref{lem: action auteq on end} since $\mathrm{GAC}(E)$ is a tame Fr\'echet submanifold of $\Gamma(M,\End E)$.
\end{proof}

Consider now a generalized almost complex structure $\mathcal{J}$ on $E$ and an autoequivalence $F$. Then we have
\begin{align*}
    L_p(F\cdot\mathcal{J})&=\left\{F u_{f^{-1}(p)}:u_{f^{-1}(p)}\in L_{f^{-1}(p)}\right\}\\
    &=\left\{F\circ u\circ f^{-1}(p):u\in\Gamma(M,L)\right\}\\
    &=\left\{[\mu_E(F,u)]_p:u\in \Gamma(M,L)\right\}.
\end{align*}
For $F$ in a small enough $C^1$-neighborhood $\mathcal{V}^{(1)}$ of $\id\in \Auteq(E)$, we find that
\begin{align*}
    L(F\cdot \mathcal{J})\cap \bar{L}=\{0\}
\end{align*}
and hence there is some $\Theta(F)\in \Gamma(M,\Lambda^2 L^*)$ such that 
\begin{align*}
    L(F\cdot \mathcal{J})=\operatorname{graph}(\Theta(F)).
\end{align*}
In particular, $\Theta(F)$ is the generalized (almost) complex deformation corresponding to $F\cdot \mathcal{J}$. Similarly, possibly after shrinking $\mathcal{V}^{(1)}$ further, and for some small enough $C^0$-neighborhood $\mathcal{W}^{(0)}$ of $0$ in $\Gamma(M,\Lambda^2 L^*)$, for every $F\in \mathcal{V}^{(1)}$, $\varphi\in\mathcal{W}^{(0)}$, the generalized almost complex structure $F\cdot \mathcal{J}_{\varphi}$ corresponds to a generalized almost complex deformation $\Theta(F,\varphi)$. Since we can think of $\Theta$ as the local expression for the action of $\Auteq(E)$ on $\mathrm{GAC}(E)$ in a local chart around $\mathcal{J}$, it is a smooth tame map. We will state this in the following proposition.

\begin{proposition}
\label{prop:Theta smooth tame}
The map 
\begin{align*}
    \Theta\colon \mathcal{V}^{(1)}\times \mathcal{W}^{(0)}&\longrightarrow \Gamma(M,\Lambda^2 L^*)\\
    (F,\varphi)&\longmapsto \Theta(F,\varphi)
\end{align*}
is smooth tame. It is given by the formula
\begin{align*}
        \Theta(F,\varphi)=(F_{\bar{L}L}+F_{\bar{L}\bar{L}}\circ \varphi )(F_{LL}+F_{L\bar{L}}\circ\varphi)^{-1},
    \end{align*}
    where we wrote
     \begin{align*}
    F=\begin{pmatrix}
        F_{LL} & F_{L\bar{L}}\\
        F_{\bar{L}L}& F_{\bar{L}\bar{L}}
    \end{pmatrix}\qquad \text{w.r.t.}\quad E=L\oplus \bar{L}.
    \end{align*}
\end{proposition}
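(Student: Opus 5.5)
The plan is to derive the formula first and then read off smooth tameness from it. The deformed structure $\mathcal{J}_\varphi$ has $+\ii$-eigenbundle $L_\varphi=\{u+\iota_u\varphi:u\in L\}$, which we view as the graph of $\varphi\colon L\to\bar L$. By the computation preceding the proposition, $L(F\cdot\mathcal{J}_\varphi)$ at $p$ is the image under $F$ of $L_\varphi$ at $f^{-1}(p)$. Equivalently, it is the span of the sections $\mu_E(F,u+\varphi u)$ for $u\in\Gamma(M,L)$.

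In the block decomposition $E_{\C}=L\oplus\bar L$, such a section has $L$-component
\[
(F_{LL}+F_{L\bar L}\varphi)u
\]
and $\bar L$-component
\[
(F_{\bar LL}+F_{\bar L\bar L}\varphi)u,
\]
where the blocks of $F$ are understood as the pointwise maps composed with $f^{-1}$. Here I use that $F$, being complex-linearly extended, preserves $E_\C$ but not necessarily $L$. Whenever $F_{LL}+F_{L\bar L}\circ\varphi$ is invertible, we set $w=(F_{LL}+F_{L\bar L}\varphi)u$. This exhibits the image as $\{w+\Theta w\}$, with $\Theta$ given by the stated formula. Invertibility holds because at $F=\id$, $\varphi=0$ the operator equals $\id_L$, and invertibility is an open pointwise condition. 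It therefore holds uniformly on $M$ for $F$ in a $C^0$-small neighborhood (the blocks involve only $F$ and $f$, not derivatives) and $\varphi$ $C^0$-small. This gives the neighborhoods $\mathcal{V}^{(1)}$ and $\mathcal{W}^{(0)}$; the $C^1$ requirement is inherited from the chart conventions. Since $F$ is orthogonal, the image is maximal isotropic. Hence $\Theta$ lies in $\Gamma(M,\Lambda^2L^*)$ after the identification $\bar L\cong L^*$, and the resulting structure is generalized almost complex.

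For smooth tameness, I would write the map as the composition of three pieces.
\begin{enumerate}
\item The map $(F,\varphi)\mapsto F\circ(\id+\varphi)\circ F^{-1}$ twisted appropriately. Concretely, the block entries of $\mu_{\mathrm{End}}(F,\cdot)$ applied to fixed projections: $F_{LL}$ etc.\ equal $\pr_L\circ\mu_{\mathrm{End}}(F,\pr_L)$-type expressions, or more directly the blocks of $F$ pushed forward by $f$. These are smooth tame by Lemma~\ref{lem: action auteq on end} together with Proposition~\ref{prop: action of ex auteq is smooth tame flat trans}.
\item Pointwise algebraic operations: composition with $\varphi$, sums, and fiberwise inversion on the open set of invertible endomorphisms. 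These are nonlinear zeroth-order operators, i.e.\ induced by smooth fiber-bundle maps, hence smooth tame by Hamilton's theorem on nonlinear differential operators.
\item Composition with the fixed projections $\pr_L$, $\pr_{\bar L}$, which is tame linear.
\end{enumerate}
Since compositions of smooth tame maps are smooth tame, $\Theta$ is smooth tame.

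The main obstacle I expect is bookkeeping of the base diffeomorphism $f$ in the blocks. One must be careful that $\Theta(F,\varphi)$ at $p$ involves $\varphi$ at $f^{-1}(p)$. I would therefore express it as $\mu_{\mathrm{End}}(F,\cdot)$ applied to $\varphi$ extended to $\End E_\C$ (as $\varphi\circ\pr_L$), and then write $\Theta$ purely in terms of the blocks of $\mu_{\mathrm{End}}(F,\id+\varphi\pr_L)$. The inverse is taken of its $L$-restricted block. This way the $f$-dependence is entirely absorbed into the already smooth tame map $\mu_{\mathrm{End}}$.
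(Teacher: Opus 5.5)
Your derivation of the formula is the paper's: push $u+\iota_u\varphi$ forward by $F$, split into $L$- and $\bar L$-components, and reparametrize by $v=(F_{LL}+F_{L\bar L}\circ\varphi)u$. One minor point: $F$ itself contains $\dd f$, and its $B$-part contains $\QdR(H-f^*H)$. So $C^0$-smallness of $F$ as a bundle map is $C^1$-smallness of the chart coordinates, and that is where $\mathcal{V}^{(1)}$ comes from, rather than from ``chart conventions''.

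For smooth tameness the paper does not use the formula. It observes that $\Theta$ is the action $\mu_{\mathrm{GAC}}$ written in the chart $\varphi\mapsto\mathcal{J}_\varphi$ of $\mathrm{GAC}(E)$ around $\mathcal{J}$, and that action was already shown to be smooth tame via Lemma~\ref{lem: action auteq on end}. Your plan is that argument unpacked: absorb all $f$-dependence into $\mu_{\mathrm{End}}$, then finish with zeroth-order pointwise algebra. It is sound in spirit, but the endomorphism you conjugate is the wrong one.

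Indeed $\mu_{\mathrm{End}}(F,\id+\varphi\circ\pr_L)=\id+F\varphi\pr_LF^{-1}$ is the identity for $\varphi=0$ and every $F$. So no expression in its blocks can produce $\Theta(F,0)$, which is nonzero whenever $F$ does not preserve $L$. Similarly, in your step 1, $\pr_L\mu_{\mathrm{End}}(F,\pr_L)|_L=F_{LL}\circ(F^{-1})_{LL}$, not $F_{LL}$.

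The fix is to conjugate a projection onto $L_\varphi$ instead, e.g.\ $Q:=\mu_{\mathrm{End}}\bigl(F,(\id+\varphi)\circ\pr_L\bigr)$, which is the projection onto $F(L_\varphi)$ along $F(\bar L)$. For $w\in L_p$ one gets $Qw=F(u+\varphi u)$ with $u=Tw$ and $T:=\pr_L\circ F^{-1}|_{L}$. Hence $\pr_LQ|_L=(F_{LL}+F_{L\bar L}\varphi)T$ and $\pr_{\bar L}Q|_L=(F_{\bar LL}+F_{\bar L\bar L}\varphi)T$, so
\[
\Theta(F,\varphi)=\pr_{\bar L}Q|_L\circ\bigl(\pr_LQ|_L\bigr)^{-1},
\]
where the factor $T$, invertible near the identity, cancels. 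Using instead $\Pi:=\mu_{\mathrm{End}}\bigl(F,\tfrac12(\id-\ii\mathcal{J}_\varphi)\bigr)$, the projection onto $L(F\cdot\mathcal{J}_\varphi)$ along its conjugate, gives the same formula; this is exactly how the paper's chart argument unwinds.

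With this correction your composition argument goes through. The map $\varphi\mapsto(\id+\varphi)\circ\pr_L$ is affine and tame, and $\mu_{\mathrm{End}}$ is smooth tame. The last step is a zeroth-order nonlinear operator on an open set of sections, hence smooth tame by Hamilton's theorem.
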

\begin{proof}
    We only need to prove the formula for $\Theta(F,\varphi)$.
    First, we write
    \begin{align*}
        L_p(F\cdot \mathcal{J}_{\varphi})&=\left\{F(u+\iota_u\varphi)_{f^{-1}(p)}:u\in\Gamma(M,L)\right\}\\
        &=\left\{(F_{LL}+F_{L\bar{L}}\circ\varphi)u_{f^{-1}(p)}+(F_{\bar{L}L}+F_{\bar{L}\bar{L}}\circ \varphi )u_{f^{-1}(p)}:u\in\Gamma(M,L)\right\}.
    \end{align*}
    Note that for $F$ in a small $C^1$-neighborhood $\mathcal{V}^{(1)}$ of the identity and $\varphi$ in a small $C^0$-neighborhood $\mathcal{W}^{(0)}$ of $0$, the map
    \begin{align*}
        F_{LL}+F_{L\bar{L}}\circ\varphi\colon L\longrightarrow L
    \end{align*}
    is invertible. Therefore, for every $u\in \Gamma(M,L)$ there is some $v\in \Gamma(M,L)$ such that for every $p\in M$ we have
    \begin{align*}
    v_p=(F_{LL}+F_{L\bar{L}}\circ\varphi)u_{f^{-1}(p)}.    
    \end{align*}
    It follows that
    \begin{align*}
     L_p(F\cdot \mathcal{J}_{\varphi})&=\left\{v_p+(F_{\bar{L}L}+F_{\bar{L}\bar{L}}\circ \varphi )(F_{LL}+F_{L\bar{L}}\circ\varphi)^{-1}v_p:v\in\Gamma(M,L)\right\}    
    \end{align*}
    and thus we find
    \begin{align*}
        \Theta(F,\varphi)&=(F_{\bar{L}L}+F_{\bar{L}\bar{L}}\circ \varphi )(F_{LL}+F_{L\bar{L}}\circ\varphi)^{-1}.
    \end{align*}
\end{proof}
From now on, we will also denote $\Theta(F,\varphi):=F\cdot \varphi$.
\begin{lemma}
    $\Theta(F_1\circ F_2,\varphi)=F_1\cdot \Theta(F_2,\varphi)$.
\end{lemma}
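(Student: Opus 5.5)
The natural route is to compare the two deformed subbundles directly as graphs over $L$, rather than to manipulate the block formula of Proposition~\ref{prop:Theta smooth tame} algebraically. By construction, $\Theta(F,\varphi)$ is the unique element of $\Gamma(M,\Lambda^2L^*)$ whose graph equals $L(F\cdot\mathcal{J}_{\varphi})$. The first step is therefore to recall that, for any autoequivalence $F$ covering $f$ and any generalized almost complex structure $\mathcal{J}'$, we have
\begin{align*}
L_p(F\cdot\mathcal{J}')=F\bigl(L_{f^{-1}(p)}(\mathcal{J}')\bigr),
\end{align*}
as computed just before Proposition~\ref{prop:Theta smooth tame}. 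In other words, $L(F\cdot\mathcal{J}')=F(L(\mathcal{J}'))$ as subbundles of $E_{\C}$. The relation $\mathcal{J}'\mapsto F\circ\mathcal{J}'\circ F^{-1}$ is a group action, since
\begin{align*}
(F_1F_2)\circ\mathcal{J}'\circ(F_1F_2)^{-1}=F_1\circ(F_2\circ\mathcal{J}'\circ F_2^{-1})\circ F_1^{-1}.
\end{align*}
This gives $(F_1F_2)\cdot\mathcal{J}_\varphi=F_1\cdot(F_2\cdot\mathcal{J}_\varphi)$ at the level of $\mathrm{GAC}(E)$.

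The second step translates this back to deformation forms. Since $F_2\cdot\mathcal{J}_\varphi$ has $+\ii$-eigenbundle equal to $\operatorname{graph}(\Theta(F_2,\varphi))$, it is $\mathcal{J}_{\Theta(F_2,\varphi)}$. Applying the definition of $\Theta$ once more yields
\begin{align*}
L(F_1\cdot\mathcal{J}_{\Theta(F_2,\varphi)})=\operatorname{graph}\bigl(\Theta(F_1,\Theta(F_2,\varphi))\bigr).
\end{align*}
On the other hand, $L((F_1F_2)\cdot\mathcal{J}_\varphi)=\operatorname{graph}(\Theta(F_1F_2,\varphi))$. Because a maximal isotropic subbundle transverse to $\bar L$ determines its graph map uniquely, the two forms coincide. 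As a cross-check, one could also verify the identity from the explicit formula $\Theta(F,\varphi)=(F_{\bar LL}+F_{\bar L\bar L}\varphi)(F_{LL}+F_{L\bar L}\varphi)^{-1}$ by multiplying the block matrices of $F_1$ and $F_2$, which is a routine linear fractional transformation computation. One must take care, though, that the blocks of $F_1$ act at shifted base points via $f_2$.

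The only genuine obstacle is domain bookkeeping. All three expressions must be defined: $F_2\in\mathcal{V}^{(1)}$, $\varphi\in\mathcal{W}^{(0)}$, $\Theta(F_2,\varphi)\in\mathcal{W}^{(0)}$, $F_1\in\mathcal{V}^{(1)}$ and $F_1F_2\in\mathcal{V}^{(1)}$. I would state the lemma under these hypotheses, or else shrink the neighbourhoods using continuity of $\Theta$ and of multiplication so that they hold automatically. Under these hypotheses the relevant invertibility of $F_{LL}+F_{L\bar L}\varphi$ holds at every stage, and the uniqueness of graph representations closes the argument.
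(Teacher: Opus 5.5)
Your proposal is correct and follows the paper's own argument. The paper deduces the identity directly from the action property $(F_1\circ F_2)\cdot\mathcal{J}=F_1\cdot(F_2\cdot\mathcal{J})$ of conjugation on $\mathrm{GAC}(E)$. It leaves implicit two points you spell out: passing back to deformation forms via uniqueness of the graph representation over $L$, and the domain bookkeeping ensuring that all three $\Theta$-expressions are defined.
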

\begin{proof}
    This follows from 
    \begin{align*}
       (F_1\circ F_2)\cdot \mathcal{J}&=(F_1\circ F_2)\circ \mathcal{J}\circ (F_1\circ F_2)^{-1}\\
       &=F_1\circ (F_2\circ \mathcal{J}\circ F_2^{-1})\circ F_1^{-1}\\
       &=F_1\cdot (F_2\cdot\mathcal{J}). 
    \end{align*}
\end{proof}

We denote by 
\begin{align*}
    \mathcal{W}^{(k)}_{\varepsilon}&:=\{\varphi\in \Gamma(M,\Lambda^2L^*):\Vert \varphi\Vert_{k,\infty}<\varepsilon\},\\\mathcal{U}^{(k)}_{\varepsilon}&:=\left\{\xi\in \Psi^{-1}(\autex(E))\subset\Gamma(M,L^*)\colon \Vert \xi \Vert_{k,\infty}<\varepsilon\right\},
\end{align*}
where $\|\cdot\|_{k,\infty}$ denotes the $C^k$-norm on the respective spaces of sections. Then for $\varepsilon>0$ small enough, restricting $\Theta$ to exact autoequivalences and concatenating with the local chart $\Phi$ of $\Autex(E)$ yields a smooth tame map
\begin{align*}
    \mathcal{E}\colon \mathcal{U}_{\varepsilon}^{(1)}\times \mathcal{W}_{\varepsilon}^{(0)}\longrightarrow \Gamma(M,\Lambda^2 L^*).
\end{align*}
For our proof of the local completeness of the deformation family in Section~\ref{sec: local cplt gc}, we need to compute the partial derivative $D_{\xi}\mathcal{E}(\xi,\varphi)$. First, we prove the following proposition.
\begin{proposition}
\label{prop: taylor E}
    Let $\varphi\in \mathcal{W}^{(0)}_{\varepsilon}$ be a generalized almost complex deformation, $F_{\xi}$ an exact autoequivalence, where $\xi\in \mathcal{U}^{(1)}_{\varepsilon}$. Then for $\varepsilon>0$ small enough, $F_{\xi}\cdot\varphi$ is well-defined and we have
    \begin{align*}
        \mathcal{E}(\xi,\varphi):=F_{\xi}\cdot \varphi=\varphi+\dd_L(a_{\varphi}\xi)+[\varphi,a_{\varphi}\xi]-\iota_{\bar{\xi}}\left(\dd_L\varphi+\tfrac{1}{2}[\varphi,\varphi]\right)+R(\varphi,\xi),
    \end{align*}
    where $a_{\varphi}\xi:=\xi-\iota_{\bar{\xi}}\varphi$ and $R(\varphi,t\xi)=t^2 \tilde{R}(\varphi,\xi,t)$ for a real parameter $t$ with $\tilde{R}(\varphi,\xi,t)$ depending smoothly on $t$ for small $t$.
\end{proposition}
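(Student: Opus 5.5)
The plan is to obtain the expansion as a first-order Taylor expansion in $\xi$. Write $u:=\Psi\xi=\xi+\bar{\xi}\in\autex(E)$, so $F_\xi=\Phi^{-1}(u)$. By Proposition~\ref{prop: mu for small exact autoeqs} (and its conjugated version for the general bisection of Theorem~\ref{thm: auteq for R=dS}), $\mu_E(F_\xi,w)=w-[u,w]+R_{\mu_E}(u)w$ for every section $w$, with remainder quadratic in $t$ when $u$ is replaced by $tu$. I apply this to sections of the deformed bundle $L_\varphi$, namely $w=v+\iota_v\varphi$ with $v\in\Gamma(M,L)$. Then $L(F_\xi\cdot\mathcal{J}_\varphi)$ is spanned pointwise by
\[
v+\iota_v\varphi-[\xi+\bar\xi,\,v+\iota_v\varphi]+O(t^2).
\]
Well-definedness for small $\varepsilon$ is already ensured by Proposition~\ref{prop:Theta smooth tame}. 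Smoothness in $t$ of the remainder follows from the smooth tameness of $\mathcal{E}$ together with \cite[Thm I.3.5.6]{hamilton82}. It therefore suffices to identify $D_\xi\mathcal{E}(0,\varphi)\xi$ with the linear terms in the claimed formula.

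To compute the linear term, I differentiate the formula $\Theta(F,\varphi)=(F_{\bar LL}+F_{\bar L\bar L}\varphi)(F_{LL}+F_{L\bar L}\varphi)^{-1}$ from Proposition~\ref{prop:Theta smooth tame} along $F=\id+t\,\delta F$, where $\delta F=-[u,\cdot]$ is interpreted through the action on sections. This gives
\[
\delta\Theta=\delta F_{\bar LL}+\delta F_{\bar L\bar L}\varphi-\varphi\,\delta F_{LL}-\varphi\,\delta F_{L\bar L}\varphi.
\]
Equivalently, pairing with $w\in\Gamma(L)$ yields
\[
\delta\Theta(v,w)=-\langle[u,v+\iota_v\varphi],\,w+\iota_w\varphi\rangle+\ldots,
\]
that is, the $\bar L$-component of $-[u,v+\iota_v\varphi]$ re-expressed in the graph coordinates over $L$. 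The cleanest route is to note that $\delta\Theta(v,w)=-\langle[u,v+\iota_v\varphi],w+\iota_w\varphi\rangle$ up to terms that cancel by skew-symmetry. Here I use isotropy of $L_\varphi$ and axiom 2 of Definition~\ref{def: courant algebroid} to move the bracket, and the fact that $\pi(u)$ acting on $\langle v+\iota_v\varphi,w+\iota_w\varphi\rangle=0$ gives zero.

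I then split $u=\xi+\bar\xi$ and expand bilinearly in $\varphi$, using Lemma~\ref{lem: different formulas for dL and brackets}. For the $\varphi$-independent part, item 1 gives $-\langle[\xi,v],w\rangle=\dd_L\xi(v,w)$, and the $\bar\xi$-terms vanish at order $\varphi^0$ by involutivity of $L$. For the terms linear in $\varphi$, item 3 produces $[\varphi,\xi]$. Moreover, items 1 and 2 combine the $\bar\xi$-contributions into $\dd_L(-\iota_{\bar\xi}\varphi)-\iota_{\bar\xi}\dd_L\varphi$, which is a Cartan-type formula $\mathcal{L}_{\bar\xi}=\dd_L\iota_{\bar\xi}+\iota_{\bar\xi}\dd_L$ for $\bar\xi\in\Gamma(L)$ acting on $\Lambda^\bullet L^*$. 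For the quadratic terms in $\varphi$, item 4 produces $-[\varphi,\iota_{\bar\xi}\varphi]-\tfrac12\iota_{\bar\xi}[\varphi,\varphi]$. Collecting everything with $a_\varphi\xi=\xi-\iota_{\bar\xi}\varphi$ gives exactly $\dd_L(a_\varphi\xi)+[\varphi,a_\varphi\xi]-\iota_{\bar\xi}(\dd_L\varphi+\tfrac12[\varphi,\varphi])$.

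I expect the main obstacle to be the bookkeeping of this last step. The bracket is the non-skew Dorfman bracket, and the components $\pr_L[\cdot,\cdot]$ in item 4 must match precisely. I would verify it by evaluating both sides on $(v,w)$ and reducing everything to expressions of the form $\langle[\cdot,\cdot],\cdot\rangle$ via Lemma~\ref{lem: different formulas for dL and brackets}, using axiom 3 to symmetrize $[a,b]+[b,a]$ where it appears.
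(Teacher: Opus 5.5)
Your proposal is correct and follows essentially the same route as the paper: expand $\mu_E(F_\xi,\cdot)$ on the graph sections $v+\iota_v\varphi$ via Proposition~\ref{prop: mu for small exact autoeqs}, extract the first-order graph map, and match it against the claimed formula using Lemma~\ref{lem: different formulas for dL and brackets} and axiom 3 of Definition~\ref{def: courant algebroid}. Your $\delta\Theta$ is exactly the paper's expression obtained by inverting $P(\xi,\varphi)$, and your pairing $-\langle[\xi+\bar\xi,v+\iota_v\varphi],w+\iota_w\varphi\rangle$ is in fact an exact identity, with no cancelling terms needed. Obtaining the remainder from Taylor's formula for the smooth tame map $\mathcal{E}$ is a slightly cleaner substitute for the paper's ``higher orders of $\xi$'' bookkeeping.
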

\begin{proof}
    We know that 
    \begin{align*}
        L_{F_{\xi}\cdot \varphi,p}&=\left\{ \mu_E(F_{\xi},u+\iota_u\varphi)_p : u\in \Gamma(M,L) \right\}\\
        &=\left\{u_p+\varphi(u_p)-[\Psi\xi,u+\varphi(u)]_p+R_{\mu_E}(\Psi\xi)(u+\iota_u\varphi)_p:u\in\Gamma(M,L)\right\},
    \end{align*}
    where we used Proposition~\ref{prop: mu for small exact autoeqs} and $R_{\mu_E}$ is as defined there. It follows that
    \begin{align*}
        \pr_L( L_{F_{\xi}\cdot \varphi,p})&=\left\{ u_p-\pr_L[\bar{\xi}+\xi,u+\iota_u\varphi]_p+\pr_LR_{\mu_E}(\Psi\xi)(u+\iota_u\varphi)_p: u\in \Gamma(M,L)\right\}\\
        &=\left\{ u_p-\pr_L[\bar{\xi}+\xi,u]_p-\pr_L [\bar{\xi},\iota_u\varphi]_p +\pr_LR_{\mu_E}(\Psi\xi)(u+\iota_u\varphi)_p : u\in \Gamma(M,L)\right\}.
    \end{align*}
    
    We define $P(\xi,\varphi)\colon \Gamma(M,L)\longrightarrow \Gamma(M,L)$ by
    \begin{align*}
        P(\xi,\varphi)u&:= \pr_L\left(\mu_E(F_{\xi}, u+\iota_u\varphi)\right)\\
        &\;=u-\pr_L[\bar{\xi}+\xi,u]-\pr_L [\bar{\xi},\iota_u\varphi] +\pr_LR_{\mu_E}(\Psi\xi)(u+\iota_u\varphi).
    \end{align*}
    Then $P(\xi,\varphi)$ is invertible for $(\xi,\varphi)\in \mathcal{U}_{\varepsilon}^{(1)}\times \mathcal{W}_{\varepsilon}^{(0)}$ with inverse 
    \begin{align*}
        P(\xi,\varphi)^{-1}u=u+[\bar{\xi},u]+\pr_L[\xi,u]+\pr_L[\bar{\xi},\iota_u\varphi]+\text{ higher orders of }\xi,
    \end{align*}
    where we used that the family of inverse maps $P(\xi,\varphi)^{-1}u$ depends smoothly on $\xi$. (To see that it is smooth in $\xi$, note that the inverse of $\pr_L\colon L_{F_{\xi}\cdot \varphi}\rightarrow L$ is given by $\id+\mathcal{E}(\xi,\varphi)\colon L\to L_{F_{\xi}\cdot\varphi}\subset E$, which we already know depends smoothly on $\xi$, and the inverse of $u\mapsto\mu_E(F_\xi,u+\iota_u\varphi)$ is given by $v\mapsto\pr_L\mu_E(F_{\xi}^{-1},v)$.)

    It follows that 
    \begin{align*}
        L_{F_{\xi}\cdot \varphi,p}&=\left\{ u_p+\varphi\left(P(\xi,\varphi)^{-1}u\right)_p-\pr_{\bar{L}}[\Psi\xi,P(\xi,\varphi)^{-1}u+\varphi\left(P(\xi,\varphi)^{-1}u\right)]_p\right.\\&\quad\quad\left.+\text{ higher orders of }\xi : u\in \Gamma(M,L) \right\}\\
        &=\left\{u_p+\varphi\left(u_p+[\bar{\xi},u]_p+\pr_L[\xi,u]_p+\pr_L[\bar{\xi},\iota_u\varphi]_p\right)-\pr_{\bar{L}}[\xi,u]_p-\pr_{\bar{L}}[\xi+\bar{\xi},\iota_u\varphi]_p\right.\\&\quad\quad\left.+\text{ higher orders of }\xi:u\in\Gamma(M,L)\right\}.
    \end{align*}
   This shows that
    \begin{align*}
        \mathcal{E}(\xi,\varphi)(u)&=\iota_{u+[\bar{\xi},u]+\pr_L[\xi,u]+\pr_L[\bar{\xi},\iota_u\varphi]}\varphi-\pr_{\bar{L}}[\xi,u]-\pr_{\bar{L}}[\xi+\bar{\xi},\iota_u\varphi]+\text{ higher orders of }\xi.
    \end{align*}

    On the other hand, using Lemma~\ref{lem: different formulas for dL and brackets}, we find that 
\begin{align*}
    &\iota_u\left(\varphi+\dd_L(\xi-\iota_{\bar{\xi}}\varphi )+[\varphi,\xi-\iota_{\bar{\xi}}\varphi]-\iota_{\bar{\xi}}(\dd_L\varphi+\tfrac{1}{2} [\varphi,\varphi])\right)\\
    &=\iota_u\varphi -\pr_{\bar{L}}[\xi,u]+\pr_{\bar{L}}[\iota_{\bar{\xi}}\varphi,u]+[\iota_u\varphi,\xi-\iota_{\bar{\xi}}\varphi]+\iota_{\pr_L[\xi-\iota_{\bar{\xi}}\varphi,u]}\varphi\\
    &\quad-\pr_{\bar{L}}[\iota_{\bar{\xi}}\varphi,u]-\pr_{\bar{L}}[\bar{\xi},\iota_u\varphi]+\iota_{[\bar{\xi},u]}\varphi-[\iota_{\bar{\xi}}\varphi,\iota_u\varphi]\\
    &\quad+\tfrac{1}{2}\,\iota_{\pr_L([\bar{\xi},\iota_u\varphi]-[\iota_u\varphi,\bar{\xi}]-[u,\iota_{\bar{\xi}}\varphi]+[\iota_{\bar{\xi}}\varphi,u])}\varphi\\
    &=\iota_{u+[\bar{\xi},u]+\pr_L[\xi,u]}\varphi-\pr_{\bar{L}}[\xi,u]-\pr_{\bar{L}}[\xi+\bar{\xi},\iota_u\varphi]\\
    &\quad+
    \tfrac{1}{2}\,\iota_{\pr_L([\bar{\xi},\iota_u\varphi]-[\iota_u\varphi,\bar{\xi}]-[u,\iota_{\bar{\xi}}\varphi]-[\iota_{\bar{\xi}}\varphi,u])}\varphi.    
\end{align*}
We need to show that
\begin{align*}
    -\pr_L([\iota_u\varphi,\bar{\xi}]+[u,\iota_{\bar{\xi}}\varphi]+[\iota_{\bar{\xi}}\varphi,u])=\pr_L[\bar{\xi},\iota_u\varphi].
\end{align*}
For this, for some arbitrary $\psi\in \bar{L}$, we compute
\begin{align*}
    &\langle [\bar{\xi},\iota_u\varphi]+[\iota_u\varphi,\bar{\xi}],\psi\rangle+\langle[u,\iota_{\bar{\xi}}\varphi]+[\iota_{\bar{\xi}}\varphi,u],\psi\rangle\\
    &=\pi(\psi)\langle \bar{\xi},\iota_u\varphi\rangle+\pi(\psi)\langle u,\iota_{\bar{\xi}}\varphi\rangle\\
    &=\pi(\psi)\left( \varphi(u,\bar{\xi})+\varphi(\bar{\xi},u)\right)=0.
\end{align*}
This concludes the proof.
\end{proof}

\begin{lemma}
\label{lem: properties a_epsilon}
    For some $\varepsilon>0$, $\varphi\in\mathcal{W}^{(0)}_{\varepsilon}$, let
    \begin{align*}
        a_{\varphi}\colon \Gamma(M,L^*)&\longrightarrow \Gamma(M,L^*)\\
        \xi &\longmapsto \xi-\iota_{\bar{\xi}}\varphi.
    \end{align*}
    Then the family of linear maps 
    \begin{align*}
        a\colon \mathcal{W}^{(0)}_{\varepsilon}\times \Gamma(M,L^*)&\longrightarrow \Gamma(M,L^*)\\
        (\varphi,\xi)&\longmapsto a_{\varphi}\xi
    \end{align*}
    is smooth tame. Moreover, for $\varepsilon>0$ small enough, $a_{\varphi}$ is invertible for every $\varphi\in \mathcal{W}^{(0)}_{\varepsilon}$ and the family of inverses $a_{\varphi}^{-1}$ is smooth tame.
\end{lemma}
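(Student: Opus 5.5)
The plan is to treat $a_\varphi$ as a zeroth-order (pointwise, algebraic) operator depending smoothly on $\varphi$, and reduce everything to tameness of smooth fiberwise bundle maps, as in Hamilton's framework.

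\emph{Smooth tameness of $a$.} Write $a_\varphi\xi=\xi-\iota_{\bar\xi}\varphi$. Complex conjugation $\xi\mapsto\bar\xi$ is a real-linear bundle isomorphism $\bar L\to L$ (with $\bar L\cong L^*$), hence a tame linear map on sections. The contraction $(\varphi,\eta)\mapsto\iota_\eta\varphi$, for $\eta\in\Gamma(M,L)$ and $\varphi\in\Gamma(M,\Lambda^2L^*)$, is induced by a smooth bilinear fiberwise bundle map $\Lambda^2L^*\oplus L\to L^*$. By Hamilton's result on nonlinear differential operators of order $0$ (the map $\Gamma(M,V)\to\Gamma(M,W)$ induced by a smooth fiber bundle map is smooth tame), the map $(\varphi,\xi)\mapsto\iota_{\bar\xi}\varphi$ is smooth tame. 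Sums and compositions of smooth tame maps are smooth tame, so $a$ is smooth tame on $\mathcal{W}^{(0)}_\varepsilon\times\Gamma(M,L^*)$, an open subset of a tame Fréchet space.

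\emph{Invertibility.} Pointwise, $a_\varphi=\id-C_\varphi$, where $C_\varphi(\xi)=\iota_{\bar\xi}\varphi$ is real-linear with operator norm bounded by $c\,|\varphi|$, with $c$ depending only on the fixed Hermitian metrics. Hence if $\Vert\varphi\Vert_{0,\infty}<\varepsilon\le 1/(2c)$, then $a_\varphi(p)$ is invertible at every $p\in M$, with inverse given by the Neumann series $\sum_k C_\varphi^k$. So $a_\varphi^{-1}$ is again pointwise, and is obtained as follows. Consider the open subbundle $\mathcal{O}\subset\Lambda^2L^*$ of elements of norm $<\varepsilon$, and the smooth fiber bundle map $\mathcal{O}\oplus L^*\to L^*$ given by $(\varphi_p,\eta)\mapsto(\id-C_{\varphi_p})^{-1}\eta$. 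This map is smooth because inversion is smooth on the invertible real-linear endomorphisms. Applying Hamilton's theorem on order-$0$ operators to sections of $\mathcal{O}\oplus L^*$ shows that $(\varphi,\eta)\mapsto a_\varphi^{-1}\eta$ is smooth tame. It inverts $a_\varphi$ on sections because it does so pointwise. Note that $\mathcal{W}^{(0)}_\varepsilon$ is exactly the set of sections taking values in $\mathcal{O}$.

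\emph{Main obstacle.} The only delicate point is the tameness of the inverse family. Differentiating the inverse naively could lose control of constants, but this is avoided by the pointwise formulation above. Alternatively, one can invoke Hamilton's theorem that a smooth tame family of invertible linear maps whose inverses form a smooth tame family has this property, by checking the tame estimate $\Vert a_\varphi^{-1}\eta\Vert_n\le C(\Vert\eta\Vert_n+\Vert\varphi\Vert_n\Vert\eta\Vert_0)$ directly from the Leibniz rule applied to the Neumann series. Care is needed to state that $\varepsilon$ is uniform in $p$; this holds since $M$ is compact.
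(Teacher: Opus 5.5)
Your proof is correct and takes essentially the same route as the paper. Both arguments treat $a$ and the family of inverses as order-zero nonlinear vector bundle operators, which are smooth tame by Hamilton's theorem, and both get fiberwise invertibility from the Neumann series $\sum_k \tilde{\varphi}^{\,k}$ when $\Vert\varphi\Vert_{0,\infty}$ is small; your explicit smooth fiber map on the open subbundle $\mathcal{O}\oplus L^*$ just spells out what the paper states briefly, so the alternative via direct tame estimates is not needed.
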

\begin{proof}
    Note that $a$ is a non-linear vector bundle operator and therefore smooth tame by~\cite[Thm II.2.2.6]{hamilton82} (see also~\cite[Prop 3.16]{paperoncomplexdefs}). Moreover, observe that $a_{\varphi}$ is invertible for $\Vert \varphi\Vert_{0,\infty}$ small enough. We define $\tilde{\varphi}\colon \bar{L}\rightarrow \bar{L}$ via $\tilde{\varphi}(\xi):=\varphi(\bar{\xi})$. Then
    \begin{align*}
        a_{\varphi}^{-1}=\sum_{k=0}^{\infty}\tilde{\varphi}^{\,k},
    \end{align*}
        where we used that the sum converges fiberwise for small enough $\Vert\varphi\Vert_{0,\infty}$.
    This shows that, when it is defined, the family $a_{\varphi}^{-1}\xi$ is a non-linear vector bundle operator and hence smooth tame.
\end{proof}
Using Proposition~\ref{prop: taylor E}, we will compute $D_{\xi}\mathcal{E}(\xi,\varphi)$ for $(\xi,\varphi)\in \mathcal{U}_{\varepsilon}^{(1)}\times \mathcal{W}_{\varepsilon}^{(0)}$.
\begin{proposition}
\label{prop: DE}
    Let $\varphi\in \mathcal{W}^{(0)}_{\varepsilon}$ be a generalized almost complex deformation. Then
    \begin{align*}
        D_{\xi}\mathcal{E}(\xi,\varphi)\eta=\dd_L(b_{(\xi,\varphi)}\eta)+[\mathcal{E}(\xi,\varphi),b_{(\xi,\varphi)}\eta]-\iota_{\overline{c_{\xi}\eta}}\left(\dd_L\mathcal{E}(\xi,\varphi)+\tfrac{1}{2}[\mathcal{E}(\xi,\varphi),\mathcal{E}(\xi,\varphi)]\right),
    \end{align*}
    where $b_{(\xi,\varphi)}\eta=a_{\mathcal{E}(\xi,\varphi)}c_{\xi}\eta$, with $a$ as defined in Proposition~\ref{prop: taylor E} and 
    \begin{align*}
    c_{\xi}\eta=DC_{\xi}(0)\eta,\quad C_{\xi}\colon \mathcal{U}\ni\tilde{\eta}\mapsto \Psi^{-1}\Phi\left(F_{\xi+\tilde{\eta}} F_{\xi}^{-1}\right)\in \Psi^{-1}\autex(E),
    \end{align*}
    for a sufficiently small $C^1$-neighborhood $\mathcal{U}\subset \Psi^{-1}\autex(E)\subset\Gamma(M,L^*)$ of zero.
\end{proposition}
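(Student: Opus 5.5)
The plan is to reduce the computation of $D_\xi\mathcal{E}(\xi,\varphi)\eta$ to the first-order expansion of Proposition~\ref{prop: taylor E} at the base point $\varphi' := \mathcal{E}(\xi,\varphi)$, using the group structure of $\Autex(E)$.

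First, for small $\tilde\eta\in\mathcal{U}$, factor
\[
F_{\xi+\tilde\eta}=\left(F_{\xi+\tilde\eta}F_\xi^{-1}\right)F_\xi .
\]
By definition of the chart $\Phi$ and of $C_\xi$, we have $F_{\xi+\tilde\eta}F_\xi^{-1}=F_{C_\xi(\tilde\eta)}$, and $C_\xi(0)=0$. The lemma $\Theta(F_1\circ F_2,\varphi)=F_1\cdot\Theta(F_2,\varphi)$ then gives
\[
\mathcal{E}(\xi+\tilde\eta,\varphi)=F_{C_\xi(\tilde\eta)}\cdot\bigl(F_\xi\cdot\varphi\bigr)=\mathcal{E}\bigl(C_\xi(\tilde\eta),\mathcal{E}(\xi,\varphi)\bigr).
\]
For this to make sense, $\varepsilon$ must be small enough that $\mathcal{E}(\xi,\varphi)$ lies in the domain $\mathcal{W}^{(0)}_{\varepsilon'}$ on which Proposition~\ref{prop: taylor E} applies, and that $C_\xi(\tilde\eta)$ lies in $\mathcal{U}^{(1)}_{\varepsilon'}$ for $\tilde\eta$ small. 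Both follow from continuity of $\Theta$, the group operations and $\Phi$ in the $C^1$ topology, after possibly shrinking neighborhoods. Since $\mathcal{E}(\xi,\varphi)$ is the deformation describing $F_\xi\cdot\mathcal{J}_\varphi$, it is again a generalized almost complex deformation.

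Second, differentiate in $\tilde\eta=t\eta$ at $t=0$ using the chain rule. Since $C_\xi$ is smooth tame (a composition of smooth tame group operations with the chart), $C_\xi(t\eta)=t\,c_\xi\eta+O(t^2)$. Now insert this into the expansion of Proposition~\ref{prop: taylor E}, with $\varphi$ replaced by $\varphi'$ and $\xi$ replaced by $C_\xi(t\eta)$:
\[
\mathcal{E}(C_\xi(t\eta),\varphi')=\varphi'+\dd_L(a_{\varphi'}C_\xi(t\eta))+[\varphi',a_{\varphi'}C_\xi(t\eta)]-\iota_{\overline{C_\xi(t\eta)}}\bigl(\dd_L\varphi'+\tfrac12[\varphi',\varphi']\bigr)+R(\varphi',C_\xi(t\eta)).
\]
All terms except the remainder are linear in the second slot. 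The linear maps involved are $a_{\varphi'}$, $\dd_L$, the bracket with $\varphi'$, and contraction; each is continuous, and $a$ is smooth tame by Lemma~\ref{lem: properties a_epsilon}. Their $t$-derivative at $0$ is therefore obtained by replacing $C_\xi(t\eta)$ with $c_\xi\eta$, which yields exactly the stated formula with $b_{(\xi,\varphi)}\eta=a_{\varphi'}c_\xi\eta$.

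The main obstacle is showing that the remainder contributes nothing to first order. Proposition~\ref{prop: taylor E} only gives $R(\varphi',s\zeta)=s^2\tilde R(\varphi',\zeta,s)$ along rays, whereas here $C_\xi(t\eta)$ is a curve, not a ray. To handle this, I would use that $R$ is a smooth (tame) map. It is the difference of $\mathcal{E}$ and explicit smooth terms, so it is smooth in $(\varphi,\zeta)$. The ray statement gives $R(\varphi',0)=0$ and $D_\zeta R(\varphi',0)=0$. The chain rule then gives
\[
\tfrac{\dd}{\dd t}\big|_{t=0}R(\varphi',C_\xi(t\eta))=D_\zeta R(\varphi',0)\,c_\xi\eta=0.
\]
This completes the derivative computation.
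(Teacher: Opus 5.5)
Your proposal is correct and follows the paper's proof: factor $F_{\xi+\tilde\eta}=F_{C_\xi(\tilde\eta)}F_\xi$, rewrite $\mathcal{E}(\xi+t\eta,\varphi)=\mathcal{E}(C_\xi(t\eta),\mathcal{E}(\xi,\varphi))$, and differentiate the expansion of Proposition~\ref{prop: taylor E} at $t=0$. Your chain-rule argument, which shows that the remainder $R$ has vanishing first derivative along the curve $C_\xi(t\eta)$ and not only along rays, makes explicit a step the paper takes silently.
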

\begin{proof}
    Let $\xi\in \mathcal{U}_{\varepsilon}^{(1)}$, $\varphi\in\mathcal{W}_{\varepsilon}^{(0)}$. Let $\mathcal{U}\subset \mathcal{U}_{\varepsilon}^{(1)}$ an open $C^1$-neighborhood of $0\in\Gamma(M,L^*)$ such that $\xi+\tilde{\eta}\in\mathcal{U}_{\varepsilon}^{(1)}$ for every $\tilde{\eta}\in\mathcal{U}$ and such that $F_{\xi+\tilde{\eta}}\circ F_{\xi}^{-1}\in \Phi^{-1}\circ\Psi(\mathcal{U}_{\varepsilon}^{(1)})$ for every $\tilde{\eta}\in\mathcal{U}$. Then we define
    \begin{align*}
        C_{\xi}\colon \mathcal{U}&\longrightarrow \mathcal{U}_{\varepsilon}^{(1)}\\
        \tilde{\eta}&\longmapsto \Psi^{-1}\circ\Phi(F_{\xi+\tilde{\eta}}\circ F_{\xi}^{-1}).
    \end{align*}
    Then
    \begin{align*}
        \mathcal{E}(\xi+\tilde{\eta},\varphi)&=F_{\xi+\tilde{\eta}}\cdot\varphi\\
        &=(F_{C_{\xi}(\tilde{\eta})}\circ F_{\xi})\cdot \varphi\\
        &=F_{C_{\xi}(\tilde{\eta})}\cdot \mathcal{E}(\xi,\varphi).
    \end{align*}
    By Proposition~\ref{prop: taylor E} and since $C_{\xi}(0)=0$, we find
    \begin{align*}
        D_{\xi}\mathcal{E}(\xi,\varphi)&=\lim_{t\to0} \frac{1}{t}\left\{ \mathcal{E}(\xi+t\eta,\varphi)-\mathcal{E}(\xi,\varphi)\right\}\\
        &=\lim_{t\to 0} \frac{1}{t}\cdot[\mathcal{E}\left(C_{\xi}(t\eta),\mathcal{E}(\xi,\varphi)\right)-\mathcal{E}(\xi,\varphi)]
        \\
        &=\lim_{t\to 0}\frac{1}{t}\left\{\dd_L\left(a_{\mathcal{E}(\xi,\varphi)}C_{\xi}(t\eta)\right)+[\mathcal{E}(\xi,\varphi),a_{\mathcal{E}(\xi,\varphi)}C_{\xi}(t\eta)]\right.\\
        &\qquad\quad \left.-\iota_{\overline{C_{\xi}(t\eta)}}\left(\dd_L\mathcal{E}(\xi,\varphi)+\tfrac{1}{2}[\mathcal{E}(\xi,\varphi),\mathcal{E}(\xi,\varphi)]\right)\right\}\\
        &=\dd_L (a_{\mathcal{E}(\xi,\varphi)}c_{\xi}\eta)+[\mathcal{E}(\xi,\varphi),a_{\mathcal{E}(\xi,\varphi)}c_{\xi}\eta]-\iota_{\overline{c_{\xi}\eta}}\left(\dd_L\mathcal{E}(\xi,\varphi)+\tfrac{1}{2}[\mathcal{E}(\xi,\varphi),\mathcal{E}(\xi,\varphi)]\right),
    \end{align*}
    where $c_{\xi}\eta=DC_{\xi}(0)\eta$. Setting $b_{(\xi,\varphi)}\eta:=a_{\mathcal{E}(\xi,\varphi)}c_{\xi}\eta$ we arrive at the expression stated above.
\end{proof}

\begin{lemma}
\label{lem: properties b and c}
    For some $\varepsilon>0$, let $\mathcal{W}^{(0)}_{\varepsilon}$, $\mathcal{U}^{(1)}_{\varepsilon}$ as above. We assume that $\varepsilon>0$ is small enough so that $c_{\xi}$ and $b_{(\xi,\varphi)}$ as in Proposition~\ref{prop: DE} are defined for $\xi\in\mathcal{U}^{(1)}_{\varepsilon}$ and $\varphi\in\mathcal{W}^{(0)}_{\varepsilon}$. Then the families of linear maps
    \begin{align*}
        c\colon \mathcal{U}^{(1)}_{\varepsilon}\times \Psi^{-1}\autex(E) &\longrightarrow \Psi^{-1}\autex(E)\\
        (\xi,\eta)&\longmapsto c_{\xi}\eta
    \end{align*}
    and 
    \begin{align*}
        b\colon  \mathcal{U}^{(1)}_{\varepsilon}\times\mathcal{W}^{(0)}_{\varepsilon}\times\Psi^{-1}\autex(E) &\longrightarrow \Gamma(M,L^*)\\
        (\xi,\varphi,\eta)&\longmapsto b_{(\xi,\varphi)}\eta
    \end{align*}
    are smooth tame. Moreover, for $\varepsilon>0$ small enough, $c_{\xi}$ is invertible for every $\xi\in \mathcal{U}^{(1)}_{\varepsilon}$. The family of inverses $c_{\xi}^{-1}$ is smooth tame. 
\end{lemma}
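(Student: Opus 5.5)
The plan is to derive all three claims from the smooth tame structure of $\Autex(E)$ and of its chart $\Phi$, together with Lemma~\ref{lem: properties a_epsilon}. The proof proceeds in three steps: smooth tameness of $c$, smooth tameness of $b$, and invertibility of $c_\xi$ with smooth tame inverses.

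\textbf{Smooth tameness of $c$.} Consider the map
\begin{align*}
\tilde C\colon (\xi,\tilde\eta)\longmapsto \Psi^{-1}\Phi\left(F_{\xi+\tilde\eta}\circ F_{\xi}^{-1}\right).
\end{align*}
It is a composition of the following smooth tame maps:
\begin{enumerate}
\item the tame linear homothety $\Psi$;
\item the inverse chart $\Phi^{-1}$;
\item the group multiplication and inversion of the tame Lie group $\Autex(E)$, which is a tame Lie subgroup by the preceding theorems;
\item the chart $\Phi$;
\item $\Psi^{-1}$.
\end{enumerate}
Hence $\tilde C$ is smooth tame on an open neighborhood of $\mathcal{U}^{(1)}_\varepsilon\times\{0\}$. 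By Hamilton's result that the derivative of a smooth tame map is smooth tame, the map $(\xi,\eta)\mapsto D_{\tilde\eta}\tilde C(\xi,0)\eta = c_\xi\eta$ is smooth tame. It is linear in $\eta$ and takes values in $\Psi^{-1}\autex(E)$, because $\tilde C$ does.

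\textbf{Smooth tameness of $b$.} By definition $b_{(\xi,\varphi)}\eta = a(\mathcal{E}(\xi,\varphi), c_\xi\eta)$. The map $\mathcal{E}$ is smooth tame, being $\Theta$ composed with the chart. The map $a$ is smooth tame by Lemma~\ref{lem: properties a_epsilon}, which applies provided $\mathcal{E}(\xi,\varphi)$ lies in $\mathcal{W}^{(0)}_{\varepsilon'}$ for a suitable $\varepsilon'$. This is ensured by shrinking $\varepsilon$, using continuity of $\mathcal{E}$ in the $C^1\times C^0$ topology together with $\mathcal{E}(0,0)=0$. Therefore $b$ is a composition of smooth tame maps.

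\textbf{Invertibility of $c_\xi$.} This is the substantial step, since $c_\xi$ is a family of linear maps on a Fréchet space, where invertibility does not follow from perturbation alone. The approach is to write $c_\xi$ as a derivative of right translation. Write
\begin{align*}
F_{\xi+\tilde\eta}F_\xi^{-1}=\rho_{\xi}(F_{\xi+\tilde\eta}),\qquad \rho_\xi(G):=G\circ F_\xi^{-1}.
\end{align*}
Here $\rho_\xi$ is a smooth tame diffeomorphism of $\Autex(E)$ with smooth tame inverse $G\mapsto G\circ F_\xi$, and both depend smoothly tamely on $\xi$. In the chart $\Phi$, the map $\tilde\eta\mapsto \Psi^{-1}\Phi(\rho_\xi(\Phi^{-1}\Psi(\xi+\tilde\eta)))$ has a local inverse
\begin{align*}
\zeta\longmapsto \Psi^{-1}\Phi\bigl(\Phi^{-1}\Psi(\zeta)\circ F_\xi\bigr)-\xi ,
\end{align*}
which is smooth tame jointly in $(\xi,\zeta)$. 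Differentiating the identity "inverse $\circ$ map $=\mathrm{id}$" at $\tilde\eta=0$ shows that
\begin{align*}
c_\xi^{-1}\zeta = D_\zeta\left[\Psi^{-1}\Phi\bigl(\Phi^{-1}\Psi(\zeta)\circ F_\xi\bigr)\right]_{\zeta=0}.
\end{align*}
This is a two-sided inverse of $c_\xi$, and it is smooth tame in $(\xi,\zeta)$, again as the derivative of a smooth tame map.

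The main obstacle is bookkeeping the domains. One must choose $\varepsilon$ so that, for all $\xi\in\mathcal{U}^{(1)}_\varepsilon$, both products $F_{\xi+\tilde\eta}F_\xi^{-1}$ and $\Phi^{-1}\Psi(\zeta)F_\xi$ stay in the chart domain $\mathcal{V}$ for small $\tilde\eta$ and $\zeta$. This follows from continuity of the multiplication at $(\id,\id)$ in the $C^1$ topology. Finally, note that $c_0=\id$ because $D\Phi(\id)=\id$, which serves as a consistency check.
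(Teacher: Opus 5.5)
Your proposal is correct and follows the paper's proof. The paper likewise gets smooth tameness of $c$ and $b$ as derivatives and compositions of smooth tame maps, and gets invertibility of $c_\xi$ from the explicit inverse $C_\xi^{-1}(\chi)=(\Phi^{-1}\circ\Psi)^{-1}(F_\chi F_\xi)-\xi$ obtained by right translation with $F_\xi$, which is exactly your formula. One minor correction to your closing remark: $c_0=\id$ holds tautologically because $C_0(\tilde\eta)=\Psi^{-1}\Phi(F_{\tilde\eta})=\tilde\eta$; it does not depend on $D\Phi(\id)=\id$.
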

\begin{proof}
    Recall that
    \begin{align*}
    c_{\xi}\eta=DC_{\xi}(0)\eta,\quad C_{\xi}\colon \mathcal{U}\ni\tilde{\eta}\mapsto \left(\Phi^{-1}\circ \Psi\right)^{-1}\left(F_{\xi+\tilde{\eta}} F_{\xi}^{-1}\right)\in \Psi^{-1}\autex(E),
    \end{align*}
   where $\mathcal{U}\subset \mathcal{U}_{\varepsilon}^{(1)}$ is an open neighborhood of $0$ such that $\xi+\tilde{\eta}\in\mathcal{U}_{\varepsilon}^{(1)}$ for every $\tilde{\eta}\in\mathcal{U}$ and $F_{\xi+\tilde{\eta}}\circ F_{\xi}^{-1}\in \Phi^{-1}(\mathcal{U}_{\varepsilon}^{(1)})$. Note that $C_{\xi}$ can be extended to a $C^0$-neighborhood $\tilde{\mathcal{U}}$ of $0$ such that $\xi+\tilde{\eta}\in\Psi^{-1}\circ\Phi(\mathcal{V}_{\id})$, and $F_{\xi+\tilde{\eta}}F_{\xi}^{-1}\in\mathcal{V}_{\id}$ for every $\tilde{\eta}\in\tilde{\mathcal{U}}$, where $\mathcal{V}_{\id}$ denotes the local chart neighborhood of $\id\in\Autex(E)$. We may furthermore assume, possibly after shrinking $\varepsilon>0$, that $\tilde{\mathcal{U}}$ can be chosen independently of $\xi$, such that we obtain a well-defined map
   \begin{align*}
       C\colon \mathcal{U}_{\varepsilon}^{(1)}\times \tilde{\mathcal{U}}&\longrightarrow\Psi^{-1}\autex(E)\\
       (\xi,\tilde{\eta})&\longmapsto C_{\xi}(\tilde{\eta}).
   \end{align*}
   Then $C_{\xi}(\tilde{\eta})$ is smooth tame in $\xi$ and $\tilde{\eta}$ as a composition of smooth tame maps and invertible for every small enough $\xi$. To compute the inverse, we note that
   \begin{align*}
       C_{\xi}(\tilde{\eta})=(\Phi^{-1}\circ \Psi)^{-1}\left(R_{F_{\xi}^{-1}}\circ\Phi^{-1}\circ \Psi(\xi+\tilde{\eta})\right).
   \end{align*}
   Since $R_{F_{\xi}^{-1}}^{-1}=R_{F_{\xi}}$, we find that
    \begin{align*}
        C_{\xi}^{-1}(\chi)&=(\Phi^{-1}\circ\Psi)^{-1}\circ R_{F_{\xi}}\circ \Phi^{-1}\circ \Psi(\chi)-\xi
        \\&=(\Phi^{-1}\circ \Psi)^{-1}\left(F_{\chi}F_{\xi}\right)-\xi,
    \end{align*}
    which is smooth tame. It follows that the family $c$ of linear maps is smooth tame and $c_{\xi}$ is invertible for small enough $\xi$ and the family of inverses $c_{\xi}^{-1}$ is again smooth tame. 

    For the statements about $b$, we observe that by Proposition~\ref{prop:Theta smooth tame} and since $c$ is smooth tame, $b$ is a composition of smooth tame maps and hence a smooth tame family of linear maps.
\end{proof}

\section{The deformation theorem}
\label{defothm:sec}

We now state the main theorem of this work.
\begin{theorem}
\label{thm: deformation theorem}
Let $L$ be a generalized complex structure on a $\mathcal{G}$-flat transitive Courant algebroid $E\rightarrow M$. There exists an open neighborhood $\mathcal{W}\subset H^2(M,L)$ of $0$, a family $\tilde{\mathcal{M}}=\{\varphi_t:t\in \mathcal{W}\}$ of generalized almost complex
deformations of $L$, and an analytic obstruction map 
\begin{equation*}
\phi\colon\mathcal{W}\rightarrow H^3(M,L)  \end{equation*}
such that the generalized almost complex deformations $\mathcal{M}:=\{\varphi_t\in\tilde{\mathcal{M}}:\phi(t)=0\}$ are precisely the integrable ones. Any sufficiently small generalized complex deformation of $L$ is equivalent to at least one member
of the family $\mathcal{M}$. In the case that the obstruction map vanishes, $\mathcal{M}$ is a locally complete family of generalized complex deformations.    
\end{theorem}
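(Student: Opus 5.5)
We follow Kuranishi's scheme in the Nash--Moser form of~\cite{paperoncomplexdefs}, with $(\Gamma(M,\Lambda^\bullet L^*),\dd_L)$ replacing the Dolbeault complex and the tame group $\Autex(E)$ of Section~\ref{sect: all autoeq} replacing $\Diff(M)$.

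\textbf{Construction of $\tilde{\mathcal M}$.} Consider the smooth tame map $\varphi\mapsto \varphi+\tfrac12\Q_L[\varphi,\varphi]$ on a neighborhood of $0$ in $\Gamma(M,\Lambda^2L^*)$. Its derivative at $\varphi$ is $\id+\Q_L[\varphi,\cdot]$. For $\varphi$ in a $C^1$-small neighborhood this derivative is invertible with a smooth tame family of inverses, since $\Q_L$ gains one derivative and $[\varphi,\cdot]$ loses one, so the operator is a small perturbation of the identity. The Nash--Moser inverse function theorem then gives a local inverse. Define $\varphi_t$ as the preimage of the harmonic form $t\in\mathcal{H}^2_L(M)\cong H^2(M,L)$. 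Then $\varphi_t$ solves $\varphi_t+\tfrac12\Q_L[\varphi_t,\varphi_t]=t$, depends smoothly (in fact analytically, by the usual elliptic-regularity argument) on $t$, and satisfies $\dd_L^*\varphi_t=0$. We set $\phi(t):=\HprojL[\varphi_t,\varphi_t]\in\mathcal H^3_L\cong H^3(M,L)$.

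\textbf{Integrability criterion.} Put $\psi:=\dd_L\varphi_t+\tfrac12[\varphi_t,\varphi_t]$. The identity $\HprojL+\dd_L\Q_L+\Q_L\dd_L=\id$, together with $\dd_L\varphi_t=-\tfrac12\dd_L\Q_L[\varphi_t,\varphi_t]$, gives $\psi=\tfrac12\HprojL[\varphi_t,\varphi_t]+\tfrac12\Q_L\dd_L[\varphi_t,\varphi_t]$. Lemma~\ref{lem: bracket and dL} (the graded Jacobi identity and the derivation property of $\dd_L$) yields $\dd_L[\varphi_t,\varphi_t]=2[\psi,\varphi_t]$. Hence, if $\phi(t)=0$, then $\psi=\Q_L[\psi,\varphi_t]$. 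The standard estimate $\|\Q_L[\psi,\varphi]\|_{k,\alpha}\le C\|\varphi\|_{k,\alpha}\|\psi\|_{k,\alpha}$ in Hölder norms then forces $\psi=0$ for small $t$. Conversely, if $\psi=0$, then $\phi(t)=2\HprojL\psi-2\HprojL\dd_L\varphi_t=0$.

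\textbf{Completeness.} This is the main obstacle. Let $\varphi$ be a small Maurer--Cartan solution. We seek $\xi\in\Psi^{-1}\autex(E)$ with $\dd_L^*(F_\xi\cdot\varphi)=0$. Granting this, $\varphi':=F_\xi\cdot\varphi$ satisfies $\varphi'+\tfrac12\Q_L[\varphi',\varphi']=\HprojL\varphi'+\dd_L^*\G_L(\dd_L\varphi'+\tfrac12[\varphi',\varphi'])=\HprojL\varphi'$, so $\varphi'=\varphi_t$ with $t=\HprojL\varphi'$ by injectivity of the inverse map.

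To find $\xi$, apply Hamilton's implicit function theorem to
\[
\mathcal F(\xi,\varphi):=\dd_L^*\mathcal E(\xi,\varphi)\in\dd_L^*\Gamma(M,\Lambda^2L^*),
\]
with $\xi$ restricted to the tame complement $\dd_L^*\Gamma(M,\Lambda^2L^*)\subset\Psi^{-1}\autex(E)$ furnished by Lemma~\ref{lem: autex contains coexact}. On integrable $\varphi$, Proposition~\ref{prop: DE} gives
\[
D_\xi\mathcal F\,\eta=\dd_L^*\dd_L(b\eta)+\dd_L^*[\mathcal E,b\eta]
\]
(the curvature term drops because $F_\xi\cdot\varphi$ is again integrable). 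At $(0,0)$ this reduces to $\dd_L^*\dd_L=\Delta_L$ on $\dd_L^*$-exact $1$-forms, which is invertible with inverse $\G_L$. For nearby $(\xi,\varphi)$ we write the derivative as a perturbation, using the invertibility of $a_\varphi$ and $c_\xi$ from Lemmas~\ref{lem: properties a_epsilon} and~\ref{lem: properties b and c}. The derivative loses two derivatives and $\G_L$ gains two, so we obtain a smooth tame family of right inverses, after restricting $b\eta$ appropriately through $a^{-1}$ and $c^{-1}$.

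The delicate point is that the inverse must exist on all of the domain, not only where the curvature term vanishes. We handle it as in~\cite{paperoncomplexdefs}: we solve for the right inverse on an open neighborhood of $(0,0)$, treating the Maurer--Cartan term as a controlled error. Hamilton's theorem then yields a smooth tame $\xi(\varphi)$ solving $\mathcal F(\xi(\varphi),\varphi)=0$.

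Applied to a smooth family, this produces $\tau(s')=\HprojL(F_{\xi(s')}\cdot\varphi'(s'))$, which is exactly obtainability from $\mathcal M$. When $\phi\equiv0$, $\mathcal M=\tilde{\mathcal M}$ is smooth and therefore locally complete.
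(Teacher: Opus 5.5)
Your construction of $\tilde{\mathcal M}$ via $\varphi\mapsto\varphi+\tfrac12\Q_L[\varphi,\varphi]$ matches the paper. So do the obstruction $\HprojL[\varphi_t,\varphi_t]$ with the contraction $\psi=\Q_L[\psi,\varphi_t]$, and the reduction of completeness to solving $\dd_L^*(F_\xi\cdot\varphi)=0$ by Hamilton's implicit function theorem with the Maurer--Cartan term kept as an error. The gap is in the step you yourself single out: building a smooth tame family of right inverses of $D_\xi$ on a whole neighborhood of $(0,0)$. As you set it up, this fails because you restrict $\xi$ to $\dd_L^*\Gamma(M,\Lambda^2L^*)$.

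By Proposition~\ref{prop: DE}, a direction $\eta$ enters $D_\xi\mathcal E(\xi,\varphi)\eta$ only through $c_\xi\eta$. This is the chart expression of the linearized right translation by $F_\xi^{-1}$ in $\Autex(E)$. It involves composition with the diffeomorphism covered by $F_\xi$, so it is not a differential operator, and $c_\xi-\id$ is small only at the cost of one derivative. It therefore cannot be absorbed as a perturbation of $\Delta_L$ (or of $\Q_L\dd_L=\id$ on coexact forms). Counting ``loses two, $\G_L$ gains two'' gives no invertibility away from $(0,0)$, and Proposition~\ref{prop: inverse family from elliptic} does not apply.

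The paper removes $c_\xi$ by the substitution $\zeta=c_\xi\eta$. It solves for a coexact $\zeta$, where $\Q_L\dd_L a_{\mathcal E(\xi,\varphi)}$ is invertible (Lemma~\ref{lem: aux lemma for invertibility of DA}). All remaining terms, including $\Q_L\iota_{\bar\zeta}\bigl(\dd_L\mathcal E+\tfrac12[\mathcal E,\mathcal E]\bigr)$, are genuine differential-operator perturbations. It then sets $\eta=c_\xi^{-1}\zeta$. But $c_\xi^{-1}$ comes from the group structure: it preserves $\Psi^{-1}\autex(E)$, not the coexact subspace. With your domain, the right inverse obtained ``through $a^{-1}$ and $c^{-1}$'' leaves the space of admissible $\eta$.

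This is why the paper lets $\xi$ range over all of $\Psi^{-1}\autex(E)$. It uses Lemma~\ref{lem: autex contains coexact} for a different purpose than you do: to guarantee that every coexact $\zeta$ lies in $\Psi^{-1}\autex(E)$, so that $c_\xi^{-1}\zeta$ is defined. The price is that $D_\xi$ is then only surjective; at $(0,0)$ it kills $\Psi^{-1}\autex(E)\cap\mathcal H^1_L$. One gets right inverses rather than inverses, which suffices for existence and smooth tame dependence of $\xi(\varphi)$.

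Your restricted domain could be rescued by an extra finite-dimensional argument. You would correct $c_\xi^{-1}\zeta$ by an element of $\ker D_\xi$, using that $\dd_L^*\Gamma(M,\Lambda^2L^*)$ has finite codimension in $\Psi^{-1}\autex(E)$. That argument is missing, and without it or the paper's enlarged domain the hypothesis of Hamilton's theorem is not established.
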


    We will call the family $\mathcal{M}$ in Theorem~\ref{thm: deformation theorem} the \emph{Kuranishi family} and the set $\phi^{-1}(0)\subset\mathcal{W}$ the \emph{Kuranishi moduli space}.

The proof of Theorem~\ref{thm: deformation theorem} is split into two parts. First, we will construct the family and the obstruction map in Section~\ref{descr_fam:sec}. Then we will prove that the family is locally complete in Section~\ref{sec: local cplt gc}. In Section~\ref{descr_fam:sec}, we will closely follow Gualtieri's proof in~\cite{gualtieri2011}. Since Gualtieri followed Kuranishi's proof in~\cite{kuranishi64}, there are similarities with this work, too, as well as with~\cite{paperoncomplexdefs}, which set the foundation for the proofs in Section~\ref{sec: local cplt gc}.

\subsection{Description of the family}
\label{descr_fam:sec}
Let
\begin{align*}
    \tilde{\mathcal{V}}_1:=\left\{\varphi\in\Gamma(M,\Lambda^2L^*):\dd_L\varphi+\tfrac{1}{2}[\varphi,\varphi]=0=\dd_L^*\varphi\right\}\subset \Gamma(M,\Lambda^2L^*).
\end{align*}
We will see that as a set, the generalized complex deformations $\mathcal{M}$ in the family $\tilde{\mathcal{M}}$ are contained in $\tilde{\mathcal{V}}_1$. Our strategy will be the same as in~\cite{gualtieri2011}: First we show that $\tilde{\mathcal{V}}_1$ is contained in another set $\tilde{\mathcal{V}}_2$ and that for small enough $\varepsilon>0$ and large enough $k\in \N_0$, the intersection $\mathcal{W}_{\varepsilon}^{(k)}\cap\tilde{\mathcal{V}}_2$ is parametrized by a neighborhood of 0 in $\mathcal{H}_L^2(M)$. Then we will find an analytic map $$\phi\colon\mathcal{W}_{\varepsilon}^{(k)}\cap\tilde{\mathcal{V}}_2\longrightarrow \mathcal{H}_L^{3}(M)$$ such that $\phi(\varphi)=0$ if and only if $\varphi$ is integrable and hence contained in $\mathcal{M}$.

In the following lemma, we define the set $\tilde{\mathcal{V}}_2$ and prove that it contains the set $\tilde{\mathcal{V}}_1$ as defined above.
\begin{lemma}[\cite{Gualtieri:2003dx,gualtieri2011}]
    $\tilde{\mathcal{V}}_1\subset\tilde{\mathcal{V}}_2:=\left\{\varphi\in\Gamma(M,\Lambda^2 L^*):\varphi+\tfrac{1}{2}\Q_L[\varphi,\varphi]\in\mathcal{H}^2_L(M)\right\}$.
\end{lemma}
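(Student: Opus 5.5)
The inclusion will follow directly from the Hodge-theoretic identities for $\HprojL$, $\G_L$ and $\Q_L$ stated at the end of Section~\ref{sect: prelim on gc}. Let $\varphi\in\tilde{\mathcal{V}}_1$, so that $\dd_L\varphi=-\tfrac12[\varphi,\varphi]$ and $\dd_L^*\varphi=0$. The aim is to show that $\psi:=\varphi+\tfrac12\Q_L[\varphi,\varphi]$ is $\Delta_L$-harmonic. Equivalently, I will show that $\psi=\HprojL\varphi$, which lies in $\mathcal{H}^2_L(M)$ by definition.

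The first step applies the identity $\HprojL+\dd_L\Q_L+\Q_L\dd_L=\id$ to $\varphi$:
\begin{align*}
\varphi=\HprojL\varphi+\dd_L\Q_L\varphi+\Q_L\dd_L\varphi.
\end{align*}
The middle term vanishes. Indeed, $\Q_L\varphi=\dd_L^*\G_L\varphi=\G_L\dd_L^*\varphi=0$, using the commutation $\G_L\dd_L^*=\dd_L^*\G_L$ and the gauge condition $\dd_L^*\varphi=0$. For the last term, the Maurer--Cartan equation gives $\Q_L\dd_L\varphi=-\tfrac12\Q_L[\varphi,\varphi]$.

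Combining these gives $\varphi=\HprojL\varphi-\tfrac12\Q_L[\varphi,\varphi]$, that is, $\varphi+\tfrac12\Q_L[\varphi,\varphi]=\HprojL\varphi\in\mathcal{H}^2_L(M)$. This shows $\varphi\in\tilde{\mathcal{V}}_2$.

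There is no real obstacle here. The only point to check is that the Hodge identities apply. They require the Lie algebroid complex to be elliptic over a compact manifold, which holds for transitive $E$ by the ellipticity proposition above, together with the fixed choice of Hermitian metric on $L$ defining $\dd_L^*$. The bracket $[\varphi,\varphi]$ is a smooth section of $\Lambda^3L^*$, so $\Q_L$ may be applied to it.
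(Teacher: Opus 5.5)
Your proof is correct and is essentially the paper's argument. The paper computes $\Delta_L\varphi=-\tfrac12\dd_L^*[\varphi,\varphi]$ and then applies $\G_L$ via $\HprojL+\G_L\Delta_L=\id$. That is the same Hodge identity you use in the packaged form $\HprojL+\dd_L\Q_L+\Q_L\dd_L=\id$, together with $\Q_L\varphi=0$.
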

\begin{proof}
    This exact statement has already been proved in~\cite{Gualtieri:2003dx,gualtieri2011}, but for the sake of completeness we will repeat the argument here. Let $\varphi\in\tilde{\mathcal{V}}_1$. Then
    \begin{align*}
        \Delta_L\varphi=(\dd_L\dd_L^*+\dd_L^*\dd_L)\varphi=\dd_L^*\dd_L\varphi=-\tfrac{1}{2}\dd_L^*[\varphi,\varphi],
    \end{align*}
    where we used $\dd_L^*\varphi=0$ for the second equality and $\dd_L\varphi+\tfrac{1}{2}[\varphi,\varphi]=0$ for the last equality. Hence,
    \begin{align*}
        0=\G_L(\Delta_L\varphi+\tfrac{1}{2}\dd_L^*[\varphi,\varphi])=\varphi-\HprojL\varphi+\tfrac{1}{2}\Q_L[\varphi,\varphi],
    \end{align*}
    where we used that $\HprojL+\G_L\Delta_L=\id$. It follows that
    \begin{align*}
        \varphi+\tfrac{1}{2}\Q_L[\varphi,\varphi]=\HprojL\varphi\in\mathcal{H}_L^2(M)
    \end{align*}
    and thus $\varphi\in \tilde{\mathcal{V}}_2$.
\end{proof}

We consider the smooth tame map
\begin{align*}
    \sigma\colon \Gamma(M,\Lambda^2L^*)&\longrightarrow \Gamma(M,\Lambda^2L^*),\\
    \varphi &\longmapsto \varphi+\tfrac{1}{2}\Q_L[\varphi,\varphi].
\end{align*}
\begin{proposition}
\label{prop: sigma invertible with smooth tame inverse}
    For $\varepsilon>0$ small enough and large enough $k\in \N_0$, $\sigma$ maps $\mathcal{W}_{\varepsilon}^{(k)}$ bijectively onto some neighborhood $\tilde{\mathcal{W}}_{\varepsilon}\subset \Gamma(M,\Lambda^2 L^*)$ such that 
    \begin{align*}
        \sigma^{-1}\colon \tilde{\mathcal{W}}_{\varepsilon}\longrightarrow \mathcal{W}_{\varepsilon}^{(k)}
    \end{align*}
    is smooth tame.
\end{proposition}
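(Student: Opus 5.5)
The plan is to invert $\sigma$ with the Nash--Moser inverse function theorem, using the explicit form of its derivative. The strategy has three parts: compute $D\sigma$, show that it is invertible near zero with a tame family of inverses, and then arrange that $\sigma^{-1}$ really lands in $\mathcal{W}^{(k)}_\varepsilon$.

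\textbf{Step 1: tameness and the derivative.} First I note that $\sigma$ is smooth tame. This holds because $\Q_L=\dd_L^*\G_L$ is tame linear (Section~\ref{appendix HNM}), and the Schouten bracket $[\cdot,\cdot]$ is a bilinear first-order differential operator, hence smooth tame. Its derivative is
\[
D\sigma(\varphi)\psi=\psi+\Q_L[\varphi,\psi].
\]
I used the graded symmetry in Lemma~\ref{lem: bracket and dL}: for $p=q=2$ the sign $-(-1)^{(p-1)(q-1)}$ equals $+1$, so $[\psi,\varphi]=[\varphi,\psi]$.

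\textbf{Step 2: invertibility of $D\sigma(\varphi)$.} I would write $D\sigma(\varphi)=\id+K_\varphi$ with $K_\varphi:=\Q_L[\varphi,\cdot\,]$. The point is that $\Q_L$ is a pseudodifferential operator of order $-1$, so it gains one derivative. This yields the Schauder/Hölder estimate
\[
\|K_\varphi\psi\|_{k+\alpha}\le C_k\,\|\varphi\|_{k+\alpha}\,\|\psi\|_{k+\alpha},
\]
with $C_k$ independent of $\varphi,\psi$; this is the same estimate Kuranishi and Gualtieri use. For $\|\varphi\|_{k,\infty}<\varepsilon$ with $\varepsilon$ small, the Neumann series
\[
D\sigma(\varphi)^{-1}=\sum_{n\ge 0}(-K_\varphi)^n
\]
converges in the $C^{k,\alpha}$-topology. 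Since $K_\varphi$ preserves smoothness, the inverse maps smooth sections to smooth sections.

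\textbf{Step 3 (main obstacle): tame estimates for the inverse.} What remains is to show that $(\varphi,\psi)\mapsto D\sigma(\varphi)^{-1}\psi$ is a smooth tame family. My first attempt would be to prove tame estimates of the form
\[
\|D\sigma(\varphi)^{-1}\psi\|_{n}\le C_n\bigl(\|\psi\|_n+\|\varphi\|_{n}\|\psi\|_{k}\bigr).
\]
These should follow by induction on $n$ from the identity
\[
\chi=\psi-\Q_L[\varphi,\chi],\qquad \chi:=D\sigma(\varphi)^{-1}\psi,
\]
together with the fact that $\Q_L$ gains a derivative and that the bracket obeys interpolation (Moser) estimates. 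Smoothness of the family then follows because $D\sigma(\varphi)^{-1}$ depends smoothly on $\varphi$ through the inverse. This is essentially the argument of~\cite{paperoncomplexdefs} for the analogous map $\varphi\mapsto\varphi+\tfrac12\Q[\varphi,\varphi]$ in the complex case, and I would follow that proof almost verbatim. The only input it needs is ellipticity of the Lie algebroid complex, which the paper has already established.

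\textbf{Step 4: applying Nash--Moser and shrinking the domain.} Hamilton's Nash--Moser inverse function theorem now gives neighborhoods of $0$ on which $\sigma$ is a bijection with smooth tame inverse. Injectivity on all of $\mathcal{W}^{(k)}_\varepsilon$ follows from the same contraction estimate. If $\sigma(\varphi_1)=\sigma(\varphi_2)$, then
\[
\varphi_1-\varphi_2=-\tfrac12\Q_L\bigl[\varphi_1+\varphi_2,\;\varphi_1-\varphi_2\bigr],
\]
and the estimate of Step 2 in the $C^{k,\alpha}$-norm forces $\varphi_1=\varphi_2$ once $\varepsilon$ is small. The same estimate (a fixed-point argument in $C^{k,\alpha}$ combined with elliptic regularity) shows that the image $\tilde{\mathcal{W}}_\varepsilon:=\sigma(\mathcal{W}^{(k)}_\varepsilon)$ is open. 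Hence $\sigma^{-1}$ is defined on all of $\tilde{\mathcal{W}}_\varepsilon$ and agrees there with the local Nash--Moser inverses, so it is smooth tame.

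There is one technical wrinkle: the neighborhoods $\mathcal{W}^{(k)}_\varepsilon$ are defined with $C^k$-norms rather than Hölder norms. I would handle this by taking $k$ large and working in $C^{k-1,\alpha}$, where the estimate above is valid.
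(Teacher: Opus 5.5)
Your architecture matches the paper's. You show that $D\sigma(\varphi)\psi=\psi+\Q_L[\varphi,\psi]$ is invertible for every $\varphi$ in a fixed $\mathcal{W}^{(k)}_\varepsilon$ with a smooth tame family of inverses, and then apply Hamilton's Nash--Moser inverse function theorem. Your sign in $D\sigma$ is correct; the paper's $\alpha-\Q_L[\alpha,\varphi]$ is a harmless typo. The gap is in the key step, the inverse family.

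The induction in Step~3 does not close with the ingredients you list. From $\chi=\psi-\Q_L[\varphi,\chi]$, the gain of one derivative from $\Q_L$ and the Moser estimate for the bracket only give
\[
\|\chi\|_{n}\le\|\psi\|_{n}+C_n\|\varphi\|_{n}\|\chi\|_{C^1}+C_n\|\varphi\|_{C^1}\|\chi\|_{n}.
\]
Absorbing the last term requires $\|\varphi\|_{C^1}<1/C_n$. Here $C_n$ is the norm of $\Q_L\colon W^{n-1,2}\to W^{n,2}$ times the Moser constant, and it in general grows with $n$. So no single neighborhood $\mathcal{W}^{(k)}_\varepsilon$ gives estimates in all norms.

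Step~2 has a related problem: it does not show that the $C^{k,\alpha}$-limit of the Neumann series is smooth. Bootstrapping the fixed-point identity gains nothing, because the bracket loses exactly the derivative that $\Q_L$ gains. Also, the fact that $K_\varphi$ preserves smoothness says nothing about $(\id+K_\varphi)^{-1}$.

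The missing idea, which the paper's Lemma~\ref{lem: existence Vsigma} uses, is to compose with $\Delta_L$. Since $\Delta_L\Q_L=\dd_L^*\Delta_L\G_L=\dd_L^*(\id-\HprojL)=\dd_L^*$, you get
\[
\Delta_L D\sigma(\varphi)\alpha=\Delta_L\alpha+\dd_L^*[\varphi,\alpha].
\]
This is a second-order operator whose principal symbol differs from that of $\Delta_L$ by a term of size $O(|\varphi|)$. It is therefore elliptic as soon as $\varphi$ is small in a fixed low norm.

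Moreover, $\Q_L[\varphi,\cdot\,]$ takes values in $\dd_L^*\Gamma(M,\Lambda^3L^*)$, where $\Delta_L$ is injective. So invertibility of $D\sigma(\varphi)$ reduces to invertibility of this elliptic family on $\im\dd_L^*$. Elliptic estimates control the top-order norm through the operator itself rather than by absorption. They therefore give both smoothness of solutions and tame estimates in every norm under an $n$-independent smallness condition. This is Proposition~\ref{prop: inverse family from elliptic}, and after it Nash--Moser finishes exactly as you say.

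A Kuranishi-type commutator argument would also repair Step~3: commute $n$ derivatives past $\Q_L$ so that the top-order coefficient becomes $n$-independent. Your low-norm Neumann series remains useful as the uniform low-norm invertibility input. Your contraction argument for injectivity on all of $\mathcal{W}^{(k)}_\varepsilon$ is correct but unnecessary, since you can simply shrink the neighborhood given by Nash--Moser to some $\mathcal{W}^{(k)}_\varepsilon$.
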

To prove Proposition~\ref{prop: sigma invertible with smooth tame inverse}, we need the following lemma:
\begin{lemma}
\label{lem: existence Vsigma}
    For $\varepsilon>0$ small enough and large enough $k\in\N_0$, the equation $D\sigma(\varphi)\alpha=\beta$ has a unique solution $\alpha$ for every $\varphi\in \mathcal{W}^{(k)}_{\varepsilon}$, $\beta\in \Gamma(M,\Lambda^2 L^*)$. Moreover, the family $V(\varphi)\beta=\alpha$ of solutions is a smooth tame family of linear maps.
\end{lemma}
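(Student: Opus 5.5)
We compute the derivative explicitly. Since the Schouten bracket is bilinear and symmetric in degree two (Lemma~\ref{lem: bracket and dL}), we have
\begin{align*}
    D\sigma(\varphi)\alpha=\alpha+\Q_L[\varphi,\alpha]=(\id+K_\varphi)\alpha,\qquad K_\varphi:=\Q_L[\varphi,\cdot\,].
\end{align*}
The plan is to invert $\id+K_\varphi$ by a Neumann series, $V(\varphi)\beta:=\sum_{n\ge0}(-K_\varphi)^n\beta$, and then to show that the resulting family is smooth tame. The argument follows the analogous step in~\cite{paperoncomplexdefs} and Kuranishi's construction, with $\dbar$ replaced by $\dd_L$.

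First, we need estimates. The bracket $[\varphi,\alpha]$ is a bilinear first order differential operator. The operator $\Q_L=\dd_L^*\G_L$ has order $-1$, and by ellipticity of the complex $\dd_L$ (the Proposition on ellipticity for transitive $E$) it is tame linear and gains one derivative in Hölder spaces. We therefore work in $C^{k,\gamma}$ norms. For $k\ge1$, the Schauder estimates give
\begin{align*}
    \|K_\varphi\alpha\|_{k,\gamma}\le C_k\|\varphi\|_{k,\gamma}\|\alpha\|_{k,\gamma}.
\end{align*}
If $\|\varphi\|_{k+1,\infty}<\varepsilon$ with $C_k\varepsilon<1/2$ (we adjust $k$ so that $\mathcal{W}^{(k)}_\varepsilon$ controls the $C^{k-1,\gamma}$ norm), the series converges in $C^{k-1,\gamma}$. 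This gives injectivity of $D\sigma(\varphi)$ and a bounded inverse at the base level.

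Next, we must show that the solution is smooth whenever $\beta$ is smooth, and that $V$ satisfies tame estimates in all higher norms. We write $\alpha=\beta-K_\varphi\alpha$ and use the interpolation-type tame estimate
\begin{align*}
    \|K_\varphi\alpha\|_{n+1}\le C_n\bigl(\|\varphi\|_{n+1}\|\alpha\|_{k}+\|\varphi\|_{k}\|\alpha\|_{n+1}\bigr),
\end{align*}
valid since $\Q_L$ gains a derivative. An induction on $n$, which absorbs the term $\|\varphi\|_k\|\alpha\|_{n+1}$ using the smallness of $\varepsilon$, yields
\begin{align*}
    \|\alpha\|_{n}\le C_n\bigl(\|\beta\|_n+\|\varphi\|_{n}\|\beta\|_{k}\bigr).
\end{align*}
This is precisely a tame estimate for $(\varphi,\beta)\mapsto V(\varphi)\beta$, and it also shows that $\alpha$ is smooth. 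Uniqueness follows from injectivity at the lowest level.

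Finally, smooth tameness of the family follows from the general principle in~\cite{hamilton82} (cf.\ Hamilton's result that a smooth tame family of linear maps whose members are invertible, with inverses satisfying tame estimates uniformly in the parameter, has a smooth tame family of inverses; see also~\cite{paperoncomplexdefs}). Alternatively, one can differentiate the identity $V(\varphi)=(\id+K_\varphi)^{-1}$ to obtain
\begin{align*}
    DV(\varphi)\{\psi\}=-V(\varphi)K_\psi V(\varphi),
\end{align*}
and iterate. The main obstacle is the second step, namely establishing the tame estimates for all $n$ simultaneously with only a fixed $C^k$ smallness of $\varphi$. This requires a careful commutator/Moser-type product estimate for the first order bilinear bracket composed with $\Q_L$.
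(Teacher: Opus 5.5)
Your computation of $D\sigma(\varphi)\alpha=\alpha+\Q_L[\varphi,\alpha]$ is correct, and so is the Neumann series at a fixed base level. But the step that carries the whole lemma fails as you describe it. That step is smoothness of $\alpha$ together with tame estimates in every norm on one fixed neighborhood $\mathcal{W}^{(k)}_{\varepsilon}$.

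In your inequality $\|K_\varphi\alpha\|_{n+1}\le C_n(\|\varphi\|_{n+1}\|\alpha\|_k+\|\varphi\|_k\|\alpha\|_{n+1})$, the constant in front of $\|\alpha\|_{n+1}$ depends on $n$. It contains the norm of $\Q_L\colon C^{n,\gamma}\to C^{n+1,\gamma}$ and the Moser constants, and has no uniform bound. Absorbing that term therefore needs $C_n\varepsilon<1$ for every $n$, which a fixed $\varepsilon$ cannot give. Tameness, however, requires the estimates for all $n$ with $\varepsilon$ fixed.

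Absorption also presupposes $\|\alpha\|_{n+1}<\infty$. The identity $\alpha=\beta-K_\varphi\alpha$ gives no regularity gain, because $K_\varphi$ has order zero, so smoothness of $\alpha$ is not obtained either. Completing your route would need genuine pseudodifferential commutator estimates for $[\partial^n,\Q_L]$. You name these as the main obstacle but do not supply them, and they are exactly the content of the lemma.

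The paper avoids the nonlocal operator entirely. Since $\Delta_L\Q_L=\dd_L^*\Delta_L\G_L=\dd_L^*(\id-\HprojL)=\dd_L^*$, one has $\Delta_L D\sigma(\varphi)\alpha=\Delta_L\alpha+\dd_L^*[\varphi,\alpha]$. This is a second-order linear \emph{differential} operator, and for $\varphi$ small in a fixed $C^k$-norm it is an elliptic perturbation of $\Delta_L$.

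Because $K_\varphi$ takes values in $\dd_L^*\Gamma(M,\Lambda^3L^*)$, the $\ker\dd_L^*$-component of $\alpha$ is simply that of $\beta$. On the coexact part, the problem reduces to inverting a smooth tame family of elliptic differential operators. Proposition~\ref{prop: inverse family from elliptic}, resting on Hamilton's results for such families, then yields existence, uniqueness, regularity and the smooth tame inverse family $V(\varphi)$ at once. There, only base-level smallness is needed, because commutators of derivatives with a differential operator are again differential operators.
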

\begin{proof}
    Let $\varphi\in \mathcal{W}^{(k)}_{\varepsilon}$, $\beta\in \Gamma(M,\Lambda^2 L^*)$. We need to find some $\alpha\in \Gamma(M,\Lambda^2 L^*)$ such that
\begin{align}
    \label{eq: surjective Dsigma}
\beta=\alpha-\Q_L[\alpha,\varphi]=\alpha-\dd_L^*\G_L[\alpha,\varphi].
\end{align} 
The statement follows now from Proposition~\ref{prop: inverse family from elliptic}, since $\alpha\mapsto\Delta_L\dd_L^*\G_L[\alpha,\varphi]=\dd_L^*[\alpha,\varphi]$ is a linear partial differential operator of order two.
\end{proof}

\begin{proof}[Proof of Proposition~\ref{prop: sigma invertible with smooth tame inverse}]
This follows from the Nash--Moser Theorem~\cite[Thm III.1.1.1]{hamilton82} (see also~\cite[Thm 3.18]{paperoncomplexdefs}) and Lemma~\ref{lem: existence Vsigma}.    
\end{proof}

Set
\begin{align*}
 \mathcal{W}_{\varepsilon}:=\tilde{\mathcal{W}}_{\varepsilon}\cap\mathcal{H}^2_L(M),\quad \mathcal{V}_{\varepsilon}:=\tilde{\mathcal{V}}_2\cap\mathcal{W}_{\varepsilon}^{(k)}.   
\end{align*}
Then $\sigma$ yields a smooth tame map
\begin{align*}
    \varphi:= \sigma^{-1}|_{\mathcal{W}_{\varepsilon}}\colon \mathcal{W}_{\varepsilon}\longrightarrow \mathcal{V}_{\varepsilon}.
\end{align*}

\begin{corollary}
\label{cor: family of defs}
The family $\{\varphi(t):t\in\mathcal{W}_{\varepsilon}\}$ is a smooth family of generalized almost complex deformations.    
\end{corollary}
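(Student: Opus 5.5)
Two things have to be checked: the map $t\mapsto\varphi(t)$ is smooth with $\varphi(0)=0$, and each $\varphi(t)$ is a generalized almost complex deformation, i.e.\ $L_{\varphi(t)}\oplus\overline{L_{\varphi(t)}}=E_{\C}$.

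Smoothness comes first. By Proposition~\ref{prop: sigma invertible with smooth tame inverse}, $\sigma^{-1}\colon\tilde{\mathcal{W}}_{\varepsilon}\to\mathcal{W}^{(k)}_{\varepsilon}$ is smooth tame. The space $\mathcal{H}^2_L(M)$ is finite-dimensional because the Lie algebroid complex is elliptic, so the inclusion $\mathcal{W}_{\varepsilon}=\tilde{\mathcal{W}}_{\varepsilon}\cap\mathcal{H}^2_L(M)\hookrightarrow\tilde{\mathcal{W}}_{\varepsilon}$ is the restriction of a continuous linear map and hence smooth. Therefore $\varphi=\sigma^{-1}|_{\mathcal{W}_{\varepsilon}}$ is a smooth map from an open neighborhood of $0$ in the finite-dimensional space $\mathcal{H}^2_L(M)\cong H^2(M,L)$ into $\Gamma(M,\Lambda^2L^*)$. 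Since $\sigma(0)=0$ and $\sigma$ is injective on $\mathcal{W}^{(k)}_{\varepsilon}$, we get $\varphi(0)=0$. The image lies in $\mathcal{V}_{\varepsilon}$, because $\sigma(\varphi(t))=t\in\mathcal{H}^2_L(M)$ means $\varphi(t)\in\tilde{\mathcal{V}}_2\cap\mathcal{W}^{(k)}_{\varepsilon}$.

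Next, the almost complex condition. Every $\varphi(t)$ lies in $\mathcal{W}^{(k)}_{\varepsilon}$, so $\Vert\varphi(t)\Vert_{0,\infty}\le\Vert\varphi(t)\Vert_{k,\infty}<\varepsilon$. The condition $L_{\varphi}\cap\overline{L_{\varphi}}=\{0\}$ is pointwise and open in the $C^0$-topology: it is equivalent to invertibility of a fiberwise endomorphism of the form $\id-\tilde{\varphi}^2$, in the spirit of the map $a_\varphi$ of Lemma~\ref{lem: properties a_epsilon}. On a compact $M$, with the fixed Hermitian metrics, this endomorphism is invertible as soon as the fiberwise operator norm of $\varphi$ is below $1$. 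So, possibly after shrinking $\varepsilon$ (which keeps Proposition~\ref{prop: sigma invertible with smooth tame inverse} valid), every $\varphi(t)$ is a generalized almost complex deformation.

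The only delicate point is this compatibility of choices: $\varepsilon$ must be small enough both for the Nash--Moser inversion and for the $C^0$ openness condition. Since both requirements only ask for $\varepsilon$ small, taking the minimum of the two thresholds settles it.
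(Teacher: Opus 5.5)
Your proof is correct and follows the paper's route. The paper gives no separate argument for this corollary: it treats it as immediate from the smooth tameness of $\sigma^{-1}$ (Proposition~\ref{prop: sigma invertible with smooth tame inverse}) restricted to the finite-dimensional space $\mathcal{H}^2_L(M)$, together with its earlier remark that being a generalized almost complex deformation is an open condition satisfied near $0$; your fiberwise-invertibility argument for $L_{\varphi}\cap\overline{L_{\varphi}}=\{0\}$ simply makes that openness explicit.
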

To identify which elements in the family are integrable, i.e.~contained in $\tilde{\mathcal{V}}_1$, we note the following:
\begin{lemma}[\cite{Gualtieri:2003dx,gualtieri2011}]
For $\varepsilon>0$ small enough, $\varphi\in \mathcal{V}_{\varepsilon}$ is in $\mathcal{S}_{\varepsilon}:=\tilde{\mathcal{V}}_1\cap \mathcal{W}_{\varepsilon}^{(k)}$ if and only if
\begin{align*}
    \phi(\varphi):=\HprojL[\varphi,\varphi]=0.
\end{align*}
\end{lemma}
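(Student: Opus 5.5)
The plan is to follow Kuranishi's and Gualtieri's argument. The direction from $\mathcal{S}_\varepsilon$ to $\phi=0$ is immediate; the converse is a small-norm uniqueness argument for an auxiliary form.

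For the forward direction, let $\varphi\in\mathcal{S}_\varepsilon$. Then $[\varphi,\varphi]=-2\dd_L\varphi$ is $\dd_L$-exact. Exact forms are orthogonal to $\mathcal{H}^3_L(M)$ by the Hodge decomposition, so $\HprojL[\varphi,\varphi]=0$.

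For the converse, let $\varphi\in\mathcal{V}_\varepsilon$ with $\HprojL[\varphi,\varphi]=0$.

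\textbf{Step 1: coclosedness.} Since $\varphi+\tfrac12\Q_L[\varphi,\varphi]$ is harmonic and $\dd_L^*\Q_L=0$, applying $\dd_L^*$ gives $\dd_L^*\varphi=0$. So only the Maurer--Cartan equation remains to be shown.

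\textbf{Step 2: the auxiliary form.} Set $\psi:=\dd_L\varphi+\tfrac12[\varphi,\varphi]\in\Gamma(M,\Lambda^3L^*)$ and apply $\dd_L$ to the harmonicity relation:
\[
\dd_L\varphi=-\tfrac12\dd_L\Q_L[\varphi,\varphi].
\]
By the identity $\HprojL+\dd_L\Q_L+\Q_L\dd_L=\id$ and $\HprojL[\varphi,\varphi]=0$, we have $\dd_L\Q_L[\varphi,\varphi]=[\varphi,\varphi]-\Q_L\dd_L[\varphi,\varphi]$. Hence
\[
\psi=\tfrac12\Q_L\dd_L[\varphi,\varphi].
\]
Lemma~\ref{lem: bracket and dL}(3) gives $\dd_L[\varphi,\varphi]=2[\dd_L\varphi,\varphi]$. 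The graded Jacobi identity, Lemma~\ref{lem: bracket and dL}(2), applied to $\varphi$ of degree $2$ gives $[[\varphi,\varphi],\varphi]=0$. Writing $\dd_L\varphi=\psi-\tfrac12[\varphi,\varphi]$, we therefore obtain
\[
\psi=\Q_L[\psi,\varphi].
\]

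\textbf{Step 3: the estimate (main obstacle).} We must conclude $\psi=0$ from $\psi=\Q_L[\psi,\varphi]$ when $\varphi$ is small in $C^k$. The operator $\psi\mapsto[\psi,\varphi]$ is first order in $\psi$, with coefficients controlled by $\Vert\varphi\Vert_{C^{k}}$ for $k\ge 1$. The operator $\Q_L=\dd_L^*\G_L$ gains one derivative, because the complex is elliptic (the ellipticity proposition for transitive $E$). For instance, in H\"older or Sobolev norms with $k$ large, elliptic estimates should give
\[
\Vert\Q_L[\psi,\varphi]\Vert_{k-1,\alpha}\le C\Vert\varphi\Vert_{k,\alpha}\Vert\psi\Vert_{k-1,\alpha}.
\]
Then $\Vert\psi\Vert\le C\varepsilon\Vert\psi\Vert$, and choosing $\varepsilon<1/C$ forces $\psi=0$. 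Together with Step 1 this places $\varphi$ in $\tilde{\mathcal{V}}_1\cap\mathcal{W}^{(k)}_\varepsilon=\mathcal{S}_\varepsilon$.

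The delicate point is this last estimate. It requires the Schauder or Sobolev bound for the Green operator of $\Delta_L$, plus the observation that the $\mathcal{W}^{(k)}_\varepsilon$-smallness, measured in $C^k$ sup norms, controls the H\"older norm of $\varphi$ one needs. If that control fails, I would shrink the neighborhood to a $C^{k+1}$ ball, increasing $k$ as Proposition~\ref{prop: sigma invertible with smooth tame inverse} permits.
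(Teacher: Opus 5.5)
Your proposal is correct and follows essentially the same argument as the paper. Your $\psi$ is half of the paper's $\eta=\Q_L\dd_L[\varphi,\varphi]$, the same derivation and Jacobi identities give the fixed-point relation $\psi=\Q_L[\psi,\varphi]$, and the paper closes with the Sobolev bound $\|\Q_L[\eta,\varphi]\|_{k,2}\le C\|\eta\|_{k,2}\|\varphi\|_{k,2}$ for $k\ge \dim M/2+1$, which avoids your H\"older-versus-$C^k$ worry; your explicit check that $\dd_L^*\varphi=0$ follows from $\varphi\in\tilde{\mathcal{V}}_2$ is a detail the paper leaves implicit.
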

\begin{proof} This exact statement has already been proved in~\cite{Gualtieri:2003dx,gualtieri2011}, but for the sake of completeness we repeat the argument here.
    We first note that for any $\varphi\in \mathcal{V}_{\varepsilon}$, we have 
    \begin{align*}
        \dd_L\varphi+\tfrac{1}{2}[\varphi,\varphi]=&-\tfrac{1}{2}\dd_L \Q_L[\varphi,\varphi]+\tfrac{1}{2}[\varphi,\varphi]\\
        =&\tfrac{1}{2}\Q_L \dd_L [\varphi,\varphi]+\tfrac{1}{2}\HprojL[\varphi,\varphi],
    \end{align*}
    where we used $\varphi\in \mathcal{V}_{\varepsilon}$ for the first equality and $\HprojL+\dd_L\Q_L+\Q_L\dd_L=\id$ for the second equality. Since the images of $\Q_L$ and $\HprojL$ are orthogonal to each other, it follows that 
    \begin{align}
    \label{eq: integrability}    \dd_L\varphi+\tfrac{1}{2}[\varphi,\varphi]=0\quad \iff \quad \Q_L\dd_L[\varphi,\varphi]=0\text{ and }\HprojL[\varphi,\varphi]=0.
    \end{align}
    We claim that for $\varepsilon>0$ small enough, $\HprojL [\varphi,\varphi]=0$ already implies $\Q_L\dd_L[\varphi,\varphi]=0$. To see this, suppose $\varphi\in\mathcal{V}_{\varepsilon}$ such that $\HprojL[\varphi,\varphi]=0$. Then,
    \begin{align*}
        \dd_L\Q_L[\varphi,\varphi]=-\Q_L\dd_L[\varphi,\varphi]+[\varphi,\varphi].
    \end{align*}
    Using the compatibility of the bracket $[\cdot,\cdot]$ and $\dd_L$ as stated in Lemma~\ref{lem: bracket and dL} and $\varphi\in \tilde{\mathcal{V}}_2$, we compute
    \begin{align*}
        \Q_L\dd_L[\varphi,\varphi]=2\Q_L[\dd_L\varphi,\varphi]=-\Q_L[\dd_L\Q_L[\varphi,\varphi],\varphi]=\Q_L[\Q_L\dd_L[\varphi,\varphi],\varphi]-\Q_L[[\varphi,\varphi],\varphi].
    \end{align*}
    By the Jacobi identity (see Lemma~\ref{lem: bracket and dL}), we must have $\left[[\varphi,\varphi],\varphi\right]=0$. Therefore, setting $\eta:=\Q_L\dd_L[\varphi,\varphi]$, we find
    \begin{align*}
        \eta=\Q_L[\eta,\varphi].
    \end{align*}
    It follows from the discussion in Section~\ref{appendix HNM} of the appendix that for $k\geq \dim M/2+1$ we have
    \begin{align*}
        \Vert \eta\Vert_{k,2}=\Vert \Q_L[\eta,\varphi]\Vert_{k,2}\leq C\Vert\eta\Vert_{k,2}\Vert\varphi\Vert_{k,2}\leq \tilde{C}\varepsilon\Vert\eta\Vert_{k,2},
    \end{align*}
    where $\|\cdot\|_{k,2}$ denotes the $W^{k,2}$-Sobolev norm.
    In particular, for $\varepsilon <1/\tilde{C}$, this implies that $\eta=0$ and hence $\Q_L\dd_L[\varphi,\varphi]=0$. By equation~\eqref{eq: integrability} this shows that $\varphi\in\mathcal{S}_{\varepsilon}$.
\end{proof}

\subsection{Local completeness of the family}\label{sec: local cplt gc}
In the last step of the proof of Theorem \ref{thm: deformation theorem}, we need to show that every sufficiently small generalized complex deformation $\varphi$ can be obtained from $\mathcal{M}$ via an (exact) autoequivalence, i.e.\ there is some small $\xi\in \Gamma(M,L^*)$ such that $\dd_L^*\mathcal{E}(\xi,\varphi)=0$. Moreover, given a smooth family of small generalized (almost) complex deformations $\varphi_t$, we need to show that there is a family $\{\xi_t\}\subset \Gamma(M,L^*)$ depending smoothly on $t$ such that $\dd_L^*\mathcal{E}(\xi_t,\varphi_t)=0$.

Recall that by Lemma~\ref{lem: autex contains coexact} we have
$$\Psi^{-1}\autex(E)\supset\dd_L^* \left(\Gamma(M,\Lambda^2 L^*)\right).$$
\begin{proposition}
\label{prop: locally complete}
    There are neighborhoods $\mathcal{W}\subset \Gamma(M,\Lambda^2 L^*)$ of $0\in \Gamma(M,\Lambda^2 L^*)$, $\mathcal{U}\subset \Gamma(M,L^*)$ of $0\in\Gamma(M,L^*)$ such that for every $\varphi \in \mathcal{W}$ there is a smooth tame family of solutions $\xi=\xi(\varphi)\in \mathcal{U}$ of the equation
    \begin{align*}
        \dd_L^*\mathcal{E}(\xi,\varphi)=0.
    \end{align*}
\end{proposition}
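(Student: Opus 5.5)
We apply Hamilton's Nash--Moser implicit function theorem, as in the proof of local completeness for classical complex deformations in~\cite{paperoncomplexdefs}. We restrict the unknown to the subspace $\dd_L^*\Gamma(M,\Lambda^2L^*)\subset\Psi^{-1}\autex(E)$; this inclusion is Lemma~\ref{lem: autex contains coexact}. The subspace is a tame direct summand of $\Gamma(M,L^*)$, since its projection is $\dd_L^*\dd_L\G_L$, which is tame linear. We then consider the smooth tame map
\begin{align*}
\mathcal{F}\colon (\mathcal{U}\cap \dd_L^*\Gamma(M,\Lambda^2L^*))\times \mathcal{W}\longrightarrow \dd_L^*\Gamma(M,\Lambda^3L^*)\ \text{(or }\dd_L^*\Gamma(M,\Lambda^2L^*)\text{)},\qquad (\xi,\varphi)\longmapsto \dd_L^*\mathcal{E}(\xi,\varphi).
\end{align*}
We take the target to be $\dd_L^*\Gamma(M,\Lambda^2L^*)\subset\Gamma(M,L^*)$, the image of $\dd_L^*$ on $2$-forms. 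The map $\mathcal{F}$ is smooth tame because $\mathcal{E}$ is smooth tame (Proposition~\ref{prop:Theta smooth tame} composed with the chart $\Phi$). We also have $\mathcal{F}(0,0)=0$. The hypothesis of Hamilton's implicit function theorem is that $D_\xi\mathcal{F}(\xi,\varphi)$ is surjective, with a smooth tame family of right inverses, for all $(\xi,\varphi)$ near $(0,0)$. Given this, the theorem produces a smooth tame solution map $\varphi\mapsto\xi(\varphi)$.

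The core step is to invert $D_\xi\mathcal{F}$. By Proposition~\ref{prop: DE},
\[
D_\xi\mathcal{F}(\xi,\varphi)\eta=\dd_L^*\dd_L(b_{(\xi,\varphi)}\eta)+\dd_L^*[\mathcal{E},b_{(\xi,\varphi)}\eta]-\dd_L^*\iota_{\overline{c_\xi\eta}}\bigl(\dd_L\mathcal{E}+\tfrac12[\mathcal{E},\mathcal{E}]\bigr),\qquad \mathcal{E}=\mathcal{E}(\xi,\varphi).
\]
We substitute $\zeta=b_{(\xi,\varphi)}\eta=a_{\mathcal{E}}c_\xi\eta$. By Lemmas~\ref{lem: properties a_epsilon} and~\ref{lem: properties b and c}, $a_{\mathcal{E}}$ and $c_\xi$ are invertible with smooth tame inverses. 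The equation $D_\xi\mathcal{F}\,\eta=\beta$ then becomes
\[
\dd_L^*\dd_L\zeta+\dd_L^*[\mathcal{E},\zeta]-\dd_L^*\iota_{\overline{a_{\mathcal{E}}^{-1}\zeta}}(\dd_L\mathcal{E}+\tfrac12[\mathcal{E},\mathcal{E}])=\beta .
\]
At $(0,0)$ this reduces to $\dd_L^*\dd_L\zeta=\beta$. For $\beta\in\dd_L^*\Gamma(M,\Lambda^2L^*)$ this is solved by $\zeta=\G_L\beta$, which again lies in $\dd_L^*\Gamma(M,\Lambda^2L^*)$. On this space $\dd_L^*\dd_L=\Delta_L$ is invertible with inverse $\G_L$.

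For general $(\xi,\varphi)$, the remaining terms have coefficients that are small in $C^k$. We therefore write the equation as $\zeta+\G_L(\text{small second-order operator})\zeta=\G_L\beta$ and invoke Proposition~\ref{prop: inverse family from elliptic}, exactly as in Lemma~\ref{lem: existence Vsigma}. This gives a unique solution $\zeta=V(\xi,\varphi)\beta$ depending smoothly and tamely on $(\xi,\varphi,\beta)$. We then set $\eta=c_\xi^{-1}a_{\mathcal{E}}^{-1}\zeta$.

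\textbf{Main obstacle.} The solution $\eta$ must lie in the domain, that is, be of the form allowed by the chart, $\Psi^{-1}\autex(E)$. Lemma~\ref{lem: autex contains coexact} guarantees this only for $\dd_L^*$-exact elements, while $a^{-1}$ and $c^{-1}$ need not preserve $\dd_L^*$-exactness. We will therefore take the unknown $\eta$ in all of $\Psi^{-1}\autex(E)$, which is where $c_\xi^{-1}$ maps. We then need $\zeta\mapsto$ (left side) to be surjective onto $\dd_L^*\Gamma(M,\Lambda^2L^*)$ from $b_{(\xi,\varphi)}(\Psi^{-1}\autex(E))$. The failure is at most finite-dimensional: by Lemma~\ref{lem: autex contains coexact}, $\dd_L^*\Gamma(M,\Lambda^2L^*)$ has finite codimension, controlled by $\mathcal{H}^1_L$, in $\Psi^{-1}\autex(E)$. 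Composing with the projection onto $\dd_L^*\Gamma(M,\Lambda^2L^*)$, the kernel and cokernel are controlled by a finite-dimensional piece. So we will construct the right inverse as $\eta=c_\xi^{-1}a_{\mathcal{E}}^{-1}\pr\,V\beta$, corrected by a finite-rank perturbation argument. Invertibility of a finite matrix is an open condition, so it persists near $(0,0)$, and its inverse stays smooth tame.

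Finally, shrinking $\mathcal{W}$ and $\mathcal{U}$ to the $C^k$-neighborhoods where all of these constructions are defined, the Nash--Moser implicit function theorem~\cite{hamilton82} gives the smooth tame family $\xi(\varphi)$ with $\dd_L^*\mathcal{E}(\xi(\varphi),\varphi)=0$.
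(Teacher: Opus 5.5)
\textbf{There is a genuine gap at exactly the step you call the main obstacle.}

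After the substitution $\zeta=a_{\mathcal{E}}c_\xi\eta$ you get a coexact solution $\zeta=V\beta$. But $a_{\mathcal{E}}^{-1}V\beta$ has no reason to lie in $\Psi^{-1}\autex(E)$, and $c_\xi^{-1}$ is only defined on that space (Lemma~\ref{lem: properties b and c}). So $\eta=c_\xi^{-1}a_{\mathcal{E}}^{-1}\pr V\beta$ is in general not defined. The projection onto $\dd_L^*\Gamma(M,\Lambda^2L^*)$ you insert does not help, since $V\beta$ is already coexact.

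Your claim that this failure is finite-dimensional is not justified. The finite codimension of $\dd_L^*\Gamma(M,\Lambda^2L^*)$ inside $\Psi^{-1}\autex(E)$ says nothing about where the algebraic map $a_{\mathcal{E}}^{-1}$ sends coexact forms. Its output can pick up components in the complement $\Psi^{-1}\ker(\ad)$. That complement is infinite-dimensional: by the proof of Lemma~\ref{lem: autex contains coexact} it contains $\dd_L C^\infty(M)$. So no finite-rank correction yields the right inverse that Hamilton's theorem requires.

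A related inconsistency: you first restrict the unknown $\xi$ to $\dd_L^*\Gamma(M,\Lambda^2L^*)$, but your right inverse would land outside that space. The paper keeps $\xi\in\mathcal{U}^{(k)}_\varepsilon\subset\Psi^{-1}\autex(E)$ throughout. It works with $A=\Q_L\mathcal{E}$, which has the same zeros as your $\dd_L^*\mathcal{E}$.

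\textbf{The missing idea is to change variables by $c_\xi$ only and keep $a_{\mathcal{E}}$ inside the operator.}
\begin{itemize}
\item Look for $\alpha=c_\xi^{-1}\theta$ with $\theta\in\dd_L^*\Gamma(M,\Lambda^2L^*)$. By Lemma~\ref{lem: autex contains coexact} this space lies in $\Psi^{-1}\autex(E)$, so $\alpha$ is automatically admissible.
\item On coexact $\theta$ one has $\Q_L\dd_L a_{\mathcal{E}}\theta=\theta-\Q_L\dd_L\iota_{\bar\theta}\mathcal{E}$. This is a small perturbation of the identity on $\dd_L^*\Gamma(M,\Lambda^2L^*)$, so Proposition~\ref{prop: inverse family from elliptic} inverts it with a smooth tame family $p_{(\xi,\varphi)}$ (Lemma~\ref{lem: aux lemma for invertibility of DA}).
\item Writing $\theta=p_{(\xi,\varphi)}\eta$, the linearized equation becomes $\eta+(\text{small})\,\eta=\beta$ on the coexact space. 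A second application of the same proposition solves it, giving $V=c_\xi^{-1}p_{(\xi,\varphi)}s_{(\xi,\varphi)}$.
\end{itemize}

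\textbf{Your own route can also be repaired, but by a different mechanism than finite rank.} The algebraic computation in the proof of Proposition~\ref{prop: taylor E} holds for every $\theta\in\Gamma(M,L^*)$. It identifies $\dd_L(a_{\mathcal{E}}\theta)+[\mathcal{E},a_{\mathcal{E}}\theta]-\iota_{\bar\theta}(\dd_L\mathcal{E}+\tfrac12[\mathcal{E},\mathcal{E}])$ with the first-order change of $\mathcal{E}$ under the derivation $-\ad(\Psi\theta)$. Hence this expression vanishes for $\theta\in\Psi^{-1}\ker(\ad)$. Consequently, tamely projecting $a_{\mathcal{E}}^{-1}V\beta$ onto $\Psi^{-1}\autex(E)$ along $\Psi^{-1}\ker(\ad)$ does not change the value of the linearized operator. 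Then $\eta=c_\xi^{-1}\pr\, a_{\mathcal{E}}^{-1}V\beta$ is a valid smooth tame right inverse. You would, however, have to state and prove this vanishing explicitly.
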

For the proof of Proposition~\ref{prop: locally complete} we wish to apply Hamilton's Nash--Moser implicit function theorem~\cite[Thm III.3.3.1]{hamilton82}. For some $\varepsilon>0$, $k\in\N$, let ${\mathcal{W}}^{(k)}_{\varepsilon}$ and $\mathcal{U}^{(k)}_{\varepsilon}$ be as above. For $\varepsilon$ small enough and large enough $k$ the map
\begin{align*}
    A\colon \mathcal{U}^{(k)}_{\varepsilon}\times{\mathcal{W}}^{(k)}_{\varepsilon}&\longrightarrow \dd_L^* \left(\Gamma(M,\Lambda^2 L^*)\right)\\
    (\xi,\varphi)&\longmapsto \Q_L\mathcal{E}(\xi,\varphi)
\end{align*}
is well-defined and smooth tame as a composition of smooth tame maps. We observe that $\dd_L^*\mathcal{E}(\xi,\varphi)=0$ if and only if $A(\xi,\varphi)=0$. Therefore, our strategy will be to apply Hamilton's Nash--Moser implicit function theorem to solve the equation $A(\xi,\varphi)=0$. To do so, we will need the following two lemmas proved below.

\begin{lemma}\label{lem: aux lemma for invertibility of DA}
    For $\varepsilon>0$ small enough, the restriction 
    \begin{align*}
        \Q_L\dd_L a_{\mathcal{E}(\xi,\varphi)}\colon \dd_L^* \left(\Gamma(M,\Lambda^2 L^*)\right)\longrightarrow\dd_L^* \left(\Gamma(M,\Lambda^2 L^*)\right)
    \end{align*}
    is invertible for every $(\xi,\varphi)\in \mathcal{U}^{(k)}_{\varepsilon}\times \mathcal{W}^{(k)}_{\varepsilon}$. The family of inverse maps $p_{(\xi,\varphi)}$ is smooth tame.
\end{lemma}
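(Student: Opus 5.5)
The plan is to treat $\Q_L\dd_L a_{\mathcal{E}(\xi,\varphi)}$ as a small perturbation of the identity on $\dd_L^*\left(\Gamma(M,\Lambda^2L^*)\right)$. At $(\xi,\varphi)=(0,0)$ we have $\mathcal{E}(0,0)=0$ and $a_0=\id$, so the operator reduces to $\Q_L\dd_L$ restricted to $\dd_L^*$-exact elements. For $\eta=\dd_L^*\omega$ we have $\HprojL\eta=0$ and $\Q_L\eta=\dd_L^*\G_L\dd_L^*\omega=0$. The identity $\HprojL+\dd_L\Q_L+\Q_L\dd_L=\id$ therefore gives $\Q_L\dd_L\eta=\eta$, and the unperturbed operator is the identity on this subspace. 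Note also that $\Q_L$ always lands in $\dd_L^*\Gamma$, so the map is well defined as an endomorphism of $\dd_L^*\left(\Gamma(M,\Lambda^2L^*)\right)$.

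For general $(\xi,\varphi)$, write $\psi:=\mathcal{E}(\xi,\varphi)$. By Lemma~\ref{lem: properties a_epsilon} and the smooth tameness of $\mathcal{E}$, $\psi$ is small in $C^0$ for $\varepsilon$ small and $k$ large, so $a_\psi$ is defined. Since $a_\psi\eta=\eta-\iota_{\bar\eta}\psi$, we get
\[
\Q_L\dd_L a_\psi\eta=\eta-\Q_L\dd_L(\iota_{\bar\eta}\psi),
\]
and the equation to invert is $\eta-K_\psi\eta=\beta$ with $K_\psi\eta:=\Q_L\dd_L\iota_{\bar\eta}\psi$.

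The structure is the same as in Lemma~\ref{lem: existence Vsigma}. $\Q_L\dd_L=\dd_L^*\G_L\dd_L$ is a pseudodifferential operator of order zero composed with the zeroth-order operator $\eta\mapsto\iota_{\bar\eta}\psi$, whose coefficients are small. Applying $\Delta_L$ gives $\Delta_L K_\psi\eta=\dd_L^*\dd_L\iota_{\bar\eta}\psi$, a second-order linear differential operator in $\eta$ depending smooth tamely on $\psi$. Equivalently, $\Delta_L\eta-\dd_L^*\dd_L\iota_{\bar\eta}\psi=\Delta_L\beta$ is a small perturbation of the elliptic operator $\Delta_L$. I would invoke Proposition~\ref{prop: inverse family from elliptic}, exactly as in Lemma~\ref{lem: existence Vsigma}, to obtain for small $\varepsilon$ and large $k$ a unique solution and a smooth tame family of inverses of $\id-K_\psi$ on $\Gamma(M,L^*)$. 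Composing with the smooth tame map $(\xi,\varphi)\mapsto\mathcal{E}(\xi,\varphi)$ then gives smooth tameness of $p_{(\xi,\varphi)}$ in $(\xi,\varphi)$.

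The main obstacle is restricting this inverse to the subspace $\dd_L^*\left(\Gamma(M,\Lambda^2L^*)\right)$.
\begin{itemize}
\item Because $K_\psi$ takes values in $\im\Q_L\subset\dd_L^*\Gamma$, any solution of $\eta-K_\psi\eta=\beta$ with $\beta\in\dd_L^*\Gamma$ satisfies $\eta=\beta+K_\psi\eta\in\dd_L^*\Gamma$.
\item So the global inverse automatically preserves the subspace, giving both injectivity and surjectivity there.
\end{itemize}
Should Proposition~\ref{prop: inverse family from elliptic} not apply directly because $\Delta_L$ has a finite-dimensional kernel, I would instead solve on the orthogonal complement of $\mathcal{H}_L^1(M)$. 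This complement is invariant since $\HprojL K_\psi=0$, and the inverse there can be obtained by a Neumann series in the $W^{k,2}$ norms. The series converges because $\Vert K_\psi\eta\Vert_{k,2}\le C\Vert\psi\Vert_{k+1,\infty}\Vert\eta\Vert_{k,2}$, using that $\Q_L\dd_L$ is bounded on $W^{k,2}$. Tame estimates in all higher norms would then follow from the elliptic estimate for $\Delta_L$, as in the appendix.
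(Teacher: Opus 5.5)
Your proposal is correct and follows essentially the same route as the paper: on $\dd_L^*\left(\Gamma(M,\Lambda^2L^*)\right)$ the operator equals $\eta\mapsto\eta-\Q_L\dd_L\iota_{\bar\eta}\mathcal{E}(\xi,\varphi)$, applying $\Delta_L$ to the perturbation gives the second-order operator $\dd_L^*\dd_L\iota_{\bar\eta}\mathcal{E}(\xi,\varphi)$, and Proposition~\ref{prop: inverse family from elliptic} then supplies the smooth tame family of inverses. Your extra checks (that $\Q_L\dd_L=\id$ on $\dd_L^*$-exact forms, and that the subspace is preserved) are steps the paper uses implicitly, and your Neumann-series fallback is not needed, because that proposition already works on $\im\dd_L^*$, where the harmonic kernel plays no role.
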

\begin{proof}
    Let $\eta\in \dd_L^* \left(\Gamma(M,\Lambda^2 L^*)\right)$. Then
    \begin{align*}
        \Q_L\dd_L a_{\mathcal{E}(\xi,\varphi)}\eta=\Q_L\dd_L\eta-\Q_L\dd_L\iota_{\bar{\eta}}\mathcal{E}(\xi,\varphi)=\eta-\Q_L\dd_L\iota_{\bar{\eta}}\mathcal{E}(\xi,\varphi).
    \end{align*}
   Since
    \begin{align*}
        \Delta_L\Q_L\dd_L\iota_{\bar{\eta}}\mathcal{E}(\xi,\varphi)=\dd_L^*\dd_L\iota_{\bar{\eta}}\mathcal{E}(\xi,\varphi),
    \end{align*}
    for $\eta,\tilde{\beta}\in \dd_L^* \left(\Gamma(M,\Lambda^2 L^*)\right)$, the equation has a unique solution by Proposition~\ref{prop: inverse family from elliptic} and the solution map is a smooth tame family of linear maps.
\end{proof}
\begin{lemma}
\label{lem: DA invertible, locally complete}
    For $\varepsilon>0$ small enough and large enough $k$, the equation $D_{\xi}A(\xi,\varphi)\alpha=\beta$ has a solution $\alpha$ for every $(\xi,\varphi)\in \mathcal{U}_{\varepsilon}^{(k)}\times {\mathcal{W}}_{\varepsilon}^{(k)}$, $\beta\in \dd_L^* \left(\Gamma(M,\Lambda^2 L^*)\right)$. Moreover, there is a smooth tame family of linear maps $V(\xi,\varphi)\colon \dd_L^* \left(\Gamma(M,\Lambda^2 L^*)\right)\rightarrow \Psi^{-1}\autex(E)$ such that $D_{\xi}A(\xi,\varphi)V(\xi,\varphi)\beta=\beta$ for every $\beta\in \dd_L^* \left(\Gamma(M,\Lambda^2 L^*)\right)$.
\end{lemma}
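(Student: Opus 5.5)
Differentiating $A(\xi,\varphi)=\Q_L\mathcal{E}(\xi,\varphi)$ in $\xi$ and using Proposition~\ref{prop: DE}, we obtain for $\eta\in\Psi^{-1}\autex(E)$
\begin{align*}
D_{\xi}A(\xi,\varphi)\eta=\Q_L\dd_L(b_{(\xi,\varphi)}\eta)+\Q_L[\mathcal{E}(\xi,\varphi),b_{(\xi,\varphi)}\eta]-\Q_L\iota_{\overline{c_{\xi}\eta}}\big(\dd_L\mathcal{E}+\tfrac12[\mathcal{E},\mathcal{E}]\big),
\end{align*}
with $b_{(\xi,\varphi)}=a_{\mathcal{E}(\xi,\varphi)}c_\xi$. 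The plan is to treat $D_\xi A$ as a perturbation of the operator $\Q_L\dd_L a_{\mathcal{E}(\xi,\varphi)}c_\xi$. The last two terms are of order zero in $b\eta$ (after applying $\Q_L$, which is of order $-1$), and their coefficients are small. Indeed, $\mathcal{E}(\xi,\varphi)$ is small in $C^{k-1}$ when $\xi$ and $\varphi$ are small, and so is the Maurer--Cartan expression $\dd_L\mathcal{E}+\tfrac12[\mathcal{E},\mathcal{E}]$. Note that we do not assume $\varphi$ integrable here, so the last term has to be kept.

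First I construct an approximate right inverse. Let $\beta\in\dd_L^*\Gamma(M,\Lambda^2L^*)$. By Lemma~\ref{lem: autex contains coexact}, $\dd_L^*\Gamma(M,\Lambda^2L^*)\subset\Psi^{-1}\autex(E)$, so $p_{(\xi,\varphi)}\beta$ from Lemma~\ref{lem: aux lemma for invertibility of DA} lies in the domain of $c_\xi^{-1}$. Set $\tilde V\beta:=c_\xi^{-1}p_{(\xi,\varphi)}\beta\in\Psi^{-1}\autex(E)$, using Lemma~\ref{lem: properties b and c}. Then
\[D_\xi A\,\tilde V\beta=\beta+K_{(\xi,\varphi)}\beta,\qquad K\beta:=\Q_L[\mathcal{E},a_{\mathcal{E}}p\beta]-\Q_L\iota_{\overline{p\beta}}\big(\dd_L\mathcal{E}+\tfrac12[\mathcal{E},\mathcal{E}]\big).\]
Since $\Q_L$ takes values in $\dd_L^*\Gamma(M,\Lambda^2L^*)$, the error $K\beta$ lies in the right space. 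The family $(\xi,\varphi,\beta)\mapsto K_{(\xi,\varphi)}\beta$ is smooth tame, being a composition of smooth tame maps (Proposition~\ref{prop:Theta smooth tame}, Lemmas~\ref{lem: properties a_epsilon} and \ref{lem: properties b and c}, and tameness of $\Q_L$).

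Next I invert $\id+K$ on $\dd_L^*\Gamma(M,\Lambda^2L^*)$ and set $V:=\tilde V(\id+K)^{-1}$. To do this I would use Proposition~\ref{prop: inverse family from elliptic} in the same way as in Lemmas~\ref{lem: existence Vsigma} and \ref{lem: aux lemma for invertibility of DA}. The equation $\gamma+K\gamma=\beta$ becomes, after applying $\Delta_L$, an equation $\Delta_L\gamma+P\gamma=\Delta_L\beta$ with $P$ a differential operator of order at most two. This holds because $\Delta_L\Q_L=\dd_L^*$, and $p$ is the inverse of an order-zero perturbation of the identity, which makes it pseudo-local. The coefficients of $P$ are controlled by $\Vert\mathcal{E}\Vert_{k,\infty}$ and hence are small. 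Proposition~\ref{prop: inverse family from elliptic} then gives existence, uniqueness and smooth tameness of the inverse family.

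The main obstacle is the nonlocal operator $p_{(\xi,\varphi)}$ inside $K$, together with the operator $c_\xi$. To avoid it, I would first try substituting $\gamma=Q_L\dd_L a_{\mathcal{E}}\zeta$, i.e.\ solving directly for $\zeta=p\gamma$. With this substitution the equation becomes $\Q_L\dd_L a_{\mathcal{E}}\zeta+\Q_L[\mathcal{E},a_{\mathcal{E}}\zeta]-\Q_L\iota_{\bar\zeta}(\cdots)=\beta$, which is of exactly the form handled in Lemma~\ref{lem: aux lemma for invertibility of DA}, namely identity plus $\Q_L$ of a small first-order operator. The family $c_\xi$ appears only as a final composition, and its inverse is smooth tame by Lemma~\ref{lem: properties b and c}. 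Setting $V:=c_\xi^{-1}\circ(\text{solution map for }\zeta)$ then gives $D_\xi A\,V\beta=\beta$, with $V$ smooth tame and taking values in $\Psi^{-1}\autex(E)$.
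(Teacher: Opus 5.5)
Your argument is correct. Its skeleton coincides with the paper's proof. The paper also introduces $\eta=\Q_L\dd_L a_{\mathcal{E}(\xi,\varphi)}c_\xi\alpha$, i.e.\ $\alpha=c_\xi^{-1}p_{(\xi,\varphi)}\eta$, and arrives at exactly your equation $\eta+K_{(\xi,\varphi)}\eta=\beta$ (note that $b_{(\xi,\varphi)}c_\xi^{-1}p_{(\xi,\varphi)}\eta=a_{\mathcal{E}}p_{(\xi,\varphi)}\eta$). It then sets $V=c_\xi^{-1}p_{(\xi,\varphi)}s_{(\xi,\varphi)}$, where $s_{(\xi,\varphi)}=(\id+K)^{-1}$ is obtained from Proposition~\ref{prop: inverse family from elliptic}.

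The real difference is your last paragraph, and it is a genuine improvement. In the $\eta$-equation the perturbation contains the nonlocal operator $p_{(\xi,\varphi)}$, so $\Delta_L K$ is not a differential operator. Hence the hypothesis of Proposition~\ref{prop: inverse family from elliptic} is not literally met. Your middle paragraph's appeal to pseudo-locality does not repair this, so drop that paragraph.

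Instead, solve for $\zeta=c_\xi\alpha\in\dd_L^*\Gamma(M,\Lambda^2L^*)$ and use $\Q_L\dd_L\zeta=\zeta$ on coexact forms. The operator then becomes $\zeta+\delta\zeta$, where
\[\delta\zeta=-\Q_L\dd_L\iota_{\bar\zeta}\mathcal{E}+\Q_L[\mathcal{E},a_{\mathcal{E}}\zeta]-\Q_L\iota_{\bar\zeta}\big(\dd_L\mathcal{E}+\tfrac12[\mathcal{E},\mathcal{E}]\big).\]
This $\delta$ has the following properties:
\begin{itemize}
\item it is defined on all of $\Gamma(M,L^*)$ and takes values in $\im\dd_L^*$;
\item it vanishes at $(\xi,\varphi)=(0,0)$;
\item since $\Delta_L\Q_L=\dd_L^*$, the operator $\Delta_L\delta$ is a genuine differential operator of order two.
\end{itemize}
So a single application of the proposition suffices, and Lemma~\ref{lem: aux lemma for invertibility of DA} is no longer needed.

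Your $\zeta$-operator equals $(\id+K)\circ p_{(\xi,\varphi)}^{-1}$, so its inverse is $p_{(\xi,\varphi)}s_{(\xi,\varphi)}$. Both routes therefore produce the same $V$; yours makes the analytic step rigorous as the proposition is stated.
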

\begin{proof}
Using Proposition~\ref{prop: DE}, for $(\xi,\varphi)\in \mathcal{U}_{\varepsilon}^{(k)}\times {\mathcal{W}}_{\varepsilon}^{(k)}$, $\alpha\in \Psi^{-1}\autex(E)$ we compute
\begin{align*}
    D_{\xi}A(\xi,\varphi)\alpha=&\Q_L D_{\xi}\mathcal{E}(\xi,\varphi)\alpha
    \\=&\Q_L\dd_L(b_{(\xi,\varphi)}\alpha)+\Q_L[\mathcal{E}(\xi,\varphi),b_{(\xi,\varphi)}\alpha]-\Q_L i_{\overline{c_{\xi}\alpha}}\left(\dd_L\mathcal{E}(\xi,\varphi)+\tfrac{1}{2}[\mathcal{E}(\xi,\varphi),\mathcal{E}(\xi,\varphi)]\right),
\end{align*}    
where as before $b_{(\xi,\varphi)}\alpha=a_{\mathcal{E}(\xi,\varphi)}c_{\xi}\alpha=c_{\xi}\alpha-\iota_{\overline{c_{\xi}\alpha}}\mathcal{E}(\xi,\varphi)$.

Now suppose that $c_{\xi}\alpha\in \dd_L^* \left(\Gamma(M,\Lambda^2 L^*)\right)$  and let $\eta\in \dd_L^* \left(\Gamma(M,\Lambda^2 L^*)\right)$ be such that
\begin{align*}
    \eta=\Q_L\dd_L a_{\mathcal{E}(\xi,\varphi)}c_{\xi}\alpha,
\end{align*}
i.e.~$\alpha=c_{\xi}^{-1}p_{(\xi,\varphi)}\eta$, where $p_{(\xi,\varphi)}$ is as in Lemma~\ref{lem: aux lemma for invertibility of DA}. If we can find $\eta\in \dd_L^* \left(\Gamma(M,\Lambda^2 L^*)\right)$ such that
\begin{align*}
    \eta+\Q_L[\mathcal{E}(\xi,\varphi),b_{(\xi,\varphi)}c_{\xi}^{-1}p_{(\xi,\varphi)}\eta]-\Q_L\iota_{\overline{p_{(\xi,\varphi)}\eta}}\left(\dd_L\mathcal{E}(\xi,\varphi)+\tfrac{1}{2}[\mathcal{E}(\xi,\varphi),\mathcal{E}(\xi,\varphi)]\right)=\beta
\end{align*}
this will provide us with a solution to $D_{\xi}A(\xi,\varphi)\alpha=\beta$. We observe by Proposition~\ref{prop: inverse family from elliptic}, that this equation
has a unique solution $\eta\in \dd_L^* \left(\Gamma(M,\Lambda^2 L^*)\right)$ for every $\beta\in \dd_L^* \left(\Gamma(M,\Lambda^2 L^*)\right)$, provided that $(\xi,\varphi)\in \mathcal{U}_{\varepsilon}^{(k)}\times \mathcal{W}_{\varepsilon}^{(k)}$ for small enough $\varepsilon>0$ and large enough $k$, and the solution map $s_{(\xi,\varphi)}\colon \dd_L^* \left(\Gamma(M,\Lambda^2 L^*)\right)\rightarrow \dd_L^* \left(\Gamma(M,\Lambda^2 L^*)\right)$ is smooth tame. This gives rise to a smooth tame family  of linear maps
\begin{align*}
    V(\xi,\varphi):=c_{\xi}^{-1}p_{(\xi,\varphi)}s_{(\xi,\varphi)}\colon \dd_L^* \left(\Gamma(M,\Lambda^2 L^*)\right)\longrightarrow \Psi^{-1}\autex(E),
\end{align*}
such that $D_{\xi}A(\xi,\varphi)V(\xi,\varphi)\beta=\beta$ for every $\beta\in \dd_L^* \left(\Gamma(M,\Lambda^2 L^*)\right)$.
\end{proof}

\begin{proof}[Proof of Proposition~\ref{prop: locally complete}]
By Lemma~\ref{lem: DA invertible, locally complete}, the assumptions of~\cite[Thm III.3.3.1]{hamilton82} (see also~\cite[Thm 3.20]{paperoncomplexdefs}) hold. Since $A(0,0)=0$, we conclude that there is a neighborhood $\mathcal{W}$ of $0$ in $\Gamma(M,\Lambda^2 L^*)$ and a neighborhood $\mathcal{U}$ of $0$ in $\Psi^{-1}\autex(E)$ such that for every $\varphi\in\mathcal{W}$ we can find some $\xi=\xi(\varphi)\in\mathcal{U}$ such that $\dd_L^*\mathcal{E}(\xi,\varphi)=0$. Moreover, the solution $\xi(\varphi)$ is defined by a smooth tame map. This finishes the proof of Proposition~\ref{prop: locally complete}.
\end{proof}

\begin{proof}[Proof of Theorem~\ref{thm: deformation theorem}]
By Corollary~\ref{cor: family of defs}, the family $\tilde{\mathcal{M}}=\{ \varphi(t): t\in \mathcal{W}_{\varepsilon}\}$ is a smooth family of generalized almost complex deformations. The integrable deformations are precisely those for which the analytic map $\phi(\varphi):=\HprojL[\varphi,\varphi]$ vanishes. They are contained in a neighborhood of 0 in $\tilde{\mathcal{V}}_1$. Moreover, by Proposition~\ref{prop: locally complete}, we know that any sufficiently small generalized complex deformation is equivalent to a generalized complex deformation in a neighborhood of 0 of $\tilde{\mathcal{V}}_1$. 

Furthermore, if $\phi\equiv 0$, then 
$$\mathcal{M}=\{ \varphi(t)\in\tilde{\mathcal{M}}:  \phi(\varphi(t))=0\}=\tilde{\mathcal{M}}$$
 is a smooth family of generalized complex deformations. Given any other smooth family of generalized complex deformations $\{\varphi'(t'):t'\in \mathcal{W}'
\}$, it follows again from Proposition~\ref{prop: locally complete} that (possibly after shrinking $\mathcal{W}'$) there is a family of exact autoequivalences $\{F_{t'}\}_{t'\in \mathcal{W}'}$, depending smoothly on $t'$, and a smooth map $\tau\colon \mathcal{W}'\rightarrow \mathcal{W}_{\varepsilon}$ such that for every $t'\in \mathcal{W}'$, we have
\begin{align*}
    F_{t'}\cdot\varphi'(t') =\varphi(\tau(t')).
\end{align*}
Hence, the family is locally complete.
\end{proof}

\section{Deformations of a class of generalized complex structures}\label{example of deformation}
Let $E=TM\oplus \mathcal{G}\oplus T^*M$ be a standard $\mathcal{G}$-flat Courant algebroid over a compact complex manifold $(M,J)$. We will furthermore assume that $H=0$. 

Let $I\in \End(\mathcal{G})$ such that $I^2=-\id$ and $$\langle I\cdot,\cdot\rangle_{\mathcal{G}}=-\langle \cdot,I\cdot\rangle_{\mathcal{G}}.$$ In particular, the $\pm\ii$-eigenbundles $\mathcal{G}^{1,0}$ and $\mathcal{G}^{0,1}$ are isotropic for the $\C$-linear extension of $\langle\cdot,\cdot\rangle_{\mathcal{G}}$. Moreover, suppose that $\nabla I=0$ for the flat connection $\nabla$ on $\mathcal{G}$ appearing in the Dorfman bracket of $E$. In particular $$\nabla_X\mathbf{r}\in \Gamma(M,\mathcal{G}^{1,0})\quad\forall \;X\in \X(M),\, \mathbf{r}\in \Gamma(M,\mathcal{G}^{1,0})$$
and similarly for $\mathcal{G}^{0,1}$.

Then we consider 
\begin{equation}\label{eq: example GC}
\mathcal{J}=\begin{pmatrix}
    -J &0 &0 \\
    0&-I&0\\
    0&0&J^*
\end{pmatrix}\in\End(E).    
\end{equation}
We claim that this defines a generalized complex structure. Clearly, it squares to $-\id$ and is skew with respect to $\langle\cdot,\cdot\rangle$. To see that it is integrable, note that the $\ii$-eigenbundle is given by
$$L=T^{0,1}M\oplus\mathcal{G}^{0,1}\oplus \Lambda^{1,0}T^*M.$$
Given $X+\mathbf{r}+\alpha\in\Gamma(M,L)$ and $Y+\mathbf{s}+\beta\in \Gamma(M,L)$, we compute 
\begin{align*}
    [X+\mathbf{r}+\alpha,Y+\mathbf{s}+\beta]&=\mathcal{L}_X Y+\nabla_X\mathbf{s}-\nabla_Y\mathbf{r}+\mathcal{L}_X\beta-\iota_Y\dd\alpha+2\langle\mathbf{s},\nabla\mathbf{r}\rangle_{\mathcal{G}}\\
    &=\mathcal{L}_X Y+\nabla_X\mathbf{s}-\nabla_Y\mathbf{r}+\mathcal{L}_X\beta-\iota_Y\dd\alpha.
\end{align*}
Hence $[X+\mathbf{r}+\alpha,Y+\mathbf{s}+\beta]\in\Gamma(M,L)$, using that $J$ is integrable and $\mathcal{G}^{0,1}$ is parallel.\footnote{More generally, the generalized almost complex structure in \eqref{eq: example GC} is integrable if and only if $J$ is integrable and $\nabla_X \mathbf{r}\in \Gamma(M,\mathcal{G}^{0,1})$ for $X\in \Gamma(M,T^{0,1}M)$ and $\mathbf{r}\in\Gamma(M,\mathcal{G}^{0,1})$.}

Next, we would like to decribe the Lie algebroid complex of $L$. First of all we observe that since $\nabla$ is flat and preserves $\mathcal{G}^{1,0}$, $\mathcal{G}^{1,0}$ is a holomorphic vector bundle with Dolbeault operator determined by $\nabla$. Since
$$L^*\cong\bar{L}=T^{1,0}M\oplus \mathcal{G}^{1,0}\oplus \Lambda^{0,1}T^*M,$$
we may identify $$\Gamma(M,\Lambda^kL^*)\cong\bigoplus_{p+q=k}\Omega^{0,p}(M,\Lambda^q(T^{1,0}M\oplus \mathcal{G}^{1,0})).$$
We claim that the Lie algebroid differential is given by the Dolbeault operator of $\Lambda^q(T^{1,0}M\oplus \mathcal{G}^{1,0})$. It is enough to check this on functions and sections of $\Gamma(M,L^*)$. For $f\in C^{\infty}(M,\C)$, $Y+\mathbf{s}+\beta\in \Gamma(M,L)$, we find
$$ \dd_L f(Y+\mathbf{s}+\beta)=Y[f]=\dbar f(Y).$$
For $X+\mathbf{r}+\alpha\in\Gamma(M,L^*)$ and $Y+\mathbf{s}+\beta$, $Z+\mathbf{t}+\gamma\in \Gamma(M,L)$, we have
\begin{align*}
    &\dd_L(X+\mathbf{r}+\alpha)(Y+\mathbf{s}+\beta,Z+\mathbf{t}+\gamma)\\&\quad=Y\langle X+\mathbf{r}+\alpha,Z+\mathbf{t}+\gamma\rangle-Z\langle X+\mathbf{r}+\alpha,Y+\mathbf{s}+\beta\rangle\\ &\qquad-\langle X+\mathbf{r}+\alpha,[Y+\mathbf{s}+\beta,Z+\mathbf{t}+\gamma]\rangle\\
    &\quad=Y\left(\frac{1}{2}(\iota_X\gamma+\iota_Z\alpha)+\langle \mathbf{r},\mathbf{t}\rangle_{\mathcal{G}}\right)-Z\left(\frac{1}{2}(\iota_X\beta+\iota_Y\alpha)+\langle\mathbf{r},\mathbf{s}\rangle_{\mathcal{G}}\right)\\ &\qquad -\langle X+\mathbf{r}+\alpha,\mathcal{L}_Y Z+\nabla_Y\mathbf{t}-\nabla_Z\mathbf{s}+\mathcal{L}_Y\gamma-\iota_Z\dd \beta\rangle\\
    &\quad=\frac{1}{2}\left(\mathcal{L}_Y\iota_X\gamma+\mathcal{L}_Y\iota_Z\alpha-\mathcal{L}_Z\iota_X\beta-\mathcal{L}_Z\iota_Y\alpha-\iota_X\mathcal{L}_Y\gamma+\iota_X\iota_Z\dd\beta-\iota_{\mathcal{L}_Y Z}\alpha\right)\\
    &\qquad +Y\langle \mathbf{r},\mathbf{t}\rangle_{\mathcal{G}}-Z\langle\mathbf{r},\mathbf{s}\rangle_{\mathcal{G}}-\langle \mathbf{r},\nabla_Y\mathbf{t}\rangle+\langle\mathbf{r},\nabla_Z\mathbf{s}\rangle_{\mathcal{G}}.
\end{align*}
Note that since $Z\in \Gamma(M,T^{0,1}M)$ and $\beta\in \Omega^{1,0}(M)$, we have
$$\iota_Z\dd\beta=\mathcal{L}_Z\beta-\dd\iota_Z\beta=\mathcal{L}_Z\beta.$$
Using this, as well as the identity $[\mathcal{L}_{X_1},\iota_{X_2}]=\iota_{\mathcal{L}_{X_1}X_2}$ and the fact that the scalar product on $\mathcal{G}$ is parallel for $\nabla$, we find
\begin{align*}
    \dd_L(X+\mathbf{r}+\alpha)(Y+\mathbf{s}+\beta,Z+\mathbf{t}+\gamma)&=\frac{1}{2}\left(\iota_{\mathcal{L}_Y X}\gamma-\iota_{\mathcal{L}_Z X}\beta\right)+\langle\nabla_Y\mathbf{r},\mathbf{t}\rangle_{\mathcal{G}}-\langle \nabla_Z\mathbf{r},\mathbf{s}\rangle_{\mathcal{G}}\\
    &=\langle\dbar_{T^{1,0}M}X(Y),\gamma\rangle-\langle \dbar_{T^{1,0}M} X(Z),\beta\rangle\\ &\quad +\langle\nabla\mathbf{r}(Y),\mathbf{t}\rangle-\langle\nabla\mathbf{r}(Z),\mathbf{s}\rangle\\
    &=(\dbar_{T^{1,0}M}X+\nabla\mathbf{r})(Y+\mathbf{s}+\beta,Z+\mathbf{t}+\gamma),
\end{align*}
where we interpreted the vector valued $(0,1)$-forms as sections of $\Lambda^2L^*$. 

Compared to the deformation space of the generalized complex structure on the generalized tangent bundle of a compact complex manifold discussed in~\cite[Sect.\ 5.3]{gualtieri2011}, which was presented as the sum of $H^0(M, \Lambda^2T^{1,0})$, 
$H^1(M,T^{1,0})$ and $H^2(M,\mathcal O)$, there are three additional components in $H^2(M,L)$: 
$$H^0(M,\Lambda^2\mathcal{G}^{1,0}),\quad H^0(M,T^{1,0}\wedge \mathcal{G}^{1,0}),\quad H^1(M,\mathcal{G}^{1,0}).$$  
We will consider deformations of $\mathcal{J}$ in a single one of such components, respectively, with all other components set to zero. To this end, we fix a parallel frame $\{\mathbf{e}_i\}$ of $\mathcal{G}^{1,0}$. Then $[\mathbf{e}_i,\mathbf{e}_j]=0$ and $\dd_L\mathbf{e}_i=\nabla\mathbf{e}_i=0$ for all $i,j$.

Consider now a deformation $\varphi\in \Gamma(M,\Lambda^2 \mathcal{G}^{1,0})$. Then $\varphi=\sum_{i<j}\varphi_{ij}\mathbf{e}_i\wedge \mathbf{e}_j$ for some $\varphi_{ij}\in C^{\infty}(M,\C)$. Since for all $f\in C^{\infty}(M,\C)$
$$[\mathbf{e}_i,f]=-[f,\mathbf{e}_i]=\pi(\mathbf{e}_i)[f]=0,$$
it follows automatically that $[\varphi,\varphi]=0$. Hence, $\varphi$ is integrable if and only if
\begin{align*}
    0=\dd_L\varphi=\sum_{i<j}\dbar\varphi_{ij}\,\mathbf{e}_i\wedge\mathbf{e}_j,
\end{align*}
i.e.\ if and only if $\dbar\varphi_{ij}=0$ for all $i,j$, i.e.\ $\varphi\in H^0(M,\Lambda^2\mathcal{G}^{1,0})$.

If $\varphi\in \Gamma(M,T^{1,0}M\wedge \mathcal{G}^{1,0})$ we may write $\varphi=\sum_i X_i\wedge \mathbf{e}_i$ for some $X_i\in\Gamma(M,T^{1,0}M)$. Then we compute
\begin{align*}
    \dd_L\varphi&=\sum_i(\dbar_{T^{1,0}M}X_i)\wedge \mathbf{e}_i\quad \in \Omega^{0,1}(M,T^{1,0}M\wedge \mathcal{G}^{1,0})\\
    [\varphi,\varphi]&=\sum_{i,j}[X_i\wedge \mathbf{e}_i,X_j\wedge \mathbf{e}_j]\\
    &=\sum_{i,j}[X_i,X_j]\wedge \mathbf{e}_i\wedge \mathbf{e}_j \quad \in \Omega^0(M,T^{1,0}M\wedge \mathcal{G}^{1,0}\wedge\mathcal{G}^{1,0}).
\end{align*}
Hence, $\varphi$ is integrable if and only if $$ \dbar_{T^{1,0}M}X_i=0,\quad \mathcal{L}_{X_i}X_j=0\qquad \forall\; i,j.$$

Finally, suppose $\varphi\in \Omega^{0,1}(M,\mathcal{G}^{1,0})$. Then $\varphi=\sum \varphi_i\wedge \mathbf{e}_i$ for some $\varphi_i\in \Omega^{0,1}(M)$. Then $[\varphi,\varphi]=0$ follows automatically and the Maurer--Cartan equation becomes
$$0=\dd_L\varphi=\sum_i(\dbar\varphi_i)\wedge \mathbf{e}_i,$$
i.e.\ $\varphi\in H^{1}(M,\mathcal{G}^{1,0})$.
\appendix

\section{The Lie algebroid differential and Schouten bracket}\label{appendix proof of lemma}
In this section, we will prove the identities for the Lie algebroid differential and the Schouten bracket stated in Lemma~\ref{lem: different formulas for dL and brackets} for the Lie bialgebroid of a generalized complex structure.

For the first equation, we plug in the definition of $\dd_L$ and compute
    \begin{align*}
        \dd_L\xi(u,v)&=\pi(u)\xi(v)-\pi(v)\xi(u)-\xi([u,v])\\
        &=\pi(u)\langle\xi,v\rangle-\pi(v)\langle\xi,u\rangle -\langle\xi,[u,v]\rangle\\
        &=\langle [u,\xi],v\rangle-\pi(v)\langle\xi,u\rangle\\&=-\langle[\xi,u],v\rangle,
    \end{align*}
    where we used the second and the third axiom in Definition~\ref{def: courant algebroid}. 
    
    Similarly, for the second identity, we compute
    \begin{align*}
        \dd_L\varphi(u,v,w)&=\pi(u)\left(\varphi(v,w)\right)-\pi(v)\left(\varphi(u,w)\right)+\pi(w)\left(\varphi(u,v)\right)\\
        &\quad -\varphi([u,v],w)+\varphi([u,w],v)-\varphi([v,w],u)\\
        &=\pi(u)\langle\iota_v\varphi,w\rangle-\pi(v)\langle\iota_u\varphi,w\rangle+\pi(w)\langle\iota_u\varphi,v\rangle\\
        &\quad -\langle\iota_{[u,v]}\varphi,w\rangle-\langle\iota_{v}\varphi,[u,w]\rangle+\langle\iota_{u}\varphi,[v,w]\rangle\\
        &=\langle [u,\iota_v\varphi]+[\iota_u\varphi,v]-\iota_{[u,v]}\varphi,w\rangle.
    \end{align*}

    Note that to show the third equation, we may assume that $\varphi=\eta\wedge\psi$ for some $\eta,\psi\in \Gamma(U,L^*)$. Using the definition of the Schouten bracket we compute
    \begin{align*}
        [\eta\wedge \psi,\xi](u,v)&=\left([\eta,\xi]\wedge \psi-[\psi,\xi]\wedge\eta\right)(u,v)\\
        &=\langle [\eta,\xi],u\rangle\langle\psi,v\rangle-\langle[\eta,\xi],v\rangle\langle\psi,u\rangle\\
        &\quad -\langle[\psi,\xi],u\rangle\langle\eta,v\rangle+\langle[\psi,\xi],v\rangle\langle\eta,u\rangle\\
        &=\left(-\pi(\xi)\langle\eta,u\rangle+\langle\eta,[\xi,u]\rangle\right)\langle\psi,v\rangle+\left(\pi(\xi)\langle\eta,v\rangle-\langle\eta,[\xi,v]\rangle\right)\langle\psi,u\rangle\\
        &\quad+\left(\pi(\xi)\langle\psi,u\rangle-\langle\psi,[\xi,u]\rangle\right)\langle\eta,v\rangle+\left(-\pi(\xi)\langle\psi,v\rangle+\langle\psi,[\xi,v]\rangle\right)\langle\eta,u\rangle\\
        &=\pi(\xi)\left(-\langle\eta,u\rangle\langle\psi,v\rangle+\langle\eta,v\rangle\langle\psi,u\rangle\right)+\eta\wedge\psi (\pr_L[\xi,u],v)-\eta\wedge\psi(\pr_L[\xi,v],u).
    \end{align*}
    Observing that
    \begin{align*}
        \langle[\iota_u(\eta\wedge\psi),\xi],v\rangle&=\left\langle \left[\langle\eta,u\rangle\psi-\langle\psi,u\rangle\eta,\xi\right],v\right\rangle\\
        &=-\pi(\xi)\left\langle\langle\eta,u\rangle\psi-\langle\psi,u\rangle\eta,v\right\rangle+\left\langle\langle\eta,u\rangle\psi-\langle\psi,u\rangle\eta,[\xi,v]\right\rangle\\
        &=\pi(\xi)\left(-\langle\eta,u\rangle\langle\psi,v\rangle+\langle\eta,v\rangle\langle\psi,u\rangle\right)-\eta\wedge\psi(\pr_L[\xi,v],u)
    \end{align*}
    we obtain the third identity.

    For the last identity we may again assume that $\varphi=\eta\wedge \psi$, $\varphi'=\eta'\wedge\psi'$ for $\eta,\eta',\psi,\psi'\in\Gamma(M,L^*)$. We start computing
    \begin{align*}
        &\left\langle \left[\iota_u(\eta\wedge \psi),\iota_v(\eta'\wedge\psi')\right],w\right\rangle\\
        &=\left\langle \left[\langle\eta,u\rangle\psi-\langle\psi,u\rangle\eta,\langle\eta',v\rangle\psi'-\langle\psi',v\rangle\eta'\right],w\right\rangle\\
        &=\pi(\psi)\left(\langle\eta',v\rangle\right)\langle\psi',w\rangle\langle\eta,u\rangle-\pi(\psi)\left(\langle\psi',v\rangle\right)\langle\eta,u\rangle\langle\eta',w\rangle\\
        &\quad-\pi(\eta)\left(\langle\eta',v\rangle\right)\langle\psi',w\rangle\langle\psi,u\rangle+\pi(\eta)\left(\langle\psi',v\rangle\right)\langle\eta',w\rangle\langle\psi,u\rangle\\
        &\quad-\pi(\psi')\left(\langle\eta,u\rangle\right)\langle\psi,w\rangle\langle\eta',v\rangle+\pi(\psi')\left(\langle\psi,u\rangle\right)\langle\eta,w\rangle\langle\eta',v\rangle\\
        &\quad+\pi(\eta')\left(\langle\eta,u\rangle\right)\langle\psi,w\rangle\langle\psi',v\rangle-\pi(\eta')\left(\langle\psi,u\rangle\right)\langle\eta,w\rangle\langle\psi',v\rangle\\
        &\quad+\langle \eta,u\rangle\langle\eta',v\rangle\left\langle [\psi,\psi'],w\right\rangle-\langle\eta,u\rangle\langle\psi',v\rangle\left\langle[\psi,\eta'],w\right\rangle\\
        &\quad -\langle\psi,u\rangle\langle\eta',v\rangle\left\langle[\eta,\psi'],w\right\rangle+\langle \psi,u\rangle\langle \psi',v\rangle\left\langle[\eta,\eta'],w\right\rangle,
    \end{align*}
    where we used the Leibniz rule for the bracket (Proposition~\ref{prop: extra properties of dorfman bracket}) in combination with the skew-symmetry of the bracket when restricted to sections of the isotropic bundle $\bar L$. 
    Using the second axiom in Definition~\ref{def: courant algebroid}, we find
    \begin{align*}
         &\left\langle \left[\iota_u(\eta\wedge \psi),\iota_v(\eta'\wedge\psi')\right],w\right\rangle\\
         &=-\left\langle [\psi,\psi'],v\right\rangle\langle\eta,u\rangle \langle\eta',w\rangle+\left\langle[\psi',\psi],u\right\rangle\langle\eta,w\rangle\langle\eta',v\rangle+\left\langle[\psi,\psi'],w\right\rangle\langle\eta,u\rangle\langle\eta',v\rangle\\
         &\quad+\left\langle[\psi,\eta'],v\right\rangle\langle\psi',w\rangle\langle\eta,u\rangle-\left\langle [\eta',\psi],u\right\rangle\langle\eta,w\rangle\langle\psi',v\rangle-\left\langle[\psi,\eta'],w\right\rangle\langle\eta,u\rangle\langle\psi',v\rangle\\
         &\quad +\left\langle[\eta,\psi'],v\right\rangle\langle\eta',w\rangle\langle\psi,u\rangle-\left\langle [\psi',\eta],u\right\rangle \langle\psi,w\rangle\langle \eta',v\rangle-\left\langle[\eta,\psi'],w\right\rangle\langle\psi,u\rangle\langle\eta',v\rangle\\
         &\quad-\left\langle[\eta,\eta'],v\right\rangle\langle\psi',w\rangle\langle\psi,u\rangle+\left\langle[\eta',\eta],u\right\rangle\langle\psi,w\rangle\langle\psi',v\rangle+\left\langle [\eta,\eta'],w\right\rangle\langle\psi,u\rangle\langle\psi',v\rangle\\
         &\quad +\left\langle \eta',[\psi,v]\right\rangle \langle\psi',w\rangle\langle \eta,u\rangle-\left\langle \psi',[\psi,v]\right\rangle\langle\eta,u\rangle\langle\eta',w\rangle\\
         &\quad-\left\langle \eta',[\eta,v]\right\rangle\langle\psi',w\rangle\langle\psi,u\rangle+\left\langle\psi',[\eta,v]\right\rangle\langle\eta',w\rangle\langle\psi,u\rangle\\
         &\quad -\left\langle\eta,[\psi',u]\right\rangle \langle\psi,w\rangle \langle\eta',v\rangle+\left\langle \psi,[\psi',u]\right\rangle\langle\eta,w\rangle\langle\eta',v\rangle\\
        &\quad+\left\langle\eta,[\eta',u]\right\rangle\langle\psi,w\rangle\langle\psi',v\rangle-\left\langle\psi,[\eta',u]\right\rangle\langle\eta,w\rangle\langle\psi',v\rangle.
    \end{align*}
    Therefore,
    \begin{align*}
    &\left\langle \left[\iota_u(\eta\wedge \psi),\iota_v(\eta'\wedge\psi')\right],w\right\rangle-\left\langle \left[\iota_v(\eta\wedge \psi),\iota_u(\eta'\wedge\psi')\right],w\right\rangle\\
    &=\left([\psi,\psi']\wedge\eta\wedge\eta'-[\psi,\eta']\wedge\eta\wedge\psi'-[\eta,\psi']\wedge\psi\wedge\eta'+[\eta,\eta']\wedge\psi\wedge\psi'\right)(u,v,w)\\
    &\quad +\langle\psi',w\rangle\left(\langle\eta',[\psi,v]\rangle\langle\eta,u\rangle-\langle\eta',[\psi,u]\rangle\langle\eta,v\rangle-\langle\eta',[\eta,v]\rangle\langle\psi,u\rangle+\langle\eta',[\eta,u]\rangle\langle\psi,v\rangle\right)\\
    &\quad -\langle\eta',w\rangle\left(\langle\psi',[\psi,v]\rangle\langle\eta,u\rangle-\langle\psi',[\psi,u]\rangle\langle\eta,v\rangle-\langle\psi',[\eta,v]\rangle\langle\psi,u\rangle+\langle\psi',[\eta,u]\rangle\langle\psi,v\rangle\right)\\
    &\quad +\langle\psi,w\rangle\left(\langle\eta,[\psi',v]\rangle\langle\eta',u\rangle-\langle\eta,[\psi',u]\rangle\langle\eta',v\rangle-\langle\eta,[\eta',v]\rangle\langle\psi',u\rangle+\langle\eta,[\eta',u]\rangle\langle\psi',v\rangle\right)\\
    &\quad -\langle\eta,w\rangle\left(\langle\psi,[\psi',v]\rangle\langle\eta',u\rangle-\langle\psi,[\psi',u]\rangle\langle\eta',v\rangle-\langle\psi,[\eta',v]\rangle\langle\psi',u\rangle+\langle\psi,[\eta',u]\rangle\langle\psi',v\rangle\right).
    \end{align*}

    On the other hand, for any $\epsilon\in\Gamma(M,L^*)$, we have 
    \begin{align*}
        &\iota_{\pr_L\left( [\iota_u(\eta\wedge\psi),v]-[v,\iota_u(\eta\wedge\psi)]\right)}\epsilon\\
        &=\left\langle\epsilon,\left[\langle\eta,u\rangle\psi,v\right]-\left[\langle\psi,u\rangle\eta,v\right]\right\rangle -\left\langle\epsilon,\left[v,\langle\eta,u\rangle\psi\right]-\left[v,\langle\psi,u\rangle\eta\right]\right\rangle \\
        &=\pi(\epsilon)\left(\langle\eta,u\rangle\langle\psi,v\rangle-\langle\psi,u\rangle\langle\eta,v\rangle\right)-2\left\langle\epsilon,\left[v,\langle\eta,u\rangle\psi\right]-\left[v,\langle\psi,u\rangle\eta\right]\right\rangle\\
        &=\pi(\epsilon)\left(\langle\eta,u\rangle\langle\psi,v\rangle-\langle\psi,u\rangle\langle\eta,v\rangle\right)-2\langle\eta,u\rangle\left\langle\epsilon,\left[v,\psi\right]\right\rangle+2\langle\psi,u\rangle\left\langle\epsilon,\left[v,\eta\right]\right\rangle\\
        &=\pi(\epsilon)\left(\langle\eta,u\rangle\langle\psi,v\rangle-\langle\psi,u\rangle\langle\eta,v\rangle\right)-2\langle\eta,u\rangle\,\pi(\epsilon)\left(\langle v,\psi\rangle\right)+2\langle\psi,u\rangle\,\pi(\epsilon)\left(\langle v,\eta\rangle\right)\\&\quad+2\langle\eta,u\rangle\left\langle\epsilon,\left[\psi,v\right]\right\rangle-2\langle\psi,u\rangle\left\langle\epsilon,\left[\eta,v\right]\right\rangle.
    \end{align*}
    It follows that
    \begin{align*}
        &\iota_{\pr_L \left( [\iota_u(\eta\wedge\psi),v]-[v,\iota_u(\eta\wedge\psi)]-[\iota_v(\eta\wedge\psi),u]+[u,\iota_v(\eta\wedge\psi)]\right)}\epsilon\\
        &=2\langle\eta,u\rangle\left\langle\epsilon,\left[\psi,v\right]\right\rangle-2\langle\psi,u\rangle\left\langle\epsilon,\left[\eta,v\right]\right\rangle-2\langle\eta,v\rangle\left\langle\epsilon,\left[\psi,u\right]\right\rangle+2\langle\psi,v\rangle\left\langle\epsilon,\left[\eta,u\right]\right\rangle
    \end{align*}
    and hence
    \begin{align*}
     &\tfrac{1}{2}\left\langle\iota_{\pr_L \left( [\iota_u(\eta\wedge\psi),v]-[v,\iota_u(\eta\wedge\psi)]-[\iota_v(\eta\wedge\psi),u]+[u,\iota_v(\eta\wedge\psi)]\right)}(\eta'\wedge \psi') ,w\right\rangle\\
     =&\left(\langle\eta',[\psi,v]\rangle\langle\eta,u\rangle-\langle\eta',[\psi,u]\rangle\langle\eta,v\rangle-\langle\eta',[\eta,v]\rangle\langle\psi,u\rangle+\langle\eta',[\eta,u]\rangle\langle\psi,v\rangle\right)\langle\psi',w\rangle\\
    &\quad -\left(\langle\psi',[\psi,v]\rangle\langle\eta,u\rangle-\langle\psi',[\psi,u]\rangle\langle\eta,v\rangle-\langle\psi',[\eta,v]\rangle\langle\psi,u\rangle+\langle\psi',[\eta,u]\rangle\langle\psi,v\rangle\right)\langle\eta',w\rangle.
    \end{align*}
    Putting everything together, we obtain
    \begin{align*}
        &\langle [\iota_u\varphi,\iota_v\varphi']-[\iota_v\varphi,\iota_u\varphi'],w\rangle\\
        &+\tfrac{1}{2}\langle \iota_{\pr_L\left([v,\iota_u\varphi]-[\iota_u\varphi,v]-[u,\iota_v\varphi]+ [\iota_v\varphi,u]\right)}\varphi',w\rangle\\
        &+\tfrac{1}{2}\langle \iota_{\pr_L\left( [v,\iota_u\varphi']-[\iota_u\varphi',v]-[u,\iota_v\varphi']+ [\iota_v\varphi',u]\right)}\varphi,w\rangle\\
        &=\left([\psi,\psi']\wedge\eta\wedge\eta'-[\psi,\eta']\wedge\eta\wedge\psi'-[\eta,\psi']\wedge\psi\wedge\eta'+[\eta,\eta']\wedge\psi\wedge\psi'\right)(u,v,w)\\
        &=[\varphi,\varphi'](u,v,w).
    \end{align*}

    \section{Elements of Hamilton--Nash--Moser theory}\label{appendix HNM}
    The main analytic tool for our proof of the local completeness
of the deformation family in Section \ref{defothm:sec} is Hamilton--Nash--Moser theory. Most of the relevant notions and results required are reviewed in the authors' previous work~\cite{paperoncomplexdefs} in the preliminary Section 3. In the following, we will summarize the notions and results used in the present paper, which are not included in~\cite{paperoncomplexdefs}. For a complete introduction to the topic, we refer to~\cite{hamilton82}.

In the following, we will denote by $\|\cdot\|_{k,2}$ the $W^{k,2}$-Sobolev norm on the space of sections to some vector bundle over a compact manifold (with respect to some choice of bundle metric, connection and Riemannian metric). Moreover, we will denote by $\|\cdot\|_{k,\infty}$ the $C^k$-norm on the space of sections to some vector bundle.

In analogy with the bracket on smooth $(0,p)$-forms with values in $T^{1,0}M$ on a complex manifold (cf.~\cite[Ex 3.8]{paperoncomplexdefs}) we find that the Schouten bracket on a Lie algebroid $L$ over a compact manifold is tame linear in both arguments separately:
For $\varphi\in\Gamma(M,\Lambda^k L)$ and $\psi\in \Gamma(M,\Lambda^l L)$, we have $\Vert [\varphi,\psi]\Vert_{n,2}\leq C\Vert \varphi\Vert_{n+1,2}\Vert\psi\Vert_{n+1,2}$ for $n\geq \dim M/2+1$. Similarly, we have $\Vert [\alpha,\beta]\Vert_{n,\infty}\leq C\Vert \alpha\Vert_{n+1,\infty}\Vert\beta\Vert_{n+1,\infty}$ for all $n\in \N_0$. 

Moreover, generalizing the special case discussed in~\cite[Ex 3.8]{paperoncomplexdefs}, we note that the Green's operator $\G$ of an elliptic linear partial differential operator of order $r$ on a vector bundle $V$ over a compact manifold satisfies $\|\G v\|_{k+r,2}\leq C\|v\|_{k,2}$ and is therefore tame linear \cite[Thm 7.4 in appendix]{kodairabook}. In particular, the operator $\Q_L=\dd_L^*\G_{L}$ defined in Section~\ref{sect: LA of GC} is tame linear as it satisfies $\Vert \Q_{L} \xi\Vert_{n,2}\leq C\Vert \xi\Vert_{n-1,2}$ for every $\xi\in\Gamma(M,\Lambda^k L^*)$.

The following is a generalization of~\cite[Prop 3.22]{paperoncomplexdefs}:
\begin{proposition}
\label{prop: inverse family from elliptic}
    Let $\mathcal{U}\subset \mathcal{F}$ be an open neighborhood of $0$ in a tame Fréchet space $\mathcal{F}$. Let $M$ be a compact   manifold,  
    $\dd_i : \Gamma(M,E_i) \to \Gamma(M,E_{i+1})$ an elliptic complex such that $\dd_i$ is of order one for every $i$ and for a fixed $j\in \mathbb{Z}$ let 
    $$A\colon \mathcal{U}\times \Gamma(M,E_j)\longrightarrow \Gamma(M,E_j)$$
    be a smooth tame family of linear maps such that
    $$A(\varphi)\alpha=\alpha+\delta(\varphi)\alpha,$$
    where $\delta(\varphi)$ is such that $\alpha\mapsto\Delta_j \delta(\varphi)\alpha$ is a linear partial differential operator of order $\leq2$, $\delta(\varphi)\alpha\in \im \dd_j^*$ for every $(\varphi,\alpha)\in \mathcal{U}\times \Gamma(M,E_j)$ and such that $\delta(0)=0$. Then there is an open neighborhood $\mathcal{U}'\subset \mathcal{U}$ of $0$ such that the restriction
    $$A(\varphi)\colon \left(\im \dd_j^*\cap \Gamma(M,E_j)\right)\longrightarrow \left(\im \dd_j^*\cap \Gamma(M,E_j)\right)$$
    is invertible for every $\varphi \in \mathcal{U}'$ and the family of inverses is smooth tame. It follows that $A(\varphi)$ is invertible for every $\varphi\in \mathcal{U}'$ and the family of inverses is smooth tame.
\end{proposition}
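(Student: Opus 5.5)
The plan is to reduce the invertibility of $A(\varphi)$ on $\im\dd_j^*$ to a perturbation of the identity that can be inverted by an elliptic estimate, and then to get tame estimates for the family of inverses. Set $W:=\im\dd_j^*\cap\Gamma(M,E_j)$. This is a tame direct summand of $\Gamma(M,E_j)$, because the projection onto it, $\dd_j^*\G_j\dd_j=\Q_j\dd_j$, is tame linear by the Green's operator estimate above. Since $\delta(\varphi)\alpha\in W$, the map $A(\varphi)$ sends $W$ into $W$. On $W$ we have $\alpha=\G_j\Delta_j\alpha$, hence
\[
\delta(\varphi)\alpha=\G_j\bigl(\Delta_j\delta(\varphi)\alpha\bigr)=\G_j P_\varphi\alpha ,
\]
where $P_\varphi:=\Delta_j\circ\delta(\varphi)$ is a linear differential operator of order at most $2$. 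By hypothesis $P_\varphi$ depends smoothly tamely on $\varphi$ and satisfies $P_0=0$.

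First I would establish invertibility at a fixed Sobolev level $n\ge\dim M/2+2$. The operator $\G_jP_\varphi$ has order $\le 0$, so
\[
\|\G_jP_\varphi\alpha\|_{n,2}\le C\,\|P_\varphi\|\,\|\alpha\|_{n,2},
\]
where the coefficients of $P_\varphi$ are controlled in some $C^{m}$-norm by $\|\varphi\|$ in a fixed norm of $\mathcal F$, and they are small since $P_0=0$. Choose $\mathcal U'$ so that this operator norm is $<1/2$ on the Sobolev completion $W^{n,2}$ of $W$. A Neumann series then inverts $A(\varphi)=\mathrm{id}+\G_jP_\varphi$ on that completion. Smooth elements are mapped to smooth elements because the same bound holds at every higher level once the lower-level smallness is fixed, by the bootstrapping argument below. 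This also gives injectivity on $W$ and surjectivity onto $W$.

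Next come the tame estimates for the inverse. I would reproduce the argument of \cite[Prop 3.22]{paperoncomplexdefs}. From $\beta=\alpha+\G_jP_\varphi\alpha$ together with the interpolation/Moser-type estimate
\[
\|\G_jP_\varphi\alpha\|_{k,2}\le C_k\bigl(\|\varphi\|_{n_0}\|\alpha\|_{k,2}+\|\varphi\|_{k+r}\|\alpha\|_{n,2}\bigr)
\]
(valid since $\varphi\mapsto P_\varphi$ is smooth tame of order $2$ and $\G_j$ gains $2$), absorbing the first term for $\|\varphi\|_{n_0}$ small yields
\[
\|A(\varphi)^{-1}\beta\|_{k,2}\le C_k\bigl(\|\beta\|_{k,2}+\|\varphi\|_{k+r}\|\beta\|_{n,2}\bigr),
\]
which is a tame estimate. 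Smoothness of the family of inverses then follows from the standard fact \cite[Thm II.3.1.1]{hamilton82} that if a smooth tame family of linear maps has a tame inverse family, that inverse family is smooth tame, by the formula $D(A^{-1})=-A^{-1}(DA)A^{-1}$.

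The last sentence of the statement I read as concerning $A(\varphi)$ on all of $\Gamma(M,E_j)$. Decompose $\alpha=\alpha_0+\alpha_1$ with $\alpha_1\in W$ and $\alpha_0\in\ker\dd_j^*$, the latter being a tame complement. Then $A(\varphi)\alpha=\alpha_0+\bigl(\alpha_1+\delta(\varphi)\alpha_0+\delta(\varphi)\alpha_1\bigr)$, which is block triangular. It is inverted by $\beta\mapsto\beta_0+(A(\varphi)|_W)^{-1}\bigl(\beta_1-\delta(\varphi)\beta_0\bigr)$, and this is again smooth tame.

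The main obstacle I expect is the uniform elliptic bound on $\G_jP_\varphi$: one has to make precise that $P_\varphi$ is a differential operator whose coefficients are tame in $\varphi$ with $P_0=0$, so that its smallness is controlled by a fixed low norm of $\varphi$.
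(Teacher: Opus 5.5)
Several parts of your reduction are correct. $A(\varphi)$ preserves $W=\im\dd_j^*$. On $W$ one has $\delta(\varphi)=\G_jP_\varphi$. The Neumann series inverts $\mathrm{id}+\G_jP_\varphi$ on the $W^{n,2}$-closure of $W$ for $\varphi$ in a fixed neighborhood. Your block-triangular extension to all of $\Gamma(M,E_j)$ is also correct. (The paper's own proof only defers to \cite[Prop 3.22]{paperoncomplexdefs}.)

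The gap is the passage from the single level $n$ to all levels $k$. It is not the obstacle you flag; tameness of the coefficients of $P_\varphi$ in $\varphi$ is routine.
\begin{itemize}
\item Absorbing $C_k\|\varphi\|_{n_0}\|\alpha\|_{k,2}$ requires $\|\varphi\|_{n_0}<1/(2C_k)$. Nothing keeps $C_k$ bounded in $k$: it contains the level-$k$ bound for $\G_j$ and the Moser constants. These conditions therefore cut out neighborhoods that shrink as $k\to\infty$, and their intersection is not open in $\mathcal F$. A tame estimate, however, must hold for all $k$ on one fixed $\mathcal U'$.
\item The absorbed inequality is only an a priori estimate, since it presupposes $\|\alpha\|_{k,2}<\infty$. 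It therefore cannot show that your $W^{n,2}$-solution is smooth. Redoing the Neumann series at level $k$ instead again costs $k$-dependent smallness.
\end{itemize}
So neither the claim that smooth elements go to smooth elements nor surjectivity of $A(\varphi)|_W$ onto the smooth space $W$ is established on a fixed neighborhood.

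The missing idea is ellipticity, which is what the hypothesis on $\Delta_j\delta(\varphi)$ is for. Since $\Delta_j$ is injective on $W$ and $A(\varphi)\alpha,\beta\in W$, the equation $A(\varphi)\alpha=\beta$ with $\alpha\in W$ is equivalent to $(\Delta_j+P_\varphi)\alpha=\Delta_j\beta$. This identity persists for your $W^{n,2}$-solution by continuity, because the harmonic part of $P_\varphi\alpha=\Delta_j\delta(\varphi)\alpha$ vanishes for smooth $\alpha$. As $P_0=0$ and the coefficients of $P_\varphi$ depend continuously on $\varphi$, one neighborhood defined by a single low norm makes $\Delta_j+P_\varphi$ uniformly elliptic, with uniformly bounded low-order coefficient norms. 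Elliptic regularity then makes the $W^{n,2}$-solution smooth. The tame elliptic estimates (Part II of \cite{hamilton82}) give, for all $k\geq n$,
\[
\|\alpha\|_{k,2}\le C_k\left(\|\Delta_j\beta\|_{k-2,2}+(1+\|\varphi\|_{k+r})\|\alpha\|_{n,2}\right),
\]
with $C_k$ uniform on that fixed neighborhood. The reason is that there the lower-order term is absorbed by interpolation, $\|\alpha\|_{k-1,2}\le\epsilon\|\alpha\|_{k,2}+C_\epsilon\|\alpha\|_{n,2}$, and $\epsilon$ may depend on $k$ without shrinking the neighborhood. Combined with $\|\alpha\|_{n,2}\le 2\|\beta\|_{n,2}$ from your Neumann series, this gives the tame estimate for the inverse.

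Finally, \cite[Thm II.3.1.1]{hamilton82} needs the inverse family $V$ to be continuous as well as tame. Continuity follows from $V(\varphi)-V(\psi)=V(\varphi)\bigl(A(\psi)-A(\varphi)\bigr)V(\psi)$ together with these uniform bounds.
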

\begin{proof} The proof works analogously to the proof of~\cite[Prop 3.22]{paperoncomplexdefs}.
\end{proof}

In the present paper, we make use of the definition of tame submanifolds and tame Lie subgroups. These are stated below.
\begin{definition}
    A \emph{tame (Fréchet) submanifold} $\mathcal{N}$ of a tame Fréchet manifold $\mathcal{M}$ is a subset that can be covered by (smooth tame) charts $\{(\mathcal{U}_i,\varphi_i\colon \mathcal{U}_i\longrightarrow E_i\oplus F_i)\}$ of $\mathcal{M}$, where $E_i\oplus F_i$ is a direct sum of graded Fréchet spaces such that $\varphi_i(\mathcal{N}\cap \mathcal{U}_i)=\varphi_i(\mathcal{U}_i)\cap (E_i\times \{0\})$.
\end{definition}

\begin{definition}
   A \emph{tame (Fréchet) Lie subgroup} of a tame Lie group $\mathcal{K}$ is a tame Fréchet submanifold, which is a subgroup of $\mathcal{K}$.
\end{definition}

The push-forward of a vector field by a diffeomorphism is a smooth tame map (see e.g.~\cite[Sec 3.3.1]{paperoncomplexdefs}).
In the following we show that the pull-back of a differential form by a diffeomorphism is a smooth tame map.
\begin{proposition}\label{prop: pullback}
    The pull-back of a $k$-form by a diffeomorphism seen as a map
    \begin{align*}
        \Diff(M)\times \Omega^k(M)&\longrightarrow \Omega^k(M)\\
        (f,\omega)&\longmapsto f^*\omega
    \end{align*}
    is smooth tame.
\end{proposition}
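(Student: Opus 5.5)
The plan is to realize the pull-back as a composition of maps already known to be smooth tame, using Hamilton's results on nonlinear vector bundle operators and on composition with diffeomorphisms. Let $\Lambda^kT^*M\to M$ be the bundle of $k$-forms. For $f\in\Diff(M)$ and $\omega\in\Omega^k(M)$ we have pointwise
\[
(f^*\omega)_p=(\dd f_p)^*\,\omega_{f(p)}=\Lambda^k(\dd f_p)^*\bigl((\omega\circ f)(p)\bigr).
\]
Accordingly, I would factor the map $(f,\omega)\mapsto f^*\omega$ into two steps. The first is $(f,\omega)\mapsto(f,\omega\circ f)$, where $\omega\circ f$ is regarded as a section of the pulled-back bundle $f^*\Lambda^kT^*M$. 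The second is $(f,\omega\circ f)\mapsto \Lambda^k(\dd f)^*(\omega\circ f)$, a fiberwise algebraic operation depending on the 1-jet of $f$.

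The key steps, in order, are as follows.

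First, I work locally in the charts of $\Diff(M)$ built from the exponential map of $h$. Near $f_0$, a diffeomorphism is $f=\exp^h\circ X$ with $X\in\Gamma(M,f_0^*TM)$ small. Then $f$, and also $\dd f$ (involving $\nabla X$), are obtained from $X$ by a nonlinear vector bundle operator of order $\leq 1$. By~\cite[Thm II.2.2.6]{hamilton82}, such operators are smooth tame.

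Second, I treat the composition map $(f,\omega)\mapsto\omega\circ f$. Here I would invoke Hamilton's result that composition $C^\infty(M,N)\times\Diff(M)\to C^\infty(M,N)$, $(g,f)\mapsto g\circ f$, is smooth tame. This is the same fact underlying the smooth tameness of the group operations of $\Diff(M)$ and of the push-forward of vector fields cited from~\cite[Sec 3.3.1]{paperoncomplexdefs}. Applying it to $\omega$ viewed as a map $M\to\Lambda^kT^*M$ gives smooth tameness of $\omega\circ f$ as a section over $f$, to be trivialized back to a fixed bundle via parallel transport along the geodesics $t\mapsto\exp^h(tX)$ in the chart.

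Third, I apply the fiberwise linear map $\Lambda^k(\dd f)^*$. In the chart this is again a nonlinear vector bundle operator in $(X,\nabla X,\omega\circ f)$, hence smooth tame. The composition of smooth tame maps is smooth tame, and chart changes of $\Diff(M)$ are smooth tame, so the local statements patch to the global claim.

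I expect the main obstacle to be the second step. Composition with $f$ loses no derivatives in $\omega$ but costs derivatives of $f$. The tame estimate $\|\omega\circ f\|_n\leq C(\|\omega\|_n+\|\omega\|_1\|f\|_n)$ requires care, as do its derivatives in $f$, which involve $\dd\omega$ and thus lose one derivative of $\omega$. Tame maps allow fixed derivative loss, so this is acceptable. Rather than redo the interpolation estimates, I would reduce to Hamilton's theorem on composition as stated. If this is not directly available in the form needed, the fallback is to embed $\Lambda^kT^*M$ into a trivial bundle $M\times\R^N$ via Whitney's theorem, reducing to composition of $\R^N$-valued functions with diffeomorphisms, which is the standard case.
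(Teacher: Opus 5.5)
Your argument is correct, but it takes a different route from the paper.

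\textbf{Your route.} You treat $f^*\omega$ as a general natural-bundle operation, $f^*\omega=\Lambda^k(\dd f)^*(\omega\circ f)$. This requires several ingredients:
\begin{itemize}
\item Hamilton's composition theorem for maps into the total space $\Lambda^kT^*M$;
\item the exponential charts of $\Diff(M)$;
\item a parallel-transport trivialization of $f^*\Lambda^kT^*M$;
\item smooth tameness of nonlinear vector bundle operators in the $1$-jet of $f$.
\end{itemize}

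\textbf{The paper's route.} The paper avoids all of this by using the exterior-algebra structure of forms. It needs only three facts: scalar composition $g\mapsto g\circ f$ is smooth tame, $f^*\dd g=\dd(g\circ f)$ holds, and $\dd$ and $\wedge$ are tame (multi)linear. Concretely, it writes every $k$-form globally, via a partition of unity on compact $M$, as a finite sum of terms $g\,\alpha_1\wedge\cdots\wedge\alpha_k$ with each $\alpha_i$ exact. It recovers primitives tamely via $g_i=\QdR\alpha_i$ and then uses
\[
f^*(g\,\dd g_1\wedge\cdots\wedge\dd g_k)=(g\circ f)\,\dd(g_1\circ f)\wedge\cdots\wedge\dd(g_k\circ f).
\]
So the derivative of $f$ enters only through $\dd$ applied to a composition. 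No charts of $\Diff(M)$, pull-back bundles, or explicit $\Lambda^k(\dd f)^*$ are needed.

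Your Whitney-embedding fallback is close in spirit to this reduction to scalar functions. However, even there you would still have to handle the fiberwise map $\Lambda^k(\dd f)^*$ yourself.

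\textbf{What each buys.} Your approach is more general: it applies verbatim to any tensor bundle. For instance, it covers push-forward of vector fields or pull-back of $\mathcal{G}$-valued forms in one stroke. The paper's trick is specific to differential forms but more elementary, resting only on the scalar composition theorem.

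The paper's route does implicitly need the splitting of $\omega$ into coefficient functions to depend tame linearly on $\omega$. This holds if one fixes a finite family of exact $1$-forms, such as differentials of cut-off coordinate functions, with zeroth-order coefficient maps.
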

\begin{proof}
    First, we observe that for $k=0$, the pull-back of $g\in \Omega^0(M)=C^{\infty}(M)$ simply becomes the composition map $f^*g=g\circ f$ for $f\in \Diff(M)$, which is a smooth tame map (see \cite[Thm II.2.3.3]{hamilton82}).

    Next, given an exact one-form $\alpha\in \Omega^1(M)$, we can write it as $\alpha=\dd g$ for some $g\in C^{\infty}(M)$. The function $g$ is uniquely determined up to a constant and there is a tame linear map $\alpha\mapsto g$ such that $\dd g=\alpha$, given by $\Q_{\mathrm{dR}}=\dd^*\G_{\mathrm{dR}}$, where $\dd^*$ is the formal adjoint of the de Rham differential with respect to the $L^2$-inner product induced by the auxiliary Riemannian metric on $M$ and $\G_{\mathrm{dR}}$ is the Green's operator of the Laplacian $\Delta=\dd^*\dd+\dd\dd^*$. It follows from the discussion above that $\QdR$ is tame linear. The pull-back of $\alpha$ by $f\in\Diff(M)$ is then given by
    \[f^*\alpha=f^*\dd g=\dd (g\circ f)\]
    and hence smooth tame as a composition of smooth tame maps.

    Moreover, for some $g\in C^{\infty}(M)$ and exact one-forms $\alpha_1,\dots,\alpha_k$, the pull-back of $\omega:=g\,\alpha_1\wedge\dots\wedge \alpha_k$ by $f\in \Diff(M)$ is given by
    \[f^*\omega=(g\circ f)\, f^*\alpha_1\wedge\dots\wedge f^*\alpha_k.\]
    Since the wedge product is a tame multi-linear map, this shows that
    \begin{equation}\label{eq: pullback smooth tame}
    \begin{aligned}
             \Diff(M)\times C^{\infty}(M)\times \left(\dd \Omega^0(M)\right)^k&\longrightarrow \Omega^k(M)\\
        (f,g,\alpha_1,\dots,\alpha_k)&\longmapsto f^*(g\,\alpha_1\wedge\dots\wedge\alpha_k)   
    \end{aligned}
    \end{equation}
    is a smooth tame map.

    Locally, every $k$-form can be written as a finite sum of elements of the form $g\,\alpha_1\wedge\dots\wedge\alpha_k$, where $g\in C^{\infty}(M)$ and $\alpha_i\in\dd\Omega^0(M)$. These local descriptions can be patched together using a partition of unity and since $M$ is compact, in this way we can write every $k$-form as a finite sum of elements of the form $g\,\alpha_1\wedge\dots\wedge\alpha_k$ as above. In particular, this shows that the pull-back of any $k$-form by some diffeomorphism is a finite sum of terms of the form as in~\eqref{eq: pullback smooth tame} which are smooth tame. It follows that the pull-back is smooth and that all its derivatives satisfy tame estimates. This finishes the proof.
\end{proof}

\printbibliography[title=References]
\end{document}